\tikzset{myboxgroup/.style={draw, densely dotted}} 
\newtheorem{lemma}{Lemma}[section]
\newtheorem{proposition}[lemma]{Proposition}
\newtheorem{theorem}[lemma]{Theorem}
\newtheorem{corollary}[lemma]{Corollary}
\newtheorem{convention}[lemma]{Convention}
\theoremstyle{definition}
\theoremstyle{remark}
\newtheorem{remark}[lemma]{Remark}
\let\xx@thm\@thm
\crefname{section}{section}{sections}
\Crefname{section}{Section}{Sections}
\crefname{definition}{Definition}{Definitions}
\crefname{definitionnodiamond}{Definition}{Definitions}
\crefname{example}{Example}{Examples}
\crefname{examplenodiamond}{Example}{Examples}
\crefname{remark}{Remark}{Remarks}
\crefname{remarknodiamond}{Remark}{Remarks}
\crefname{convention}{Convention}{Conventions}
\crefname{notation}{Notation}{Notations}
\crefname{notationnodiamond}{Notation}{Notations}
\crefname{lemma}{Lemma}{Lemmas}
\crefname{proposition}{Proposition}{Propositions}
\crefname{corollary}{Corollary}{Corollaries}
\crefname{theorem}{Theorem}{Theorems}
\crefname{assumption}{Assumption}{Assumptions}
\crefname{enumi}{}{}
\crefname{equation}{}{}
\crefname{align}{}{}
\crefname{proofstep}{Step}{Steps}
\crefname{table}{Table}{Tables}
\numberwithin{equation}{section}
\newenvironment{pf}{\begin{proof}}{\end{proof}}
\def\CC{{\mathbb C}}
\def\NN{{\mathbb N}}
\def\PP{{\mathbb P}}
\def\QQ{{\mathbb Q}}
\def\ZZ{{\mathbb Z}}
\newcommand{\bbZ}{\mathbb{Z}}
\newcommand{\bbC}{\mathbb{C}}
\newcommand{\bbP}{\mathbb{P}}
\def\0ol{{\bar 0}}
\def\1ol{{\bar 1}}
\def\2ol{{\bar 2}}
\def\ol2{{\bar 2}}
\def\3ol{{\bar 3}}
\def\4ol{{\bar 4}}
\def\5ol{{\bar 5}}
\def\6ol{{\bar 6}}
\def\7ol{{\bar 7}}
\def\8ol{{\bar 8}}
\def\9ol{{\bar 9}}
\def\bold0{{\bf 0}}
\def\bold1{{\bf 1}}
\def\bold2{{\bf 2}} 
\def\bold3{{\bf  3}}
\def\bold4{{\bf 4}}
\def\bold5{{\bf 5}}
\def\bold6{{\bf 6}}
\def\bold7{{\bf 7}}
\def\bold8{{\bf 8}}
\def\bold9{{\bf 9}}
\def\P2Skly{\PP^2_{Skly}}
\def\a{\alpha}
\def\b{\beta}
\def\c{\gamma}
\def\d{\delta}
\def\g{\gamma}
\def\s{\sigma}
\def\ve{\varepsilon}
\def\D{\Delta}
\def\G{\Gamma}
\def\L{\Lambda}
\def\fd{{\mathfrak d}}
\def\fsl{{\mathfrak s}{\mathfrak l}}
\def\sfk{{\sf k}}
\def\sfl{{\sf l}}
\def\sfn{{\sf n}}
\def\sfz{{\sf z}}
\def\cal{\mathcal}
\def\cA{{\cal A}}
\def\cB{{\cal B}}
\def\cC{{\cal C}}
\def\cE{{\cal E}}
\def\cF{{\cal F}}
\def\cG{{\cal G}}
\def\cI{{\cal I}}
\def\cK{{\cal K}}
\def\cL{{\cal L}}
\def\cM{{\cal M}}
\def\cN{{\cal N}}
\def\cO{{\cal O}}
\def\cP{{\cal P}}
\def\cR{{\cal R}}
\def\cU{{\cal U}}
\def\cV{{\cal V}}
\def\cZ{{\cal Z}}
\def\Fdim{\operatorname{\sf Fdim}}
\def\Gr{\operatorname{\sf Gr}}
\def\QGr{\operatorname{\sf QGr}}
\def\Proj{\operatorname{Proj}}
\def\Projnc{\operatorname{Proj}_{nc}}
\def\Hom{\operatorname {Hom}}
\def\Aut{\operatorname{Aut}}
\def\id{\operatorname{id}}
\def\coh{{\sf coh}}
\def\Qcoh{{\sf Qcoh}}
\def\rank{{\rm rank}}
\def\th{{\rm th}}
\def\dirlim{\mathop{\vtop{\baselineskip -100pt\lineskip -1pt\lineskiplimit 0pt
\setbox0\hbox{lim}\copy0\hbox to \wd0{\rightarrowfill}}}\limits}
\def\invlim{\mathop{\vtop{\baselineskip -100pt\lineskip -1pt\lineskiplimit 0pt
\setbox0\hbox{lim}\copy0\hbox to \wd0{\leftarrowfill}}}\limits}
\def\I11{{1 \kern -0.8pt \! \mbox{l}}}
\def\mumu{{\mu\kern-4.2pt\mu}}
\def\bfmu{{\mu\kern-4.2pt\mu}}
\def\2slash{\backslash \! \backslash}
\def\rk{\mathrm{rank}}
\def\pic{\mathrm{Pic}}
\def\NS{\mathrm{NS}}
\let\originalleft\left
\let\originalright\right
\renewcommand{\left}{\mathopen{}\mathclose\bgroup\originalleft}
\renewcommand{\right}{\aftergroup\egroup\originalright}
\newcommand\spr[1]{\cite[\href{https://stacks.math.columbia.edu/tag/#1}{Tag {#1}}]{stacks-project}}
\def\l@subsection{\@tocline{2}{0pt}{2.75pc}{5pc}{}}
\begin{document}

\title[Maps from elliptic algebras to twisted homogeneous coordinate rings]{Maps from Feigin and Odesskii's elliptic algebras\\
to twisted homogeneous coordinate rings}

\author{Alex Chirvasitu, Ryo Kanda, and S. Paul Smith}

\address[Alex Chirvasitu]{Department of Mathematics, University at
  Buffalo, Buffalo, NY 14260-2900, USA.}
\email{achirvas@buffalo.edu}

\address[Ryo Kanda]{Department of Mathematics, Graduate School of Science, Osaka City University, 3-3-138, Sugimoto, Sumiyoshi, Osaka, 558-8585, Japan.}
\email{ryo.kanda.math@gmail.com}

\address[S. Paul Smith]{Department of Mathematics, Box 354350,
  University of Washington, Seattle, WA 98195, USA.}
\email{smith@math.washington.edu}

\subjclass[2010]{14A22 (Primary), 16S38, 16W50, 14H52, 14F05 (Secondary)}

\keywords{Elliptic algebra; Sklyanin algebra; twisted homogeneous coordinate ring; characteristic variety}


\begin{abstract}
The elliptic algebras in the title are connected graded $\mathbb{C}$-algebras, denoted $Q_{n,k}(E,\tau)$, depending on a pair of relatively prime integers $n>k\ge 1$, an elliptic curve $E$, and a point $\tau\in E$. This paper examines a canonical homomorphism from $Q_{n,k}(E,\tau)$ to the twisted homogeneous coordinate ring $B(X_{n/k},\sigma',\mathcal{L}'_{n/k})$ on the characteristic variety $X_{n/k}$ for $Q_{n,k}(E,\tau)$. When $X_{n/k}$ is isomorphic to $E^g$ or the symmetric power $S^gE$ we show the homomorphism $Q_{n,k}(E,\tau) \to B(X_{n/k},\sigma',\mathcal{L}'_{n/k})$ is surjective, that the relations for $B(X_{n/k},\sigma',\mathcal{L}'_{n/k})$ are generated in degrees $\le 3$, and the non-commutative scheme $\mathrm{Proj}_{nc}(Q_{n,k}(E,\tau))$ has a closed subvariety that is isomorphic to $E^g$ or $S^gE$, respectively. When $X_{n/k}=E^g$ and $\tau=0$, the results about $B(X_{n/k},\sigma',\mathcal{L}'_{n/k})$ show that the morphism $\Phi_{|\mathcal{L}_{n/k}|}:E^g \to \mathbb{P}^{n-1}$ embeds $E^g$ as a projectively normal subvariety that is a scheme-theoretic intersection of quadric and cubic hypersurfaces.
\end{abstract}

\maketitle

\tableofcontents{}

\section{Introduction}

For a fixed $n$ and $k$, the elliptic algebras $Q_{n,k}(E,\tau)$, defined by Feigin and Odesskii in 1989 \cite{FO89}, 
are non-commutative deformations of the polynomial ring on $n$ variables.
Twisted homogeneous coordinate rings are non-commutative analogues (often deformations) 
of homogeneous coordinate rings (more precisely, section rings) for projective algebraic varieties.
This paper uses the latter to study the former.

\subsection{The contents of this and other papers}
Always, $n$ and $k$ denote relatively prime integers,  $n>k\ge 1$, $E=\CC/\L$ is a complex elliptic curve, and $\tau\in E$ is a (closed) point. We sometimes regard $\tau$ as a translation automorphism $\tau:E \to E$.

This is one of several papers we are writing about the algebras $Q_{n,k}(E,\tau)$. The first of them, \cite{CKS1}, focused on their definition in terms of generators and relations and established some immediate consequences of that definition.  The second, \cite{CKS2}, examined its characteristic variety, $X_{n/k}$, a projective algebraic variety that controls a large part of the structure and representation theory of $Q_{n,k}(E,\tau)$. 
Feigin and Odesskii identified a distinguished ample invertible sheaf $\cL_{n/k}$ on $E^g$, the $g^{\rm th}$ power of $E$, 
where $g$ is the ``length'' of the negative continued fraction expression for $\frac{n}{k}$.
This sheaf is generated by its global sections, the space of which has dimension $n$, so the complete linear system $|\cL_{n/k}|$ 
provides a morphism 
$\Phi_{n/k}:E^g \to \PP^{n-1}$, the image of which is $X_{n/k}$, by definition. 
The main result in \cite{CKS2} is that $X_{n/k}$ is isomorphic to the quotient $E^g/\Sigma_{n/k}$ where 
$\Sigma_{n/k}$ is a certain finite group; furthermore, $E^g/\Sigma_{n/k}$ is a bundle over a power of 
$E$ with fibers that are products of projective spaces. 
The third of our papers, \cite{CKS5}, examines the structure of $E^g/\Sigma_{n/k}$ in more detail: 
an \'etale cover of it that is a product of projective spaces and a power of $E$; a distinguished automorphism of it and
 its \'etale cover that is induced by a translation automorphism $\s:E^g\to E^g$ that ``controls'' the non-commutativity of $Q_{n,k}(E,\tau)$.
Another, \cite{CKS4}, 
will show that $Q_{n,k}(E,\tau)$ has the same Hilbert series as the polynomial ring on $n$ variables when $\tau$ is not a torsion point of $E$.

This paper concerns homomorphisms from $Q_{n,k}(E,\tau)$ to 
non-commutative algebras $B(X,\s,\cL)$ defined in terms of a scheme 
$X$, an automorphism $\s:X \to X$, and an invertible $\cO_X$-module $\cL$.
The algebras $B(X,\s,\cL)$ are the ``twisted homogeneous coordinate rings'' in the title. They are non-commutative analogues of the section rings $\oplus_{i \ge 0} H^0(X,\cL^{\otimes i})$. 

Our main result is as follows (some of the notation  is explained later in this introduction).

\begin{theorem}
\label{thm.main.01}\leavevmode
\begin{enumerate}
  \item\label{main.thm.hom}
There are non-trivial graded $\CC$-algebra homomorphisms 
$$
\xymatrix{
Q_{n,k}(E,\tau) \ar[rr]^>>>>>>>>{\Psi_{n/k}} &&  B(X_{n/k},\s',\cL'_{n/k}) \ar[rr]^>>>>>>>>{\sim} &&  B(E^g,\s,\cL_{n/k})^{\Sigma_{n/k}}  
\; \subseteq \; B(E^g,\s,\cL_{n/k}).
}
$$
\item{}\label{main.thm.qgr}
The quotient categories $\QGr(B(X_{n/k},\s',\cL'_{n/k}))$ and $\QGr(B(E^g,\s,\cL_{n/k}))$ are equivalent to the categories
$\Qcoh(X_{n/k})$ and $\Qcoh(E^g)$, respectively.
\end{enumerate}
If all  the integers $n_1,\ldots,n_g$ in the negative continued fraction for $\frac{n}{k}$ are $\ge 3$ (resp., exactly one of $n_1$ and $n_g$
is $\ge 3$ and the other $n_i$'s are $2$), then
 \begin{enumerate}\setcounter{enumi}{2}
 \item\label{main.thm.charvar}
  $X_{n/k}$ is isomorphic to the $g^{\rm th}$ power  $E^g$ (resp., the $g^{\rm th}$ symmetric power, $S^gE$);
  \item\label{main.thm.surj}
  the homomorphism $Q_{n,k}(E,\tau) \to B(X_{n/k},\s',\cL'_{n/k})$ is surjective; 
  \newline
  equivalently, $B(X_{n/k},\s',\cL'_{n/k})$
  is generated by elements of degree one;
  \item\label{main.thm.rel.deg}
   the relations for $B(X_{n/k},\s',\cL'_{n/k})$ are generated in degrees $\le 3$;
   \item\label{main.thm.subvar}
  $X_{n/k}$ is a closed subvariety of the non-commutative scheme $\Projnc(Q_{n,k}(E,\tau))$, 
  i.e.,  there are functors
  $$
  i_*,i^!:\QGr(Q_{n,k}(E,\tau)) \, \longrightarrow \, \QGr(B(X_{n/k},\s',\cL'_{n/k}))
  $$
  and 
  $$
  i^*:\QGr(B(X_{n/k},\s',\cL'_{n/k}))  \, \longrightarrow \,   \QGr(Q_{n,k}(E,\tau))
  $$
forming an adjoint triple $i^* \dashv i_* \dashv i^!$, and  $i_*$ is a fully faithful functor whose essential image is closed under subquotients. 
\end{enumerate} 
\end{theorem}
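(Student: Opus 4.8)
The plan is to establish the assertions in the order \cref{main.thm.hom}, \cref{main.thm.qgr}, \cref{main.thm.charvar}, \cref{main.thm.surj}, \cref{main.thm.rel.deg}, \cref{main.thm.subvar}, with \cref{main.thm.surj,main.thm.rel.deg} forming the analytic core and the rest being citations, a finite combinatorial computation, or categorical formalities. For \cref{main.thm.hom}: by definition $B(E^g,\sigma,\mathcal L_{n/k})_1=H^0(E^g,\mathcal L_{n/k})$, an $n$-dimensional space which by \cite{CKS2} carries the trivial $\Sigma_{n/k}$-action (restriction from $\PP^{n-1}$ identifies it with $H^0(X_{n/k},\mathcal L'_{n/k})$), so it is canonically the span of the degree-one generators of $Q_{n,k}(E,\tau)$; to produce $\Psi_{n/k}$ one checks that the quadratic defining relations of $Q_{n,k}(E,\tau)$, regarded in $H^0(E^g,\mathcal L_{n/k})^{\otimes2}$, are killed by the twisted multiplication $H^0(\mathcal L_{n/k})\otimes H^0(\mathcal L_{n/k})\to H^0(\mathcal L_{n/k}\otimes\sigma^{*}\mathcal L_{n/k})$, a theta-function identity equivalent to the graph of $\sigma$ lying in the zero scheme of the multilinearized relations --- essentially the defining property of the characteristic variety from \cite{CKS1,CKS2}. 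Since $\sigma$ commutes with $\Sigma_{n/k}$ and $\mathcal L_{n/k}$ is $\Sigma_{n/k}$-equivariant, $B(E^g,\sigma,\mathcal L_{n/k})$ inherits a $\Sigma_{n/k}$-action with invariant subring $B(X_{n/k},\sigma',\mathcal L'_{n/k})$ (descent along the finite quotient $E^g\to X_{n/k}$), and $\Psi_{n/k}$ lands there because products of degree-one elements are automatically invariant; it is non-trivial, being an isomorphism in degree one. For \cref{main.thm.qgr}: $\mathcal L_{n/k}$ is ample on the projective variety $E^g$, hence $\sigma$-ample by Keeler's theorem, and $\mathcal L'_{n/k}$, being the restriction of $\mathcal O(1)$ to $X_{n/k}\subseteq\PP^{n-1}$, is ample and $\sigma'$-ample; the asserted equivalences are then the Artin--Van den Bergh theorem.

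Assertion \cref{main.thm.charvar} is a computation of the finite group $\Sigma_{n/k}\le\Aut(E^g)$ from the negative continued fraction $[n_1,\dots,n_g]$ of $n/k$, via the recursion defining $\mathcal L_{n/k}$ and $\Sigma_{n/k}$ in \cite{CKS2,CKS5}: $\Sigma_{n/k}$ is trivial precisely when every $n_i\ge3$, giving $X_{n/k}\cong E^g$, and is the full symmetric group on the $g$ factors acting by permutation precisely when all $n_i=2$ except one of $n_1,n_g$, giving $X_{n/k}\cong E^g/S_g=S^gE$. Granting this, \cref{main.thm.surj} says $B:=B(X_{n/k},\sigma',\mathcal L'_{n/k})$ is generated in degree one (equivalent to surjectivity of $\Psi_{n/k}$ because $Q_1\cong B_1$); writing $\mathcal L=\mathcal L'_{n/k}$, this is the surjectivity for all $m\ge2$ of
\[
H^0(\mathcal L)\otimes H^0\bigl((\sigma')^{*}\mathcal L\bigr)\otimes\cdots\otimes H^0\bigl((\sigma')^{*(m-1)}\mathcal L\bigr)\ \longrightarrow\ H^0\bigl(\mathcal L\otimes(\sigma')^{*}\mathcal L\otimes\cdots\otimes(\sigma')^{*(m-1)}\mathcal L\bigr),
\]
while \cref{main.thm.rel.deg} says the kernel of the ensuing surjection $Q_{n,k}(E,\tau)\twoheadrightarrow B$ (equivalently of $T(B_1)\twoheadrightarrow B$, as $Q_{n,k}(E,\tau)$ is quadratic) is generated in degrees $2$ and $3$. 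For $X_{n/k}=E^g$ each twist $(\sigma)^{*i}\mathcal L_{n/k}$ is a translate of $\mathcal L_{n/k}$ and so of the same polarization type, and $n_i\ge3$ for all $i$ is exactly what places this type past the numerical thresholds in the classical multiplication-map and syzygy theorems for ample line bundles on abelian varieties (Koizumi, Kempf, Khaled, and others): projective normality ($N_0$) holds and the homogeneous ideal is generated in degrees $\le3$ --- quadrics plus cubics, the cubics being unavoidable since the threshold is ``$\ge 3$'' and not ``$\ge 4$'' --- and since these theorems accept arbitrary ample bundles, the $\sigma$-twisted products are covered too. For $X_{n/k}=S^gE$, the automorphism $\sigma'$ respects the structure of $S^gE$ as a $\PP^{g-1}$-bundle over $E$ (covering a translation of the base), so each $B_m$ is a space of sections of a twist of $\operatorname{Sym}^m$ of a vector bundle on $E$, and one gets the same conclusions from normal-generation and presentation results for sufficiently positive bundles on an elliptic curve, or by descending them from $E^g$ along $E^g\to S^gE$.

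Assertion \cref{main.thm.subvar} is then formal. The surjection $\pi\colon Q_{n,k}(E,\tau)\twoheadrightarrow B$ of connected graded $\CC$-algebras gives restriction of scalars $\pi_*\colon\Gr B\to\Gr Q$, which is exact, fully faithful, and torsion-preserving, hence descends to an exact, fully faithful $i_*\colon\QGr B\to\QGr Q$; its left adjoint $i^*$ is induced by $B\otimes_Q-$ and its right adjoint $i^!$ by $\underline{\Hom}_Q(B,-)$ (well-behaved since $B$ is cyclic, hence finitely generated, over $Q$), yielding $i^*\dashv i_*\dashv i^!$. The essential image of $i_*$ is the full subcategory of $\QGr Q$ on the objects annihilated by $\ker\pi$, which is stable under subquotients. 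Composing with the equivalences of \cref{main.thm.qgr} exhibits $\Qcoh(X_{n/k})$ as the quasi-coherent sheaves on a closed subscheme of $\Projnc(Q_{n,k}(E,\tau))$, in the sense of Smith's theory of closed subspaces of non-commutative spaces.

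The genuinely hard part is \cref{main.thm.surj,main.thm.rel.deg}: verifying that the polarization type forced by $(n_1,\dots,n_g)$ with all $n_i\ge3$ sits on the correct side of the $N_0$/$N_1$-type thresholds, that twisting by the translation $\sigma$ preserves the relevant multiplication and syzygy conclusions, and --- for \cref{main.thm.rel.deg} --- pinning down $\ker\bigl(Q_{n,k}(E,\tau)\twoheadrightarrow B\bigr)$ sharply enough to see it is generated in degrees $\le3$; and in the $S^gE$ case carrying this out through the elliptic projective-bundle description rather than over an abelian variety. Everything else reduces to a theta-function identity, the Artin--Van den Bergh and Keeler theorems, a finite continued-fraction computation, or categorical formalities.
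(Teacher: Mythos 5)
Your outline for \cref{main.thm.hom}, \cref{main.thm.qgr}, \cref{main.thm.charvar} and \cref{main.thm.subvar} matches the paper: the homomorphism comes from the vanishing of the quadratic relations on the graph of $\s$ (\cref{prop.map.to.B}, \cref{cor.map.to.B}), $\s$-ampleness comes from Keeler's criterion via quasi-unipotence of $\s^*$ on $\NS(X)_\CC$ (\cref{cor.sigma-ample}, \cref{cor.sigma-ample.2}), the identification of $X_{n/k}$ is the continued-fraction computation cited from the earlier papers, and the adjoint triple is the standard consequence of a surjection of connected graded algebras.

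The gap is in \cref{main.thm.surj} and \cref{main.thm.rel.deg}, which you correctly flag as the core but then dispose of by appeal to ``classical multiplication-map and syzygy theorems for ample line bundles on abelian varieties (Koizumi, Kempf, Khaled).'' Those theorems require $\cL$ to be a sufficiently high tensor power of an ample bundle --- typically $\cM^{\otimes 3}$ for normal generation and for relations in degrees $\le 3$ --- and $\cL_{n/k}$ is not of that form. The paper says explicitly in \cref{ssect.proj.norm} that none of the known commutative results yields normal generation of $\cL_{n/k}$ even when $\s=\id$, and that this is precisely why new methods are needed; a general ample line bundle on an abelian variety of dimension $\ge 2$ need not be normally generated, so ``all $n_i\ge 3$'' is not a numerical threshold recognized by those theorems. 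What the paper actually does is fibre $E^g$ over $E$ by the last coordinate, prove that the pushforwards of translates of $\cL_{n/k}$ are stable of slope $n/k'>2$ (\cref{prop.grauert}, \cref{le.push}), and feed this into a new surjectivity criterion for multiplication of sections of semistable bundles on an elliptic curve (\cref{le.otimes}), with an induction on $g$ (\cref{le.pm}, \cref{pr.l'l''}, \cref{0892340894}). Your fallback for the $S^gE$ case --- descending the conclusion from $E^g$ along $E^g\to S^gE$ --- also fails: when $X_{n/k}=S^gE$ some $n_i$ equal $2$, so $\cL_{n/k}$ is not very ample and the $E^g$-results do not apply upstairs; and in any case the invariant subring of an algebra generated in degree one need not be generated in degree one. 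The paper instead works directly on the $\PP^{g-1}$-bundle $\pi:S^gE\to E$ with $[\cL'_{n/k}]=D+(m-1)F$ (\cref{lem.ns.pb.va}), using Gushel's positivity criteria and the same semistable-bundle machinery on $E$ (\cref{le.aux}, \cref{cor.ab}, \cref{pr.rel}). So the architecture of your proposal is right, but the step you describe as ``verifying the thresholds'' cannot be discharged by citation; it is where all the new mathematics of the paper lives.
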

\begin{proof}
	\cref{main.thm.hom} \cref{cor.map.to.B} and \cref{thm.thcr.qnk}\cref{thcr.qnk.act}.
	
	\cref{main.thm.qgr} \cref{thm.AV}, \cref{cor.sigma-ample.2}, \cref{2-char-var.prop}, and \cref{thm.thcr.qnk}.
	
	\cref{main.thm.charvar} \cref{sssec.sp.cases}.
	
	\cref{main.thm.surj}\cref{main.thm.rel.deg} \cref{thm.B(S^gE)}, \cref{pr.l'l''}, and \cref{0892340894}.
	
	\cref{main.thm.subvar} \cref{sect.first.g}.
\end{proof}

The perspective of non-commutative algebraic geometry is illuminating. The algebra $Q_{n,k}(E,\tau)$ is a 
homogeneous coordinate ring for a non-commutative analogue, $\Projnc(Q_{n,k}(E,\tau))$, of the projective space $\PP^{n-1}$.
The  homomorphism $Q_{n,k}(E,\tau) \to B(X_{n/k},\s',\cL'_{n/k})$ induces a ``map'' $X_{n/k} \to \Projnc(Q_{n,k}(E,\tau))$. 
When $X_{n/k}$ is $E^g$ or $S^gE$ this map is a ``closed immersion'', i.e., there are non-commutative 
analogues of the usual inverse and direct image functors that allow one to carry information from $\Qcoh(X_{n/k})$ to 
an analogous category of graded $Q_{n,k}(E,\tau)$-modules.

When $\tau=0$, \cref{thm.main.01}\cref{main.thm.rel.deg} shows that the image of $E^g$ in $\PP^{n-1}$ under $\Phi_{n/k}$
is a scheme-theoretic intersection of quadric and cubic hypersurfaces (we do not know if this follows from known results).
Thus, in a sense, the situation for $\tau\ne 0$ is 
exactly the same. That result also recovers the less well-known fact that the image of $\Phi_{|\cL|}:S^gE \to \PP^{(m-1)g}$,
where the class of $\cL$ in the N\'eron-Severi group of $S^gE$ is $D+(m-1)F$ with $m\geq 3$ (see \cref{sssect.NS.S^gE} for notation),
is a scheme-theoretic intersection of quadric and cubic hypersurfaces. Again, we note that the non-commutative case is
perfectly analogous to the classical case.

\subsection{Some of what is known about $Q_{n,k}(E,\tau)$}
\label{sect.good.props}

The algebras $Q_{n,k}(E,0)$ and $Q_{n,n-1}(E,\tau)$ are polynomial rings on $n$ variables (\cite[Props.~5.1 and 5.5]{CKS1}).
The algebras $Q_{n,1}(E,\tau)$ are commonly called Sklyanin algebras.

For a fixed $E$ and $n$, Odesskii and Feigin showed that the algebras $Q_{n,1}(E,\tau)$ provide a flat family of  deformations of 
the  polynomial ring on $n$ variables for all $\tau$ in a countable intersection of Zariski-open neighborhoods of $0$.  
Tate and Van den Bergh made a careful analysis of the algebras $Q_{n,1}(E,\tau)$ for all $\tau$ and all elliptic curves defined over an arbitrary field \cite{TvdB96}. Among other things, they showed that as $E$ and $\tau$ vary  the algebras $Q_{n,1}(E,\tau)$ 
form a flat family of deformations of the polynomial ring on $n$ variables; i.e., for all $\tau$, 
the dimensions of the homogeneous components of $Q_{n,1}(E,\tau)$ are the same as those of the polynomial ring on $n$ variables.

This paper concerns $Q_{n,k}(E,\tau)$ when $k>1$ and $\tau$ is arbitrary.

Tate and Van den Bergh showed that  $Q_{n,1}(E,\tau)$ has the following properties:
\begin{enumerate}
\item\label{tv.hilb.ser} 
It is a connected graded left and right noetherian algebra having the same Hilbert series as the polynomial ring on $n$ variables (with its standard grading).
\item\label{tv.domain} 
It has no zero divisors. 
\item\label{tv.koszul} 
It is a Koszul algebra.
\item\label{tv.fin.order}
It is a finite module over its center if and only if $\tau$ has finite order.\footnote{Tate and Van den Bergh proved that 
$Q_{n,k}(E,\tau)$ is finite over its center if $\tau$ has finite order. The converse follows from \cref{cor.commutative} and \cref{thm.main.01}\cref{main.thm.surj}.}
\item\label{tv.cm}
It is Cohen-Macaulay.
\item\label{tv.aus}
It has the Auslander property.
\item\label{tv.as.reg}
It  is an Artin-Schelter regular algebra \cite{AS87}. 
\end{enumerate}
Definitions of the last three properties can be found in \cite{Lev92}. 
We expect that every $Q_{n,k}(E,\tau)$ has these properties. In \cite{CKS4}, we show that $Q_{n,k}(E,\tau)$ has the same Hilbert series as the polynomial ring on $n$ variables and it is a Koszul algebra, provided that $\tau$ is not a torsion point.

\subsection{The category $\QGr(A)$ when $A=Q_{n,k}(E,\tau)$}
Let $\Bbbk$ be a field and $A$ a finitely generated connected graded $\Bbbk$-algebra. 
Let $\Gr(A)$ denote the category of $\ZZ$-graded left $A$-modules.
We write $\Fdim(A)$ for the full subcategory of $\Gr(A)$ consisting of those modules that are the sum of their finite dimensional 
submodules and define the quotient category
$$
\QGr(A) \; :=\; \frac{\Gr(A)}{\Fdim(A)}.
$$
If $A$ is a finitely generated commutative connected $\Bbbk$-algebra generated by its degree-one component, 
then $\QGr(A)$ is equivalent to the category of quasi-coherent sheaves on the projective scheme $\Proj(A)$. 
Even when $A$ is not commutative, the category $\QGr(A)$ often 
behaves like the category of  quasi-coherent sheaves on a projective scheme.

\subsubsection{}
\label{sect.first.g}
Suppose $\cL_{n/k}$ is very ample or, equivalently, all integers in the ``negative'' continued fraction for $\frac{n}{k}$ are $\ge 3$ (see \cref{sssec.neg.conti.frac})
 or, equivalently, the natural map $E^g \to X_{n/k}$ is an isomorphism. Then $B(X_{n/k},\s',\cL'_{n/k})=B(E^g,\s,\cL_{n/k})$
 and the homomorphism 
\begin{equation}
\label{eq:Psi.nk}
\Psi_{n/k}:Q_{n,k}(E,\tau) \; \longrightarrow \; B(E^g,\s,\cL_{n/k})
\end{equation}
in \cref{thm.main.01}
is surjective or, equivalently, $B(E^g,\s,\cL_{n/k})$ is generated by elements of degree one (\cref{ssec.prod.gen.deg.one}).
The sheaf  $\cL_{n/k}$ is $\s$-ample (see \cref{ssse.sample} and \cref{thm.thcr.qnk}\cref{thcr.qnk.ample}) so a result of Artin and Van den Bergh \cite{AV90} (see \cref{AVdB.thm})
tells us that  
$\QGr(B(E^g,\s,\cL_{n/k}))$ is equivalent to $\Qcoh(E^g)$. Combining this with the main result in \cite{SPS15} (see \cite[Thm.~1.2]{SPS15-corrigendum}) implies there are
 functors
\begin{equation}
\label{eq:tclosed.immersion}
\xymatrix{
\Qcoh(E^g) \ar[rr]_{i_*}  &&   \QGr(Q_{n,k}(E,\tau)) \ar@/_1.5pc/[ll]_{i^*,\,i^!}
}
\end{equation}
satisfying the properties in \cref{thm.main.01}\cref{main.thm.subvar}. The claim for the cases $\frac{n}{k}=[m,2,\ldots,2]$ and $[2,\ldots,2,m]$ ($m\geq 3$) follows from a similar argument using \cref{cor.ab}.

\subsection{The definition of $Q_{n,k}(E,\tau)$}

Fix a point $\eta \in \CC$ lying in the upper half-plane. Let $\L=\ZZ+\ZZ\eta$ and define $E=\CC/\L$.
Let $\Theta_n(\L)$ be the space of theta functions defined in \cite[\S2.1]{CKS1}, and let $\theta_0(z), \ldots, \theta_{n-1}(z)$ 
be the basis for $\Theta_n(\L)$  defined in \cite[Prop.~2.6]{CKS1}.
For all $\tau \in \CC-\frac{1}{n}\L$,  we define $Q_{n,k}(E,\tau)$ to be the free algebra $\CC\langle x_0,\ldots,x_{n-1}\rangle$ 
modulo the $n^2$ relations
\begin{equation}
\label{the-relns}
\sum_{r \in \ZZ_n} \frac{\theta_{j-i+r(k-1)}(0)}{\theta_{j-i-r}(-\tau)\theta_{kr}(\tau)}\, x_{j-r}x_{i+r}  \;=\; 0, \qquad (i,j) \in \ZZ_n^2.
\end{equation}

For the rest of this introduction we assume $\tau \notin \tfrac{1}{n}\Lambda$. This ensures that the denominators in \cref{the-relns} are non-zero. 
In \cite[Defn.~3.11]{CKS1}, we defined $Q_{n,k}(E,\tau)$ for all $\tau\in E$.

In \cite{CKS4} it is shown that the $n^2$ relations in \cref{the-relns} span an ${n \choose 2}$-dimensional space when $\tau$ is not a torsion point.

By  \cite[Prop.~3.22]{CKS1}, $Q_{n,k}(E,\tau) \cong Q_{n,k}(E,-\tau)=Q_{n,k}(E,\tau)^{\rm op}$.

Although the relations for $Q_{n.k}(E,\tau)$ seem to have no meaning at first sight, there are two perspectives that make them less mysterious. One involves $R$-matrices and the other involves an identity for theta functions on $g$ variables.

\subsubsection{}
The relations in \cref{the-relns} come from Belavin's elliptic solutions to the quantum Yang-Baxter equation.  Let $V$ be a $\CC$-vector space with basis $e_0,\ldots,e_{n-1}$. For each $z \in \CC$, let $R(z):V \otimes V \to V \otimes V$ be the linear operator
$$
R(z)(e_i \otimes e_j) \; :=\; \sum_{r \in \ZZ_n} \frac{\theta_{j-i+r(k-1)}(-z+\tau)}{\theta_{kr}(\tau)\theta_{j-i-r}(-z)}    \, e_{j-r}\otimes e_{i+r}.
$$
As conjectured by Belavin \cite{bel}, and later proved by Cherednik \cite{cher}, Chudnovsky and  Chudnovsky \cite{ch2}, and Tracy  \cite{tra}, when $k=1$,
 these operators satisfy the equation
$$
R(u)_{12}R(u+v)_{23}R(v)_{12} \;=\; R(v)_{23}R(u+v)_{12}R(u)_{23}.
$$
Clearly,
$$
Q_{n,k}(E,\tau) \;=\; \frac{TV}{({\rm im} \, R(\tau))} \, ,
$$
the right-hand side of which denotes the quotient of the tensor algebra on $V$ by the ideal generated by the image of $R(\tau)$.
In \cite{CKS4}, we  use the fact that $R(z)$ satisfies the quantum  Yang-Baxter equation to show that $Q_{n,k}(E,\tau)$ has the same 
Hilbert series as the polynomial ring on $n$ variables when $\tau$ is not a torsion point.

\subsubsection{}
The second ``explanation'' for the relations involves an $n$-dimensional space $\Theta_{n/k}(\L)$ of theta functions in $g$ variables where 
$g$ is the number in \cref{sect.first.g}. The Heisenberg group
$H_n$ of order $n^3$ acts in a natural way on $\Theta_{n/k}(\L)$ and there is a basis $w_0,\ldots,w_{n-1}$ for $\Theta_{n/k}(\L)$ that
transforms in a nice way with respect to the ``standard'' generators for $H_n$ (see \cite[\S 5.1.1]{CKS2}). 
There is an identity 
	\begin{equation}\label{eq.id.rel.w.intro}
		\sum_{r\in\ZZ_n}  \frac{\theta_{\beta-\alpha+r(k-1)}(0)}{\theta_{\beta-\alpha-r}(-\tau) \theta_{rk}(\tau)} 
		\, w_{\beta-r}(\sfz)\,w_{\alpha+r}(\s(\sfz)) 
		\;=\; 0.
	\end{equation}
in which $\sfz \in\CC^g$ and $\s$ is a certain automorphism of $\CC^{g}$ defined in \cref{sssect.sigma}. 
Compare \cref{eq.id.rel.w.intro} and \cref{the-relns}: if one identifies $x_\a$ with $w_\a$, then  \cref{eq.id.rel.w.intro} tells us that the relations
for $Q_{n,k}(E,\tau)$ vanish on the graph of $\s$. 

\subsubsection{The relations for $Q_{2k+1,k}(E,\tau)$}\label{sssec.geom.description}
This case, which includes the 3-dimensional Sklyanin algebra $Q_{3,1}(E,\tau)$,  is special.
Since $\frac{2k+1}{k}=[3,2,\ldots,2]$ where there are $k-1$ twos,  $X_{n/k} \cong S^kE$. 
The automorphism $\s':S^kE \to S^kE$ is $(\!(z_1,\ldots,z_k)\!) \mapsto (\!(z_1+\tau,\ldots,z_k+\tau)\!)$ (\cref{prop.action.of.sigma.on.S^gE}).
The degree-one component, $V$ say,  of $Q_{2k+1,k}(E,\tau)$ can be viewed as linear forms on $\PP^{n-1}=\PP^{2k}$ so 
$V^{\otimes 2}$ can be viewed as bilinear forms on $\PP^{2k} \times \PP^{2k}$. 

\begin{theorem}
[\cref{prop.Q.2k+1.k}]
If $\tau\in E$ is not a $2$-torsion point, then the quadratic relations for $Q_{2k+1,k}(E,\tau)$  are exactly those elements of $V^{\otimes 2}$ that vanish on the graph of
the automorphism $\s':S^kE \to S^kE$.
\end{theorem}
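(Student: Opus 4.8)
The plan is to identify two subspaces of $V^{\otimes2}$: the span $R$ of the quadratic relations of $Q_{2k+1,k}(E,\tau)$, and the space $K$ of bilinear forms vanishing on the graph of $\s'$. The inclusion $R\subseteq K$ is a reformulation of a known identity; the reverse inclusion follows from a dimension count, the nontrivial half of which is where the hypothesis enters and where I expect the real work to lie.

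First I would fix the dictionary and read off $R\subseteq K$. Via $\Phi_{n/k}$ the degree-one component $V$ is identified with $H^0(S^kE,\cL'_{n/k})=H^0(E^k,\cL_{n/k})=\Theta_{n/k}(\L)$, sending $x_\alpha$ to the basis vector $w_\alpha$. Pulling a bilinear form $f=\sum c_{\alpha\beta}\,x_\alpha\otimes x_\beta$ back along $\operatorname{id}\times\s'$ and then to $\CC^k$ turns $f|_{\text{graph}(\s')}$ into the function $\sfz\mapsto\sum c_{\alpha\beta}\,w_\alpha(\sfz)\,w_\beta(\s(\sfz))$, a section of $\cL_{n/k}\otimes\s^*\cL_{n/k}$; so $f\in K$ means exactly that this section is identically zero. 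The identity \cref{eq.id.rel.w.intro} then says precisely that each of the $n^2$ relations \cref{the-relns}, read with $x_\alpha$ replaced by $w_\alpha$, produces the zero section, i.e.\ $R\subseteq K$. It remains to show $\dim R=\dim K=\binom n2=k(2k+1)$.

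Next I would compute $\dim K$. The degree-two multiplication $V^{\otimes2}=B_1\otimes B_1\to B_2=H^0\bigl(S^kE,\cL'_{n/k}\otimes(\s')^*\cL'_{n/k}\bigr)$ of the twisted homogeneous coordinate ring $B:=B(S^kE,\s',\cL'_{n/k})$ is $s\otimes t\mapsto s\cdot(\s')^*t$, which is exactly the restriction-to-the-graph map, so $K$ is its kernel. Since $\tfrac nk=[3,2,\dots,2]$, the homomorphism $Q_{2k+1,k}(E,\tau)\to B$ is surjective by \cref{thm.main.01}\cref{main.thm.surj} and $Q_{2k+1,k}(E,\tau)$ is generated in degree one, so this multiplication map is onto; hence $\dim K=n^2-h^0\bigl(S^kE,\cL'_{n/k}\otimes(\s')^*\cL'_{n/k}\bigr)$. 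As $\s'$ lies in a connected family of automorphisms of $S^kE$ it acts trivially on the Néron–Severi group, so $\cL'_{n/k}\otimes(\s')^*\cL'_{n/k}$ has class $2D+4F$ in the notation of \cref{sssect.NS.S^gE}; writing $S^kE=\PP(\cE)$ over $E$ with $\cE$ the indecomposable rank-$k$, degree-$1$ bundle, $D=\cO_{\PP(\cE)}(1)$ and $F$ a fibre, one gets $h^0(S^kE,2D+4F)=h^0\bigl(E,\operatorname{Sym}^2\cE\otimes M\bigr)$ for a degree-$4$ line bundle $M$ on $E$, and since $\operatorname{Sym}^2\cE\otimes M$ is semistable of positive slope its $H^1$ vanishes, so $h^0=\deg=(k+1)(2k+1)=\binom{n+1}2$. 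Thus $\dim K=n^2-\binom{n+1}2=\binom n2$. (Equivalently one may read $\dim B_2=\binom{n+1}2$ off the Hilbert series of $B$.)

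Finally, the main obstacle: I must prove $\dim R=\binom n2$ when $\tau\notin E[2]$, which together with the above forces $R=K$. The key structural point is that $R$ is graded by weight $c\in\ZZ_n$: the relation indexed by $(i,j)$ involves only the $n$ monomials $x_ax_{c-a}$ with $c=i+j$, so $R=\bigoplus_c R_c$ with $R_c$ the span of the $n$ relations $\{(i,c-i):i\in\ZZ_n\}$, and likewise $K=\bigoplus_c K_c$; the Heisenberg symmetry $x_\alpha\mapsto x_{\alpha+1}$ shifts $c$ by $2$, a bijection of $\ZZ_n$ since $n$ is odd, so all $R_c$ (resp.\ all $K_c$) have a common dimension, whence $\dim K_c=k$ for every $c$. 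Since $R_c\subseteq K_c$, the upper bound $\dim R_c\le k$ is automatic, and for $k=1$ the lower bound is trivial because $R_c\ne0$; the content is the lower bound $\dim R_c\ge k$ for $k\ge2$, equivalently that the $n\times n$ matrix
\[
M^{(c)}_{i,a}\;=\;\frac{\theta_{\,c-2i+(c-i-a)(k-1)}(0)}{\theta_{\,a-i}(-\tau)\,\theta_{\,k(c-i-a)}(\tau)}\qquad(i,a\in\ZZ_n)
\]
---whose rows are the weight-$c$ relations and whose columns are the monomials $x_ax_{c-a}$---has rank exactly $k$. I would attack this via a Frobenius-type theta determinant evaluation: after clearing the row factors $\theta_{a-i}(-\tau)^{-1}$ and reindexing away the $\theta_{k(c-i-a)}(\tau)^{-1}$, a well-chosen $k\times k$ minor should reduce to a product of theta values whose only zeros among $\tau\notin\tfrac1n\L$ are the points of $E[2]$. (Alternatively one can combine the always-valid bound $\operatorname{rank}R(\tau)\le\dim K=\binom n2$ with lower semicontinuity of $\operatorname{rank}R(\tau)$ and the equalities $\operatorname{rank}R(\tau)=\binom n2$ for $\tau$ nontorsion and near $\tau=0$, reducing to confining the locus where the rank drops to $E[2]\cup\tfrac1n\L$.) The hypothesis is genuinely necessary for $k\ge2$: when $\tau\in E[2]$ one has $-\tau=\tau$ in $E$, the matrices $M^{(c)}$ acquire extra symmetry, the rank can drop below $k$, and then $R\subsetneq K$.
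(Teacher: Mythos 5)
Your steps (1) and (2) track the paper's argument exactly: the inclusion $R\subseteq K$ is the existence of $\Psi_{(2k+1)/k}$ (the paper's \cref{prop.cor5.13.in.2} and \cref{cor.map.to.B}), the identification of $K$ with $\ker(B_1\otimes B_1\to B_2)$ is done there via the ideal sequence of the graph, and $\dim B_2=(k+1)(2k+1)=\binom{n+1}{2}$ is computed (the paper uses the Cattaneo--Ciliberto formula \cite[Thm.~1.17]{CaCi93}; your pushforward computation $h^0(E,\operatorname{Sym}^2\cE\otimes M)$ is an equivalent route), with surjectivity of the multiplication supplied by \cref{thm.B(S^gE)}. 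So everything reduces, in both your write-up and the paper's, to the single equality $\dim R=\binom{n}{2}$, equivalently $\dim Q_{2k+1,k}(E,\tau)_2=\binom{n+1}{2}$.

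That last equality is where your proposal has a genuine gap. The paper does not compute any rank of the relation matrix: it simply quotes $\dim(Q_2)=\binom{2k+2}{2}$ from \cite[Thm.~5.10]{CKS4}, the Hilbert-series theorem proved in the companion paper via the $R$-matrix/Yang--Baxter machinery, and that citation is precisely where the hypothesis $\tau\in(E-E[2])\cup\{0\}$ enters. You instead propose to show directly that each weight block $M^{(c)}$ of the relation matrix has rank $k$. The weight decomposition and the Heisenberg shift argument reducing everything to a single weight are correct and are a nice observation not in the paper, but the actual lower bound $\dim R_c\ge k$ is only sketched: ``a well-chosen $k\times k$ minor should reduce to a product of theta values whose only zeros \dots are the points of $E[2]$'' is a hope, not an argument, and no candidate minor or determinant identity is exhibited. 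Your fallback via semicontinuity also does not close: lower semicontinuity of rank only says the locus where $\operatorname{rank}M^{(c)}(\tau)\ge k$ is open, so knowing the rank is $k$ for generic $\tau$ gives no information at a \emph{particular} non-generic $\tau$; ``confining the locus where the rank drops to $E[2]\cup\frac{1}{n}\Lambda$'' is exactly the hard content and is not carried out. As written, the proof establishes $R\subseteq K$ and $\dim K=\binom{n}{2}$ but not $\dim R=\binom{n}{2}$; to finish you must either import \cite[Thm.~5.10]{CKS4} as the paper does, or genuinely evaluate a nonvanishing $k\times k$ minor of $M^{(c)}$.
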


\subsection{Review of results about $Q_{n,1}(E,\tau)$}
\subsubsection{}
 The algebras $Q_{3,1}(E,\tau)$ first appeared in Artin and Schelter's classification of 3-dimensional regular algebras \cite{AS87}.
There, the algebras $Q_{3,1}(E,\tau)$ belonged to a slightly larger class of algebras $A_{a,b.c}$  parametrized by 
points $(a,b,c) \in \PP^2$ and  defined as $\CC\langle x_0,x_1,x_2\rangle/(r_0,r_1,r_2)$ 
where $r_i=a x_ix_{i+1}+bx_{i+1}x_i+cx_{i+2}^2$ (see \cite[(10.36) and 10.37(i)]{AS87}, and the remark on page 38 of \cite{ATV1} to the effect that the conjecture in \cite[10.37(i)]{AS87} is true). Artin-Tate-Van den Bergh showed that $A_{(a,b,c)}$ is a 3-dimensional regular algebra if and only if $(a,b,c) \in \PP^2-\{\text{12 points}\}$. To do that they introduced the notion of a twisted homogeneous coordinate ring \cite{ATV1} (Odesskii and Feigin discovered  this notion around the same time;  \cite[p.~7]{FO-Kiev} and \cite[p.~208]{FO89})
and showed  there is a surjective homomorphism
$$
\Psi:Q_{3,1}(E,\tau) \; \longrightarrow \; B(E,\tau,\cL_3)
$$
where $\cL_3$ is an invertible $\cO_E$-module of degree 3 and $\ker(\Psi)$ is generated by a degree-three central element, $\Omega$ say.\footnote{When $\tau=0$, the vanishing locus of $\Omega$ is  the curve $abc(x^3+y^3+z^3)=(a^3+b^3+c^3)xyz$, which is non-singular if and only if  $abc \ne 0$ and $(3abc)^3 \ne (a^3+b^3+c^3)^3$,
and $\Psi$ is the familiar map from the polynomial ring on 3 variables to the homogeneous coordinate ring of the image of $E$ under the morphism $E\to \PP(H^0(E,\cL_3)^*)$.}
Artin-Tate-Van den Bergh exploit this, and the fact that $\QGr(B(E,\tau,\cL_3))$ is equivalent to $\Qcoh(E)$, 
to show that $Q_{3,1}(E,\tau) $  has properties \cref{tv.hilb.ser}-\cref{tv.as.reg} in \S\ref{sect.good.props}.
One should think of $B(X,\tau,\cL_3)$ as a homogeneous coordinate ring of $E$, albeit a non-commutative, or twisted,
one. In a similar spirit, one should view $Q_{3,1}(E,\tau)$ as a non-commutative algebra that behaves as if it is the homogeneous 
coordinate ring of a non-commutative analogue of the projective plane $\PP^2$. The element $\Omega$ plays the role of a cubic equation
whose zero locus is $E$. 
\subsubsection{}
Similar results hold for $Q_{4,1}(E,\tau)$. In 1982, Sklyanin used Baxter's elliptic solution to the quantum Yang-Baxter equation 
to define a family of algebras $A_4(E,\tau)$ \cite{Skl82,Skl83}. As mentioned in \cite[p.~20]{ST94}, there is an isomorphism
$$
Q_{4,1}(E,\tau) \; \cong A_4(E,2\tau).
$$
Smith and Stafford \cite[\S 3]{SS92} showed  there is a surjective homomorphism  $A_{4}(E,\tau) \to B(E,\tau,\cL_4)$, 
and hence a surjective homomorphism
$$
\Psi:Q_{4,1}(E,\tau) \; \longrightarrow \; B(E,2\tau,\cL_4),
$$
where $\cL_4$ is an invertible $\cO_{E}$-module of degree $4$, and showed that the kernel of 
$\Psi$ is generated by a regular sequence consisting of two degree-two central elements, $\Omega_1$ 
and $\Omega_2$ say (\cite[Cor.~3.9 and Thm.~5.4]{SS92}). They used this, and the fact that $\QGr(B(E,\tau,\cL_4))$ is equivalent to $\Qcoh(E)$, to show that $Q_{4,1}(E,\tau) $ has properties \cref{tv.hilb.ser}-\cref{tv.as.reg} in \S\ref{sect.good.props}.
 One thinks of $Q_{4,1}(E,\tau)$ as if it is the homogeneous coordinate ring of a non-commutative analogue $\Projnc(Q_{4,1}(E,\tau))$ of the projective space $\PP^3$ and of $\Omega_1$ and $\Omega_2$ as if they are 
the defining equations of $E$ presented as the intersection of two ``non-commutative quadrics'' in $\Projnc(Q_{4,1}(E,\tau))$.
This theme is elaborated on in \cite{SVdB-NCQ}.

\subsubsection{}
As stated immediately after the proof of Theorem 3.1 in \cite{FO-Kiev}, for all $n \ge 3$ there is a surjective homomorphism 
$$
\Psi:Q_{n,1}(E,\tau) \;  \longrightarrow \;  B(E,(n-2)\tau,\cL_n)
$$
 with $\cL_n$ an invertible $\cO_{E}$-module of degree $n$; see \cite[\S4.1]{TvdB96}.  
 All degree-$n$ invertible $\cO_{E}$-modules are pullbacks of each other along suitable translation automorphisms so the isomorphism class of $B(E,(n-2)\tau,\cL_n)$ does not depend on the choice of $\cL_n$.
 When $n \ge 5$ it is difficult to use the surjectivity of $\Psi$ to obtain information about $Q_{n,1}(E,\tau)$ 
 because  $\ker(\Psi)$ is no longer generated by a 
regular sequence of central elements; this is analogous to the fact that the image of $E$ under the morphism 
$\Phi_{|\cL_n|}:E \subseteq \PP(H^0(E,\cL_n)^*)$ is a complete intersection if and only if $n=3,4$.\footnote{This is well-known. The case $n=3$ is 
trivial. When $n=4$, $E$ is an intersection of two quadrics (see, e.g., \cite[Exer.~IV.3.6]{Hart} or \cite[Ch.~III]{Hulek86}.
Since the degree-$n$ elliptic normal curve $E \subseteq \PP^{n-1}$ is not contained in any hyperplane, if it is a complete intersection, 
it would be a complete intersection of $n-2$ hypersurfaces of degree $\ge 2$ so would have degree $\ge 2^{n-2}$; however, if $n >4$, then $n<2^{n-2}$.} 

\subsection{The organization of this paper}

\Cref{sect.prelims} concerns twisted homogeneous coordinate rings. It records important results due to Artin-Van den Bergh and Keeler, and a few results that are not in the literature (but should be). Some of those are surely known to others.
\cref{cor.sigma-ample,cor.sigma-ample.2}, which appear to be new, give a criterion for $\s$-ampleness that is 
particularly useful for the types of twisted homogeneous coordinate rings that appear in the study of $Q_{n,k}(E,\tau)$. 

\Cref{sect.negative.contd.fracs} records some results and notation from our earlier papers about $Q_{n,k}(E,\tau)$ that are used in this paper.  We discuss maps from $Q_{n,k}(E,\tau)$ to twisted homogeneous coordinate rings in   \cref{ssect.Q.B}. The main results there 
are  \cref{thm.thcr.qnk,cor.map.to.B}. We also want to emphasize the  isomorphism 
$B(X_{n/k},\sigma',\cL'_{n/k}) \cong B(X_{n/k},(\sigma')^{-1},\cL'_{n/k})$ in \cref{th.-tau}. 
This, and the anti-isomorphism $B(X,\s^{-1},\cL)^{\mathrm{op}} \cong B(X,\s,\cL)$  in  \Cref{prop.B-op}, allow one to 
reconcile some sign differences that arise when one compares various papers.
These (anti-)isomorphisms, and the homomorphisms
in  \cref{ssect.Q.B}, are compatible with the observation in \cite[Prop.~3.22]{CKS1} that $Q_{n,k}(E,\tau) \cong Q_{n,k}(E,-\tau)=Q_{n,k}(E,\tau)^{\rm op}$.

Our questions about the degrees of minimal sets of generators and relations for $B(X_{n/k},\sigma',\cL'_{n/k})$ often reduce
to this: if $\cF$ and $\cG$ are locally free\footnote{All our locally free sheaves are coherent.} $\cO_X$-modules,  when is the natural map
\begin{equation}
\label{mult.map}
H^0(X,\cF) \otimes H^0(X,\cG) \to H^0(X,\cF \otimes \cG)
\end{equation}
surjective? This question is of broad interest in algebraic geometry and has been studied a great deal. 
We prove several new results of this form in the later sections of the paper. 
Most of those results are for varieties $X$ for which there is a surjective morphism $\pi:X \to E$. The proofs often reduce to the 
question of whether $H^0(E,\pi_*\cF) \otimes H^0(E,\pi_*\cG) \to H^0(E,\pi_*\cF \otimes \pi_*\cG)$ is surjective.
It usually turns out in the cases of interest to us that $\pi_*\cF$ and $\pi_*\cG$ are semistable locally free $\cO_E$-modules.
For this reason, \cref{sect.semistable} collects a number of standard results about semistable $\cO_E$-modules.
We also prove the following result that we found particularly useful. 

\begin{theorem}[\cref{le.otimes}]
Let $\cU$ and $\cV$ be semistable locally free coherent $\cO_E$-modules of slopes $\mu(\cU)$ and $\mu(\cV)$. If 
$\cU$ and $\cV$ are generated by their global sections and 
$$
\frac{1}{\mu(\cU)} \, + \, \frac{1}{\mu(\cV)} \; < \; 1,
$$
then the multiplication map $H^0(E,\cU) \otimes H^0(E,\cV) \to H^0(E,\cU \otimes \cV)$ is surjective.
\end{theorem}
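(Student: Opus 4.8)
The plan is to convert surjectivity of the multiplication map into a cohomology vanishing on $E$ by feeding $\cV$ into an evaluation (``syzygy'') sequence, and then to read that vanishing off from slope inequalities; the numerical hypothesis will turn out to be precisely what is needed. Two elementary facts set the stage. First, if $\cF$ is a locally free $\cO_E$-module all of whose Harder--Narasimhan slopes are positive (in particular if $\cF$ is semistable of positive slope), then $H^1(E,\cF)=0$: by the HN filtration it suffices to treat the semistable case, where $H^1(E,\cF)\cong\Hom_{\cO_E}(\cF,\cO_E)^{*}$ by Serre duality, and a nonzero map $\cF\to\cO_E$ would exhibit a subsheaf of $\cO_E$ of strictly positive slope, which is impossible; hence $h^0(E,\cF)=\chi(\cF)=\deg\cF$. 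Second, the hypothesis $\mu(\cU)^{-1}+\mu(\cV)^{-1}<1$ forces $\mu(\cU),\mu(\cV)>1$, so in particular $H^1(E,\cU)=H^1(E,\cV)=0$ and $h^0(E,\cV)=\deg\cV=\rk(\cV)\,\mu(\cV)$.

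Since $\cV$ is globally generated, evaluation of sections gives a short exact sequence
$$
0 \;\longrightarrow\; \cM \;\longrightarrow\; H^0(E,\cV)\otimes_{\CC}\cO_E \;\longrightarrow\; \cV \;\longrightarrow\; 0
$$
with $\cM$ locally free; from this, $\rk\cM = h^0(E,\cV)-\rk\cV = \rk(\cV)(\mu(\cV)-1)$ and $\deg\cM=-\deg\cV$, so $\mu(\cM) = -\mu(\cV)/(\mu(\cV)-1)$. Tensoring the sequence with the locally free module $\cU$ and passing to cohomology, and using $H^1(E,\cU)=0$, the cokernel of the multiplication map $H^0(E,\cU)\otimes H^0(E,\cV)\to H^0(E,\cU\otimes\cV)$ is identified with $H^1(E,\cM\otimes\cU)$. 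Thus it suffices to show $H^1(E,\cM\otimes\cU)=0$.

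Assume for the moment that $\cM$ is semistable. Since a tensor product of semistable $\cO_E$-modules is again semistable (we are in characteristic zero; see \cref{sect.semistable}), $\cM\otimes\cU$ is semistable of slope
$$
\mu(\cM\otimes\cU) \;=\; \mu(\cM)+\mu(\cU) \;=\; \mu(\cU)-\frac{\mu(\cV)}{\mu(\cV)-1}.
$$
Because $\mu(\cV)>1$, clearing denominators shows this slope is $>0$ if and only if $\mu(\cU)\,\mu(\cV) > \mu(\cU)+\mu(\cV)$, i.e.\ if and only if $\mu(\cU)^{-1}+\mu(\cV)^{-1}<1$ --- exactly our hypothesis. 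Hence $\cM\otimes\cU$ is semistable of positive slope, so $H^1(E,\cM\otimes\cU)=0$ by the first paragraph, finishing the proof. (One in fact only needs $\mu_{\min}(\cM\otimes\cU)>0$, and $\mu_{\min}$ is additive under $\otimes$ in characteristic zero, so it would be enough to have $\mu_{\min}(\cM)>-\mu(\cU)$; semistability of $\cM$ is simply the cleanest way to secure this.)

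The semistability of the syzygy bundle $\cM$ is the main obstacle, and the one step genuinely special to elliptic curves --- on a curve of genus $g$ the analogue typically needs $\mu(\cV)>2g$ (cf.\ Butler's theorem). I would prove it using the structure results for semistable $\cO_E$-modules in \cref{sect.semistable}: take a Jordan--H\"older filtration of $\cV$, whose successive quotients are stable of slope $\mu(\cV)>1$; by the first paragraph every term of the filtration and every successive quotient is globally generated with vanishing $H^1$, so the evaluation sequences fit into $3\times 3$ diagrams and the nine-lemma shows the kernel bundles assemble into a filtration of $\cM$ with successive quotients the (stable-bundle) syzygy modules, all of the single slope $-\mu(\cV)/(\mu(\cV)-1)$; an iterated extension of semistable modules of one slope is semistable, so the claim reduces to semistability of the syzygy bundle of a \emph{stable} $\cO_E$-module of slope $>1$. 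That last case is where the explicit description of stable bundles on $E$ (as direct images along étale isogenies) is brought to bear, reducing it to the classical semistability of the syzygy bundle of a line bundle of degree $\ge 2$ on an elliptic curve; arranging this reduction carefully is the delicate part of the argument.
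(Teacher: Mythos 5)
Your reduction of the statement to a cohomology vanishing is exactly the paper's: form the evaluation sequence of one of the two bundles, tensor with the other, and observe that the cokernel of the multiplication map is $H^1(E,\cM\otimes\cU)$, where $\cM$ is the syzygy (kernel) bundle; the slope computation showing that $\mu(\cM)+\mu(\cU)>0$ is equivalent to $\mu(\cU)^{-1}+\mu(\cV)^{-1}<1$ is also identical (the paper runs it with the roles of $\cU$ and $\cV$ interchanged, which is immaterial). The entire weight of the proof therefore rests on the semistability of $\cM$, and that is precisely the step you have not proved. Your Jordan--H\"older reduction to the case of a \emph{stable} bundle is fine (the nine-lemma argument works because all the graded pieces have vanishing $H^1$ and are globally generated), but the proposed endgame --- writing a stable bundle of coprime rank and degree as $\pi_*L$ for an isogeny $\pi:E'\to E$ and ``reducing to the syzygy bundle of a line bundle'' --- does not go through as described: the syzygy bundle of $\pi_*L$ on $E$ has rank $\deg L-\operatorname{rank}(\pi_*L)$, whereas $\pi_*$ of the syzygy bundle of $L$ has rank $(\deg\pi)(\deg L-1)$, so the former is neither the pushforward of the latter nor any evident transform of it (pulling back along $\pi$ produces an intersection of kernels inside a trivial bundle, which is not obviously semistable either). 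As written, the crucial claim is a sketch resting on a reduction that fails numerically.

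The paper closes this gap differently, and you may want to compare. It proves that the syzygy bundle $\cK$ of an \emph{indecomposable} globally generated $\cU$ is itself indecomposable, and then invokes Atiyah's theorem (\cref{le.tu}) that indecomposable locally free sheaves on an elliptic curve are semistable --- no stable/isogeny classification is needed. Indecomposability is obtained by a duality trick: dualizing the evaluation sequence shows $\cK^\vee$ is globally generated, and Serre duality identifies the kernel of the evaluation map of $\cK^\vee$ with $\cU^\vee$; if $\cK$ had $n$ indecomposable summands, that kernel would split into at least $n$ nonzero pieces, forcing $n=1$ since $\cU^\vee$ is indecomposable. (The paper also notes in \cref{re.to} the slicker explanation: $\cK[1]=T_\cO(\cU)$ for the spherical twist $T_\cO$, which is an autoequivalence of $\mathsf{D}^b(\coh(E))$ and hence preserves indecomposability.) Replacing your final paragraph with either of these arguments would complete the proof.
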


Although our ultimate interest is the specific twisted homogeneous coordinate rings $B(X_{n/k},\sigma',\cL'_{n/k})$
we often prove results in greater generality. For example, \cref{le.aux} provides a result about the surjectivity of the map in \cref{mult.map} 
when $X$ is a projective space bundle over $E$. In  \cref{se.rel} we show that $B(S^gE,\sigma,\cL)$ is generated in
 degree one and has relations of degrees 2 and 3 for all $g \ge 2$, all translation automorphisms $\s:S^gE\to S^gE$, and all invertible $\cL$ whose 
class in the N\'eron-Severi group is $aD+bF$ with $a \ge 1$ and $b \ge 2$.\footnote{Given the basis 
$\{D,F\}$ for $\NS(S^gE)$ in \cref{sssect.NS.S^gE}, 
if $[\cL]=aD+bF$ with $a \ge 1$ and $b \ge 2$, then $\cL$ is ample and generated by its global sections.}
Likewise, \cref{se.rel.ep} shows that the relations for $B(E^g,\s,\cL_{n/k})$ are generated in degree $\le 3$ for all translation automorphisms $\s:E^g \to E^g$ when $\cL_{n/k}$ is very ample.

\subsection{Acknowledgements}

A.C. was partially supported by NSF grants DMS-1565226, DMS-1801011, and DMS-2001128.

R.K. was a JSPS Overseas Research Fellow, and supported by JSPS KAKENHI Grant Numbers JP16H06337, JP17K14164, and JP20K14288, Leading Initiative for Excellent Young Researchers, MEXT, Japan, and Osaka City University Advanced Mathematical Institute (MEXT Joint Usage/Research Center on Mathematics and Theoretical Physics JPMXP0619217849). R.K. would like to express his deep gratitude to Paul Smith for his hospitality as a host researcher during R.K.'s visit to the University of Washington.

S.P.S. thanks S\'andor Kov\'acs, Jack Lee, 
Max Lieblich, and  Bianca Viray for many helpful conversations.

\section{Twisted homogeneous coordinate rings}
\label{sect.prelims}

In this section we mostly work over an algebraically closed field $\Bbbk$. Always,  $E$ denotes an elliptic curve defined over $\Bbbk$ 
and $\PP^n$ denotes the projective space $\PP_\Bbbk^n$.

We always assume $\Bbbk=\CC$ when we discuss  $Q_{n,k}(E,\tau)$  because its definition  involves theta functions.


\subsection{Motivation: projective normality and defining relations for abelian varieties}
\label{ssect.proj.norm}
Nothing in this section is used later in the paper. Its purpose is to explain how the results about $B(E^g,\s,\cL_{n/k})$ (when all the $n_i$'s 
in the continued fraction for $\frac{n}{k}$ are $\ge 3$) in parts \cref{main.thm.surj} and \cref{main.thm.rel.deg} of \cref{thm.main.01} fit
into the theme of defining relations for abelian varieties: when $\s=\id$, $B(E^g,\s,\cL_{n/k})$  is the section ring 
$S(E^g,\cL_{n/k})$ so those results say that $\cL_{n/k}$ is normally generated and the image of $E^g$ under the embedding 
$\Phi_{|\cL_{n/k}|}:E^g \to \PP^{n-1}$ is a scheme-theoretic intersection of quadrics and cubics.

Let $X$ be a projective  algebraic variety, $\cL$ a very ample invertible $\cO_X$-module, and 
$\Phi_{|\cL|}:X \to \PP^r = \PP(H^0(X,\cL)^*)$ the associated embedding. We identify $X$ with its image in $\PP^r$,
and denote by $I_X$ the largest graded ideal in $\Bbbk[x_0,\ldots,x_r]$ vanishing on $X$, and write $S(X)$ for the 
homogeneous coordinate ring $\Bbbk[x_0,\ldots,x_r]/I_X$.
The following statements are equivalent:
\begin{enumerate}
\item 
the restriction map
$$
\Bbbk[x_0,\ldots,x_r] \, =\, \bigoplus_{k=0}^\infty  H^0(\PP^r,\cO_{\PP^r}(k)) \, \longrightarrow \, S(X,\cL) 
\, := \, \bigoplus_{k=0}^\infty   H^0(X,\cL^{\otimes k})
$$ 
is surjective;  
\item 
$S(X)$ is integrally closed; 
\item 
the map $S(X) \to  S(X,\cL)$ is an isomorphism;
\item
$S(X,\cL)$ is generated by its degree-one component.  
\end{enumerate}
If one, hence all, of these conditions holds we say that $\cL$ is {\sf normally generated} and that the subvariety $X \subseteq \PP^r$
is {\sf projectively normal}. 
A fundamental problem in algebraic geometry is to decide when this happens and, when it does, to determine  the 
degrees of a minimal set of relations for $X \subseteq \PP^r$. 

Let $X$ be a complex abelian variety. The theorem in the introduction to \cite{PP04} provides a short history of what is known about $S(X,\cL)$. Those results have the following flavor: if $\cL$ is a sufficiently high power of an ample invertible sheaf, perhaps with an additional 
hypothesis about its base locus or global generation, then $S(X,\cL)$ is generated by its degree-one component and the kernel
of the map $S(X) \to  S(X,\cL)$ is generated by elements of degree two, and perhaps degree three. 
Most of those results are subsumed by \cite[Thm.~6.1]{PP04}: if $\cM$ is an ample invertible sheaf on an abelian variety $X$, 
then $\cM^{\otimes 3}$ is very ample (Lefschetz's Theorem \cite[Thm.~2.11]{Kempf-book}), and 
$\cM^{\otimes 3}$ is normally generated (Koizumi's Theorem \cite[Cor.~4.7]{Koi76}), and the kernel of the map $S(X) \to
S(X,\cM^{\otimes 3})$ is generated by elements of degrees 2 and 3 (Mumford's Theorem \cite[Thm.~(7), p.~168]{PP04}).

The twisted homogeneous coordinate rings $B(X,\s,\cL)$ defined in \cref{sect.twhcr} below are non-commutative analogues of
$S(X,\cL)$ ($B(X,\id_X,\cL) = S(X,\cL)$) and the same questions about $B(X,\s,\cL)$ are of interest: is it generated in degree one
and what are the minimal degrees of a generating set of relations for it. 
For example, the question of whether the map $\Psi_{n/k}$ in \cref{thm.main.01} is surjective is equivalent to the question of 
whether its codomain $B(X_{n/k},\s',\cL'_{n/k})$ is generated by its degree-one component. 
None of the results referred to in the previous paragraph shows that $S(E^g,\cL_{n/k})$ is generated by its degree-one component so
one cannot expect to prove that $B(E^g,\s,\cL_{n/k})$ is generated by its degree-one component by tweaking the 
commutative arguments.  We therefore 
develop some new methods that yield fairly complete results about $B(X_{n/k},\s',\cL_{n/k}')$ when $X_{n/k}$ is 
$E^g$ and $S^gE$.

\subsection{Notation}
\label{sect.notation}
We adopt the notation laid out at \cite[p.~23]{ST94}. For convenience we recall it.

Let $X$ be a scheme over a field $\Bbbk$ and let  $f:\cF \to \cG$ be a homomorphism of $\cO_X$-modules.
Let $\nu$ be a $\Bbbk$-automorphism of $X$.

If $p \in X$ we write $p^\nu$ for $\nu(p)$. We extend this to Weil divisors in the obvious way. For example, if  $D=\sum n_p (p)$
is a divisor on a curve, then $D^\nu := \sum n_p(p^\nu)$. 

 We write $\cF^\nu$ for $\nu^*\cF=(\nu^{-1})_*\cF$. 
Thus if $D$ is a Weil divisor, $\cO_X(D)^\nu=\cO_X(D^{\nu^{-1}})$. We write $f^\nu$ for
$\nu^*(f):\cF^\nu \to \cG^\nu$. There is a $\Bbbk$-linear isomorphism 
$$
H^0(X,\cF) \, \longrightarrow \, H^0(X,\cF^\nu) \;=\; H^0(X,\cO_X \otimes_{\nu^{-1}\cO_X} \nu^{-1}\cF)
$$
given by $s \mapsto s^\nu :=1 \otimes s$. Notice that $s(p^\nu)=0$ if and only if $s^\nu(p)=0$. Notice too that the natural isomorphism 
$\Hom_X(\cO_X,\cF) \stackrel{\sim}{\longrightarrow} H^0(X,\cF)$, $f \mapsto f(1)$, satisfies $f^\nu \mapsto f(1)^\nu$.

There is a canonical map 
$H^0(X,\cF) \otimes H^0(X,\cG) \to H^0(X,\cF \otimes \cG)$ which we call {\sf multiplication}. 
If $s \in H^0(X,\cF)$ and $t \in H^0(X,\cG)$ we write $s*t$ for the image of $s \otimes t$ under the multiplication map.
If $\nu$ is a $\Bbbk$-linear automorphism of $X$, then $(s *t)^\nu=s^\nu * t^\nu$ because $\nu^*$ distributes across tensor products.

If  $X$ is an abelian variety and $x \in X$, we write $T_x:X \to X$ for the map $T_x(y)=x+y$. We call $T_x$ a {\sf 
translation automorphism}. If $\cF$ is an $\cO_X$-module we call  $T_x^*\cF$  a {\sf translate of $\cF$}. 

Let $X$ and $Y$ be projective $\Bbbk$-varieties. 
The N\'eron-Severi group of $X$ is the group  $\NS(X):={\rm Pic}(X)/{\rm Pic}^0(X)$. 
This is a finitely generated abelian group.  If $\s:X \to Y$ is a morphism, the inverse image functor induces a group homomorphism 
$\s^*:{\rm Pic}(Y) \to {\rm Pic}(X)$ that descends to a homomorphism $\NS(Y) \to \NS(X)$, and hence to $\NS(Y)_{\bbC} \to \NS(X)_{\bbC}$. The last homomorphism can be represented by a 
matrix with entries in $\ZZ$.  If $\s \in \Aut(X)$, we call $\s^*$ {\sf quasi-unipotent} if all its eigenvalues in $\CC$ are roots of unity.

\subsection{The twisted homogeneous coordinate rings $B(X,\s,\cL)$}
\label{sect.twhcr}
The rings $B(X,\s,\cL)$ we are about to define were introduced by Artin-Tate-Van den Bergh  in \cite {ATV1}, and independently
by Feigin-Odesskii in \cite{FO-Kiev} and \cite{FO89}, as a device to understand  graded algebras that map to them.
 That understanding is obtained through \cref{thm.AV} and \cref{cor.closed.subspace} below.

\begin{proposition}\label{pr.triples} 
  Let $\Bbbk$ be a field.  There is a contravariant functor $(X,\s,\cL) \rightsquigarrow B(X,\s,\cL)$ from the category of triples consisting of a $\Bbbk$-scheme $X$, a $\Bbbk$-automorphism $\s:X \to X$, and an invertible $\cO_X$-module $\cL$, to the category of graded $\Bbbk$-algebras.
\end{proposition}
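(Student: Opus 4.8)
The plan is to produce $B(X,\s,\cL)$ explicitly as a graded vector space with a "twisted" multiplication, and then show the assignment is functorial. First I would set, for $i\ge 0$,
$$
\cL_i \; := \; \cL \otimes \cL^\s \otimes \cdots \otimes \cL^{\s^{i-1}}, \qquad \cL_0 := \cO_X,
$$
where $\cL^\s = \s^*\cL$ in the notation of \cref{sect.notation}, and define $B(X,\s,\cL) := \bigoplus_{i\ge 0} H^0(X,\cL_i)$ as a graded $\Bbbk$-vector space. There is an evident canonical isomorphism $\cL_{i+j} \cong \cL_i \otimes (\cL_j)^{\s^i}$ (a cocycle identity, immediate from the definition since $(-)^{\s^i}$ is monoidal), and the multiplication of $s\in H^0(X,\cL_i)$ and $t\in H^0(X,\cL_j)$ is declared to be $s\cdot t := s * t^{\s^i} \in H^0(X,\cL_i\otimes\cL_j^{\s^i}) = H^0(X,\cL_{i+j})$, using the multiplication map of \cref{sect.notation} and this identification. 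Associativity follows by comparing $(\cL_i\otimes\cL_j^{\s^i})\otimes\cL_k^{\s^{i+j}}$ and $\cL_i\otimes(\cL_j\otimes\cL_k^{\s^j})^{\s^i}$ via the associativity constraint for $\otimes$, together with the identities $(s*t)^\nu = s^\nu * t^\nu$ and the functoriality of $(-)^{\s^i}$; the section $1\in H^0(X,\cO_X)$ is a two-sided unit. This is routine bookkeeping with tensor products, pullbacks, and the coherence of the associativity/unit constraints.

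Next I would pin down the category of triples: a morphism $(X,\s,\cL)\to(X',\s',\cL')$ is a pair $(f,\theta)$ with $f:X\to X'$ a $\Bbbk$-morphism satisfying $f\s = \s' f$ and $\theta:f^*\cL'\to\cL$ a homomorphism of $\cO_X$-modules; composition is $(g,\psi)\circ(f,\theta) := (gf,\,\theta\circ f^*\psi)$ and the identity of $(X,\s,\cL)$ is $(\id_X,\id)$. One checks this is a category from functoriality of $(-)^*$ on $\cO$-modules.

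Given such a morphism, the equivariance $f\s = \s' f$ yields canonical identifications $f^*((\cL')^{\s'}) = f^*(\s')^*\cL' = (\s' f)^*\cL' = (f\s)^*\cL' = \s^* f^*\cL' = (f^*\cL')^\s$, hence $f^*(\cL'_i) = (f^*\cL')_i$ (the right-hand side formed with $\s$). Twisting $\theta$ by powers of $\s$ and tensoring gives a map $f^*(\cL'_i)\to\cL_i$ for each $i$; composing with pullback $H^0(X',\cL'_i)\to H^0(X,f^*\cL'_i)$ gives $\Bbbk$-linear maps $B(X',\s',\cL')_i\to B(X,\s,\cL)_i$. To see these assemble into a graded algebra homomorphism $B(f,\theta)$, I would unwind the multiplication: compatibility reduces to the facts that $f^*$ is symmetric monoidal, that $f^*$ commutes with the multiplication maps $H^0\otimes H^0\to H^0$, and that $f^*(t^{(\s')^i}) = (f^*t)^{\s^i}$ under the identification above; the cocycle isomorphisms for $\cL'_\bullet$ pull back to those for $\cL_\bullet$ because both are assembled from the same associativity and unit constraints. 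Functoriality — $B(\id) = \id$ and $B((g,\psi)\circ(f,\theta)) = B(f,\theta)\circ B(g,\psi)$, with the order reversed — then follows from functoriality of $(-)^*$ on sheaves and on global sections. (This construction is the one indicated in \cite{ATV1} and \cite[p.~23]{ST94}.)

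I expect the only real friction to be notational rather than conceptual: one must keep careful track of the canonical isomorphisms $f^*(\cL^\nu)\cong(f^*\cL)^\nu$ and $\cL_{i+j}\cong\cL_i\otimes\cL_j^{\s^i}$ and verify that the relevant compatibility diagrams commute, so that associativity of $B(X,\s,\cL)$ and multiplicativity of $B(f,\theta)$ genuinely hold rather than merely appearing plausible. There is no deep content; the \emph{hard part} is simply organizing the diagram chases cleanly enough that the verification does not degenerate into a thicket of subscripts.
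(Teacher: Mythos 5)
Your construction is exactly the one the paper uses: the paper gives no formal proof of \cref{pr.triples}, but merely records the same definitions (the morphisms of triples $(f,u)$ with $f\s=\s'f$ and $u:f^*\cL'\to\cL$, the graded pieces $H^0(X,\cL_i)$, and the product $x\cdot y = x*y^{\s^i}$) and leaves the routine verifications of associativity and functoriality implicit. Your proposal is correct and simply makes those bookkeeping checks explicit.
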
 

This needs some explanation.

A morphism of triples is a pair $(f,u):(X,\s,\cL) \to (X',\s',\cL')$ consisting of a $\Bbbk$-morphism $f:X \to X'$
 such that $f\s=\s'f$ and a homomorphism $u:f^*\cL' \to \cL$. 
 
As a graded vector space the $\Bbbk$-algebra $B(X,\s,\cL)$ is
$$
\bigoplus_{i=0}^\infty H^0(X,\cL_i)
$$
where 
$$
\cL_i \; :=\;  \cL \otimes \cL^\sigma  \otimes\cdots\otimes\cL^{ \sigma^{i-1}}.
$$
The product $x\cdot y$ of $x \in H^0(X,\cL_i)$ and $y \in H^0(X,\cL_j)$ is $x*y^{\s^i}$, i.e.,  the image of 
$x \otimes y^{\s^i}$ under the natural multiplication map
$$
H^0(X,\cL_i)  \otimes H^0(X, \cL_j^{\sigma^i})  \, \longrightarrow \,  H^0(X,\cL_i \otimes \cL_j^{\sigma^i}).
$$
We call $B(X,\s,\cL)$ a {\sf twisted homogeneous coordinate ring}. The terminology  is motivated and justified by \cref{thm.AV} below. 

\subsubsection{Isomorphisms and anti-isomorphisms}

The next two results are probably known to the experts.

\begin{proposition}
\label{prop.B-op}
$B(X,\s^{-1},\cL)^{\mathrm{op}} \cong B(X,\s,\cL)$ where $(\,\cdot\,)^{\mathrm{op}}$ denotes the opposite ring.
\end{proposition}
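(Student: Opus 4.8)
The plan is to write down an explicit degree-preserving linear bijection $\phi\colon B(X,\s,\cL)\to B(X,\s^{-1},\cL)$ and to check that it reverses multiplication, so that it is a ring isomorphism $B(X,\s,\cL)\xrightarrow{\sim}B(X,\s^{-1},\cL)^{\mathrm{op}}$ — equivalently, via $\phi^{-1}$, the isomorphism $B(X,\s^{-1},\cL)^{\mathrm{op}}\cong B(X,\s,\cL)$ claimed in the statement. Write $\cM_i:=\cL\otimes\cL^{\s^{-1}}\otimes\cdots\otimes\cL^{\s^{-(i-1)}}$ for the degree-$i$ ``$\cL$-product'' sheaf attached to $\s^{-1}$, so that $B(X,\s^{-1},\cL)=\bigoplus_i H^0(X,\cM_i)$. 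The first step is the purely formal sheaf identity
\[
\cM_i \;=\; \cL_i^{\,\s^{-(i-1)}},
\]
obtained by applying $\s^{-(i-1)}$ to each tensor factor of $\cL_i=\cL\otimes\cL^{\s}\otimes\cdots\otimes\cL^{\s^{i-1}}$ and reordering (tensor products of invertible sheaves commute up to canonical isomorphism). The canonical $\Bbbk$-linear isomorphism $H^0(X,\cF)\xrightarrow{\sim}H^0(X,\cF^\nu)$, $s\mapsto s^\nu$, recalled in \cref{sect.notation}, applied with $\nu=\s^{-(i-1)}$, then gives linear isomorphisms $\phi_i\colon H^0(X,\cL_i)\xrightarrow{\sim}H^0(X,\cM_i)$, $x\mapsto x^{\s^{-(i-1)}}$; their direct sum is the desired graded bijection $\phi$.

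The second step is to verify $\phi(x\cdot y)=\phi(y)\cdot\phi(x)$ for $x\in H^0(X,\cL_i)$ and $y\in H^0(X,\cL_j)$. Using that $\nu^*$ distributes over $\otimes$ (so $(s*t)^\nu=s^\nu*t^\nu$) and that $(\cF^\mu)^\nu=\cF^{\mu\circ\nu}$, one computes
\[
\phi(x\cdot y)=\bigl(x*y^{\s^i}\bigr)^{\s^{-(i+j-1)}}=x^{\s^{-(i+j-1)}}*y^{\s^{-(j-1)}},
\]
while unwinding the product in $B(X,\s^{-1},\cL)$ gives $\phi(y)\cdot\phi(x)=y^{\s^{-(j-1)}}*x^{\s^{-(i+j-1)}}$. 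These two sections are exchanged by the symmetry isomorphism $\cL_i^{\,\s^{-(i+j-1)}}\otimes\cL_j^{\,\s^{-(j-1)}}\xrightarrow{\sim}\cL_j^{\,\s^{-(j-1)}}\otimes\cL_i^{\,\s^{-(i+j-1)}}$, and a brief bookkeeping check (analogous to the first step) shows that both of these invertible sheaves equal $\cM_{i+j}$; hence the two expressions are literally the same element of $H^0(X,\cM_{i+j})$. Since $\phi$ is already a graded linear bijection, this proves $\phi$ is a ring isomorphism onto $B(X,\s^{-1},\cL)^{\mathrm{op}}$, and $\phi^{-1}$ is the asserted isomorphism.

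This argument is entirely formal: no hypothesis on $X$, $\s$, or $\cL$ beyond those in the statement is used, and I anticipate no genuine obstacle. The only real care is keeping the $\s$-exponents straight through the composites — $\s^i$ from the product in $B(X,\s,\cL)$, $\s^{-(i+j-1)}$ from $\phi$ in the target degree, $\s^{-(i-1)}$ from $\phi$ in degree $i$, and $\s^{-j}$ from the product in $B(X,\s^{-1},\cL)$ — and confirming that the relevant source and target sheaves coincide on the nose so that each ``$=$'' above is justified rather than merely an ``$\cong$''. The cleanest way to organize this is to isolate the two sheaf identities $\cM_i=\cL_i^{\,\s^{-(i-1)}}$ and $\cM_{i+j}=\cL_i^{\,\s^{-(i+j-1)}}\otimes\cL_j^{\,\s^{-(j-1)}}$ as preliminary observations and only then run the section-level computation.
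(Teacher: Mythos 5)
Your proposal is correct and follows essentially the same route as the paper's proof: the same degree-preserving map $x\mapsto x^{\s^{-(i-1)}}$ from $H^0(X,\cL_i)$ to the degree-$i$ component of $B(X,\s^{-1},\cL)$, followed by the same exponent bookkeeping to match $\phi(x\cdot y)$ with $\phi(y)\cdot\phi(x)$. The only difference is cosmetic: you make explicit the symmetry isomorphism exchanging the two tensor factors, which the paper subsumes under the remark that inverse image commutes with tensor products.
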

\begin{proof}
  We write $B':=B(X,\s^{-1},\cL)$ and $B:=B(X,\s,\cL)$ and denote the multiplication maps by $\mu:B \otimes B \to B$ and $\mu':B' \otimes B' \to B'$.  We will prove the result by defining a degree-preserving $\Bbbk$-linear isomorphism $\varphi:B \to B'$ with the property that $\varphi\circ \mu (x\otimes y) = \mu' (\varphi(y) \otimes \varphi(x))$ for all homogeneous $x$ and $y$ in $B$.

  Let $\cL_n'=\cL \otimes \cL^{\s^{-1}} \otimes \ldots \otimes \cL^{\s^{-n+1}}$.  The degree-$n$ component of $B'$ is $B_n':=H^0(X,\cL_n')$.

  Since $\cL_n'=(\cL_n)^{\s^{-n+1}}$, there is a $\Bbbk$-linear isomorphism $\varphi_n:H^0(X,\cL_n) \to H^0(X,\cL_n')$ given by $\varphi(x)=x^{\s^{-n+1}}$. If $x \in H^0(X,\cL_m)$ and $y \in H^0(X,\cL_n)$, then
$$
\varphi\circ \mu (x\otimes y) \;=\; \left(x*y^{\s^m} \right)^{\s^{-m-n+1}}.
$$
 On the other hand, 
 $$
\mu' (\varphi(y)\otimes \varphi(x)) \;=\;    y^{\s^{-n+1}} *\big(x^{\s^{-m+1}}\big)^{\s^{-n}}.
$$
The result now follows from the fact that inverse image commutes with tensor product (see \S\ref{sect.notation}).
\end{proof}

\begin{proposition}\label{pr.tw-by-mu}
  Let $(X,\s,\cL)$ be a triple. If $\mu:X \to X$ is a $\Bbbk$-automorphism, there is an isomorphism
  \begin{equation*}
    B(X,\sigma,\cL) \, \stackrel{\sim}{\longrightarrow} \, B(X,\mu^{-1}\sigma\mu,\mu^*\cL)
  \end{equation*}
sending $s\in H^0(X,\cL) = B(X,\sigma,\cL)_1$ to $\mu^*(s)=s^\mu \in H^0(X,\mu^*\cL)$. 
\end{proposition}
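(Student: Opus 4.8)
The plan is to exhibit the isomorphism explicitly, degree by degree, and then check multiplicativity; conceptually the statement just says that conjugating the triple $(X,\sigma,\cL)$ by the automorphism $\mu$ produces an isomorphic triple, hence an isomorphic algebra by \cref{pr.triples}, but since we have not spelled out how that functor acts on morphisms it is cleanest to write the map by hand (exactly as in the proof of \cref{prop.B-op}). Write $\sigma' := \mu^{-1}\sigma\mu$ and $\cL' := \mu^*\cL = \cL^\mu$, and let $\cL_n$ and $\cL'_n$ denote the sheaves whose global sections are the degree-$n$ components of $B(X,\sigma,\cL)$ and $B(X,\sigma',\cL')$.

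The first step, and the one requiring a little care, is the sheaf identification $\cL'_n \cong (\cL_n)^\mu = \mu^*\cL_n$. This rests on two observations. Since $(\sigma')^i = \mu^{-1}\sigma^i\mu$, the conventions of \S\ref{sect.notation} (under which $\cF^\nu = \nu^*\cF$, so that $(\cF^\alpha)^\beta = \cF^{\alpha\beta}$ for composable automorphisms) give $(\cL')^{(\sigma')^i} = (\cL^\mu)^{\mu^{-1}\sigma^i\mu} = \cL^{\sigma^i\mu} = (\cL^{\sigma^i})^\mu$; and since $\mu^*$ distributes across tensor products, $\cL'_n = \bigotimes_{i=0}^{n-1}(\cL^{\sigma^i})^\mu = \bigl(\bigotimes_{i=0}^{n-1}\cL^{\sigma^i}\bigr)^\mu = (\cL_n)^\mu$.

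Granting this, I would define $\varphi = \bigoplus_n \varphi_n \colon B(X,\sigma,\cL)\to B(X,\sigma',\cL')$, where $\varphi_n \colon H^0(X,\cL_n) \xrightarrow{\sim} H^0(X,(\cL_n)^\mu) = H^0(X,\cL'_n)$ is the canonical $\Bbbk$-linear isomorphism $s\mapsto s^\mu$ from \S\ref{sect.notation}. By construction $\varphi$ is degree-preserving and bijective (its inverse is $s\mapsto s^{\mu^{-1}}$), it carries $1\in H^0(X,\cO_X)$ to $1$, and on $B_1 = H^0(X,\cL)$ it is $s\mapsto s^\mu = \mu^*(s)$, as the statement demands. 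The only thing left is multiplicativity: for $x\in H^0(X,\cL_m)$ and $y\in H^0(X,\cL_n)$ the product in $B(X,\sigma,\cL)$ is $x\cdot y = x * y^{\sigma^m}$, while in $B(X,\sigma',\cL')$ one has $\varphi_m(x)\cdot\varphi_n(y) = x^\mu * (y^\mu)^{(\sigma')^m}$. Computing $(y^\mu)^{(\sigma')^m} = (y^\mu)^{\mu^{-1}\sigma^m\mu} = y^{\sigma^m\mu} = (y^{\sigma^m})^\mu$ and invoking the identity $(s*t)^\nu = s^\nu * t^\nu$ from \S\ref{sect.notation}, this equals $(x * y^{\sigma^m})^\mu = \varphi_{m+n}(x\cdot y)$, which finishes the proof.

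I do not expect any genuine obstacle here: the entire content is bookkeeping, and the only place one can go wrong is mishandling the conventions — the meaning of $\cF^\nu = \nu^*\cF$, the order in which the exponents compose, the compatibility of $(-)^\nu$ with $\otimes$ and with the multiplication map $*$, and the fact that the twist in degree $m$ of $B(X,\sigma',\cL')$ involves $(\sigma')^m = \mu^{-1}\sigma^m\mu$ rather than $\sigma^m$. Once these are applied in the correct order the computation closes in one line. (If one prefers, the same argument can be packaged as the observation that $(\mu^{-1},\mathrm{can})\colon (X,\sigma,\cL)\to (X,\sigma',\cL')$ — with $\mu^{-1}\sigma = \sigma'\mu^{-1}$ and the canonical isomorphism $(\mu^{-1})^*\mu^*\cL\cong\cL$ — is an isomorphism of triples, and then \cref{pr.triples} applies; the hands-on computation above is just the unwinding of that functoriality.)
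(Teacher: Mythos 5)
Your proof is correct, and it is essentially the paper's argument unwound by hand: the paper's entire proof is the one-line observation you make parenthetically at the end, namely that $(\mu,\id)\colon (X,\mu^{-1}\sigma\mu,\mu^*\cL)\to(X,\sigma,\cL)$ is an isomorphism of triples, so \cref{pr.triples} applies (note the paper uses $(\mu,\id)$ in this direction, which by contravariance induces the map $B(X,\sigma,\cL)\to B(X,\mu^{-1}\sigma\mu,\mu^*\cL)$ directly, whereas your $(\mu^{-1},\mathrm{can})$ gives its inverse). Your explicit degree-by-degree verification is a correct and harmless expansion of that one line.
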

\begin{proof}
This is an immediate consequence of \Cref{pr.triples} because 
$(\mu,\id):   (X,\mu^{-1}\sigma\mu,\mu^*\cL) \to (X,\sigma,\cL)$ is an isomorphism of triples.
\end{proof}

\subsubsection{$\s$-ampleness}\label{ssse.sample} 
When $X$ is noetherian we say $\cL$ is {\sf $\s$-ample} if for every coherent $\cO_X$-module $\cF$,
$$
H^q(X,\cF  \otimes  \cL_i) \; = \; 0
$$ 
for all $q\ge 1$ and all $i \gg 0$.  When $\s$ is the identity this becomes the traditional definition of ampleness.

\subsubsection{The Artin-Van den Bergh Theorem and  the functor $\Gamma_*:\Qcoh(X) \to \Gr(B(X,\s,\cL))$}
\label{AVdB.thm}
\leavevmode

Following  \cite{AV90} and \cite{AZ94},  we define the auto-equivalence  $s:\Qcoh(X) \to \Qcoh(X)$ 
by the formula  
$$
s \; :=\; \cL \otimes \s^*(\,\cdot\,), \qquad i.e., \;\; s(\cM)=\cL \otimes \s^*\cM = \cL \otimes \cM^\s.
$$
We write $s^0$ for the identity functor on $\Qcoh(X)$. Now define the graded vector space
$$
M \;:=\; \bigoplus_{n \in \ZZ} H^0(X,s^n(\cM)) \;=\; \bigoplus_{n \in \ZZ}  \Hom_{\cO_X}(\cO_X,s^n(\cM))  \;=\; \bigoplus_{n \in \ZZ}  H^0(X,\cL_n \otimes \cM^{\s^n}).
$$
Let $b\in B(X,\s,\cL)_i$ and $m \in M_j=H^0(X,\cL_j \otimes \cM^{\s^j})$.
Since $m$ is a homomorphism
$\cO_X \to 
\cL_j \otimes \cM^{\s^j}$,
$$
s^i(m): s^i(\cO_X)\,=\, \cL_i \, \longrightarrow \, s^i(\cL_j \otimes \cM^{\s^j}) \,=\,  \cL_i \otimes (\cL_j \otimes \cM^{\s^j})^{\s^i} \,=\, 
\cL_{i+j} \otimes \cM^{\s^{i+j}}.
$$
Since $b$ is a homomorphism $\cO_X \to \cL_i$ we may define  
\begin{equation}
\label{eq.B-mod}
b \cdot m \; :=\; s^i(m) \circ b.
\end{equation} 
This formula gives $M$ the structure of a graded left $B(X,\s,\cL)$-module. We define $\Gamma_*$ by
$$
\Gamma_*\cM \; := \;  \bigoplus_{n \in \ZZ}  H^0(X,\cL_n \otimes \cM^{\s^n})
$$ 
with this graded module structure. Sometimes we abuse notation and  write $\G_*$ for the composition
$$
\xymatrix{
\Qcoh(X) \ar[rr]^-{\G_*} && \Gr\big(B(X,\s,\cL)\big) \ar[rr] && \QGr\big(B(X,\s,\cL)\big).
}
$$

\begin{theorem}
[Artin-Van den Bergh]
\cite[Thms.~1.3 and 1.4]{AV90}
\label{thm.AV}
If $X$ is a projective $\Bbbk$-scheme and $\cL$ is $\s$-ample, then $B(X,\s,\cL)$ is a finitely generated left noetherian $\Bbbk$-algebra and the functor $\G_*$ provides an equivalence of categories
\begin{equation}
\label{equiv-cats}
 \Qcoh(X) \; \equiv \; \QGr\big(B(X,\s,\cL)\big).
\end{equation}
\end{theorem}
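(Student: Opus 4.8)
The plan is to adapt Serre's description of the category of quasi-coherent sheaves on a projective $\Proj$ to the twisted setting, with $\s$-ampleness playing the role that ampleness plays classically. The first thing I would do is record the standard equivalent forms of $\s$-ampleness (due to Artin-Van den Bergh \cite{AV90}, and sharpened by Keeler): $\cL$ is $\s$-ample if and only if for every coherent $\cF$ there is an $n_0$ such that, for all $n\ge n_0$, both $H^q(X,\cF\otimes\cL_n)=0$ for all $q\ge 1$ and $\cF\otimes\cL_n$ is globally generated. The passage from the cohomology vanishing to global generation is the usual Cartan-Serre argument, and the powers of $\s$ that intervene are absorbed by the identity $\cL_m^{\s^{j}}\cong\cL_j^{-1}\otimes\cL_{j+m}$, which is immediate from $\cL_{j+m}=\cL\otimes\cL^{\s}\otimes\cdots\otimes\cL^{\s^{j+m-1}}$; this identity rewrites any pulled-back power of $\cL$ in terms of the honest products $\cL_\bullet$, and it gets used at essentially every step.

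Next I would establish the finiteness statements. For coherent $\cF$, set $M(\cF):=\bigoplus_{n\ge 0}H^0(X,\cL_n\otimes\cF^{\s^n})$, a graded left $B$-module; taking $\cF=\cO_X$ recovers $B=M(\cO_X)$. For $n_0\gg 0$ the sheaf $\cL_{n_0}\otimes\cF^{\s^{n_0}}$ is globally generated; writing $\cK$ for the kernel of its evaluation map and using the displayed identity to turn $\cK^{\s^m}\otimes\cL_m$ into a fixed coherent sheaf tensored with a high $\cL_\bullet$, $\s$-ampleness kills the relevant $H^1$ and forces the multiplication maps $B_m\otimes M(\cF)_{n_0}\to M(\cF)_{n_0+m}$ to be surjective for all $m\ge 0$. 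Hence $M(\cF)_{\ge n_0}$ is generated over $B$ in degree $n_0$, and since the finitely many lower degrees are finite-dimensional, $M(\cF)$ is a finitely generated $B$-module. With $X$ connected (say, a variety) one has $B_0=\Bbbk$ and $B$ locally finite, so once left noetherianity is known the graded Nakayama lemma gives that $B$ is a finitely generated $\Bbbk$-algebra.

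Noetherianity is the substantive point, and here I would follow Artin-Van den Bergh. One constructs, for each finitely generated graded left $B$-module $M$, a coherent $\cO_X$-module $\wt M$ --- a twisted Serre sheafification, assembled from the local data $(M_n)$ glued against the $\cL_n$ over a cover on which sufficiently many of the $\cL_n$ trivialise --- together with a comparison morphism $M\to M(\wt M)$ that is an isomorphism in all sufficiently large degrees, the surjectivity of the multiplication maps above being exactly the mechanism that makes the comparison eventually an isomorphism. Because $\wt{(\,\cdot\,)}$ is exact and $\Coh(X)$ satisfies the ascending chain condition, while two graded submodules of $M$ that agree from some degree onward can differ only inside finitely many finite-dimensional components, the ascending chain condition for graded submodules of $M$ follows; combined with local finiteness and connectedness this gives that $B$ is left noetherian.

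Finally, the equivalence: I would check that $\Gamma_*$, composed with the quotient functor $\Gr(B)\to\QGr(B)$, is (a) exact, since $H^0$ is left exact and the obstruction $R^1\Gamma_*$ of a coherent sheaf vanishes in large degrees by $\s$-ampleness, hence is torsion and dies in $\QGr(B)$, the quasi-coherent case reducing to the coherent one by a filtered-colimit argument; (b) essentially surjective, since every noetherian object of $\QGr(B)$ is the image of a finitely generated $M$, hence by the comparison of the previous paragraph coincides with the image of $M(\wt M)$, that is, with $\Gamma_*(\wt M)$ regarded in $\QGr(B)$, and one passes to filtered colimits; and (c) fully faithful, by computing $\Hom_{\QGr(B)}(\Gamma_*\cF,\Gamma_*\cG)=\varinjlim_n\Hom_{\Gr(B)}(M(\cF)_{\ge n},\Gamma_*\cG)$ --- the truncations $M(\cF)_{\ge n}$ being cofinal among submodules with finite-dimensional quotient --- and identifying this colimit with $\Hom_{\cO_X}(\cF,\cG)$, using that $M(\cF)_{\ge n}$ is generated in its bottom degree and that $H^0(X,\cHom(\cF,\cG)\otimes\cL_m)$ stabilises. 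Equivalently one checks that $\wt{(\,\cdot\,)}$ and $\Gamma_*$ are mutually quasi-inverse, which is the comparison morphism of the previous paragraph run in both directions. The main obstacle throughout is precisely the construction of $\wt M$ and the high-degree comparison $M\cong M(\wt M)$: this one input simultaneously yields noetherianity and the quasi-inverse, and with it in hand the remainder is a twisted but essentially routine transcription of Serre's theory, the $\s$-twists being neutralised at every turn by the identity $\cL_m^{\s^{j}}\cong\cL_j^{-1}\otimes\cL_{j+m}$.
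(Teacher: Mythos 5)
This theorem is quoted verbatim from Artin--Van den Bergh and the paper supplies no proof of its own (it simply cites \cite{AV90}), so there is nothing internal to compare against; your sketch is a faithful outline of the argument in that reference --- the twisted Serre sheafification $M\mapsto\widetilde M$, the high-degree comparison $M\cong M(\widetilde M)$ driven by surjectivity of the multiplication maps, transfer of the ascending chain condition from $\Coh(X)$ to graded submodules, and $\Gamma_*$ with $\widetilde{(\,\cdot\,)}$ as mutually quasi-inverse functors. The one point worth flagging is that with the paper's convention $\Gamma_*\cM=\bigoplus_n H^0(X,\cL_n\otimes\cM^{\s^n})$ the vanishing you actually need is $H^1(X,\cL_n^{\s^{-n}}\otimes\cM)=0$, which is $\s^{-1}$-ampleness rather than $\s$-ampleness of $\cL$; these are equivalent by Keeler's criterion, which the paper records immediately after the theorem, so this is a matter of convention rather than a gap.
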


By \cite[Cor.~5.1]{Keeler-Criteria}, $\cL$ is $\s$-ample if and only if it is $\s^{-1}$-ample. Thus, by \cref{prop.B-op}, in the context of 
\cref{thm.AV}, $B(X,\sigma,\cL)$ is right noetherian too \cite[Cor.~5.3]{Keeler-Criteria}.

\begin{theorem}
[Keeler]
\cite[Thms.~1.2 and 1.4]{Keeler-Criteria}
\label{thm.sigma.ample}
Let $\s$ be an automorphism of  a projective scheme $X$ over an algebraically closed field $\Bbbk$. 
The following conditions are equivalent:
\begin{enumerate}
\item 
there is a $\s$-ample invertible $\cO_{X}$-module;
\item 
every ample invertible $\cO_{X}$-module is $\s$-ample;
\item 
the action of $\s^*$ on $\NS(X)_\CC$ is quasi-unipotent. 
\end{enumerate}
Furthermore, if one of those conditions holds, then
\begin{enumerate}\setcounter{enumi}{3}
\item
the GK-dimension of $B(X,\s,\cL)$ is an integer for every ample $\cL$ and
\item
$B(X,\s,\cL)$ is right and left  noetherian for every ample $\cL$.
\end{enumerate}
\end{theorem}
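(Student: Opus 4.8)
The plan is to reprove Keeler's criterion following the architecture of \cite{Keeler-Criteria}. The implication (2)$\Rightarrow$(1) is free, since a projective scheme carries an ample invertible sheaf. It is worth recording first a linear-algebra reformulation of (3): acting on $\NS(X)$ modulo torsion, which is isomorphic to $\ZZ^r$, the operator $\s^*$ lies in $GL_r(\ZZ)$, so its eigenvalues are algebraic integers whose product has absolute value $1$; hence the spectral radius $\rho(\s^*)$ is always $\ge 1$, and by Kronecker's theorem (an algebraic integer all of whose Galois conjugates have absolute value $\le 1$ is a root of unity) the condition ``$\s^*$ quasi-unipotent'' is equivalent to ``$\rho(\s^*)=1$''. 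So the whole statement is a dichotomy between polynomial and exponential growth of $\|(\s^*)^n\|$, and the numerical class $[\cL_n]=(1+\s^*+\cdots+(\s^*)^{n-1})[\cL]\in\NS(X)_\RR$ is the object to watch.

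For (1)$\Rightarrow$(3): assume $\cL$ is $\s$-ample. Taking $\cF=\cO_X$ gives $H^{>0}(X,\cL_n)=0$, hence $\dim_\Bbbk B(X,\s,\cL)_n=\chi(X,\cL_n)$, for $n\gg 0$; and by Snapper's theorem $\chi$ descends to a polynomial function of degree $\le\dim X$ on $\NS(X)_\RR$. Were $\rho(\s^*)>1$, then, since the ample class $[\cL]$ is an interior point of a full-dimensional cone and hence cannot lie in the proper $\s^*$-invariant subspace spanned by the generalized eigenspaces of smaller modulus, $[\cL_n]$ would grow in norm like $\rho(\s^*)^n$, so $\dim_\Bbbk B_n=\chi(X,\cL_n)$ would grow exponentially in $n$. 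But $\s$-ampleness gives, via \cref{thm.AV}, that $B(X,\s,\cL)$ is noetherian with $\QGr(B(X,\s,\cL))\simeq\Qcoh(X)$ for the projective scheme $X$, and this forces $\dim_\Bbbk B_n$ to be polynomially bounded. The verification of this last point is where the argument does real work: one shows that $\cL_n$ must itself be ample for $n\gg 0$, and then controls the asymptotics of the $\chi(X,\cL_n)$ along the $\s$-orbit, using that for $\rho(\s^*)>1$ the leading eigenvector is a nef but non-ample class (a dynamical fact; for surfaces it is immediate from the Hodge index theorem, since a $\s^*$-eigenvector with eigenvalue $\ne\pm1$ is isotropic). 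The resulting contradiction gives $\rho(\s^*)=1$, i.e. (3).

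For (3)$\Rightarrow$(2), the substantive direction: fix an ample $\cL$; we must show $H^q(X,\cF\otimes\cL_n)=0$ for every coherent $\cF$, all $q\ge 1$, and $n\gg 0$. A standard reduction via Veronese-type subrings lets us assume $\s^*$ unipotent: if $(\s^*)^N$ is unipotent then $\cL$ is $\s$-ample once $\cL_N$ is $\s^N$-ample, because after an automorphism twist $H^{>0}(X,\cF\otimes\cL_{Nm+r})$ becomes $H^{>0}$ of a fixed sheaf tensored with $\cL_{Nm}$, with $r$ ranging over a finite set. With $\s^*$ unipotent, $\|(\s^*)^j\|=O(j^e)$, so $[\cL_n]=\sum_{j<n}(\s^*)^j[\cL]$ is ample for every $n$ and grows polynomially. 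By dévissage (noetherian induction on $\operatorname{Supp}\cF$, reducing to $\cF=\iota_*\cN$ with $\iota\colon Y\hookrightarrow X$ an integral closed subscheme and $\cN$ invertible on $Y$) and an induction on $\dim X$ using restriction sequences $0\to\cG(-H)\to\cG\to\cG|_H\to 0$ for $H$ a general very ample divisor, the problem is reduced to a uniform positivity statement fed into Fujita's vanishing theorem: for a fixed ample $\cH$ there is an $N_0$ with $H^q(X,\cG\otimes\cH^{\otimes m}\otimes\cA)=0$ for all nef $\cA$, all $q\ge 1$, and $m\ge N_0$. The main obstacle, and the technical heart of the proof, is that $\s$ need not preserve the subvariety $Y$ produced by the dévissage, so one cannot simply restrict the triple $(X,\s,\cL)$ to $Y$, and moreover, although $[\cL_n]$ grows polynomially it may approach the boundary of $\operatorname{Nef}(X)$, so that $[\cL_n]-m[\cH]$ fails to be nef for $m$ beyond a bounded amount. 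The resolution is to replace $Y$ by a $\s$-invariant subscheme containing it (the closure of its $\s$-orbit, with bookkeeping of how $\s$ permutes components) and run the induction there, where quasi-unipotence of $\s^*$ on the ambient $\NS$ supplies the uniform lower bounds needed for Fujita vanishing; I expect setting up this orbit induction correctly to be the hardest step.

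Finally, granting the equivalence of (1)--(3), the two remaining conclusions follow. Given any ample $\cL$, it is $\s$-ample by (2), so \cref{thm.AV} makes $B(X,\s,\cL)$ left noetherian; one checks directly from the definition that $\cL$ is $\s^{-1}$-ample as well (the sheaves $\cL_n$ for $\s^{-1}$ are automorphism-translates of those for $\s$, hence have the same higher-cohomology vanishing), so \cref{prop.B-op} makes $B(X,\s,\cL)$ right noetherian too, proving the last assertion. For the assertion about GK-dimension, quasi-unipotence of $\s^*$ makes $n\mapsto[\cL_n]$ eventually a vector-valued quasi-polynomial in $n$; combined with $H^{>0}(X,\cL_n)=0$ for $n\gg 0$ (from $\s$-ampleness with $\cF=\cO_X$) and Snapper polynomiality of $\chi$, the function $n\mapsto\dim_\Bbbk B(X,\s,\cL)_n=\chi(X,\cL_n)$ is eventually a scalar quasi-polynomial in $n$; its degree is an integer, and $\GKdim B(X,\s,\cL)$ equals that degree plus one, hence is an integer.
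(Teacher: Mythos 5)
The first thing to say is that the paper offers no proof of this statement to compare against: it is quoted, with attribution, from \cite[Thms.~1.2 and 1.4]{Keeler-Criteria}, and what you have written is an outline of Keeler's own argument rather than an alternative to anything in this paper. As an outline it assembles the right ingredients (Snapper polynomiality of $\chi$, the spectral dichotomy for $\s^*$ on $\NS(X)_\RR$, the Veronese reduction to the unipotent case, d\'evissage plus Fujita vanishing), but the weight-bearing steps are not actually carried out. In (1)$\Rightarrow$(3), the assertion that noetherianity of $B(X,\s,\cL)$ ``forces $\dim_\Bbbk B_n$ to be polynomially bounded'' is not something you can invoke; the input Keeler actually uses is Stephenson--Zhang's theorem that a connected graded noetherian $\Bbbk$-algebra has \emph{subexponential} growth, and even with that in hand you still owe the argument that $\rho(\s^*)>1$ makes $\chi(X,\cL_n)$ grow genuinely exponentially: exponential growth of the norm of $[\cL_n]$ in $\NS(X)_\RR$ does not by itself prevent the intersection-theoretic polynomial computing $\chi$ from degenerating along the limiting eigendirection --- indeed you note yourself that the leading eigenclass is isotropic in the surface case, which is precisely why this step requires care. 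You flag this as ``where the argument does real work'' and then do not do the work.

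The same is true of the heart of (3)$\Rightarrow$(2): you correctly identify the two obstructions (the subvarieties produced by d\'evissage need not be $\s$-invariant, and $[\cL_n]$ may drift toward the boundary of the nef cone so that $[\cL_n]-m[\cH]$ need not stay nef), and then explicitly defer their resolution. Finally, in the last paragraph the claim that $\cL$ is $\s^{-1}$-ample ``directly from the definition'' has a uniformity gap: writing $\cL'_n$ for the $\s^{-1}$-twisted powers, one has $H^q(X,\cF\otimes\cL'_n)\cong H^q(X,(\s^{n-1})^*\cF\otimes\cL_n)$, and the sheaf $(\s^{n-1})^*\cF$ changes with $n$, so the threshold beyond which $\s$-ampleness gives vanishing is not obviously uniform over this family. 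This is why the paper cites \cite[Cor.~5.1]{Keeler-Criteria} for the statement rather than treating it as formal; the correct repair is cheap once the equivalence (1)--(3) is established (note that $(\s^{-1})^*=(\s^*)^{-1}$ is quasi-unipotent if and only if $\s^*$ is, and apply (3)$\Rightarrow$(2) to $\s^{-1}$), but the argument as you state it does not close.
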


The next result applies to $X=E^g$ and all translation automorphisms $\s:E^g \to E^g$.

\begin{corollary}
\label{cor.sigma-ample}
Let $X$ be a projective scheme over an algebraically closed field $\Bbbk$ and $G$ an algebraic group over $\Bbbk$ that acts on $X$. 
If $\s \in G$, then every ample invertible $\cO_{X}$-module is $\s$-ample.
\end{corollary}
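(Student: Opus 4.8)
The plan is to reduce to Keeler's criterion, \cref{thm.sigma.ample}: it suffices to show that the automorphism $\s^*$ of $\NS(X)_\CC$ is quasi-unipotent, and I shall in fact argue that $\s^*$ has finite order there.

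First I would use that an algebraic group $G$ over $\Bbbk$ has only finitely many connected components; writing $G^0$ for the identity component and $m := [G:G^0]$, this yields $\s^m \in G^0$. The heart of the matter is to show that $G^0$ acts trivially on $\NS(X)$. The $G$-action on $X$ induces, by pullback of invertible sheaves, an action of $G$ on $\pic(X)$, hence a morphism of schemes $G \times \pic_{X/\Bbbk} \to \pic_{X/\Bbbk}$ on the Picard scheme. For a $\Bbbk$-point $[\cL]$ of $\pic_{X/\Bbbk}$, the image $G^0 \cdot [\cL]$ is connected and contains $[\cL]$ (because $e \in G^0$ acts as $\id_X$), so it lies in the single connected component of $\pic_{X/\Bbbk}$ through $[\cL]$; therefore $g^*\cL \otimes \cL^{-1} \in \pic^0(X)$ for every invertible $\cO_X$-module $\cL$ and every $g \in G^0$, i.e.\ $G^0$ acts trivially on $\NS(X) = \pic_{X/\Bbbk}/\pic^0_{X/\Bbbk}$. (Alternatively, without invoking the Picard scheme: with $a$ and $\operatorname{pr}_X$ the action morphism and the projection $G^0 \times X \to X$, the invertible sheaf $a^*\cL \otimes \operatorname{pr}_X^*\cL^{-1}$ on $G^0 \times X$ restricts to $\cO_X$ over $\{e\}\times X$, and the class in $\NS(X)$ of its restriction over $\{g\}\times X$ is locally constant in $g$ over the connected scheme $G^0$, hence trivial for all $g$.)

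Granting this, $(\s^*)^m = (\s^m)^*$ acts as the identity on $\NS(X)$, hence on $\NS(X)_\CC$, so the minimal polynomial of $\s^*$ divides $t^m - 1$; every eigenvalue of $\s^*$ is then a root of unity, so $\s^*$ is quasi-unipotent, and the corollary follows from \cref{thm.sigma.ample}. I expect the only non-formal ingredient to be the triviality of the $G^0$-action on $\NS(X)$ --- a standard property of algebraic group actions on projective schemes --- whose cleanest justification rests on the identification of $\NS(X)$ with the discrete component group of the Picard scheme $\pic_{X/\Bbbk}$, on which the connected group $G^0$ must act trivially. Finite generation of $\NS(X)$ for $X$ projective is classical and is in any case already built into the statement of \cref{thm.sigma.ample}.
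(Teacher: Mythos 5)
Your proposal is correct and follows essentially the same route as the paper's proof: reduce to Keeler's criterion (\cref{thm.sigma.ample}), use that $G$ has finitely many components to get $\s^m\in G^0$, and observe that the connected group $G^0$ preserves each connected component of the Picard scheme and hence acts trivially on $\NS(X)$, so $(\s^*)^m=\id$ and $\s^*$ is quasi-unipotent. The extra detail you supply (the orbit-connectedness argument and the alternative via the sheaf on $G^0\times X$) merely fleshes out the step the paper states in one line.
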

\begin{proof}
By \Cref{thm.sigma.ample} it suffices to show that $\sigma^*$ is quasi-unipotent. 

The Picard functor is representable by a scheme $\pic(X)$ \cite[II.15]{mur} which is acted upon by $G$. Since $G$ has finitely many connected components (as all algebraic groups do) some power of $\s$, say $\s^r$, belongs to the connected component $G^0$ 
that contains the identity. The action of $G^0$ sends each connected component of $\pic(X)$ to itself and hence acts trivially on
$\NS(X) = \pic(X)/\pic^0(X)$. In particular, $(\s^*)^r$ acts trivially on $\NS(X)_\CC$ so the action of $\s^*$ on $\NS(X)_\CC$ is 
quasi-unipotent.
\end{proof}

\begin{corollary}
\label{cor.sigma-ample.2}
Let  $\Sigma$ be a finite group acting as group automorphisms of an abelian variety $A$ over an algebraically closed field $\Bbbk$.
If $\s:A \to A$ is translation by a point that is fixed by $\Sigma$, then $\s$ descends to an 
automorphism $\s'$ of $A/\Sigma$ having the property that every ample invertible module over $A/\Sigma$ is $\s'$-ample.  
\end{corollary}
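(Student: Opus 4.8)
The plan is to reduce the statement to \cref{cor.sigma-ample}. Write $x\in A$ for the point with $\s=T_x$. First I would check that $\s$ descends: since every $\gamma\in\Sigma$ is a group automorphism and fixes $x$, we have $\gamma(a+x)=\gamma(a)+\gamma(x)=\gamma(a)+x$ for all $a\in A$, so $\gamma\circ T_x=T_x\circ\gamma$; hence $T_x$ sends $\Sigma$-orbits to $\Sigma$-orbits and induces an automorphism $\s'$ of $A/\Sigma$ with $\pi\s=\s'\pi$, where $\pi\colon A\to A/\Sigma$ is the quotient morphism. The same computation shows that translation by \emph{any} $\Sigma$-fixed point of $A$ commutes with $\Sigma$, hence descends to an automorphism of $A/\Sigma$.

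Next I would package this into an algebraic group action so that \cref{cor.sigma-ample} applies. The $\Sigma$-fixed points of $A$ form a closed subgroup scheme $A^\Sigma\subseteq A$; let $G$ be its reduced subscheme, a closed algebraic subgroup of $A$. The morphism $G\times A\to A$, $(a,b)\mapsto a+b$, is equivariant for the $\Sigma$-action on the second factor (here one uses $\gamma(a)=a$ for $a\in G$), so it descends to a morphism $G\times(A/\Sigma)\to A/\Sigma$ making the projective variety $A/\Sigma$ a $G$-variety; by the first paragraph the automorphism of $A/\Sigma$ attached to the point $x\in G(\Bbbk)$ is precisely $\s'$. Applying \cref{cor.sigma-ample} with this $G$ acting on $X=A/\Sigma$ then gives that every ample invertible $\cO_{A/\Sigma}$-module is $\s'$-ample, as claimed.

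The only content beyond \cref{cor.sigma-ample} is the (essentially formal) verification that the descended translations genuinely form an algebraic group action, so that is the step I would be most careful about. If one prefers to sidestep the Picard-scheme argument behind \cref{cor.sigma-ample}, one can argue directly from \cref{thm.sigma.ample}: it suffices to show $(\s')^*$ acts quasi-unipotently on $\NS(A/\Sigma)_\CC$. Since $\pi$ is finite and surjective, $\pi^*\colon\NS(A/\Sigma)_\CC\to\NS(A)_\CC$ is injective; from $\pi\s=\s'\pi$ we obtain $T_x^*\circ\pi^*=\pi^*\circ(\s')^*$; and translation by $x$ acts trivially on $\NS(A)$ because $T_x^*\cM\otimes\cM^{-1}\in\pic^0(A)$ for every invertible $\cM$. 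Hence $\pi^*((\s')^*\alpha)=\pi^*\alpha$ for every $\alpha\in\NS(A/\Sigma)_\CC$, so $(\s')^*=\id$ on $\NS(A/\Sigma)_\CC$, which is certainly quasi-unipotent.
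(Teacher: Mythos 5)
Your proposal is correct and its main argument is exactly the paper's: the paper also takes $G=A^\Sigma$ (the algebraic subgroup of $\Sigma$-fixed points), lets it act on $A/\Sigma$ by descended translations, and invokes \cref{cor.sigma-ample}; you simply spell out the descent of the action in more detail. Your alternative closing argument --- that $\pi^*$ is injective on $\NS(A/\Sigma)_\CC$ and translations act trivially on $\NS(A)$, so $(\s')^*=\id$ and \cref{thm.sigma.ample} applies directly --- is also valid and bypasses \cref{cor.sigma-ample} entirely, but it is not needed.
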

\begin{proof}
The set $A^\Sigma$ consisting of points fixed by $\Sigma$ is an algebraic subgroup of $A$. It acts on $A$ by translation automorphisms
and each such automorphism descends to an automorphism of $A/\Sigma$. Since $A/\Sigma$ is a projective variety
the result follows from \cref{cor.sigma-ample} with $G=A^\Sigma$. 
\end{proof}

\subsection{Using the rings $B(X,\s,\cL)$}

If $J$ is a graded ideal in a finitely generated $\NN$-graded algebra $A$ over a field $\Bbbk$, 
 the three natural functors between the categories $\Gr(A)$ and $\Gr(A/J)$ induce functors
$$
\xymatrix{
\QGr(A/J) \ar@/_1pc/[rr]_{i_*} && \ar@/_1pc/[ll]_{i^*, \, i^!} \QGr(A)
}
$$
between the quotient categories such that $i_*$ is a fully faithful embedding whose essential image is closed under subobjects and quotients, 
$i^*$ is left adjoint to $i_*$, and $i^!$ is right adjoint to $i_*$ (see \cite{VdB-blowup} and \cite{SPS15-corrigendum}). 
The functors $i^*$ and $i_*$ behave like the inverse and direct image functors associated to a closed immersion of one scheme in another. Thus, the next result says, in effect, that the non-commutative 
scheme with homogeneous coordinate ring $A$ has a closed subscheme isomorphic to $X$. In this paper, we will show this happens
when $A$ is $Q_{n,k}(E,\tau)$ and $X$ is its characteristic variety provided that characteristic variety is a product or symmetric product of copies of $E$.

\begin{corollary}
\label{cor.closed.subspace}
Let $A$ be an $\NN$-graded $\Bbbk$-algebra. 
Assume the hypotheses in \cref{thm.AV} hold.
If there is a surjective homomorphism $A \to B(X,\s,\cL)$, then there are functors 
$$
\xymatrix{
\Qcoh(X) \ar@/_1pc/[rr]_{i_*} && \ar@/_1pc/[ll]_{i^*, \, i^!} \QGr(A)
}
$$
 in which $i_*$ is a fully faithful functor whose essential image is closed under subobjects and quotients, 
$i^*$ is left adjoint to $i_*$, and $i^*$ is right adjoint to $i_*$.
\end{corollary}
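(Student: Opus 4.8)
The plan is to combine the Artin--Van den Bergh equivalence of \cref{thm.AV} with the standard package of functors attached to a graded quotient recalled just before the statement. Write $B:=B(X,\s,\cL)$ and let $J$ be the kernel of the given surjection $A\to B$; it is a graded ideal of $A$, and the surjection induces an isomorphism of graded $\Bbbk$-algebras $A/J\xrightarrow{\ \sim\ }B$. Applying to the projection $A\to A/J$ the construction of \cite{VdB-blowup,SPS15-corrigendum} — the three natural functors between $\Gr(A)$ and $\Gr(A/J)$ and their descent to the quotient categories — produces an adjoint triple
\[
j^*\ \dashv\ j_*\ \dashv\ j^!,\qquad j_*:\QGr(A/J)\longrightarrow\QGr(A),\quad j^*,j^!:\QGr(A)\longrightarrow\QGr(A/J),
\]
with $j_*$ fully faithful and with essential image $\cE\subseteq\QGr(A)$ closed under subobjects and quotients. (That construction is phrased for finitely generated $A$, which is harmless in the applications, where $A=Q_{n,k}(E,\tau)$ and $B$ is finitely generated by \cref{thm.AV}.)

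Next, the hypotheses of \cref{thm.AV} — $X$ projective and $\cL$ $\s$-ample — are exactly what is assumed, so that theorem furnishes an equivalence $\G_*:\Qcoh(X)\xrightarrow{\ \equiv\ }\QGr(B)=\QGr(A/J)$; fix a quasi-inverse $\G_*^{-1}$. Then set
\[
i_*:=j_*\circ\G_*,\qquad i^*:=\G_*^{-1}\circ j^*,\qquad i^!:=\G_*^{-1}\circ j^!.
\]
Because $\G_*$ is an equivalence, composing the adjunctions $j^*\dashv j_*\dashv j^!$ with the adjunctions $\G_*^{-1}\dashv\G_*$ and $\G_*\dashv\G_*^{-1}$ gives $i^*\dashv i_*\dashv i^!$; the composite $i_*=j_*\circ\G_*$ is fully faithful since both factors are; and, since $\G_*$ is essentially surjective, the essential image of $i_*$ is exactly $\cE$, hence still closed under subobjects and quotients in $\QGr(A)$. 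This is precisely the data claimed.

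There is no substantive obstacle here: the corollary is a formal consequence of \cref{thm.AV} and the quotient-functor formalism of \cite{VdB-blowup,SPS15-corrigendum}. The only points that deserve a word of care are (i) checking that the hypotheses of those two black boxes are met in the stated generality — projectivity of $X$ and $\s$-ampleness of $\cL$ for \cref{thm.AV}, and effectively finite generation of $A$ for the quotient-functor construction — and (ii) the elementary categorical bookkeeping that pre- and post-composition with an equivalence of categories preserves an adjoint triple together with the full faithfulness of the middle functor and the closure of its essential image under subobjects and quotients. Both are routine, so the proof is short once the machinery of \cref{sect.twhcr} and the cited papers is in place.
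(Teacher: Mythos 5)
Your proof is correct and is essentially the argument the paper intends: the corollary is stated without a separate proof precisely because it is the composition of the quotient-functor adjoint triple recalled in the paragraph immediately preceding it (from \cite{VdB-blowup} and \cite{SPS15-corrigendum}) with the Artin--Van den Bergh equivalence $\Gamma_*:\Qcoh(X)\equiv\QGr(B(X,\s,\cL))$ of \cref{thm.AV}, exactly as you describe. Your two points of care (finite generation for the quotient-functor construction, and the routine transport of the adjoint triple and its properties along an equivalence) are the right ones and need no further elaboration.
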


In \cite[\S3.17, Prop.~3.20]{ATV1}, Artin-Tate-Van den Bergh describe a procedure that associates to a 
fairly general graded $\Bbbk$-algebra $A$ a canonical algebra $B$ and a canonical homomorphism of 
graded algebras $A \to B$. 
In general, $B$ might not be of the form $B(X,\s,\cL)$.
But in a number of important situations it is. 

The next result, which uses ideas in \cite{ATV1}  and \cite[p.~8]{FO-Kiev},  will be applied to $A=Q_{n,k}(E,\tau)$. In it  
we view elements of $V^{\otimes 2}$ as forms of bi-degree $(1,1)$ on the product of projective spaces $\PP(V^*) \times \PP(V^*)$.

\begin{proposition}
\label{prop.map.to.B}
Let $TV$ denote the tensor algebra on a finite dimensional $\Bbbk$-vector space $V$ and let $A=TV/(R)$ be the quotient by the ideal generated by a subspace $R \subseteq V^{\otimes 2}$. 
Let $X$ be a $\Bbbk$-scheme, $\s \in \Aut_\Bbbk(X)$, $\G_\s$ the graph of $\s$, $f:X \to \PP(V^*)$ a morphism, and let $\cL=f^* \cO_{\PP(V^*)}(1)$. 
If $R$ vanishes on $(f \times f)(\G_\s)$, 
then the canonical linear map $V \to H^0(X,\cL)$ extends to a $\Bbbk$-algebra homomorphism $\varphi:A \to B(X,\s,\cL)$.
\end{proposition}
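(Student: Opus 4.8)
The plan is to build the homomorphism directly from the universal property of the tensor algebra, and then check that the defining relations $R$ land in zero. First I would observe that the canonical linear map $V \to H^0(X,\cL) = B(X,\s,\cL)_1$ is given as follows: the morphism $f:X \to \PP(V^*)$ corresponds to a choice of line bundle $\cL$ together with a linear map $V \to H^0(X,\cL)$ whose image generates $\cL$ (this is how morphisms to $\PP(V^*)$ are classified), and $\cL = f^*\cO_{\PP(V^*)}(1)$ by hypothesis. By the universal property of $TV$, this linear map extends uniquely to a graded $\Bbbk$-algebra homomorphism $\wt\varphi:TV \to B(X,\s,\cL)$. It then remains to show that $\wt\varphi(R) = 0$, for then $\wt\varphi$ factors through $A = TV/(R)$ to give the desired $\varphi$.

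The heart of the matter is therefore the identification of $\wt\varphi$ on degree two, i.e.\ the map $V^{\otimes 2} \to H^0(X,\cL_2) = H^0(X,\cL \otimes \cL^{\s})$. Unwinding the multiplication in $B(X,\s,\cL)$: for $v,w \in V$ mapping to sections $s_v, s_w \in H^0(X,\cL)$, the product $s_v \cdot s_w$ is $s_v * s_w^{\s}$, the image of $s_v \otimes s_w^{\s}$ under $H^0(X,\cL) \otimes H^0(X,\cL^{\s}) \to H^0(X,\cL \otimes \cL^{\s})$. Concretely, at a point $p \in X$ this section takes the value $s_v(p) \otimes s_w(\s(p))$ in the fiber $\cL_p \otimes \cL_{\s(p)}$. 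Now I would interpret an element $\rho = \sum v_i \otimes w_i \in V^{\otimes 2}$ as a bi-degree-$(1,1)$ form on $\PP(V^*) \times \PP(V^*)$, as the statement instructs; then under the map $f \times f: X \times X \to \PP(V^*) \times \PP(V^*)$ its pullback is a section of $\cO_{\PP(V^*)}(1) \boxtimes \cO_{\PP(V^*)}(1)$ pulling back to $\cL \boxtimes \cL$ on $X \times X$, and restricting this section to the graph $\G_\s = \{(p,\s(p))\} \subseteq X \times X$ (which is isomorphic to $X$ via the first projection) gives exactly the section $\wt\varphi(\rho) \in H^0(X, \cL\otimes\cL^{\s})$ up to the canonical identification of $\cL \boxtimes \cL|_{\G_\s}$ with $\cL \otimes \cL^\s$. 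The hypothesis says precisely that $\rho$ vanishes on $(f\times f)(\G_\s)$, i.e.\ the pulled-back form vanishes identically on $\G_\s$, so $\wt\varphi(\rho) = 0$ for all $\rho \in R$.

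Granting that, $\wt\varphi$ kills $R$, hence kills the two-sided ideal $(R)$ since $\wt\varphi$ is an algebra homomorphism into $B(X,\s,\cL)$, and so descends to $\varphi:A = TV/(R) \to B(X,\s,\cL)$ extending $V \to H^0(X,\cL)$, as required.

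I expect the main obstacle to be purely bookkeeping: making the identification in the previous paragraph precise, i.e.\ tracking the canonical isomorphism between the restriction to $\G_\s$ of the external tensor product $\cL \boxtimes \cL$ and the sheaf $\cL \otimes \cL^{\s} = \cL \otimes \s^*\cL$ on $X$, and checking that under this isomorphism the "evaluate the form on the graph" operation agrees on the nose with the multiplication $s_v * s_w^\s$ in $B(X,\s,\cL)$. This is the same compatibility that underlies \cref{pr.triples}, so it should go through cleanly, perhaps by reducing to the affine-local statement where sections are honest functions and the claim becomes the tautology $\rho(s_v(p), s_w(\s(p))) = 0$. No genuine geometry is needed beyond the functoriality of pullback and its compatibility with tensor products, recalled in \S\ref{sect.notation}.
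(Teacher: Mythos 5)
Your proposal is correct and follows essentially the same route as the paper's proof: extend the canonical map $V \to H^0(X,\cL)$ to $TV$ by the universal property, identify the degree-two component with restriction of the $(1,1)$-form to $(f\times f)(\G_\s)$, and conclude $R \subseteq \ker(\wt\varphi)$ so the map factors through $A$. The paper simply asserts the identification of $\wt\varphi$ on $V^{\otimes 2}$ with evaluation on the graph, whereas you spell out the canonical isomorphism $(\cL\boxtimes\cL)|_{\G_\s}\cong\cL\otimes\cL^\s$ — a harmless elaboration of the same argument.
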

\begin{pf} 
Since $V=H^0(\PP(V^*),\cO_{\PP(V^*)}(1))$, there is a canonical linear map $V \to H^0(X,\cL)=B(X,\s,\cL)_1$. 
This map extends in a unique
way to a homomorphism $\varphi:TV \to B(X,\s,\cL)$.  
An element $w \in V \otimes V=H^0(X\times X,\cL \boxtimes \cL)$ is in the kernel of $\varphi$  
if and only if it vanishes on $(f \times f)(\G_\s)$. Since elements of $R$ vanish on this graph by hypothesis, 
$R \subseteq \ker(\varphi)$. The result follows.
\end{pf}

\begin{proposition}
\label{prop.functoriality}
Let $Y$ be a projective $\Bbbk$-scheme, $\s$ a $\Bbbk$-automorphism  of $Y$, and $\cL$ a base-point free 
invertible $\cO_Y$-module. Let $\Phi:Y \to \PP(H^0(Y,\cL)^*)$ be the morphism associated to the complete linear system 
$|\cL|$ and let $X=\Phi(Y)$.
There is a factorization $\Phi=i \circ f$ where $i:X \to \PP(H^0(Y,\cL)^*)$ is the inclusion and $f:Y \to X$
is obtained by restricting the codomain of $\Phi$. 
Let $\cL'=\cO(1)|_X$.  If $\s':X \to X$ is an automorphism  such that $f\s=\s' f$, 
then the canonical map $H^0(X,\cL') \to H^0(Y,\cL) $ extends to a homomorphism of graded rings
$$
\varphi:B(X,\s',\cL') \; \longrightarrow \;  B(Y,\s,\cL) 
$$
\end{proposition}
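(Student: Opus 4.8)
The plan is to recognize $\varphi$ as the image of a morphism of triples under the contravariant functor of \Cref{pr.triples}; once the correct morphism of triples is in hand there is nothing more to do. The one geometric input is that, by the very construction of the morphism $\Phi$ attached to the complete linear system $|\cL|$, there is a canonical isomorphism $\Phi^*\cO(1)\xrightarrow{\sim}\cL$ inducing the identity of $H^0(Y,\cL)$ under the identifications $H^0(\PP(H^0(Y,\cL)^*),\cO(1))=H^0(Y,\cL)=H^0(Y,\Phi^*\cO(1))$. Since $\Phi=i\circ f$ and $\cL'=\cO(1)|_X=i^*\cO(1)$, this reads $f^*\cL'=f^*i^*\cO(1)=\Phi^*\cO(1)\cong\cL$; write $u\colon f^*\cL'\xrightarrow{\sim}\cL$ for this isomorphism.

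Next I would note that the hypothesis $f\s=\s'f$ together with the homomorphism $u\colon f^*\cL'\to\cL$ just produced is precisely the data of a morphism of triples $(f,u)\colon(Y,\s,\cL)\to(X,\s',\cL')$ in the sense of the paragraph following \Cref{pr.triples}. Applying the (contravariant) functor of that proposition yields a homomorphism of graded $\Bbbk$-algebras $\varphi\colon B(X,\s',\cL')\to B(Y,\s,\cL)$. To identify $\varphi$ one unwinds how this functor acts on morphisms: in degree $n$ it is the composite $H^0(X,\cL'_n)\xrightarrow{f^*}H^0(Y,f^*\cL'_n)\to H^0(Y,\cL_n)$, where the second map is induced by $u$ together with its $\s$-twists, using the identification $f^*(\cL')^{\s'}=(f^*\cL')^{\s}$ (itself a consequence of $f\s=\s'f$) to match $f^*\cL'_n$ with a tensor product to which these twists of $u$ apply. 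In degree one this composite is pullback along $f$ followed by $u$, i.e.\ the canonical map $H^0(X,\cL')\to H^0(Y,\cL)$, as asserted.

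I expect no real obstacle here: the entire content is the canonical identification $\cL\cong f^*\cL'$ coming from $\Phi=i\circ f$, after which the existence and degree-one description of $\varphi$ are formal consequences of \Cref{pr.triples}. The only point deserving a sentence of care is checking that the functor of \Cref{pr.triples} carries $(f,u)$ to a map with the stated behaviour in degree one, which is immediate since $B(-,-,-)$ acts on morphisms through pullback of global sections.
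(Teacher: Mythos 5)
Your proposal is correct and follows exactly the paper's own argument: identify $\cL\cong f^*\cL'$ via $\Phi=i\circ f$ and $\Phi^*\cO(1)\cong\cL$, package this with $f\sigma=\sigma'f$ into a morphism of triples $(f,u)\colon(Y,\sigma,\cL)\to(X,\sigma',\cL')$, and apply the functoriality of \Cref{pr.triples}. The extra paragraph unwinding the degree-$n$ behaviour is a harmless elaboration of what the paper leaves implicit.
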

\begin{proof}
  Since $\cL$ is generated by its global sections, $\cL\cong \Phi^*\cO(1)$.  Since $\Phi^*\cO(1)=f^*i^* \cO(1)=f^*\cL'$, there is an isomorphism $u:f^*\cL' \to \cL$.  We therefore obtain a morphism of triples $(f,u):(Y,\s,\cL) \to (X,\s',\cL')$ and hence, by functoriality of the $B$-construction, a homomorphism $\varphi$ as claimed.
\end{proof}

In \cref{cor.map.to.B}, we apply \cref{prop.map.to.B,prop.functoriality} to obtain homomorphisms
$$
Q_{n,k}(E,\tau) \, \stackrel{\Psi}{\longrightarrow} \,  B(X_{n/k},\s',\cL'_{n/k}) \, \longrightarrow \,  B(E^g,\s,\cL_{n/k}).
$$
The following questions then become 
relevant:
\begin{itemize}
\item
is $\cL'_{n/k}$ a $\s'$-ample sheaf ?
  \item 
is $B(X_{n/k},\s',\cL'_{n/k})$ generated in degree one; i.e., is $\Psi$ surjective ?
  \item 
are the relations for $B(X_{n/k},\s',\cL'_{n/k})$ generated in degrees 2 and 3 ?
\end{itemize}
We answer these in the affirmative when $X_{n/k}$ is $E^g$ and $S^gE$.

\subsection{Generators and relations for $B(X,\s,\cL)$}
\label{sect.ker.mult}

We assume that $\Bbbk$ is an algebraically closed field, $X$ is a projective $\Bbbk$-scheme, and $\sigma:X\to X$ is a $\Bbbk$-automorphism.

In this section we use the notation
\begin{align*}
\cL &  \; :=\; \text{an ample invertible $\cO_X$-module generated by its global sections},
\\
\cM  \; = \; \cM_m & \; :=\;   \sigma^*\cL\otimes\cdots\otimes (\sigma^*)^m \cL,
\\
\cN   \; = \; \cN_m  & \; :=\;  (\sigma^{m+1})^*\cL,
\\
\cK \;=\; \cK_m  & \; :=\;  \ker\big(   H^0(X,\cN_m) \otimes \cO_X \twoheadrightarrow   \cN_m\big),
\\
\cG \;=\; \cG_m & \;:=\;  \cM_m \otimes \cK_m\; =\;  \ker\big(\cM_m \otimes   H^0(X,\cN_m)   \twoheadrightarrow  \cM_m \otimes \cN_m\big),
\\
R(\cM,\cN) &\; := \;  \ker \big(H^0(X,\cM)\otimes H^0(X,\cN)\to H^0(X,\cM\otimes \cN)\big). 
\end{align*}
The notation $R(\cM,\cN)$ is taken from \cite{mum-q,SS92}.

There is a natural map
 \begin{equation}
\label{eq.H0L.RMN}
H^0(X,\cL) \otimes R(\cM_m,\cN_m) \to R(\cL \otimes \cM_m,\cN_m) 
\end{equation}
that fits into the following commutative diagram with exact rows:
\begin{equation}
\label{diag1}
\xymatrix{
0 \longrightarrow  H^0(\cL) \otimes R(\cM_m,\cN_m) \ar[d] \ar[r] & H^0(\cL) \otimes  H^0(\cM_m) \otimes  H^0(\cN_m)  \ar[r] \ar[d]& H^0(\cL) \otimes  H^0(\cM_m \otimes  \cN_m) \ar[d]
\\
0 \longrightarrow   R(\cL \otimes \cM_m,\cN_m)  \ar[r] & H^0(\cL \otimes  \cM_m) \otimes  H^0(\cN_m)  \ar[r] & H^0(\cL \otimes  \cM_m \otimes  \cN_m). 
 }
\end{equation}

The next result is a small extension of \cite[Lem.~3.7]{SS92}.

\begin{lemma}
\label{lem.SS92}
Let $X$ be a connected projective $\Bbbk$-scheme; i.e., $H^0(X,\cO_X)=\Bbbk$, $\sigma\colon X\to X$ a $\Bbbk$-automorphism, and  
$\cL$ an ample invertible $\cO_X$-module generated by its global sections.
Suppose $B=B(X,\s,\cL)$ is generated as a $\Bbbk$-algebra by $B_1$. Write $B=T(B_1)/J$ where $T(B_1)$ is the tensor algebra on $B_1$. Let $J_i=J \cap B_1^{\otimes i}$. 
The ideal $J$ is generated by $J_2+\cdots+J_\ell$ if and only if the map in (\ref{eq.H0L.RMN})
is surjective for all $m \ge \ell-1$.
\end{lemma}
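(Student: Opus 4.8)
The plan is to analyze the filtration of the graded ideal $J$ by comparing, in each degree $d$, the image of the multiplication maps coming from $J_2,\dots,J_\ell$ with the full space $J_d$, and to translate the "generated in degrees $\le\ell$" condition into the surjectivity statement about \cref{eq.H0L.RMN} via the diagram \cref{diag1}. First I would unwind the definitions: since $B=B(X,\s,\cL)$ is generated by $B_1=H^0(X,\cL)$, the degree-$d$ component $B_d=H^0(X,\cL_d)$ is a quotient of $B_1^{\otimes d}$, and $J_d=\ker(B_1^{\otimes d}\to H^0(X,\cL_d))$. The key observation is that $B_d=H^0(X,\cL_d)$ with $\cL_d=\cL\otimes\cL^\s\otimes\cdots\otimes\cL^{\s^{d-1}}$, and the multiplication $B_1\otimes B_{d-1}\to B_d$ is, after applying the automorphisms built into the $B$-construction, exactly the map $H^0(X,\cL)\otimes H^0(X,\cM_{d-2}^{\sigma}) \to H^0(X,\cL\otimes\cM_{d-2}^\sigma)$ up to a twist — so that $R(\cL,\cM_m)$-type kernels appear naturally, with $\cM_m$ and $\cN_m$ as defined, once one accounts for the shift $\cN_m=(\sigma^{m+1})^*\cL$ playing the role of "the last tensor factor."

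The main structural step is the following. Let $J^{(\ell)}$ denote the ideal of $T(B_1)$ generated by $J_2+\cdots+J_\ell$. Then $J^{(\ell)}=J$ iff $J^{(\ell)}_d=J_d$ for all $d$, and one proves this by induction on $d$, the cases $d\le\ell$ being trivial. For the inductive step one needs: $J_d$ is spanned by $B_1^{\otimes(d-1-m)}\otimes J^{(\ell)}_{?}\otimes\cdots$ contributions plus the "new" part, and the new part is controlled by whether $B_1\otimes R(\cM_{d-2},\cN_{d-2})$ surjects onto $R(\cL\otimes\cM_{d-2},\cN_{d-2})=R(\cM_{d-1}\text{-type})$. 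Concretely, one shows that $J_{d}/(B_1\cdot J_{d-1}+J_{d-1}\cdot B_1)$ — i.e. the "genuinely new degree-$d$ relations" — is, via \cref{diag1} and a diagram chase, identified with the cokernel of the map in \cref{eq.H0L.RMN} with $m=d-2$. Here the exactness of the rows of \cref{diag1} (which holds because tensoring the short exact sequence $0\to\cK_m\to H^0(\cN_m)\otimes\cO_X\to\cN_m\to0$ with the locally free — in fact invertible — sheaf $\cM_m$ or $\cL\otimes\cM_m$ stays exact, and then taking $H^0$) is exactly what lets one read the cokernel of the left vertical as a subquotient measuring failure of $J$ to be generated in low degrees. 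The connectedness hypothesis $H^0(X,\cO_X)=\Bbbk$ is used to ensure $B_0=\Bbbk$ so that the grading and the identification $B_1=H^0(X,\cL)$ behave as expected, and ampleness plus global generation of $\cL$ guarantee the relevant higher cohomology vanishings needed to keep the top row of \cref{diag1} from having spurious kernel on the right.

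For the converse direction, assuming \cref{eq.H0L.RMN} is surjective for all $m\ge\ell-1$, one runs the same induction: the diagram chase in \cref{diag1} shows that every degree-$d$ relation ($d>\ell$) lies in $B_1\cdot J_{d-1}+$ (lower-degree-generated stuff), hence by induction in $J^{(\ell)}$; and conversely, if $J=J^{(\ell)}$ then the new-relations space in degree $m+2$ vanishes for all $m\ge\ell-1$, which by the identification forces surjectivity of \cref{eq.H0L.RMN}.

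The step I expect to be the main obstacle is the precise bookkeeping identifying $J_d$ modulo the ideal-generated-by-lower-degrees with the cokernel of \cref{eq.H0L.RMN}: one must carefully track the automorphism twists $\s^i$ inserted by the $B$-product (the product of $x\in B_i$ and $y\in B_j$ is $x*y^{\s^i}$, not $x*y$), and verify that the relevant kernels $R(\cM_m,\cN_m)$ with $\cN_m=(\sigma^{m+1})^*\cL$ match up with the naive "kernel of $B_1\otimes B_{m+1}\to B_{m+2}$" after applying a suitable power of $\s^*$ — this is a pure twist, so it is harmless, but it must be spelled out so that the asymmetric roles of $\cM$ (the first $m+1$ tensor factors of $\cL$) and $\cN$ (the last factor, pushed out by $\s$) are correct. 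Once that identification is in place the proof is the induction above plus two diagram chases in \cref{diag1}, following \cite[Lem.~3.7]{SS92} with only the cosmetic change of allowing $\ell>3$.
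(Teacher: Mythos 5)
Your proposal is correct and follows essentially the same route as the paper: both reduce ``$J$ is generated by $J_2+\cdots+J_\ell$'' to the condition $J_{m+2}=B_1J_{m+1}+J_{m+1}B_1$ for $m\ge\ell-1$, and then identify the right-hand square of \cref{diag1} with the tensor-algebra multiplication diagram so that the map \cref{eq.H0L.RMN} becomes $T_1J_{m+1}/T_1J_mT_1\to J_{m+2}/J_{m+1}T_1$, whose cokernel is exactly $J_{m+2}/(T_1J_{m+1}+J_{m+1}T_1)$. (Two cosmetic points: the rows of \cref{diag1} are exact purely because $R(-,-)$ is defined as a kernel, with no cohomology vanishing needed, and the sheaf sequence involving $\cK_m$ belongs to \cref{lem.B.deg.relns} rather than here; neither affects your argument.)
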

\begin{pf}
The degree-$r$ component of $T=T(B_{1})$ is  $T_r=B_1^{\otimes r}$. Clearly, $J$ is generated by $J_2+\cdots+J_\ell$ if and only if 
$J_{r+1}=T_1J_r+J_rT_1$ for all $r \ge \ell$, i.e., if and only if 
$$
\frac{J_{r+1}}{J_rT_1} \; = \; \frac{T_1J_r+J_rT_1}{J_rT_1}
$$
for all $r \ge \ell$. 

We will now reformulate this, but first, to be consistent with the definitions of $\cL$, $\cM$, $\cN$, we set $m=r-1$ so that 
$J_{r+1}=J_{m+2}$.

There is a commutative diagram
$$
\xymatrix{
0 \ar[r] & T_1J_mT_1 \ar[r]   \ar[d]_\a & T_1J_{m+1} \ar[r]^>>>>>>\d  \ar[d]_\b  & T_1J_{m+1}/ T_1J_mT_1 \ar[r]  \ar[d]_\c  & 0
\\
0 \ar[r]    & J_{m+1}T_1 \ar[r] & J_{m+2} \ar[r]_>>>>>>\ve &  J_{m+2}/ J_{m+1}T_1 \ar[r] & 0
}
$$  
in which $\a$ and $\b$ are the natural inclusions, the rows are exact, and $\c$ is the unique linear map such that $\ve\b=\c\d$.
Clearly 
$$
{\rm im}(\c) \; = \; {\rm im}(\c\d) \;=\; {\rm im}(\ve\b) \;=\; \frac{T_1J_{m+1}+J_{m+1}T_1}{J_{m+1}T_1}.
$$
Thus, $\c$ is surjective if and only if $J_{m+2}=T_1J_{m+1}+J_{m+1}T_1$. 

Hence $J$ is generated by $J_2+\cdots+J_\ell$ if and only if $\c$ is surjective for all 
$m \ge \ell-1$. Since $T_1=B_1=H^0(\cL)$, the right-hand square in the commutative diagram just prior to this lemma 
 is canonically isomorphic to the diagram
\begin{equation*}
\xymatrix{
	T_{1}\otimes(T_{m}/J_{m})\otimes T_{1}\ar[d]\ar[r]^-{\lambda} & T_{1}\otimes(T_{m+1}/J_{m+1})\ar[d] \\
	(T_{m+1}/J_{m+1})\otimes T_{1}\ar[r]_-{\mu} & T_{m+2}/J_{m+2}
}.
\end{equation*}
Thus the map in (\ref{eq.H0L.RMN}) is surjective if and only if the induced map $\ker\lambda\to\ker\mu$ is surjective. Here we have
\begin{align*}
	\ker(\lambda)
& \ = \;  \ker\left(\frac{T_1 \otimes T_{m} \otimes T_1}{T_1 \otimes J_{m} \otimes T_1} \, \longrightarrow \,   \frac{T_1 \otimes T_{m+1}}{T_1 \otimes J_{m+1}} \right)
\\
& \ = \;  \ker\left(\frac{T_{m+2}  }{T_1J_{m} T_1} \, \longrightarrow \,   \frac{T_{m+2}}{T_1J_{m+1}} \right)
\\
& \ = \; \frac{T_1J_{m+1}  }{T_1J_{m} T_1}
\end{align*}
and
\begin{equation*}
	\ker(\mu) \ = \; \ker\left(\frac{T_{m+2}  }{J_{m+1} T_1} \, \longrightarrow \,   \frac{T_{m+2}}{J_{m+2}} \right)
	\ = \; \frac{J_{m+2}  }{J_{m+1} T_1},
\end{equation*}
and these equalities identify the map $\ker\lambda\to\ker\mu$ with $\gamma$. This completes the proof.
\end{pf}

\begin{lemma}
\label{lem.B.deg.relns}
If   $B(X,\sigma,\cL)$ is generated  in degree one, then its relations 
are generated in degree $\le \ell$ if and only if the multiplication map 
 \begin{equation}
H^0(X,\cL) \otimes H^0(X,\cG_m)\, \longrightarrow \, H^0(X,\cL \otimes \cG_m)
  \end{equation}
is onto for all $m \ge \ell - 1$.
\end{lemma}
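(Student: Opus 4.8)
<br>

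The plan is to derive this from \Cref{lem.SS92} by the same bookkeeping that relates the map in \cref{eq.H0L.RMN} to a multiplication map on global sections. First I would recall that, since $B = B(X,\sigma,\cL)$ is generated in degree one, \Cref{lem.SS92} tells us the relation ideal $J$ is generated in degrees $\le \ell$ if and only if the natural map
\[
H^0(X,\cL)\otimes R(\cM_m,\cN_m)\;\longrightarrow\; R(\cL\otimes\cM_m,\cN_m)
\]
is surjective for all $m\ge \ell-1$. So it suffices to show that, for each fixed $m$, the surjectivity of this map is equivalent to the surjectivity of the multiplication map $H^0(X,\cL)\otimes H^0(X,\cG_m)\to H^0(X,\cL\otimes\cG_m)$.

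The key step is to reinterpret $R(\cM_m,\cN_m)$ as global sections of $\cG_m=\cM_m\otimes\cK_m$. Since $\cN_m$ is globally generated (being $(\sigma^{m+1})^*\cL$ with $\cL$ globally generated) we have the short exact sequence $0\to\cK_m\to H^0(X,\cN_m)\otimes\cO_X\to\cN_m\to 0$. Tensoring with the locally free $\cM_m$ and taking cohomology gives $0\to H^0(X,\cG_m)\to H^0(X,\cM_m)\otimes H^0(X,\cN_m)\to H^0(X,\cM_m\otimes\cN_m)$, where I have used $H^0(X,\cM_m\otimes(H^0(\cN_m)\otimes\cO_X))=H^0(X,\cM_m)\otimes H^0(X,\cN_m)$ (here $H^0(X,\cO_X)$ plays no role beyond being a module over $\Bbbk$, so the identification is the obvious one; note $X$ is connected, $H^0(X,\cO_X)=\Bbbk$). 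Hence $H^0(X,\cG_m)=R(\cM_m,\cN_m)$ canonically. Tensoring the same sequence instead with $\cL\otimes\cM_m$ and taking cohomology identifies $H^0(X,\cL\otimes\cG_m)=H^0(X,\cL\otimes\cM_m\otimes\cK_m)$ with $R(\cL\otimes\cM_m,\cN_m)$, the kernel of $H^0(X,\cL\otimes\cM_m)\otimes H^0(X,\cN_m)\to H^0(X,\cL\otimes\cM_m\otimes\cN_m)$. Under these two identifications the map in \cref{eq.H0L.RMN} becomes precisely the multiplication map $H^0(X,\cL)\otimes H^0(X,\cG_m)\to H^0(X,\cL\otimes\cG_m)$; one checks this by tracing through \cref{diag1}, whose top-left vertical arrow is by definition the map in \cref{eq.H0L.RMN} and whose left column, after the identifications, is the multiplication-by-$H^0(\cL)$ map on $H^0(\cG_m)$.

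Putting these together: the relations of $B(X,\sigma,\cL)$ are generated in degrees $\le\ell$ iff the map in \cref{eq.H0L.RMN} is onto for all $m\ge\ell-1$ (by \Cref{lem.SS92}), and that map is identified with the multiplication map $H^0(X,\cL)\otimes H^0(X,\cG_m)\to H^0(X,\cL\otimes\cG_m)$, which gives the claim. The one point requiring care — and the main obstacle — is checking that the identification of \cref{eq.H0L.RMN} with the multiplication map really is compatible, i.e.\ that the left-hand vertical arrow of \cref{diag1} is, after the canonical isomorphisms $H^0(\cG_m)\cong R(\cM_m,\cN_m)$ and $H^0(\cL\otimes\cG_m)\cong R(\cL\otimes\cM_m,\cN_m)$, exactly $\mathrm{id}_{H^0(\cL)}\otimes(\text{inclusion})$ followed by the multiplication $H^0(\cL)\otimes H^0(\cG_m)\to H^0(\cL\otimes\cG_m)$; this is a diagram chase using functoriality of cohomology and the projection formula, with no genuine difficulty but some notational overhead. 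Everything else is formal.
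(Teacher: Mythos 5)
Your proposal is correct and follows essentially the same route as the paper: apply \Cref{lem.SS92}, then identify $R(\cM_m,\cN_m)\cong H^0(X,\cG_m)$ and $R(\cL\otimes\cM_m,\cN_m)\cong H^0(X,\cL\otimes\cG_m)$ via the left-exact sequences obtained from tensoring $0\to\cK_m\to H^0(X,\cN_m)\otimes\cO_X\to\cN_m\to 0$ with $\cM_m$ and $\cL\otimes\cM_m$. The compatibility check you flag at the end is the only point the paper passes over in silence, and your treatment of it is fine.
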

\begin{proof}
Fix an integer $m$. By \cref{lem.SS92}, it suffices to show that the map 
$$
H^0(X,\cL) \otimes R(\cM,\cN) \, \longrightarrow \, R(\cL \otimes \cM,\cN)
$$ is onto if and only if  the map 
$H^0(X,\cL) \otimes H^0(X,\cG)\, \longrightarrow \, H^0(X,\cL \otimes \cG)$ is onto.

There are exact sequences $0 \to \cG \to  \cM  \otimes H^0(X,\cN) \to  \cM \otimes \cN  \to 0$  and
$$
  0 \to \cL \otimes  \cG \to  \cL \otimes  \cM  \otimes H^0(X,\cN) \to \cL \otimes   \cM \otimes \cN  \to 0
  $$
  and therefore exact sequences 
  $$
  0 \to H^0(X,\cG) \to H^0(X, \cM)  \otimes H^0(X,\cN) \to  H^0(X,\cM \otimes \cN)  
$$
and
$$
  0 \to H^0( X,\cL \otimes  \cG) \to  H^0(X,\cL \otimes  \cM)  \otimes H^0(\cN) \to H^0(X,\cL \otimes   \cM \otimes \cN).
  $$
Thus, there are canonical isomorphisms $R(\cM,\cN)\cong H^0(X,\cG)$ and $R(\cL \otimes \cM,\cN) \cong H^0(X,\cL \otimes \cG)$;
it follows that (\ref{eq.H0L.RMN}) is onto if and only if  
the multiplication map $H^0(X,\cL) \otimes H^0(X,\cG)\, \longrightarrow \, H^0(X,\cL \otimes \cG)$ is onto.
\end{proof}

 \subsection{Point modules for $B(X,\s,\cL)$}
 \label{sect.pt.mods}
The ``simplest part''  of the representation theory of a non-commutative algebra consists of its 
1-dimensional modules.  The ``simplest part'' of the graded representation theory of a connected graded 
$\Bbbk$-algebra, $A$ say, consists of its point modules: a {\sf point module} for $A$  is a cyclic 
 graded left $A$-module $M=M_0 \oplus  M_1\oplus \cdots$ such that $\dim_\Bbbk(M_i)=1$ for all $i \ge 0$ 
 (see \cite[p.~208]{FO89} and \cite{ATV2}).

The next result was known to Artin-Tate-Van den Bergh \cite{ATV1} and to Feigin-Odesskii \cite{FO89} sometime in the late 1980's
but it wasn't recorded explicitly.

\begin{proposition}
\label{prop.pt.mods.1}
Let $\cO_p$ be the skyscraper sheaf at a closed point $p \in X$. If $\cL$ is generated by its global sections, then
$$
M_p \; := \; (\Gamma_*\cO_p)_{\ge 0} \;=\; \bigoplus_{n =0}^\infty  H^0(X,\cO^{\s^n}_p).
$$
is a point module for $B(X,\s,\cL)$ and
\begin{equation}
\label{eq.pt.mod.isom.for.B}
(M_p)_{\ge 1}(1) \cong M_{\s^{-1} p}.
\end{equation}
An element $b \in B(X,\s,\cL)_1$ annihilates the degree-$n$ component of $M_p$ if and only if $b(\s^{-n}p)=0$.
\end{proposition}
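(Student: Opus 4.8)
The plan is to deduce all three assertions directly from the definition of the functor $\G_{*}$ and of the $B(X,\s,\cL)$-module structure on its values, using only the global generation of $\cL$ and the elementary fact that a global section of a skyscraper sheaf supported at a closed point $q$ is determined by its value at $q$; no $\s$-ampleness or Artin--Van den Bergh equivalence is needed. Write $B=B(X,\s,\cL)$. First I would record that, since $\s^{n}$ is an automorphism, $\cO_{p}^{\s^{n}}=(\s^{n})^{*}\cO_{p}=\cO_{\s^{-n}p}$, so the degree-$n$ component of $\G_{*}\cO_{p}$ is $H^{0}(X,\cL_{n}\otimes\cO_{\s^{-n}p})$, the space of global sections of a skyscraper sheaf with one-dimensional stalk $\cL_{n}|_{\s^{-n}p}$. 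Hence every $(M_{p})_{n}$ with $n\ge 0$ is one-dimensional, and $(M_{p})_{0}=H^{0}(X,\cO_{p})=\Bbbk$.

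Next I would unwind the action of a degree-one element. For $b\in B_{1}=H^{0}(X,\cL)$ and $m\in(M_{p})_{n}$, the rule $b\cdot m=s(m)\circ b$ exhibits $b\cdot m\in(M_{p})_{n+1}$ as the image of $b\otimes m^{\s}$ under a multiplication map into the global sections of a skyscraper sheaf supported at a point of the $\s$-orbit of $p$; since $m^{\s}$ is a nonzero section of that skyscraper whenever $m\ne 0$, the product $b\cdot m$ is zero precisely when $b$ vanishes at that orbit point, and a careful accounting of the $\s$-twists shows the orbit point to be $\s^{-n}p$, which is the last sentence of the proposition. The same computation yields cyclicity: since $\cL$ is globally generated, the evaluation $H^{0}(X,\cL)\to\cL|_{q}$ is onto at every closed point $q$, so one may choose $b$ not vanishing at the relevant orbit point of $p$ and conclude $B_{1}\cdot(M_{p})_{n}=(M_{p})_{n+1}$ for all $n\ge 0$. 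Together with the first paragraph this shows $M_{p}$ is cyclic, generated in degree $0$, with all graded pieces one-dimensional, i.e.\ a point module.

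For the isomorphism $(M_{p})_{\ge 1}(1)\cong M_{\s^{-1}p}$ I would use that $\G_{*}$ converts the degree shift into the twist functor $s$: from $s^{n}\circ s=s^{n+1}$ one obtains, for every $\cO_{X}$-module $\cN$, an identification of graded $B$-modules $(\G_{*}\cN)_{\ge 1}(1)=(\G_{*}(s\cN))_{\ge 0}$, since in each degree $n\ge 0$ both sides equal $H^{0}(X,s^{n+1}\cN)$ and the two $B$-actions agree by inspection of the defining formula $b\cdot m=s^{i}(m)\circ b$. Applying this with $\cN=\cO_{p}$, and noting $s\cO_{p}=\cL\otimes\cO_{p}^{\s}=\cL\otimes\cO_{\s^{-1}p}$ is a skyscraper sheaf with one-dimensional stalk, hence isomorphic as an $\cO_{X}$-module (though not canonically) to $\cO_{\s^{-1}p}$, functoriality of $\G_{*}$ gives $(M_{p})_{\ge 1}(1)\cong(\G_{*}\cO_{\s^{-1}p})_{\ge 0}=M_{\s^{-1}p}$. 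The step I expect to be the main obstacle --- more bookkeeping than genuine difficulty --- is exactly this last one: the isomorphism $\cL\otimes\cO_{q}\cong\cO_{q}$ is non-canonical, so one must carry a chosen trivialisation of $\cL|_{\s^{-1}p}$ consistently through $\G_{*}$ and keep the numerous $\s$-twists straight when checking $B$-linearity.
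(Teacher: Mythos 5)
Your argument is essentially the paper's own proof, just written out in more detail: identify $(M_p)_n$ with the global sections of a skyscraper at $\s^{-n}p$, unwind $b\cdot m$ as $b*m^{\s}$, deduce cyclicity from global generation of $\cL$, and get the shift isomorphism from the degree-by-degree identification of $(M_p)_{\ge 1}(1)$ with $M_{\s^{-1}p}$ (your detour through the functorial identity $(\Gamma_*\cN)_{\ge 1}(1)=(\Gamma_*(s\cN))_{\ge 0}$ and the non-canonical isomorphism $s\cO_p\cong\cO_{\s^{-1}p}$ is a slightly more structured packaging of the same computation). One caution about the single step you defer to ``careful accounting'': if you actually carry it out, you find that for $0\ne m\in (M_p)_n$ the section $m^{\s}$ is supported at $\s^{-1}(\s^{-n}p)=\s^{-(n+1)}p$, so $b\cdot m=b*m^{\s}$ vanishes if and only if $b(\s^{-(n+1)}p)=0$, not $b(\s^{-n}p)=0$. (A two-point example with $\s$ the swap and $\cL=\cO_X$ confirms this.) This off-by-one is present in the paper's statement and proof as well, so you have reproduced rather than introduced it; it is harmless for the cyclicity claim (global generation still supplies a $b$ not vanishing at the relevant point) and for the isomorphism $(M_p)_{\ge 1}(1)\cong M_{\s^{-1}p}$, but you should not assert that the accounting yields $\s^{-n}p$ without checking it, since that is precisely where the index slips.
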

\begin{proof}
By definition,
$$
\Gamma_*\cO_p \;=\; \bigoplus_{n \in \ZZ} H^0(X,\cL_n \otimes \cO^{\s^n}_p) \; \cong \; \bigoplus_{n \in \ZZ} H^0(X,\cO_{\s^{-n}p}).
$$
A section $b \in B(X,\s,\cL)_1=H^0(X,\cL)$ annihilates the degree-$n$ component of $\Gamma_*\cO_p$ if and only if $b(\sigma^{-n}p)=0$.  Since $\cL$ is generated by its global sections, for each $x \in X$ there is some $b \in B(X,\s,\cL)_1$ such that $b(x) \ne 0$. It follows that $M_p$ is generated by its degree-zero component as a $B(X,\s,\cL)$-module. Since $\dim_\Bbbk((M_p)_i)=1$ for all $i \ge 0$, $M_p$ is a point module for $B(X,\s,\cL)$.

The degree $i$ component of $M_{\s^{-1} p}$ is $H^0(X,\cO_{\s^{-i}(\s^{-1} p)})=H^0(X,\cO_{\s^{-i-1} p)}) = (M_p)_{i+1}$. It follows that $(M_p)_{\ge 1}(1) \cong M_{\s^{-1} p}$.
\end{proof}

\subsubsection{Remark}

When $X$ is projective and $B(X,\id,\cL)$ is finitely generated, each point module for $B(X,\id,\cL)$ is isomorphic in $\QGr(B(X,\id,\cL))$ to one of 
the $M_p$'s in \cref{prop.pt.mods.1}. We will now prove this claim. 

First, \spr{01Q0} implies that the image of the canonical morphism
\begin{equation}\label{eq:2}
	f:\,X \, \longrightarrow \,  \Proj\big(B(X,\id,\cL)\big)
\end{equation}
is dense; since $X$ is projective the image is closed so the morphism is onto. The morphism $f$ has the property that $f^{-1}(D_{+}(s))=X_{s}$ for all homogeneous $s\in B(X,\id,\cL)_{+}$, and this implies that $f(p)=\operatorname{Ann}(M_{p})$ for all closed points
$p\in X$, where the annihilator $\operatorname{Ann}(M_{p})$ inside $B(X,\id,\cL)$ is a homogeneous ideal that is maximal among those not containing $B(X,\id,\cL)_{+}$, and hence is regarded as a point of $\Proj(B(X,\id,\cL))$.

Now let $N$ be a point module for $B(X,\id,\cL)$. Since $B(X,\id,\cL)$ is finitely generated, $N$ admits a subquotient isomorphic to $B(X,\id,\cL)/\mathfrak{m}$ shifted by some degree $d\in\ZZ$, where $\mathfrak{m}$ is a point of $\Proj(B(X,\id,\cL))$ regarded as a homogeneous ideal of $B(X,\id,\cL)$. The surjectivity of \Cref{eq:2} implies that $\mathfrak{m}=f(p)$ for some $p\in X$, whence $\mathfrak{m}=\operatorname{Ann}(M_{p})$. Therefore $M_{p}$ is isomorphic to $B(X,\id,\cL)/\mathfrak{m}$ and its degree shift by $d$ is a subquotient of $N$. Since both $M_{p}$ and $N$ are point modules, $d\leq 0$ and $M_{p}(d)\cong (M_{\sigma^{-d}(p)})_{\geq -d}$ is a submodule of $N$. It follows that $N$ is isomorphic to $M_{\sigma^{-d}(p)}$ in $\QGr(B(X,\id,\cL))$.

\section{The algebras $Q_{n,k}(E,\tau)$}
\label{sect.Q}

As always, $n$ and $k$ are relatively prime integers such that  $n>k\ge 1$.

\subsection{Some notation and results from \cite{CKS2}}
 \label{sect.negative.contd.fracs}
 
\subsubsection{}
Fix $\eta \in \CC$ lying in the upper half-plane. Let $\L=\ZZ+\ZZ\eta$ and $E=\CC/\L$. 
We will usually view $E$ as an elliptic curve. 
If $r$ is a positive integer we write $E[r]$ for the $r$-torsion subgroup of $E$. 
It equals $\frac{1}{r}\L/\L$ so is isomorphic to $\ZZ_r \times \ZZ_r$.

We fix a point $\tau \in \CC$ and use the symbol $\tau$ to denote the image of $\tau$ in $E$ and the translation automorphisms
$\CC \to \CC$ and $E \to E$ given by the formula $z \mapsto z+\tau$. The meaning of $\tau$ will always be clear from the context.

\subsubsection{}
At different times we give the degree-one component of $Q_{n,k}(E,\tau)$ different interpretations as:
\begin{enumerate}
\item
an anonymous vector space $V$ with  basis $\{x_\a \; | \; \a \in \ZZ_n\}$;
  \item 
a space  $\Theta_n(\L)$ of theta functions in one variable  with  basis $\{\theta_\a(z) \; | \; \a \in \ZZ_n\}$;
  \item 
a space  $\Theta_{n/k}(\L)$ of theta functions in $g$ variables with  basis $\{w_\a(\sfz) \; | \; \a \in \ZZ_n\}$;
  \item 
$H^0(E^g,\cL_{n/k})$ where $\cL_{n/k}$ is the invertible $\cO_{E^g}$-module defined in \cref{defn.Lnk} below;
  \item 
$H^0(X_{n/k},\cL'_{n/k})$ where $X_{n/k}$ and $\cL'_{n/k}$ are defined  below.
\end{enumerate}
See \cite[\S5.3]{CKS2} for the relations between these interpretations.

\subsubsection{Negative continued fractions}\label{sssec.neg.conti.frac}
If $a,b,\ldots, c$ are integers $\ge 2$ we write
$$
[a,b,\ldots,c]   \;:=\; a-\frac{1}{b-\frac{1}{\ddots \, -\frac{1}{c}}} \, .
$$
 There is a unique integer $g\ge 1$ and a unique 
sequence of integers $n_{1},\ldots,n_g$, all $\ge 2$, such that 
$$
	\frac{n}{k}\;=\; [n_1,\ldots,n_g].
$$

\subsubsection{The translation automorphism $\s:E^g \to E^g$}
\label{sssect.sigma}
As in \cite[\S2.4]{CKS2}, we define
\begin{equation*}
	d(n_{1},\ldots,n_{g})\;:=\;\det
	\begin{pmatrix}
		n_{1}&-1&&&\\
		-1&n_{2}&-1&&\\
		&-1&\ddots&\ddots&\\
		&&\ddots&n_{g-1}&-1\\
		&&&-1&n_{g}
	\end{pmatrix}
\end{equation*}
Using this notation, we define $k_i$ and $l_i$ for $i=0,\ldots,g+1$ by
\begin{equation*}
	k_{i}\;:=\;d(n_{i+1},\ldots,n_{g})\quad\text{and}\quad l_{i}\;:=\;d(n_{i-1},\ldots,n_{1})
\end{equation*}
with conventions $k_{g}=1$, $k_{g+1}=0$, $l_{0}=0$, and $l_{1}=1$. By \cite[Prop.~2.6]{CKS2}, we have the formulas
$$
\frac{k_{i-1}}{k_i} \;=\; [n_i,\ldots,n_g]
\qquad \text{and} \qquad 
\frac{l_{i+1}}{l_i} \;=\; [n_{i},\ldots,n_1]
$$
for $i=1,\ldots,g$, and
\begin{equation}\label{eq.nk.sp}
	k_{0}\;=\;l_{g+1}\;=\;d(n_{1},\ldots,n_{g})\;=\;n,\qquad
	k_{1}\;=\;k,\qquad
	l_{g}\;=\;k',
\end{equation}
where $n>k'\geq 1$ and $kk'\equiv 1$ ($\mathrm{mod}$ $n$). In \cite[\S2.4.1]{CKS2}, we also observed
\begin{equation}\label{eq.kl.ind}
	k_{i}n_{i}\;=\;k_{i-1}+k_{i+1}\quad\text{and}\quad l_{i}n_{i}\;=\;l_{i-1}+l_{i+1}
\end{equation}
for $i=1,\ldots,g$. We will use these in \cref{sect.applic}.

For $i=1,\ldots,g$,  we  define $ \tau_i :=  (k_i+l_i-n)\tau$ and define the automorphism $\s:\CC^g \to \CC^g$ by
$$ 
  \s(z_1,\ldots,z_g) \;:=\;  (z_1+\tau_1,\ldots,z_g+\tau_g).
$$
Because $(\CC^g,+)$ is an abelian group, all translation automorphisms  of it commute with one another. 
In particular,  $\s$ 
commutes with the translation action of $\L^g$ on $\CC^g$, 
and therefore induces an automorphism of $E^g=\CC^g/\L^g$  that we will also 
 denote by $\s$.

\subsubsection{The  group $\Sigma_{n/k} \subseteq \Aut(E^g)$}
\label{sect.Sigma.n/k}
 Let $\Sigma_{n/k} := \langle s_i \; | \; n_i=2 \rangle$ 
where $s_{i}:E^g \to E^{g}$ is the automorphism  
	\begin{equation*}
		s_{i}(z_{1},\ldots,z_{g})\; :=\; (z_{1},\ldots,z_{i-1},z_{i-1}-z_{i}+z_{i+1},z_{i+1},\ldots,z_{g})
	\end{equation*} 
with the convention that $z_{0}=0$ and $z_{g+1}=0$.

\subsubsection{The invertible sheaf $\cL_{n/k}$ and the characteristic variety $X_{n/k}$}
\label{sect.Xn/k}

Following Odesskii and Feigin \cite[\S3.3]{FO89}, we define an invertible sheaf $\cL_{n/k}$ on $E^g$ as follows.\footnote{In \cite[\S 3.1.3]{CKS2} we relate this definition to Odesskii and Feigin's original definition.}
Let $\cL=\cO_E((0))$ be the degree-one invertible $\cO_E$-module corresponding to the divisor $(0)$, and define
\begin{equation}
\label{defn.Lnk}
 \cL_{n/k} \; : =\; \big( \cL^{n_1} \boxtimes \cdots \boxtimes \cL^{n _g} \big) \otimes \left( \bigotimes_{j=1}^{g-1} {\rm pr}^*_{j,j+1}\cP\right)
\end{equation}
where $\cP$ is the Poincar\'e bundle $(\cL^{-1} \boxtimes \cL^{-1}) (\D)$ on $E \times E$ and
 ${\rm pr}_{j,j+1} :E^g \to E \times E$ is the projection $(z_1,\ldots,z_g) \mapsto (z_j,z_{j+1})$  and $\D=\{(z,z) \; | \; z \in E\}$.

Thus $\cL_{n/k}=\cO_{E^g}(D_{n/k})$ where 
\begin{equation}\label{eq:d} 
D_{n/k}  \; :=\; \sum_{i=1}^g E^{i-1} \times D_i \times E^{g-i} \, + \, \sum_{j=1}^{g-1} \D_{j,j+1}
\end{equation}
and $ \D_{j,j+1} = {\rm pr}^*_{j,j+1} \D$ and $D_{i}:=(n_{i}-2+\delta_{i,1}+\delta_{i,g})(0)$. If $g\geq 2$, then
\begin{equation}
\label{defn.Di}
D_i \; = \; 
\begin{cases}
(n_i-1)(0) & \text{if $i \in \{1,g\}$}
\\
(n_i-2)(0) & \text{if $2 \le i \le g-1$.}
\end{cases}
\end{equation}

A \textsf{standard divisor} of \textsf{type} $(n_{1},\ldots,n_{g})$ is a divisor of the form
\begin{equation*}
	D_{\fd_{i},z_{j}}\;:=\;\sum_{i=1}^{g}E^{i-1}\times\fd_{i}\times E^{g-i}+\sum_{i=1}^{g-1}\Delta^{z_{j}}_{j,j+1}
\end{equation*}
where $\fd_{i}$ ($1\leq i\leq g$) are effective divisors on $E$ of respective degrees $(n_{i}-2+\delta_{i,1}+\delta_{i,g})$, $z_{j}\in E$ ($1\leq j\leq g-1$) are points, $\Delta^{z_{j}}_{j,j+1}=\operatorname{pr}^{*}_{j,j+1}\Delta^{z_{j}}$, and
\begin{equation*}
	\Delta^{z_{j}}\;:=\;\{(z,z+z_{j})\;|\;z\in E\}\;\subseteq\;E^{2}.
\end{equation*}

\begin{proposition}
\label{2-char-var.prop}
\cite[\S\S 3 and 4]{CKS2}
If $D$ is a standard divisor of type $(n_{1},\ldots,n_{g})$ with $n_{i}\geq 2$ for all $i$, then   $\cO_{E^{g}}(D)$ has the following properties:
\begin{enumerate}
\item \label{item:1}
it is base-point free or, equivalently, generated by its global sections;
\item \label{item:2}
it is ample;
\item \label{item:3}
it is very ample if and only if $n_i \ge 3$ for all $i$;
\item \label{item:4}
  $\dim_\CC\left( H^0(E^g,\cO_{E^{g}}(D))\right)=n$;
\item \label{item:5}
  $H^q(E^g,\cO_{E^{g}}(D)) =0$ for all $q \ge 1$.
\end{enumerate}
In particular, $\cL_{n/k}=\cO_{E^{g}}(D_{n/k})$ satisfies these properties.
\end{proposition}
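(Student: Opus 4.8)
The properties collected here are those established in \cite[\S\S3--4]{CKS2}, so the plan is to recall the shape of that argument, separating the claims that are translation-invariant on $E^g$ from the one that is not. The first move is to reduce everything to $\cL_{n/k}=\cO_{E^g}(D_{n/k})$. For a fixed type $(n_1,\ldots,n_g)$, the class of $\cO_{E^g}(D)$ in $\NS(E^g)$ is independent of the choice of effective divisors $\fd_i$ of the prescribed degrees and of the points $z_j$: its analytic first Chern class is the Hermitian form on $\CC^g$ given by $M\cdot H_0$, where $H_0$ is the form of $\cL=\cO_E((0))$ and
$$
M \;=\; \begin{pmatrix} n_1 & -1 & & \\ -1 & n_2 & \ddots & \\ & \ddots & \ddots & -1 \\ & & -1 & n_g \end{pmatrix},
$$
the diagonal being contributed by $\cL^{n_1}\boxtimes\cdots\boxtimes\cL^{n_g}$ and each off-diagonal $-1$ by one $\mathrm{pr}^{*}_{j,j+1}\cP$ (passing to another effective divisor $\fd_i$ of the same degree, or to $\Delta^{z_j}$, merely replaces $\cO_{E^g}(D)$ by a translate, leaving $c_1$ unchanged). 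Since two line bundles on an abelian variety with the same N\'eron--Severi class differ by an element of $\pic^{0}$ and are therefore translates of one another, and since all of \cref{item:1} through \cref{item:5} are invariant under translation automorphisms, it suffices to treat $\cL_{n/k}$.

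For parts \cref{item:2}, \cref{item:4} and \cref{item:5} I would argue via the theory of line bundles on abelian varieties. Because $n_i\ge 2$ for all $i$, the matrix $M$ is the sum of $\mathrm{diag}(n_1-2,\ldots,n_g-2)\succeq 0$ and the tridiagonal matrix with diagonal entries $2$ and off-diagonal entries $-1$, the latter being positive definite (its leading principal minors are $2,3,\ldots,g+1$); hence $M$ is positive definite, $M\cdot H_0$ is a positive-definite Hermitian form, and $\cL_{n/k}$ is ample, which is \cref{item:2}. An ample line bundle on an abelian variety has index $0$, so the vanishing theorem for abelian varieties gives $H^q(E^g,\cL_{n/k})=0$ for $q\ge 1$, which is \cref{item:5}, and then $h^0(\cL_{n/k})=\chi(\cL_{n/k})$. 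The Euler characteristic equals the Pfaffian of the alternating form $\mathrm{Im}(M\cdot H_0)$ on $\L^g$; this form is $M$ tensored with the standard unimodular symplectic form on $\L$, so its Pfaffian is $\det M=d(n_1,\ldots,n_g)=n$ by \cref{eq.nk.sp}, whence $h^0(\cL_{n/k})=n$, which is \cref{item:4}.

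For \cref{item:1} a direct argument is needed, since the polarization type of $\cL_{n/k}$ can have smallest elementary divisor $1$ and so global generation is not automatic; the route I would take is through the identification of $H^0(E^g,\cL_{n/k})$ with a space $\Theta_{n/k}(\L)$ of theta functions in $g$ variables on which the Heisenberg group $H_n$ acts, together with its basis $w_0,\ldots,w_{n-1}$ from \cite{CKS2}, showing that these $n$ functions have no common zero — for instance by induction on $g$, pushing $\cL_{n/k}$ forward along a projection $E^g\to E^{g-1}$ (the higher direct image vanishes because $\cL_{n/k}$ has degree $n_g\ge 2$ on the fibres, the pushforward commutes with base change, and the inductive hypothesis makes it globally generated). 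Finally, \cref{item:3} is the deepest statement and is in effect the main theorem of \cite{CKS2}: the morphism $\Phi_{|\cL_{n/k}|}\colon E^g\to\PP^{n-1}$ factors as the quotient map $E^g\to E^g/\Sigma_{n/k}$ followed by a closed immersion, where $\Sigma_{n/k}=\langle s_i : n_i=2\rangle$, so $\cL_{n/k}$ is very ample exactly when $\Sigma_{n/k}$ is trivial, i.e.\ exactly when no $n_i$ equals $2$. The main obstacle is constructing and analysing that factorization — establishing that $\Sigma_{n/k}$ is precisely the group identifying fibres of $\Phi_{|\cL_{n/k}|}$, not merely a group acting on them — and there is no box-product shortcut to the ``$n_i\ge 3$'' direction, since after splitting off a very ample factor $\cL^{3}\boxtimes\cdots\boxtimes\cL^{3}$ the residual Poincar\'e twists leave a non-nef Hermitian form.
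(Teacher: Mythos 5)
First, a point of reference: the paper does not prove this proposition here --- it is quoted from \cite[\S\S 3 and 4]{CKS2} --- so there is no internal argument to compare against, and what follows is an assessment of your reconstruction on its own terms. Your treatment of \cref{item:2}, \cref{item:4} and \cref{item:5} is correct and essentially complete: the identification of the first Chern class with the Hermitian form attached to the tridiagonal matrix $M$ (the off-diagonal $-1$'s coming from $c_1(\cP)=-H_0(z_1,w_2)-H_0(z_2,w_1)$), positive-definiteness of $M$ via the splitting into $\mathrm{diag}(n_1-2,\ldots,n_g-2)$ plus the type-$A_g$ Cartan matrix, Mumford's index theorem for the vanishing, and the Pfaffian identity $\mathrm{Pf}(M\otimes\omega)=\det M=d(n_1,\ldots,n_g)=n$ for the dimension count are all sound. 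The reduction of an arbitrary standard divisor to $\cL_{n/k}$ is also legitimate, since nondegeneracy of the class (which your Chern-class computation establishes for every standard divisor directly) makes translation act transitively on the line bundles in a fixed N\'eron--Severi class, and all five properties are translation-invariant.

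There are, however, two soft spots. For \cref{item:1}, the induction you sketch does not close as stated: deducing global generation of $\cL_{n/k}$ from the projection $\pi:E^g\to E^{g-1}$ requires global generation of $\pi_*\cL_{n/k}$ on $E^{g-1}$, and that sheaf is locally free of rank $n_g\ge 2$ --- it is not $\cO_{E^{g-1}}(D')$ for a standard divisor $D'$, so the inductive hypothesis says nothing about it. Equivalently, you would need surjectivity of the restriction map $H^0(E^g,\cL_{n/k})\to H^0(\pi^{-1}(z'),\cL_{n/k}|_{\pi^{-1}(z')})$ for every fibre, which is an extra cohomological input, not a formality; the present paper confronts the analogous issue in \cref{se.ep} by proving the pushforward is indecomposable of slope $>2$ and invoking \cref{le.semist.pos.deg}\cref{semist.pos.deg.slope}, but only under the stronger hypothesis that all $n_i\ge 3$. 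Your primary suggestion --- checking that the theta basis $w_0,\ldots,w_{n-1}$ has no common zero --- is indeed the route of \cite{CKS2}, but as written it is a pointer rather than an argument. For \cref{item:3} you correctly locate the substance in the identification $X_{n/k}\cong E^g/\Sigma_{n/k}$; deferring that to \cite{CKS2} is consistent with what this paper does, but it means your proposal establishes only the easy implication (a nontrivial $\Sigma_{n/k}$ obstructs injectivity of $\Phi_{n/k}$, hence very ampleness forces all $n_i\ge 3$) and not the converse.
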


Since  $\cL_{n/k}$ is base-point free, the complete linear system $|D_{n/k}|$ determines a morphism 
$$
\Phi_{n/k}:E^g \, \longrightarrow \, \PP^{n-1} \,=\, \PP\left(H^0(E^g,\cL_{n/k})^*\right).
$$  
The {\sf characteristic variety} for $Q_{n,k}(E,\tau)$ is   $X_{n/k} :=  \text{the image of }\, \Phi_{n/k}$.

\subsubsection{Special cases}\label{sssec.sp.cases}
The following examples illustrate some of the possibilities.
\begin{enumerate}
   \item 
If $n\geq 3$ and $k=1$, then $X_{n/k}=E$, $\s$ is translation by $(2-n)\tau$, and $\cL_{n/k}$ is an invertible $\cO_{E}$-module of degree $n$.
  \item 
  If $[n_1,\ldots,n _g]=[2,\ldots,2]$, then $g=n-1=k$, $\Phi_{n/k}:E^g \to \PP^{n-1}$ is surjective (\cite[\S 4.6.2]{CKS2}), 
  $Q_{n,n-1}(E,\tau)$ is a polynomial ring on $n$ variables, and $\QGr\big(Q_{n,n-1}(E,\tau)\big)=\Qcoh(\PP^{n-1})$ (see the footnote in \cref{sect.good.props}). 
   \item 
  If  $n_i \ge 3$ for all $i$, then $X_{n/k} \cong E^g$ is an isomorphism, and conversely (\cite[\S 4.6.1]{CKS2}). 
  \item
  If $f_0=f_1=1$ and $f_{i+1}=f_i+f_{i-1}$ and $(n,k)=(f_{2g+1},f_{2g-1})$, then $\frac{n}{k}=[3,\ldots,3]$ and $X_{n/k} \cong E^g$.
\item
If  $m \ge 3$ and $[n_1,\ldots,n _g]$ is either $[m,2,\ldots,2,2]$ or $[2,2,\ldots,2,m]$,  then $X_{n/k} \cong S^gE$, and conversely (\cite[Cor.~4.24]{CKS2}). 
\item
$X_{(2k+1)/k} \cong S^gE$ since $\frac{2k+1}{k}=[3,2,\ldots,2]$. See \cref{sssec.geom.description} for the significance of this case.
\item
$X_{n^2/n-1} \cong S^{n-1}E$ when $n \ge 2$, because $\frac{n^2}{n-1}=[n+2,2,\ldots,2]= [n+2,2^{n-2}]$ (see \cref{prop.7.1}\cref{item.nk.mfirst}).
The algebras $Q_{n^2,n-1}(E,\tau)$ were studied by Cherednik in \cite{Ch86-R-matrix}. They are, in a sense, homogenized 
elliptic versions of the quantized enveloping algebras $U_q(\fsl_n)$. Or, conversely, the $U_q(\fsl_n)$'s are ``degenerations'' of $Q_{n^2,n-1}(E,\tau)$.
A detailed examination of this degeneration process for $n=2$ is carried out in \cite{CSW}.
\end{enumerate}

\subsection{Twisted homogeneous coordinate rings related to $Q_{n,k}(E,\tau)$}
\label{ssect.Q.B}
Let
$$
\cL'_{n/k} \; :=\; \cO_{\PP^{n-1}}(1)\big\vert_{X_{n/k}}.
$$
In \cref{cor.map.to.B} we obtain a graded $\CC$-algebra homomorphism
$Q_{n/k}(E,\tau) \to B(X_{n/k},\s',\cL'_{n/k})$ that is an isomorphism in degree one.

\begin{theorem} \label{thm.thcr.qnk}
  Let $f:E^g \to X_{n/k}$ be the co-restriction of the morphism $\Phi_{n/k}$.
 \begin{enumerate}
 \item\label{thcr.qnk.quot} The map $f:E^g \to X_{n/k}$ is a quotient morphism for the action of $\Sigma_{n/k}$ on $E^g$.
\item\label{thcr.qnk.aut} There is a unique automorphism $\s'$ of $X_{n/k}$ such that $\Phi_{n/k} \circ \s = \s' \circ \Phi_{n/k}$.
 \item\label{thcr.qnk.tri.mor} There is a morphism of triples $(f,u):(E^g,\s,\cL_{n/k}) \to (X_{n/k},\s',\cL_{n/k}')$.
\item\label{thcr.qnk.hom} There is a homomorphism of graded algebras $B(X_{n/k},\s',\cL_{n/k}') \longrightarrow B(E^g,\s,\cL_{n/k})$.
\item\label{thcr.qnk.act} The group $\Sigma_{n/k}$ acts as automorphisms of $B(E^g,\s,\cL_{n/k})$, and the co-restriction 
of the homomorphism in \cref{thcr.qnk.hom} is an isomorphism
\begin{equation}
\label{eq:homom} 
B(X_{n/k},\s',\cL_{n/k}')  \stackrel{\sim}{\longrightarrow} B(E^g,\s,\cL_{n/k})^{\Sigma_{n/k}}. 
\end{equation}
\item\label{thcr.qnk.ample}
Every ample invertible sheaf on $X_{n/k}$, in particular $\cL_{n/k}'$, is $\s'$-ample.
\item\label{thcr.qnk.isom}
There are isomorphisms $\cL_{n/k}'   \cong (f_*\cL_{n/k})^{\Sigma_{n/k}}$ and $\cL_{n/k} \cong f^*\cL_{n/k}'$.
\end{enumerate}
 \end{theorem}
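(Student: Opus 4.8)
The plan is to establish the seven assertions in order, deriving nearly all of them from results already in hand. Assertion \cref{thcr.qnk.quot} is one of the main conclusions of \cite{CKS2}, where it is shown that $X_{n/k}$ is the quotient of $E^g$ by the $\Sigma_{n/k}$-action and that the co-restriction $f$ of $\Phi_{n/k}$ is precisely the quotient morphism, so nothing new is required for it. For \cref{thcr.qnk.aut} the crucial point is that the point $(\tau_1,\dots,\tau_g)\in E^g$ is fixed by $\Sigma_{n/k}$. Extending the definition of $\tau_j$ to $j\in\{0,g+1\}$ by the same formula $\tau_j=(k_j+l_j-n)\tau$ gives $\tau_0=\tau_{g+1}=0$ (since $k_0=l_{g+1}=n$ and $k_{g+1}=l_0=0$), consistent with the conventions $z_0=z_{g+1}=0$ used in the $s_i$. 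For each generator $s_i$ of $\Sigma_{n/k}$ we have $n_i=2$, so \cref{eq.kl.ind} gives $k_{i-1}+k_{i+1}=n_ik_i=2k_i$ and likewise $l_{i-1}+l_{i+1}=2l_i$, whence $\tau_{i-1}+\tau_{i+1}=2\tau_i$; this is exactly the condition $\tau_{i-1}-\tau_i+\tau_{i+1}=\tau_i$ that $s_i$ fix $(\tau_1,\dots,\tau_g)$. Since $\Sigma_{n/k}$ acts on $E^g$ by group automorphisms and fixes this point, $\sigma$ commutes with every element of $\Sigma_{n/k}$, so \cref{cor.sigma-ample.2}, applied with $A=E^g$ and $\Sigma=\Sigma_{n/k}$, produces an automorphism $\sigma'$ of $E^g/\Sigma_{n/k}=X_{n/k}$ with $f\sigma=\sigma'f$, i.e.\ $\Phi_{n/k}\circ\sigma=\sigma'\circ\Phi_{n/k}$; uniqueness is immediate from the surjectivity of $f$. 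The same corollary gives \cref{thcr.qnk.ample}: every ample invertible sheaf on $X_{n/k}$ is $\sigma'$-ample.

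Assertions \cref{thcr.qnk.tri.mor} and \cref{thcr.qnk.hom}, together with the isomorphism $\cL_{n/k}\cong f^*\cL'_{n/k}$ in \cref{thcr.qnk.isom}, then follow by applying \cref{prop.functoriality} with $Y=E^g$, $\cL=\cL_{n/k}$ (base-point free by \cref{2-char-var.prop}), $\Phi=\Phi_{n/k}$, and the automorphism $\sigma'$ just obtained: the proof of that proposition exhibits the isomorphism $u\colon f^*\cL'_{n/k}\xrightarrow{\sim}\cL_{n/k}$, the morphism of triples $(f,u)\colon(E^g,\sigma,\cL_{n/k})\to(X_{n/k},\sigma',\cL'_{n/k})$, and the graded-algebra homomorphism $B(X_{n/k},\sigma',\cL'_{n/k})\to B(E^g,\sigma,\cL_{n/k})$.

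For \cref{thcr.qnk.act}, transporting the tautological $\Sigma_{n/k}$-linearization of $f^*\cL'_{n/k}$ along $u$ makes $\cL_{n/k}$ a $\Sigma_{n/k}$-linearized sheaf; since each $s_i$ also commutes with $\sigma$, this realizes each element of $\Sigma_{n/k}$ as an automorphism of the triple $(E^g,\sigma,\cL_{n/k})$, and hence, by functoriality of the $B$-construction (\cref{pr.triples}), as a graded automorphism of $B(E^g,\sigma,\cL_{n/k})$. To identify the image of the homomorphism of \cref{thcr.qnk.hom} with the fixed subring, one observes that $f^*(\cL'_{n/k})_m=(\cL_{n/k})_m$ for every $m$ (because $f^*$ commutes with $\otimes$ and with $\sigma^*$ and $f\sigma=\sigma'f$), and that for the finite quotient $f\colon E^g\to E^g/\Sigma_{n/k}$ one has $H^0(X_{n/k},\cG)=H^0(E^g,f^*\cG)^{\Sigma_{n/k}}$ for every coherent sheaf $\cG$ on $X_{n/k}$; applying this with $\cG=(\cL'_{n/k})_m$ and summing over $m$ gives a degree-preserving isomorphism onto $B(E^g,\sigma,\cL_{n/k})^{\Sigma_{n/k}}$, and that it is a ring homomorphism is a routine unwinding of the definition of the product in $B$. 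Finally, the remaining isomorphism $\cL'_{n/k}\cong(f_*\cL_{n/k})^{\Sigma_{n/k}}$ in \cref{thcr.qnk.isom} follows from the projection formula: $f_*\cL_{n/k}=f_*f^*\cL'_{n/k}\cong\cL'_{n/k}\otimes f_*\cO_{E^g}$, and $(f_*\cO_{E^g})^{\Sigma_{n/k}}=\cO_{X_{n/k}}$ because $f$ is the quotient morphism, so taking $\Sigma_{n/k}$-invariants yields $(f_*\cL_{n/k})^{\Sigma_{n/k}}\cong\cL'_{n/k}$.

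The step requiring the most care is \cref{thcr.qnk.act}: one must simultaneously keep track of the $\Sigma_{n/k}$-linearization on $\cL_{n/k}$, the identification $H^0(X_{n/k},(\cL'_{n/k})_m)\cong H^0(E^g,(\cL_{n/k})_m)^{\Sigma_{n/k}}$, and the multiplication maps on both $B$'s, and verify that all three are mutually compatible; the underlying cohomological and invariant-theoretic facts used here are standard over $\CC$. Everything else is either quoted from \cite{CKS2} or is a formal consequence of \cref{cor.sigma-ample.2}, \cref{prop.functoriality}, and \cref{pr.triples}.
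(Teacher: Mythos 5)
Your proposal is correct and follows essentially the same route as the paper: quotient morphism from \cite{CKS2}, descent of $\s$ to $\s'$, the morphism of triples plus functoriality of the $B$-construction for \cref{thcr.qnk.tri.mor}--\cref{thcr.qnk.hom}, a $\Sigma_{n/k}$-equivariant structure on $\cL_{n/k}$ combined with the projection formula and invariants for \cref{thcr.qnk.act} and \cref{thcr.qnk.isom}, and \cref{cor.sigma-ample.2} for \cref{thcr.qnk.ample}. The only difference is that you supply self-contained verifications (the computation that $(\tau_1,\ldots,\tau_g)$ is $\Sigma_{n/k}$-fixed via \cref{eq.kl.ind}, and the equivariant structure obtained by transporting the tautological linearization of $f^*\cL'_{n/k}$ along $u$) where the paper instead cites \cite[Props.~2.10 and 4.11]{CKS2}.
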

 \begin{proof}
\cref{thcr.qnk.quot}
See \cite[Cor.~4.19]{CKS2}.
 
\cref{thcr.qnk.aut}
 This follows from \cref{thcr.qnk.quot} and  \cite[Prop.~2.10]{CKS2}.
 
\cref{thcr.qnk.tri.mor}
This follows from the definitions of $X_{n/k}$ and $\cL_{n/k}'$.
More explicitly, if $\iota:X_{n/k} \to \PP(H^0(E^g,\cL_{n/k})^*)$ is the inclusion morphism, then $\Phi_{n/k} =\iota \circ f$ and 
$\cL_{n/k} \cong \Phi_{n/k}^{*} \cO_{\PP^{n-1}}(1) =  f^*\iota^* \cO_{\PP^{n-1}}(1) = f^*\cL_{n/k}'$ so we take $u$ to be 
the canonical isomorphism $f^*\cL_{n/k}' \to \cL_{n/k}$.

\cref{thcr.qnk.hom}
 This follows from \cref{thcr.qnk.tri.mor} and \cref{pr.triples}.
 
\cref{thcr.qnk.act}
 By \cite[Prop.~4.11]{CKS2}, $\cL_{n/k}$ is  a $\Sigma_{n/k}$-equivariant sheaf on $E^g$;  i.e., 
there are isomorphisms $t_\g:\cL_{n/k} \to \g^*\cL_{n/k}$, $\g \in \Sigma_{n/k}$,
such that  $t_{\a\b}=\b^*(t_\a) \circ t_\b$ for all $\a,\b \in \Sigma_{n/k}$. Since the action of $\s$  commutes with that of $\Sigma_{n/k}$,
each pair $(\g,t_\g^{-1})$ is an automorphism of the triple $(E^g,\s,\cL_{n/k})$ and therefore induces (by 
 functoriality) a right action of $\Sigma_{n/k}$ as automorphisms of $B(E^g,\s,\cL_{n/k})$.\footnote{The triple $(E^g,\s,\cL_{n/k})$ is a 
 $\Sigma_{n/k}$-triple in the terminology of \cite[p.~27]{ST94}. In \cite{ST94} the group acts  freely but the 
 terminology extends to the present situation.} 
 
For brevity we write $\cL=\cL_{n/k}$ and $\cL'=\cL_{n/k}'$. 
 
Comparing the degree-$(m+1)$ components  in (\ref{eq:homom}), we must show that the natural map
$$
H^0(E^g/\Sigma_{n/k},\cL' \otimes \s'^*\cL' \otimes \cdots \otimes (\s'^m)^*\cL') 
\; \longrightarrow \;
H^0(E^g,\cL \otimes \s^*\cL \otimes \cdots \otimes (\s^m)^*\cL)^{\Sigma_{n/k}}
$$
is an isomorphism for all $m \ge 0$.

For simplicity we assume $m=2$; all other cases are essentially the same. We will show that the natural map
\begin{equation}
\label{eq:m=2.case}
H^0(E^g/\Sigma_{n/k},\cL' \otimes \s'^*\cL') 
\; \longrightarrow \;
H^0(E^g,\cL \otimes \s^*\cL)^{\Sigma_{n/k}}
\end{equation}
is an isomorphism. 
Since $\s' \circ f=f \circ \s$, $f^*\circ (\s'^i)^*=(\s^i)^* \circ f^*$ for all $i$; hence, since $f^*$ commutes with $\otimes$
and $\cL\cong f^*\cL'$, 
$$
\cL \otimes \s^*\cL  \; \cong \; f^*(\cL' \otimes \s'^*\cL').
$$
Therefore, by the projection formula, $f_*(\cL \otimes \s^*\cL) \cong f_*\cO_{E^g} \otimes  \cL' \otimes \s'^*\cL'$. 
Since the action of $\Sigma_{n/k}$ on $\cL'$ and $\s'^*\cL'$ is trivial it follows that 
$$
\big(f_*(\cL \otimes \s^*\cL)\big)^{\Sigma_{n/k}} \; \cong \;\big(f_*\cO_{E^g}\big)^{\Sigma_{n/k}} \otimes  \cL' \otimes \s'^*\cL'  
\;=\;   \cL' \otimes \s'^*\cL'. 
$$
Hence
\begin{align*}
H^0(E^g,\cL \otimes \s^*\cL)^{\Sigma_{n/k}} &  \; \cong \; H^0(E^g/\Sigma_{n/k},f_*(\cL \otimes \s^*\cL ) )^{\Sigma_{n/k}}
\\
&  \; \cong \;  H^0\big(E^g/\Sigma_{n/k},(f_*(\cL \otimes \s^*\cL ) )^{\Sigma_{n/k}}\big)
\\
&  \; \cong \;  H^0\big(E^g/\Sigma_{n/k}, \cL' \otimes \s'^*\cL' ). 
\end{align*}
Thus the map in \cref{eq:m=2.case} is an isomorphism in degree two.

\cref{thcr.qnk.ample}
This is an immediate consequence of \cref{thcr.qnk.quot} and \cref{cor.sigma-ample.2}.

\cref{thcr.qnk.isom}
See Remark 4.10, Proposition 4.5, and the remarks at the beginning of \S4 in \cite{CKS2}.
\end{proof}

\begin{remark}\label{re.bdescent}
  Part \Cref{thcr.qnk.act} of \Cref{thm.thcr.qnk} is essentially \cite[Prop.~2.5]{ST94}. The difference is that in the latter the action of the group ($\Sigma_{n/k}$) is free. An examination of the proof, however, reveals that the only consequence of freeness needed there is the fact that
  \begin{equation*}
    \cL_{n/k}\cong f^*\cL'_{n/k} \cong f^*((f_*\cL_{n/k})^{\Sigma_{n/k}}). 
  \end{equation*}
  This, in turn, is a consequence of \cite[Prop.~4.5]{CKS2}.
\end{remark}
 
By \Cref{prop.B-op}, $B(X,\s^{-1},\cL)^{\mathrm{op}} \cong B(X,\s,\cL)$.  In the context of elliptic algebras more is true.

\begin{theorem}\label{th.-tau}
$B(X_{n/k},\sigma',\cL'_{n/k}) \cong B(X_{n/k},(\sigma')^{-1},\cL'_{n/k})$. 
\end{theorem}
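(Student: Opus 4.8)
The plan is to produce an automorphism $\iota'$ of $X_{n/k}$ satisfying $(\iota')^{-1}\sigma'\iota' = (\sigma')^{-1}$ and $(\iota')^*\cL'_{n/k}\cong\cL'_{n/k}$, and then apply \cref{pr.tw-by-mu} with $\mu=\iota'$, which gives $B(X_{n/k},\sigma',\cL'_{n/k})\cong B(X_{n/k},(\iota')^{-1}\sigma'\iota',(\iota')^*\cL'_{n/k}) = B(X_{n/k},(\sigma')^{-1},\cL'_{n/k})$. The automorphism $\iota'$ will be the one induced by the inversion $\iota := -\mathrm{id}\colon E^g\to E^g$, $(z_1,\ldots,z_g)\mapsto(-z_1,\ldots,-z_g)$. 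First I would record the two relevant properties of $\iota$ upstairs. Since $\sigma$ is translation by $(\tau_1,\ldots,\tau_g)$ (\cref{sssect.sigma}), a one-line computation gives $\iota\sigma\iota^{-1}=\sigma^{-1}$. And since $\cL_{n/k}=\cO_{E^g}(D_{n/k})$ with $D_{n/k}$ as in \cref{eq:d}, each summand of $D_{n/k}$ is $\iota$-invariant: the divisors $E^{i-1}\times D_i\times E^{g-i}$ because $D_i$ is supported at $0=-0$, and the diagonal translates $\Delta_{j,j+1}$ because $\iota$ carries $\{(z,z)\}$ to $\{(-z,-z)\}=\{(w,w)\}$; hence $\iota^*\cL_{n/k}\cong\cL_{n/k}$.

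Next I would descend $\iota$ through the quotient morphism $f\colon E^g\to X_{n/k}$ of \cref{thm.thcr.qnk}\cref{thcr.qnk.quot}. The generators $s_i$ of $\Sigma_{n/k}$ (\cref{sect.Sigma.n/k}) are group automorphisms of $E^g$, and $-\mathrm{id}$ commutes with every group endomorphism of an abelian variety, so $\iota$ centralizes $\Sigma_{n/k}$ in $\Aut(E^g)$; therefore $\iota$ sends $\Sigma_{n/k}$-orbits to $\Sigma_{n/k}$-orbits and descends to an automorphism $\iota'$ of $X_{n/k}$ with $f\iota=\iota' f$. Combining $f\iota=\iota' f$, $f\sigma=\sigma' f$ (\cref{thm.thcr.qnk}\cref{thcr.qnk.aut}), $\iota\sigma\iota^{-1}=\sigma^{-1}$, and the surjectivity of $f$ then yields $\iota'\sigma'(\iota')^{-1}=(\sigma')^{-1}$, equivalently $(\iota')^{-1}\sigma'\iota'=(\sigma')^{-1}$. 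For the line bundle I would use $\cL_{n/k}=f^*\cL'_{n/k}$ (\cref{thm.thcr.qnk}\cref{thcr.qnk.isom}) to get $f^*\big((\iota')^*\cL'_{n/k}\big)=\iota^*f^*\cL'_{n/k}=\iota^*\cL_{n/k}\cong\cL_{n/k}=f^*\cL'_{n/k}$, and then recover $(\iota')^*\cL'_{n/k}\cong\cL'_{n/k}$ by applying $f_*$, the projection formula, and the identity $(f_*\cO_{E^g})^{\Sigma_{n/k}}=\cO_{X_{n/k}}$ valid since $f$ is the quotient morphism (alternatively one can work with $\cL'_{n/k}\cong(f_*\cL_{n/k})^{\Sigma_{n/k}}$, using that $\iota$ is compatible with the $\Sigma_{n/k}$-equivariant structure on $\cL_{n/k}$ because $\iota$ centralizes $\Sigma_{n/k}$).

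With $\iota'$ in hand the theorem is immediate from \cref{pr.tw-by-mu}. The only point requiring any care is the last one, namely passing from the transparent isomorphism $\iota^*\cL_{n/k}\cong\cL_{n/k}$ on $E^g$ to $(\iota')^*\cL'_{n/k}\cong\cL'_{n/k}$ on the quotient $X_{n/k}$; this is handled by the projection formula together with $(f_*\cO_{E^g})^{\Sigma_{n/k}}=\cO_{X_{n/k}}$. Everything else reduces to direct computations with translation automorphisms and with the explicit divisor $D_{n/k}$.
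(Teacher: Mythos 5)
Your overall strategy is the same as the paper's: conjugate $\sigma$ by $[-1]:E^g\to E^g$, descend to an automorphism $\mu=\iota'$ of $X_{n/k}$ with $(\sigma')^{-1}=\mu^{-1}\sigma'\mu$, verify $\mu^*\cL'_{n/k}\cong\cL'_{n/k}$, and invoke \cref{pr.tw-by-mu}. The identification of $\iota$, the computation $\iota\sigma\iota^{-1}=\sigma^{-1}$, the observation that $D_{n/k}$ is $\iota$-invariant, and the descent of $\iota$ through $f$ (via the fact that $-\id$ centralizes the group automorphisms $s_i$) are all correct and match the paper.

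The gap is in the step you yourself flag as the delicate one. From $f^*\bigl((\iota')^*\cL'_{n/k}\bigr)\cong f^*\cL'_{n/k}$ you cannot recover $(\iota')^*\cL'_{n/k}\cong\cL'_{n/k}$ merely by ``applying $f_*$, the projection formula, and $(f_*\cO_{E^g})^{\Sigma_{n/k}}=\cO_{X_{n/k}}$'': to take $\Sigma_{n/k}$-invariants of the pushed-forward isomorphism you need that isomorphism to be $\Sigma_{n/k}$-equivariant, and an arbitrary isomorphism $f^*\cA\cong f^*\cB$ differs from an equivariant one by a character of $\Sigma_{n/k}$. Equivalently, $f^*:\pic(X_{n/k})\to\pic(E^g)$ is not injective for a general finite quotient --- its kernel sits inside $\Hom(\Sigma_{n/k},\Bbbk^\times)$, which is nontrivial here since $\Sigma_{n/k}$ is a product of symmetric groups --- so your argument only pins down $(\iota')^*\cL'_{n/k}$ up to such a (torsion) twist. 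The missing content is exactly the equivariance bookkeeping: one must check that the canonical isomorphism $\iota^*\cL_{n/k}\cong\cL_{n/k}$ (coming from $\iota$ preserving $D_{n/k}$) is compatible with the $\Sigma_{n/k}$-equivariant structure on $\cL_{n/k}$, which holds because the whole group $\langle\Sigma_{n/k},\iota\rangle$ preserves $D_{n/k}$ and all the isomorphisms involved are the canonical divisor-preserving ones. This is in effect your parenthetical alternative, which is the correct route but is asserted rather than proved. The paper handles the same point by equipping $\cL_{n/k}$ with a $\ZZ_2$-equivariant structure, linearizing the induced $\ZZ_2$-action on $V=H^0(E^g,\cL_{n/k})$ and on $\cO_{\PP(V^*)}(1)$, and restricting that linearization to $X_{n/k}$, which produces the isomorphism $\mu^*\cL'_{n/k}\cong\cL'_{n/k}$ directly without ever needing $f^*$ to be injective on Picard groups.
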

\begin{proof}
  Let $[-1]:E^g \to E^g$ be the automorphism $\sfz \mapsto -\sfz$ and let $\mu:X_{n/k} \to X_{n/k}$ be the automorphism that is the descent of $[-1]$. Since $\s$ is a translation automorphism, $\s^{-1}=[-1]^{-1} \circ \s \circ [-1]$; hence $(\sigma')^{-1}=\mu^{-1}\sigma'\mu$.  We will complete the proof by applying \Cref{pr.tw-by-mu} to $(X_{n/k},\s',\cL_{n/k}')$ after showing that $\mu^*\cL'_{n/k}\cong \cL'_{n/k}$.

The action of $\ZZ_2=\{\id,[-1]\}$ on $E^g$ preserves the effective divisor $D_{n/k}$ as a subscheme. 
By \cite[Lem.~4.8]{CKS2}, this gives a $\ZZ_2$-equivariant structure on $\cL_{n/k}$. Setting $V=H^0(E^g,\cL_{n/k})$, we now have a $\ZZ_2$-action on $V$ inducing one on $\PP(V^*)$ together with a compatible $\ZZ_2$-equivariant structure on the twisting sheaf 
$\cO_{\PP(V^*)}(1)$ (see, e.g., \cite[Prop.~1.7]{mumf-git}).
  The morphism $\Phi_{n/k}:E^g \to \PP(V^*)$ is $\ZZ_2$-equivariant and the generator of $\ZZ_2$ acts on $X_{n/k}$ as $\mu$. 
  The equivariant structure on $\cO_{\PP(V^*)}(1)$ restricts to one on
  \begin{equation*}
    \cO_{\PP(V^*)}(1)|_{X_{n/k}}\cong \cL'_{n/k},
  \end{equation*}
  whence the desired isomorphism $\mu^*\cL'_{n/k}\cong\cL'_{n/k}$ that completes the proof.
\end{proof}

Let $\Theta_{n/k}(\L)$ be the space of theta functions in $g$ variables defined in \cite[\S2.7 and \S5.2]{CKS2}, and 
let $\{w_\a(\sfz) \; | \; \a \in \ZZ_n\}$ be the basis for $\Theta_{n/k}(\L)$ in \cite[\S5.1.1]{CKS2}.
We make the identifications $Q_{n,k}(E,\tau)_1 = H^0(E^g,\cL_{n/k}) = \Theta_{n/k}(\L)$ described in \cite[\S5.3]{CKS2}. The
identifications are such that $x_\a=w_\a(\sfz)$, and the morphism  $\Phi_{n/k}:E^g \to \PP(H^0(E^g,\cL_{n/k})^*)$
is given by $\Phi_{n/k}(\sfz)=(w_0(\sfz),\ldots,w_{n-1}(\sfz))$. 

\begin{proposition}
\cite[Cor.~5.9]{CKS2}
\label{prop.cor5.13.in.2}
The quadratic relations for $Q_{n,k}(E,\tau)$ vanish on the graph of the automorphism $\s':X_{n/k} \to X_{n/k}$.
\end{proposition}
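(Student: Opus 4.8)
The plan is to pull the statement back along the morphism $\Phi_{n/k}$ and recognize the outcome as the theta-function identity \cref{eq.id.rel.w.intro}. Use the identifications $Q_{n,k}(E,\tau)_1 = H^0(E^g,\cL_{n/k}) = \Theta_{n/k}(\L)$ under which $x_\alpha = w_\alpha(\sfz)$ and $\Phi_{n/k}(\sfz) = (w_0(\sfz),\ldots,w_{n-1}(\sfz))$, and regard an element $F = \sum_{\alpha,\beta} c_{\alpha\beta}\, x_\alpha\otimes x_\beta$ of $V\otimes V$ as a form of bidegree $(1,1)$ on $\PP(V^*)\times\PP(V^*)$, as in \cref{prop.map.to.B}. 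The pullback of $F$ along $\Phi_{n/k}\times\Phi_{n/k}$ is the section of $\cL_{n/k}\boxtimes\cL_{n/k}$ whose value at $(\sfz,\sfz')\in E^g\times E^g$ is $\sum_{\alpha,\beta} c_{\alpha\beta}\, w_\alpha(\sfz)\, w_\beta(\sfz')$. Since $\Phi_{n/k}\colon E^g\to X_{n/k}$ is surjective and $\Phi_{n/k}\circ\s = \s'\circ\Phi_{n/k}$ by \cref{thm.thcr.qnk}\cref{thcr.qnk.aut}, every point of the graph $\G_{\s'}\subseteq X_{n/k}\times X_{n/k}$ has the form $(\Phi_{n/k}(\sfz),\Phi_{n/k}(\s(\sfz)))$; hence $F$ vanishes on $\G_{\s'}$ as soon as the function $\sfz\mapsto\sum_{\alpha,\beta} c_{\alpha\beta}\, w_\alpha(\sfz)\, w_\beta(\s(\sfz))$ vanishes identically on $E^g$.

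Next I would apply this to $F = r_{ij}$, the quadratic relation of $Q_{n,k}(E,\tau)$ indexed by $(i,j)$ in \cref{the-relns}. Substituting $x_{j-r}\otimes x_{i+r}\mapsto w_{j-r}(\sfz)\, w_{i+r}(\s(\sfz))$ turns $r_{ij}$ into
\begin{equation*}
\sum_{r\in\ZZ_n}\frac{\theta_{j-i+r(k-1)}(0)}{\theta_{j-i-r}(-\tau)\,\theta_{kr}(\tau)}\, w_{j-r}(\sfz)\, w_{i+r}(\s(\sfz)),
\end{equation*}
which, after relabelling $\beta = j$ and $\alpha = i$, is term-by-term the left-hand side of \cref{eq.id.rel.w.intro} and therefore vanishes identically on $E^g$. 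By the previous paragraph, each $r_{ij}$ — and hence the whole space of quadratic relations of $Q_{n,k}(E,\tau)$ — vanishes on $\G_{\s'}$. This is, in the end, a repackaging of \cite[Cor.~5.9]{CKS2}.

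The substantive input is the identity \cref{eq.id.rel.w.intro} itself, which is established in \cite{CKS2}; granting it, the proposition here is essentially immediate, and the only genuine points to check are bookkeeping ones. First, that the coefficient and index conventions in \cref{the-relns} and \cref{eq.id.rel.w.intro} match under $\beta = j$, $\alpha = i$ (they do, using only $kr = rk$). Second, that the argument remains valid for all $\tau\in E$, including torsion points and points of $\tfrac1n\L$, where the coefficients of \cref{the-relns} must be read off after clearing denominators as in \cite[Defn.~3.11]{CKS1}: here one observes that the subspace of quadratic relations and the graph $\G_{\s'}$ both vary algebraically with $\tau$, while vanishing on $\G_{\s'}$ is a closed condition, so the inclusion, valid for $\tau$ outside a proper closed subset, propagates to all $\tau$ by specialization. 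I expect this last degeneration point to be the only mild obstacle; everything else is a direct translation.
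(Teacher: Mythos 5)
Your proposal is correct and is essentially the argument the paper itself has in mind: the paper gives no proof here, citing \cite[Cor.~5.9]{CKS2} directly, but its introduction sketches exactly your route (``if one identifies $x_\alpha$ with $w_\alpha$, then \cref{eq.id.rel.w.intro} tells us that the relations for $Q_{n,k}(E,\tau)$ vanish on the graph of $\sigma$''), combined with the surjectivity of $\Phi_{n/k}$ and the intertwining $\Phi_{n/k}\circ\sigma=\sigma'\circ\Phi_{n/k}$ to descend from $E^g$ to $X_{n/k}$. Your closing remark about specialization to torsion $\tau$ is the right way to handle the degenerate parameters, and is the only point where the details live in \cite{CKS1,CKS2} rather than in what you wrote.
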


\begin{corollary}
\label{cor.map.to.B}
There are $\CC$-algebra homomorphisms
\begin{equation}
\label{map.Q.to.B}
\begin{tikzcd}
	Q_{n,k}(E,\tau)\ar[r,"\Psi_{n/k}"] & B(X_{n/k},\s',\cL'_{n/k})\ar[r] & B(E^g,\s,\cL_{n/k})
\end{tikzcd}
\end{equation}
that  are isomorphisms in degree one.
\end{corollary}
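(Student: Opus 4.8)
The plan is to obtain the two arrows from results already established and then to check separately that each is bijective on degree-one components; there is essentially no new mathematical content, only bookkeeping with the various identifications. For the first arrow I would invoke \cref{prop.map.to.B} with $V = Q_{n,k}(E,\tau)_1$, writing $Q_{n,k}(E,\tau) = TV/(R)$ for $R \subseteq V^{\otimes 2}$ the span of the (quadratic) defining relations, taking $X = X_{n/k}$, taking $f\colon X_{n/k} \hookrightarrow \PP^{n-1} = \PP(V^*)$ to be the inclusion --- so that $f^*\cO_{\PP^{n-1}}(1) = \cL'_{n/k}$ by the definition of $\cL'_{n/k}$ --- and taking the automorphism ``$\s$'' of \cref{prop.map.to.B} to be the automorphism $\s'$ of $X_{n/k}$ supplied by \cref{thm.thcr.qnk}\cref{thcr.qnk.aut}. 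Under the convention $\PP^{n-1} = \PP(V^*)$, elements of $V^{\otimes 2}$ are bidegree-$(1,1)$ forms on $\PP^{n-1}\times\PP^{n-1}$, so \cref{prop.cor5.13.in.2} --- the statement that the quadratic relations for $Q_{n,k}(E,\tau)$ vanish on the graph $\G_{\s'} \subseteq X_{n/k}\times X_{n/k}$ --- is precisely the hypothesis of \cref{prop.map.to.B} that $R$ vanish on $(f\times f)(\G_{\s'})$, since $f$ is a closed immersion. That proposition then produces a $\CC$-algebra homomorphism $\Psi_{n/k}\colon Q_{n,k}(E,\tau) \to B(X_{n/k},\s',\cL'_{n/k})$ whose degree-one part is the canonical restriction map $V = H^0(\PP^{n-1},\cO(1)) \to H^0(X_{n/k},\cL'_{n/k})$.

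For the second arrow I would apply \cref{thm.thcr.qnk}\cref{thcr.qnk.hom} directly --- equivalently, feed the morphism of triples $(f,u)\colon (E^g,\s,\cL_{n/k}) \to (X_{n/k},\s',\cL'_{n/k})$ of \cref{thm.thcr.qnk}\cref{thcr.qnk.tri.mor} into \cref{pr.triples} (or \cref{prop.functoriality}) --- obtaining a graded homomorphism $B(X_{n/k},\s',\cL'_{n/k}) \to B(E^g,\s,\cL_{n/k})$ whose degree-one component is the pullback $f^*\colon H^0(X_{n/k},\cL'_{n/k}) \to H^0(E^g,\cL_{n/k})$. To conclude, recall the identification $Q_{n,k}(E,\tau)_1 = \Theta_{n/k}(\L) = H^0(E^g,\cL_{n/k})$ used just before \cref{prop.cor5.13.in.2}: under it the composite $V \to H^0(X_{n/k},\cL'_{n/k}) \to H^0(E^g,\cL_{n/k})$ is $\Phi_{n/k}^*$, which is an isomorphism because $\Phi_{n/k}$ is the morphism attached to the complete linear system $|\cL_{n/k}|$ on $\PP^{n-1} = \PP(H^0(E^g,\cL_{n/k})^*)$ and $\dim_\CC H^0(E^g,\cL_{n/k}) = n$ by \cref{2-char-var.prop}\cref{item:4}. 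Since $f$ is dominant, $f^*$ is injective on global sections of the line bundle $\cL'_{n/k}$; being also surjective, because its composite with the restriction map is the surjection $\Phi_{n/k}^*$, it is an isomorphism, and hence so is the restriction map $V \to H^0(X_{n/k},\cL'_{n/k})$. This gives both degree-one claims.

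I do not expect a genuine obstacle: the corollary is a formal consequence of \cref{prop.map.to.B}, \cref{prop.cor5.13.in.2}, and \cref{thm.thcr.qnk}. The only points that need care are keeping the dualities straight --- that $\PP^{n-1} = \PP(V^*)$ with $V = Q_{n,k}(E,\tau)_1 = H^0(E^g,\cL_{n/k})$, so that the statement of \cref{prop.cor5.13.in.2} matches the hypothesis of \cref{prop.map.to.B} verbatim --- and noticing that $f^*$ is not merely injective but bijective in degree one.
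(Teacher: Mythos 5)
Your proposal is correct and follows essentially the same route as the paper: the first arrow comes from \cref{prop.map.to.B} applied to the inclusion $X_{n/k}\hookrightarrow\PP(V^*)$ with the vanishing hypothesis supplied by \cref{prop.cor5.13.in.2}, and the second from \cref{thm.thcr.qnk}\cref{thcr.qnk.hom}. The paper's proof is terser, handling the degree-one isomorphism implicitly via the identifications $Q_{n,k}(E,\tau)_1=\Theta_{n/k}(\L)=H^0(E^g,\cL_{n/k})=H^0(X_{n/k},\cL'_{n/k})$ from \cite[\S5.3]{CKS2}, whereas you verify it explicitly; both are fine.
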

\begin{proof}
Let $\varphi:Q_{n,k}(E,\tau)_1 \longrightarrow \Theta_{n/k}(\L)=H^0(E^g,\cL_{n/k})=H^0(X_{n/k},\cL'_{n/k})=B(X_{n/k},\s',\cL'_{n/k})_1$ 
be the vector space isomorphism defined by $\varphi(x_\a)=w_\a(\sfz)$.  
By \cref{prop.map.to.B}, $\varphi$ extends to the desired algebra homomorphism if the degree-two
relations for $Q_{n,k}(E,\tau)$ vanish on 
$$
 \big\{\big(\Phi_{n/k}(\sfz), \Phi_{n/k}(\s(\sfz))\big) \; \big\vert \; \sfz \in E^g\big\}.
$$
They do by \cref{prop.cor5.13.in.2}.
\end{proof}

\begin{corollary}
\label{cor.commutative}
Assume $k \ne n-1$. Suppose the homomorphism $Q_{n,k}(E,\tau) \to B(X_{n/k},\sigma',\cL_{n/k}')$ is surjective.
If $Q_{n,k}(E,\tau)$ is a finitely generated module over its center, then $\tau$ has finite order. 
\end{corollary}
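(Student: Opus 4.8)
The plan is to push the finiteness hypothesis through the surjection $\Psi_{n/k}\colon Q_{n,k}(E,\tau)\twoheadrightarrow B:=B(X_{n/k},\sigma',\cL'_{n/k})$, deduce that the automorphism $\sigma'$ has finite order, and then unwind the definition of $\sigma'$ to see that $\tau$ is torsion.

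First I would set up the reductions. Since $\Psi_{n/k}$ is surjective and $Q_{n,k}(E,\tau)$ is a finite module over its center, the image of that center is a central graded subalgebra of $B$ over which $B$ is a finite module; hence $B$ is module-finite over its own center $Z(B)$, and $Z(B)$ is an affine $\CC$-algebra. By \cref{cor.map.to.B}, $B_1=H^0(X_{n/k},\cL'_{n/k})$ is the space of linear forms on $\PP^{n-1}$ restricted to $X_{n/k}\subseteq\PP^{n-1}$, so the ratios of the coordinate forms generate $\CC(X_{n/k})$. Because $\cL'_{n/k}=\cO_{\PP^{n-1}}(1)\big|_{X_{n/k}}$ is very ample, each sheaf $\cL'_i=\cL'_{n/k}\otimes(\sigma')^{*}\cL'_{n/k}\otimes\cdots\otimes((\sigma')^{i-1})^{*}\cL'_{n/k}$ is very ample, so $B$ is infinite-dimensional, and hence so is $Z(B)$. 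Since $X_{n/k}$ is an integral projective scheme (it is the image of $E^g$), $B$ is a domain, so $S:=\{\,i\ge 0\mid Z(B)_i\neq 0\,\}$ is a sub-semigroup of $\NN$ containing $0$; being infinite, it contains every sufficiently large multiple of $e:=\gcd(S)$.

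The heart of the argument is to show $\sigma'$ has finite order, and for this I would exploit a nonzero central element $z\in Z(B)_i$ for each $i\in S$. Writing $x_0,\dots,x_{n-1}\in B_1$ for the restricted coordinate forms and expanding the identities $z\cdot x_j=x_j\cdot z$ inside $B_{i+1}=H^0(X_{n/k},\cL'_{i+1})$, one compares divisors of zeros using $\operatorname{div}(s*t)=\operatorname{div}(s)+\operatorname{div}(t)$ and $\operatorname{div}(s^{\nu})=\nu^{-1}\operatorname{div}(s)$ from \cref{sect.notation}; subtracting the relation for $j=0$ yields $(\sigma'^{\,i})^{-1}\bigl(\operatorname{div}(x_j)-\operatorname{div}(x_0)\bigr)=\operatorname{div}(x_j)-\operatorname{div}(x_0)$. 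Hence $(\sigma'^{\,i})^{*}$ fixes each rational function $x_j/x_0$ up to a nonzero scalar, so $\sigma'^{\,i}$ is the restriction to $X_{n/k}$ of a diagonal element $\phi_i\in PGL_n(\CC)$. Composing with the quotient morphism $f\colon E^g\to X_{n/k}$ of \cref{thm.thcr.qnk} (which satisfies $f\sigma=\sigma'f$, where $\sigma=T_w\colon E^g\to E^g$ is the translation of \cref{sssect.sigma}) gives $\Phi_{n/k}\circ\sigma^{i}=\phi_i\circ\Phi_{n/k}$, and since $\phi_i^{*}\cO_{\PP^{n-1}}(1)\cong\cO_{\PP^{n-1}}(1)$ this forces $(\sigma^{i})^{*}\cL_{n/k}\cong\cL_{n/k}$, i.e.\ $iw$ lies in the finite kernel $K(\cL_{n/k})$ of the polarization $E^g\to\widehat{E^g}$ attached to the ample sheaf $\cL_{n/k}$ (ampleness is \cref{2-char-var.prop}). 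Letting $i$ run over two consecutive large multiples of $e$ in $S$ shows $ew\in K(\cL_{n/k})$, so $w$ has finite order; hence so do $\sigma$ and $\sigma'$.

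Finally I would unwind the definitions: $\sigma$ is translation by $w=(\tau_1,\dots,\tau_g)$ with $\tau_1=(k_1+l_1-n)\tau=(k+1-n)\tau$, using $k_1=k$ from \cref{eq.nk.sp} and the convention $l_1=1$. As $w$ is a torsion point of $E^g$, its first coordinate $(k+1-n)\tau$ is torsion in $E$; since $k\neq n-1$ the integer $k+1-n$ is nonzero, so multiplication by it is an isogeny of $E$ and $\tau$ must be a torsion point. The step I expect to be the main obstacle is the third paragraph: extracting the projective linearity of $\sigma'^{\,i}$ from a single central element of degree $i$ (the divisor bookkeeping with the twisted product and the convention $\operatorname{div}(s^\nu)=\nu^{-1}\operatorname{div}(s)$ needs care) and then invoking the finiteness of $K(\cL_{n/k})$. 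A quicker but less self-contained route would instead observe that the graded ring of fractions of $B$ is the skew-Laurent ring $\CC(X_{n/k})[t^{\pm1};(\sigma')^{*}]$; being a localization of the PI ring $B$ it is PI, which already forces $(\sigma')^{*}$, and hence $\sigma'$, to have finite order.
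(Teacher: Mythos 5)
Your proposal is correct, but your primary argument takes a genuinely different route from the paper's. The paper's proof is exactly the ``quicker but less self-contained route'' you sketch in your final sentence: it passes to the graded quotient ring $K[t^{\pm 1};\sigma']$ with $K=\CC(X_{n/k})$ (citing \cite[Prop.~2.1]{ST94} for the localization), invokes the standard fact that a skew Laurent extension finite over its center forces $\sigma'$ to have finite order on $K$ and hence on $X_{n/k}$, and then lifts finite order from $\sigma'$ to $\sigma$ by noting that $\sigma^b(p)=\gamma\cdot p$ for some $\gamma\in\Sigma_{n/k}$, so $\sigma^{ab}=\id$ where $a=|\Sigma_{n/k}|$; the endgame ($k_1+l_1-n=k+1-n\neq 0$, so $\tau$ is torsion) is identical to yours. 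Your main argument replaces the skew-Laurent/PI input with a self-contained geometric one: central elements in infinitely many degrees, divisor bookkeeping with $\operatorname{div}(s*t)=\operatorname{div}(s)+\operatorname{div}(t)$ and the twist convention of \cref{sect.notation}, the conclusion that $(\sigma'^{\,i})^*$ fixes each $x_j/x_0$ up to scalar so that $\sigma^i$ preserves the isomorphism class of $\cL_{n/k}$, and finiteness of the kernel of the polarization of the ample sheaf $\cL_{n/k}$ (using \cref{2-char-var.prop}). This checks out -- $X_{n/k}=E^g/\Sigma_{n/k}$ is normal and integral, so $B$ is a graded domain, $Z(B)$ is graded and infinite-dimensional, and the numerical-semigroup and $K(\cL_{n/k})$ arguments go through -- and it has the advantage of bypassing both the external reference for the localization and the ``well-known'' fact about skew Laurent rings, at the cost of more bookkeeping. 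A small bonus of your route is that it yields $\sigma$ (not just $\sigma'$) of finite order directly, without the $\Sigma_{n/k}$-orbit argument the paper needs to descend from $X_{n/k}$ back to $E^g$.
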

\begin{proof}
Since the homomorphism is surjective, $B(X_{n/k},\sigma',\cL_{n/k}')$ is also finite over its center. 
Let $K$ denote the field of rational functions of $X_{n/k}$, and let $K[t^{\pm 1}; \sigma']$ 
denote the skew Laurent polynomial extension associated to the automorphism $\s'$ of 
$K$. By \cite[Prop.~2.1]{ST94}, $K[t^{\pm 1}; \sigma']$ is a localization of 
$B(X_{n/k},\sigma',\cL_{n/k}')$ so it is also finite over its center. 
It is well-known, and easy to show, that the fact that $K[t^{\pm 1}; \sigma']$ is finite over its center implies
$\sigma'$ has finite order as an automorphism of $K$ and 
hence as an automorphism of $X_{n/k}$. If that order is $b$, then $\sigma^b(p)$ and $p$ have the same
image in $X_{n/k}$ for all $p \in E^g$. Thus, if $p \in E^g$, then $\sigma^b(p)=\gamma\cdot p$ for some $\gamma \in \Sigma_{n/k}$.
If $a$ denotes the size of $\Sigma_{n/k}$, then 
$\sigma^{ab}(p)=p$ for all $p \in E^g$.

But $\sigma$ is translation by $(\sfk+\sfl-\sfn)\tau$ so, in particular, $(k_1+l_1-n)\tau$ has finite order. But $k_1=k$ and 
$l_1=1$ so $k_1+l_1-n \ne 0$. Hence $\tau$ has finite order. 
\end{proof}

It is stated at  \cite[p.~209, Rmk.~1]{FO89} and \cite[p.~1143]{Od-survey} that $Q_{n,n-1}(E,\tau)$ is a polynomial ring for all $E$ and $\tau$. We proved this in \cite[Prop.~5.5]{CKS1}. The proof is a direct calculation and also uses the fact that the space of relations for $Q_{n,n-1}(E,\tau)$ has dimension $\binom{n}{2}$.
The direct calculation part has an alternative proof using the twisted homogeneous coordinate ring.
 
 \begin{corollary} 
  $Q_{n,n-1}(E,\tau)$ is a polynomial ring on $n$ variables.
\end{corollary}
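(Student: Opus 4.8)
The plan is to recognize the target of $\Psi_{n/k}$ as the honest polynomial ring. Put $k=n-1$, so that $\frac{n}{n-1}=[2,\ldots,2]$ with $g=n-1$ twos. In this case $\Phi_{n/k}\colon E^{n-1}\to\PP^{n-1}$ is surjective (see \cref{sssec.sp.cases}), so $X_{n/k}=\PP^{n-1}$ and $\cL'_{n/k}=\cO_{\PP^{n-1}}(1)$. The first step is a short computation showing $\sigma'=\id$: since $d(2,\ldots,2)=m+1$ for a string of $m$ twos, the integers of \cref{sssect.sigma} are $k_i=d(n_{i+1},\ldots,n_g)=n-i$ and $l_i=d(n_{i-1},\ldots,n_1)=i$, hence $\tau_i=(k_i+l_i-n)\tau=0$ for every $i$. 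So $\sigma$ is the identity automorphism of $E^g$, and since $\Phi_{n/k}\circ\sigma=\sigma'\circ\Phi_{n/k}$ with $\Phi_{n/k}$ dominant, the uniqueness in \cref{thm.thcr.qnk}\cref{thcr.qnk.aut} forces $\sigma'=\id_{\PP^{n-1}}$.

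With $\sigma'=\id$ the twisting sheaves $\cL'_i$ are just $\cO_{\PP^{n-1}}(i)$ and the multiplication on $B(X_{n/k},\sigma',\cL'_{n/k})$ is the ordinary one, so
$$
B(X_{n/k},\sigma',\cL'_{n/k})\;=\;\bigoplus_{i\ge 0}H^0\big(\PP^{n-1},\cO_{\PP^{n-1}}(i)\big)\;=\;\CC[x_0,\ldots,x_{n-1}],
$$
the polynomial ring on $n$ variables. By \cref{cor.map.to.B} the homomorphism $\Psi_{n/k}\colon Q_{n,n-1}(E,\tau)\to\CC[x_0,\ldots,x_{n-1}]$ is an isomorphism in degree one; since the polynomial ring is generated in degree one, $\Psi_{n/k}$ is surjective.

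To conclude that $\Psi_{n/k}$ is an isomorphism, write $Q_{n,n-1}(E,\tau)=TV/(R)$ with $V=Q_{n,n-1}(E,\tau)_1$ and $R\subseteq V^{\otimes 2}$ the span of the $n^2$ defining relations. Because $\Psi_{n/k}$ is a graded ring homomorphism that is an isomorphism on $V$ and the target is commutative, the composite $V^{\otimes 2}\twoheadrightarrow Q_{n,n-1}(E,\tau)_2\xrightarrow{\Psi_{n/k}}\CC[x_0,\ldots,x_{n-1}]_2$ is the canonical surjection onto the symmetric square, whose kernel is the space $\wedge^2 V$ of skew tensors; since the kernel of the first arrow is $R$, this gives $R\subseteq\wedge^2 V$. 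Now $\dim_\CC R=\binom{n}{2}=\dim_\CC\wedge^2 V$ by \cite{CKS1}, so $R=\wedge^2 V$, hence $(R)=(\wedge^2 V)$ as two-sided ideals of $TV$, and therefore $Q_{n,n-1}(E,\tau)=TV/(\wedge^2 V)=\CC[x_0,\ldots,x_{n-1}]$.

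I expect the only genuine content to be the vanishing $\tau_i=0$ for all $i$, which is where the special shape $[2,\ldots,2]$ of the continued fraction enters; everything afterward is formal. Note that, as in \cite{CKS1}, this argument still relies on the dimension equality $\dim_\CC R=\binom{n}{2}$, but it replaces the explicit identification of the relations carried out there by \cref{cor.map.to.B} together with the computation of $\sigma'$.
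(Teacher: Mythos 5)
Your proof is correct and follows essentially the same route as the paper's: compute $k_i=n-i$ and $l_i=i$ to get $\sigma=\sigma'=\id$, identify $B(X_{n/(n-1)},\sigma',\cL'_{n/(n-1)})$ with the polynomial ring, deduce surjectivity from generation in degree one, and conclude injectivity from the dimension count $\dim R=\binom{n}{2}$ imported from \cite{CKS1}. Your spelled-out argument that $R\subseteq\wedge^2V$ is just a more explicit version of the paper's terse remark that both spaces of quadratic relations have dimension $\binom{n}{2}$.
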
  
\begin{proof} 
An induction argument shows that $\frac{n}{n-1}=[2,\ldots,2]$ where the number of 2's is $n-1$. Thus $g=n-1$.
 Hence $\Sigma_{n/(n-1)}=\Sigma_{g+1}$ and $X_{n/(n-1)}\cong \PP^{n-1}$. 
 \cref{cor.map.to.B} therefore provides a homomorphism $Q_{n,n-1}(E,\tau) \to B(\PP^{n-1},\s',\cO_{\PP^{n-1}}(1))$ that is 
 surjective in degree one. 
 
The numbers $k_i$ and $l_i$ defined in \cref{sssect.sigma} are 
 $(k_1,\ldots,k_{n-1})=(n-1,\ldots,2,1)$ and $(l_1,\ldots,l_{n-1})=(1,2,\ldots,n-1)$ so $k_i+l_i-n=0$ for
all $i=1,\ldots,n-1$. Hence  $\s$ and $\s'$ are the identity morphisms. In particular, 
$B(X_{n/k},\s',\cL'_{n/k})=B(\PP^{n-1},\id,\cO(1))=\bigoplus_{i\ge 0} H^0(\PP^{n-1},\cO(i))$. This is a polynomial ring on 
$n$ variables so the homomorphism $Q_{n,n-1}(E,\tau) \to B(\PP^{n-1},\s',\cO_{\PP^{n-1}}(1))$
 is surjective. It is also injective because the quadratic relations for both $Q_{n,n-1}(E,\tau)$ and $B(\PP^{n-1},\id,\cO(1))$ 
 span vector spaces of dimension ${n \choose 2}$.
\end{proof}

 \begin{proposition}
 There is a commutative diagram 
 $$
 \xymatrix{
 Q_{n,k}(E,\tau)^{\mathrm{op}}  \ar@{=}[d]  \ar[r]    & B(E^g,\s,\cL_{n/k})^{\mathrm{op}}   \ar[d]^{\varphi}
 \\
  Q_{n,k}(E,-\tau) \ar[r]    & B(E^g,\s^{-1},\cL_{n/k})
  }
  $$
 in which the horizontal arrows are given by \cref{map.Q.to.B} and $\varphi$ is the  isomorphism in \cref{prop.B-op}.
 \end{proposition}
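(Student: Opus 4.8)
The plan is to reduce the commutativity of the square to its degree-one part. Write $\psi_\tau\colon Q_{n,k}(E,\tau)\to B(E^g,\s,\cL_{n/k})$ for the composite homomorphism of \cref{map.Q.to.B}, and $\psi_{-\tau}\colon Q_{n,k}(E,-\tau)\to B(E^g,\s^{-1},\cL_{n/k})$ for the analogous composite obtained by running the construction of \cref{cor.map.to.B} with $\tau$ replaced by $-\tau$; note that this replacement changes nothing in $\cL_{n/k}$ (see \cref{defn.Lnk}) or in $X_{n/k}$ but, by the formula $\tau_i=(k_i+l_i-n)\tau$ of \cref{sssect.sigma}, turns the translation automorphism $\s$ of $E^g$ into $\s^{-1}$, so the codomain of $\psi_{-\tau}$ is indeed the one drawn. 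The two ways around the square are then graded $\CC$-algebra homomorphisms $Q_{n,k}(E,\tau)^{\mathrm{op}}\to B(E^g,\s^{-1},\cL_{n/k})$, namely $\varphi\circ\psi_\tau^{\mathrm{op}}$ on one side, and $\psi_{-\tau}$ precomposed with the identification $Q_{n,k}(E,\tau)^{\mathrm{op}}=Q_{n,k}(E,-\tau)$ of \cite[Prop.~3.22]{CKS1} on the other. Since $Q_{n,k}(E,\tau)$, hence $Q_{n,k}(E,\tau)^{\mathrm{op}}$, is generated by its degree-one component (it is a quotient of the free algebra on $x_0,\dots,x_{n-1}$, see \cref{the-relns}), it suffices to check that these two homomorphisms agree in degree one.

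Next I would compute each of the relevant maps in degree one. Under the identifications $Q_{n,k}(E,\pm\tau)_1=\Theta_{n/k}(\L)=H^0(E^g,\cL_{n/k})$ of \cite[\S5.3]{CKS2} --- with $\Theta_{n/k}(\L)$ and its basis $\{w_\a\}$ depending only on $n,k,\L$ and not on $\tau$ --- both $\psi_\tau$ and $\psi_{-\tau}$ are $x_\a\mapsto w_\a(\sfz)$ in degree one, by \cref{cor.map.to.B} applied with parameter $\tau$, resp.\ $-\tau$. The degree-one component of the isomorphism $\varphi$ of \cref{prop.B-op} is the identity on $H^0(E^g,\cL_{n/k})$, since in the notation of that proof $\varphi_1(x)=x^{\s^{0}}=x$; and passing to opposite algebras does not alter a map in degree one, so $(\varphi\circ\psi_\tau^{\mathrm{op}})_1$ is again $x_\a\mapsto w_\a(\sfz)$. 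On the other side, the identification $Q_{n,k}(E,\tau)^{\mathrm{op}}=Q_{n,k}(E,-\tau)$ of \cite[Prop.~3.22]{CKS1} is the identity $x_\a\mapsto x_\a$ on degree-one components, so precomposing $\psi_{-\tau}$ with it also gives $x_\a\mapsto w_\a(\sfz)$. The two composites therefore coincide in degree one, hence everywhere, and the square commutes.

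The only ingredient that takes genuine (though short) work is the claim that \cite[Prop.~3.22]{CKS1} identifies $Q_{n,k}(E,\tau)^{\mathrm{op}}$ with $Q_{n,k}(E,-\tau)$ via the identity on generators --- equivalently, that reversing the order of the two monomials in each relation \cref{the-relns} for the parameter $\tau$ and relabelling $(i,j)\rightsquigarrow(j,i)$ produces exactly the relations for the parameter $-\tau$; this follows from the symmetry $\theta_{-a}(-z)=\theta_a(z)$ of the theta basis and is implicit in the cited proposition. Everything else is bookkeeping. Should one prefer to avoid appealing to the degree-one shape of \cite[Prop.~3.22]{CKS1}, one can alternatively verify the square directly on the space of quadratic relations $R\subseteq V^{\otimes 2}$, using \cref{prop.map.to.B} together with the fact (\cref{prop.cor5.13.in.2}) that the quadratic relations of $Q_{n,k}(E,\pm\tau)$ vanish on the graph of $(\s')^{\pm 1}$.
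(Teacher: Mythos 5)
Your proposal is correct and follows essentially the same route as the paper: identify $Q_{n,k}(E,\tau)^{\mathrm{op}}$ with $Q_{n,k}(E,-\tau)$ via \cite[Prop.~3.22]{CKS1}, reduce to degree one using that the algebras are generated in degree one, and observe that $\varphi$ and the horizontal arrows are all the identity on degree-one components under the standard identifications. The extra details you supply (that $\tau\mapsto-\tau$ turns $\s$ into $\s^{-1}$, and that $\varphi_1(x)=x^{\s^0}=x$) are correct and merely make explicit what the paper leaves implicit.
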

 \begin{proof}
 By \cite[Prop.~3.22]{CKS1},  $Q_{n,k}(E,\tau)^{\mathrm{op}}=  Q_{n,k}(E,-\tau)$ because the space of relations for 
 $Q_{n,k}(E,\tau)^{\mathrm{op}}$ is the same subspace of $V^{\otimes 2}$ as the space of relations for $Q_{n,k}(E,-\tau)$.  
 Since $Q_{n,k}(E,\tau)$ is generated by its degree-one component, to show that the diagram commutes we need only check it
commutes in degree one. This is true because $\varphi$ is the identity map in degree one, and so are the horizontal maps.
  \end{proof}

\subsubsection{Remark}
The homomorphisms in \cref{cor.map.to.B} do not give all homomorphisms to twisted homogeneous coordinate rings.
For example, there are four surjective homomorphisms from $Q_{4,1}(E,\tau)$ to the polynomial ring in one variable corresponding to
the four isolated point modules. If we present $Q_{4,1}(E,\tau)$ as Sklyanin does, then those homomorphisms are obtained
by quotienting out three of the four generators for the algebra (\cite[Prop.~5.2]{LS93}).
Similarly, the remarks at the end of \cite[\S5.5]{CKS2} exhibit four surjective homomorphisms from $Q_{8,3}(E,\tau)$ to the 
polynomial ring on two variables.

\section{Semistable and locally free  $\cO_E$-modules}
\label{sect.semistable}

We need some standard results on semistable locally free sheaves on a smooth projective curve $C$. 

Loring Tu's paper \cite{tu} is a good source for these results when $C$ is the elliptic curve  $E$. 

In this and subsequent sections, a locally free sheaf always means a locally free \emph{coherent} sheaf, i.e., of finite type.

\subsection{Semistable $\cO_C$-modules}

Let $C$ be a smooth projective curve.

The {\sf slope} of a non-zero locally free    $\cO_C$-module $\cF$ is the number
\begin{equation*}
  \mu(\cF) \; := \; \frac{\mathrm{\deg(\cF)}}{\rk(\cF)}.
\end{equation*}
We say $\cF$ is
\begin{itemize}
  \item{}
   {\sf semistable}  if $\mu(\cF')  \le  \mu(\cF)$ for all non-zero $\cF' \subseteq \cF$ and 
  \item{}
{\sf stable}  if $\mu(\cF')  <  \mu(\cF)$ for all non-zero $\cF' \subsetneq \cF$.  
\end{itemize}

\begin{lemma}
\label{le.seesaw}
	If $0\to\cF'\to\cF\to\cF''\to 0$ is an exact sequence of non-zero locally free  $\cO_C$-modules, then either
	\begin{enumerate}
		\item\label{le.seesaw.st} $\mu(\cF')<\mu(\cF)<\mu(\cF'')$ or
		\item\label{le.seesaw.eq} $\mu(\cF')=\mu(\cF)=\mu(\cF'')$ or
		\item\label{le.seesaw.ne} $\mu(\cF')>\mu(\cF)>\mu(\cF'')$.
	\end{enumerate}
	In particular, $\min\{\mu(\cF'),\mu(\cF'')\}\leq\mu(\cF)\leq\max\{\mu(\cF'),\mu(\cF'')\}$.
	\begin{enumerate}\setcounter{enumi}{3}
		\item\label{le.seesaw.ss}
		If $\cF$ is semistable, then  \cref{le.seesaw.st} or \cref{le.seesaw.eq} holds.
		\item\label{le.seesaw.sb}
		 If $\cF$ is stable, then \cref{le.seesaw.st} holds. 
	\end{enumerate}
\end{lemma}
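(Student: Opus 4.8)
The plan is to reduce the trichotomy to the elementary fact that a mediant of two fractions lies weakly between them. First I would record additivity of rank and degree along the exact sequence: writing $r'=\rk\cF'$, $r''=\rk\cF''$, $d'=\deg\cF'$, and $d''=\deg\cF''$, one has $r',r''\geq 1$ since the sheaves are non-zero locally free sheaves, $\rk\cF=r'+r''$, and $\deg\cF=d'+d''$. Rank additivity is seen by localizing at the generic point of $C$, and degree additivity follows from Riemann--Roch, $\deg\cG=\chi(\cG)-\rk(\cG)\,\chi(\cO_C)$, combined with additivity of $\chi$ and of $\rk$ in short exact sequences.

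Next I would observe that for positive integers $r',r''$ and arbitrary integers $d',d''$ one has
\[
\frac{d'+d''}{r'+r''}-\frac{d'}{r'}\;=\;\frac{r'd''-r''d'}{r'(r'+r'')},\qquad
\frac{d''}{r''}-\frac{d'+d''}{r'+r''}\;=\;\frac{r'd''-r''d'}{r''(r'+r'')},
\]
together with $\mu(\cF'')-\mu(\cF')=(r'd''-r''d')/(r'r'')$. Since all three denominators are positive, these three quantities are simultaneously positive, simultaneously zero, or simultaneously negative according as the integer $N:=r'd''-r''d'$ is positive, zero, or negative, and the three possibilities yield respectively cases \cref{le.seesaw.st}, \cref{le.seesaw.eq}, and \cref{le.seesaw.ne}. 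In each case $\mu(\cF)$ lies between $\mu(\cF')$ and $\mu(\cF'')$, which is the ``in particular'' inequality.

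For \cref{le.seesaw.ss} and \cref{le.seesaw.sb} I would feed the subsheaf $\cF'\subseteq\cF$ into the defining inequalities of (semi)stability, noting that $\cF'$ is non-zero by hypothesis and is a \emph{proper} subsheaf because $\cF''=\cF/\cF'\neq 0$. If $\cF$ is semistable, then $\mu(\cF')\leq\mu(\cF)$, which excludes \cref{le.seesaw.ne} and leaves \cref{le.seesaw.st} or \cref{le.seesaw.eq}; if $\cF$ is stable, then $\mu(\cF')<\mu(\cF)$, which additionally excludes \cref{le.seesaw.eq}, leaving only \cref{le.seesaw.st}.

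I do not anticipate a real obstacle: the whole argument is the one-line mediant computation displayed above, and the single point that calls for a word of justification is additivity of the degree, which I would dispatch via Riemann--Roch as indicated rather than dwell on.
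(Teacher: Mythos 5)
Your proof is correct. The paper states this lemma without proof (it is a standard fact about slopes), and your argument — additivity of rank and degree in short exact sequences plus the mediant computation showing that the three differences $\mu(\cF)-\mu(\cF')$, $\mu(\cF'')-\mu(\cF)$, and $\mu(\cF'')-\mu(\cF')$ all have the sign of $r'd''-r''d'$ — is exactly the standard route, with parts \cref{le.seesaw.ss} and \cref{le.seesaw.sb} correctly handled by noting that $\cF'$ is a non-zero \emph{proper} subsheaf so the (semi)stability inequality applies.
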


\begin{lemma} 
\label{lem.indec.summands.semistable}
All direct summands of a semistable $\cO_C$-module $\cF$ have the same slope as $\cF$.
\end{lemma}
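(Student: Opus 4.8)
The plan is to derive everything from the definition of semistability together with the elementary slope inequalities recorded in \cref{le.seesaw}. Given a direct summand $\cG$ of $\cF$, fix a complementary summand, so $\cF\cong\cG\oplus\cG'$; if $\cG'=0$ there is nothing to prove, and a direct summand of a locally free coherent $\cO_C$-module is again locally free and coherent (it is a finitely generated projective, hence free, module over every local ring). I would then regard $\cG$ and $\cG'$ as nonzero locally free subsheaves of $\cF$ and work with the short exact sequence $0\to\cG\to\cF\to\cG'\to 0$.

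First, semistability of $\cF$ gives $\mu(\cG)\le\mu(\cF)$ and $\mu(\cG')\le\mu(\cF)$, hence $\max\{\mu(\cG),\mu(\cG')\}\le\mu(\cF)$; the last assertion of \cref{le.seesaw} applied to the sequence above gives the reverse inequality $\mu(\cF)\le\max\{\mu(\cG),\mu(\cG')\}$. Therefore $\mu(\cF)=\max\{\mu(\cG),\mu(\cG')\}$, so after swapping the two summands if necessary we may assume $\mu(\cG)=\mu(\cF)$. Now I would invoke the semistable refinement \cref{le.seesaw}\cref{le.seesaw.ss}: for $0\to\cG\to\cF\to\cG'\to 0$ either $\mu(\cG)<\mu(\cF)<\mu(\cG')$ or $\mu(\cG)=\mu(\cF)=\mu(\cG')$. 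The first alternative is incompatible with $\mu(\cG)=\mu(\cF)$, so the second holds, and in particular $\mu(\cG')=\mu(\cF)$ as well; thus both summands have slope $\mu(\cF)$.

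This handles an arbitrary direct summand, since a summand $\cG$ of $\cF\cong\cG_1\oplus\cdots\oplus\cG_r$ occurs in a two-term decomposition $\cF\cong\cG\oplus\cH$ with $\cH$ the sum of the remaining summands. There is no genuine obstacle here: once \cref{le.seesaw} is available the argument is purely formal, and the only points worth a remark are the harmless ones already noted — restricting to nonzero summands so that the slope is defined, and observing that a direct summand of a locally free sheaf is locally free.
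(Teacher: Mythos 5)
Your argument is correct. The paper states this lemma without proof, treating it as a standard fact, so there is nothing to compare against; your derivation from \cref{le.seesaw} is a perfectly valid (and essentially the standard) way to fill that gap: both summands are subsheaves, so semistability bounds their slopes above by $\mu(\cF)$, while the ``in particular'' clause of \cref{le.seesaw} forces $\mu(\cF)\le\max\{\mu(\cG),\mu(\cG')\}$, and part \cref{le.seesaw.ss} then rules out strict inequality for the remaining summand. A marginally shorter route, if you want one, is to observe directly that $\mu(\cF)=\frac{\deg\cG+\deg\cG'}{\rk\cG+\rk\cG'}$ is a weighted average of $\mu(\cG)$ and $\mu(\cG')$, so if both are $\le\mu(\cF)$ they must both equal it; but this buys nothing essential over what you wrote.
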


\begin{lemma} 
\label{lem.slope.summands}
If   $\cA$ and $\cB$ are semistable and $0 \to \cA \to \oplus \cV_i \to \cB \to 0$ is an exact sequence of locally free $\cO_C$-modules, then 
$\mu(\cV_i) \ge {\rm min}\{\mu(\cA),\mu(\cB)\}$ for all $i$.
\end{lemma}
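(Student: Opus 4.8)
The plan is to reduce the lemma to a single statement about locally free \emph{quotients}, proved using only \cref{le.seesaw} together with elementary degree/rank bookkeeping on the smooth curve $C$. Set $m:=\min\{\mu(\cA),\mu(\cB)\}$ and $\cV:=\bigoplus_{j}\cV_{j}$. First I would record the easy observation that every nonzero locally free quotient of a semistable sheaf $\cF$ has slope $\ge\mu(\cF)$: if the quotient is not $\cF$ itself, its kernel is a nonzero subsheaf of a locally free sheaf on $C$, hence locally free (as $C$ is a smooth curve), and \cref{le.seesaw}\cref{le.seesaw.ss} forces case \cref{le.seesaw.st} or \cref{le.seesaw.eq}, either of which gives the inequality. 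In particular all nonzero locally free quotients of $\cA$ and of $\cB$ have slope $\ge m$.

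The heart of the argument is the sub-claim: \emph{if $0\to\cF'\to\cF\to\cF''\to0$ is exact with $\cF',\cF,\cF''$ locally free, and every nonzero locally free quotient of $\cF'$ and of $\cF''$ has slope $\ge m$, then so does every nonzero locally free quotient of $\cF$.} To prove it, given a surjection $\cF\twoheadrightarrow\cG$ with $\cG\ne0$ locally free, I would set $\cG':=$ the image of $\cF'$ in $\cG$ (a subsheaf of $\cG$, hence locally free, and a quotient of $\cF'$) and $\cG'':=\cG/\cG'$ (a quotient of $\cF''$, possibly with torsion), and write $\cG''_f:=\cG''/\operatorname{tors}(\cG'')$ for its maximal locally free quotient, still a quotient of $\cF''$. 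If $\cG'=0$ then $\cG=\cG''_f$ is a nonzero locally free quotient of $\cF''$, so $\mu(\cG)\ge m$. If $\cG'\ne0$ but $\cG''$ is torsion, then $\cG$ and $\cG'$ have equal rank while $\deg\cG\ge\deg\cG'$ (a torsion sheaf has non-negative degree), so $\mu(\cG)\ge\mu(\cG')\ge m$. If $\cG'\ne0$ and $\cG''_f\ne0$, then $\rk\cG=\rk\cG'+\rk\cG''_f$ and $\deg\cG\ge\deg\cG'+\deg\cG''_f$, so $\mu(\cG)$ is at least the mediant $(\deg\cG'+\deg\cG''_f)/(\rk\cG'+\rk\cG''_f)\ge\min\{\mu(\cG'),\mu(\cG''_f)\}\ge m$, using the elementary inequality $(x+z)/(y+w)\ge\min\{x/y,z/w\}$ valid for positive $y,w$ and arbitrary real $x,z$.

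With the sub-claim in hand the lemma is immediate: applying it to $0\to\cA\to\cV\to\cB\to0$ shows every nonzero locally free quotient of $\cV$ has slope $\ge m$, and each $\cV_{i}$ is a direct summand of $\cV$, hence such a quotient, so $\mu(\cV_{i})\ge m$. The only genuine subtlety — and the step I would be most careful about — is that quotients of locally free sheaves on $C$ need not be locally free, which is exactly why the sub-claim is phrased in terms of quotients and why one must pass to torsion-free quotients and account for the non-negative degree of torsion; beyond that, the proof uses nothing more than \cref{le.seesaw} and arithmetic of slopes.
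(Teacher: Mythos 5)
Your proof is correct and follows essentially the same route as the paper's: project onto $\cV_j$, take the image $\cU_j$ of $\cA$ there, observe that $\cV_j/\cU_j$ is a quotient of $\cB$, and combine semistability of $\cA$ and $\cB$ with the seesaw/mediant inequality on the extension $0\to\cU_j\to\cV_j\to\cV_j/\cU_j\to0$. Your extra care with the possible torsion in $\cV_j/\cU_j$ (passing to the maximal locally free quotient and using that torsion has non-negative degree) is a genuine improvement, since the paper's proof applies \cref{le.seesaw} to that quotient without checking that it is locally free.
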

\begin{proof}
Fix $j$ and let $\pi: \oplus \cV_i \to \cV_j$ be the projection. Let $\cU_j$ be the image of $\cA$ in $\cV_j$. 
There is a commutative diagram
$$
\xymatrix{
0 \ar[r] &\cA \ar[d] \ar[r] &\oplus \cV_i  \ar[d]^\pi \ar[r] &\cB \ar[r] \ar[d] & 0
\\
0 \ar[r] &\cU_j \ar[r] &\cV_j \ar[r] &\cV_j/\cU_j \ar[r] & 0
}
$$
with exact rows. The left-most vertical arrow is an epimorphism by definition so, by the Snake Lemma, the right-most 
vertical arrow is also an epimorphism. Since $\cU_j$ and $\cV_j/\cU_j$ are quotients of the 
semistable sheaves $\cA$ and $\cB$, respectively, \cref{le.seesaw} tells us that $\mu(\cU_j) \ge \mu(\cA)$ and
$\mu(\cV_j/\cU_j) \ge \mu(\cB)$.  \cref{le.seesaw} also tells us that $\mu(\cV_j) \ge \min\{\mu(\cU_j),\mu(\cV_j/\cU_j)\}$.
The result follows. 
\end{proof}

\begin{lemma} 
If $\cU$ and $\cV$ are non-zero locally free $\cO_C$-modules, then 
$$
\mu(\cU \otimes \cV) \; =\; \mu(\cU) \, +\, \mu(\cV).
$$
\end{lemma}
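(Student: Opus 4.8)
The plan is to reduce everything to two elementary facts: that the rank is multiplicative under $\otimes$, and that the ordinary degree is additive on short exact sequences. Write $u:=\rk(\cU)$ and $v:=\rk(\cV)$. Since $(\cU\otimes\cV)_\eta\cong\cU_\eta\otimes_{K(C)}\cV_\eta$ at the generic point $\eta$ of $C$, we have $\rk(\cU\otimes\cV)=uv$; so it suffices to prove the degree identity
\[
\deg(\cU\otimes\cV)\;=\;v\deg(\cU)\,+\,u\deg(\cV),
\]
after which dividing by $uv$ yields $\mu(\cU\otimes\cV)=\mu(\cU)+\mu(\cV)$.

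First I would record the rank-one case: for line bundles $\cL,\cM$ one has $\deg(\cL\otimes\cM)=\deg\cL+\deg\cM$ (standard — e.g. comparing divisors of rational sections, or \cite{tu}). Next I would use the fact, special to smooth curves, that every non-zero locally free $\cO_C$-module $\cF$ admits a sub-line-bundle $\cL\subseteq\cF$ with locally free quotient: for $n\gg0$ the sheaf $\cF(n)$ is globally generated, hence a non-zero section gives an injection $\cO_C(-n)\hookrightarrow\cF$, and replacing its image by its saturation produces a rank-one saturated (hence locally free) subsheaf whose quotient is torsion-free, hence locally free. Using this and induction on $\rk\cF$ I would prove the auxiliary statement $\deg(\cF\otimes\cL)=\deg\cF+(\rk\cF)\deg\cL$ for $\cF$ locally free and $\cL$ a line bundle: tensoring $0\to\cL'\to\cF\to\cF'\to0$ by the (flat, since locally free) sheaf $\cL$ keeps the sequence exact, and additivity of $\deg$ and $\rk$ makes the inductive step immediate from the rank-one case.

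The general identity then follows by induction on $v=\rk\cV$, the base case $v=1$ being the auxiliary statement. For the inductive step, pick $0\to\cM\to\cV\to\cV'\to0$ with $\cM$ a line bundle and $\cV'$ locally free of rank $v-1$; tensoring with the flat sheaf $\cU$ gives $0\to\cU\otimes\cM\to\cU\otimes\cV\to\cU\otimes\cV'\to0$, so $\deg(\cU\otimes\cV)=\deg(\cU\otimes\cM)+\deg(\cU\otimes\cV')$; now apply the auxiliary statement to the first summand, the induction hypothesis to the second, and additivity of $\deg$ and $\rk$ to the sequence defining $\cV$, then collect terms. There is no genuine obstacle here; the only points that merit a sentence of justification are the curve-specific existence of sub-line-bundles with locally free quotient (torsion-free $=$ locally free on a smooth curve) and the exactness-preservation of $\cU\otimes(-)$, which holds because $\cU$ is locally free hence flat. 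Alternatively, one can bypass the induction entirely by invoking the determinant identity $\det(\cU\otimes\cV)\cong(\det\cU)^{\otimes v}\otimes(\det\cV)^{\otimes u}$ together with $\deg\cF=\deg(\det\cF)$; I would mention this as the quickest route if a reference such as \cite{tu} is cited for the determinant formula.
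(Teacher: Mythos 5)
Your proof is correct and follows the same route as the paper: the paper's entire proof is the observation that $\deg(\cU \otimes \cV) = \deg(\cU)\rank(\cV) + \deg(\cV)\rank(\cU)$, which is exactly the identity you reduce to and then divide by $\rank(\cU)\rank(\cV)$. The only difference is that you supply a proof of that standard degree formula (by induction on rank via sub-line-bundles, or via determinants), whereas the paper simply cites it as known.
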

\begin{proof}
This follows from the fact that $\deg(\cU \otimes \cV) = \deg(\cU)\rank(\cV) + \deg(\cV)\rank(\cU)$.
\end{proof}

\begin{lemma}\label{le.tu}
\cite[Appendix~A]{tu}  
\label{le.indec}
Let $\cF$ be a locally free $\cO_E$-module. If $\cF$ is indecomposable, then it is semistable and is stable if and only if 
its degree and rank are coprime.\footnote{Polishchuk uses the Harder-Narasimhan filtration to show indecomposability  implies semistability  \cite[Lem.~14.5]{Polishchuk-book}.}
\end{lemma}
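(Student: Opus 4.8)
I would split the statement into three parts: (a) indecomposable $\Rightarrow$ semistable; (b) if $\cF$ is indecomposable with $\deg$ and $\rk$ coprime, then $\cF$ is stable; (c) if $\cF$ is stable, then $\deg$ and $\rk$ are coprime. Parts (a) and (b) are formal, using only Serre duality on $E$ (where $\omega_E\cong\cO_E$), the existence of Harder--Narasimhan filtrations on a smooth projective curve, and the seesaw behaviour of slopes recorded in \cref{le.seesaw}; part (c) is where a genuine structural input -- Atiyah's classification -- enters.

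\emph{Part (a).} Suppose $\cF$ is locally free and indecomposable but not semistable. Then its Harder--Narasimhan filtration is non-trivial; let $0\neq\cF_{1}\subsetneq\cF$ be the maximal destabilising subsheaf, so $\cF_{1}$ is semistable of slope $\mu_{1}:=\mu_{\max}(\cF)>\mu(\cF)$, the quotient $\cF/\cF_{1}$ is nonzero and (being torsion-free on a smooth curve) locally free, and every slope appearing in the HN filtration of $\cF/\cF_{1}$ is $<\mu_{1}$. I claim $\Hom_{\cO_{E}}(\cF_{1},\cF/\cF_{1})=0$: a nonzero map would have image $\cI$ that is at once a quotient of the semistable sheaf $\cF_{1}$, hence $\mu(\cI)\geq\mu_{1}$ by \cref{le.seesaw}, and a nonzero subsheaf of $\cF/\cF_{1}$, hence $\mu(\cI)\leq\mu_{\max}(\cF/\cF_{1})<\mu_{1}$ -- a contradiction. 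Now $\omega_{E}\cong\cO_{E}$, so Serre duality gives $\Ext^{1}_{\cO_{E}}(\cF/\cF_{1},\cF_{1})\cong\Hom_{\cO_{E}}(\cF_{1},\cF/\cF_{1})^{\ast}=0$; hence the extension $0\to\cF_{1}\to\cF\to\cF/\cF_{1}\to 0$ splits, contradicting indecomposability of $\cF$ (both terms are nonzero). This is the argument attributed to Polishchuk in the footnote.

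\emph{Part (b).} Let $\cF$ be indecomposable of rank $r$ and degree $d$ with $\gcd(r,d)=1$; by (a) it is semistable. Let $0\neq\cF'\subsetneq\cF$. If $\rk\cF'=r$, then $\cF/\cF'$ is a nonzero torsion sheaf, so $\deg\cF'<\deg\cF$ and $\mu(\cF')<\mu(\cF)$. If instead $r':=\rk\cF'$ satisfies $1\leq r'<r$, semistability gives $r\deg\cF'\leq r'd$, and equality would force $r\mid r'd$, hence $r\mid r'$ since $\gcd(r,d)=1$, which is impossible. So $\mu(\cF')<\mu(\cF)$ in all cases, i.e. $\cF$ is stable.

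\emph{Part (c), and the main obstacle.} For the converse I would invoke Atiyah's classification of vector bundles on $E$ (the content of \cite[Appendix~A]{tu}): if $\cF$ is indecomposable of rank $r$ and degree $d$ with $h:=\gcd(r,d)$, then $\cF\cong\cF_{0}\otimes F_{h}$, where $F_{h}$ is the unique indecomposable bundle of rank $h$ and degree $0$ possessing a nonzero global section -- an iterated self-extension of $\cO_{E}$ fitting in $0\to\cO_{E}\to F_{h}\to F_{h-1}\to 0$ -- and $\cF_{0}$ is indecomposable of rank $r/h$ and degree $d/h$. Tensoring the inclusion $\cO_{E}\hookrightarrow F_{h}$ with $\cF_{0}$ realises $\cF_{0}$ as a subsheaf of $\cF$ of rank $r/h$ and slope $\mu(\cF_{0})=(d/h)/(r/h)=\mu(\cF)$. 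If $h>1$ this is a proper subsheaf of $\cF$ of slope equal to $\mu(\cF)$, contradicting stability; hence $h=1$. The one step that is not formal is precisely this appeal to Atiyah -- the splitting-off of the Atiyah bundle $F_{h}$; everything else reduces to Serre duality, HN filtrations, and \cref{le.seesaw}.
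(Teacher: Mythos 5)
Your proof is correct. The paper gives no argument of its own for this lemma---it simply cites Tu's Appendix~A, with a footnote pointing to Polishchuk's Harder--Narasimhan argument for the implication ``indecomposable $\Rightarrow$ semistable''---and your part~(a) is precisely that argument (maximal destabilising subsheaf, vanishing of $\Hom$ by slope comparison, Serre duality with $\omega_E\cong\cO_E$ to split the extension), while parts~(b) and~(c) reproduce the standard deduction from Atiyah's classification that the cited sources rely on. In effect you have supplied the proof the paper delegates to the literature, resting on the same single non-formal input (Atiyah's tensor decomposition $\cF\cong\cF_0\otimes F_h$) for the ``stable $\Rightarrow$ coprime'' direction; the remaining steps are formal consequences of \cref{le.seesaw} and the HN filtration, exactly as you say.
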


\begin{lemma}\label{le.tens-ss}
 \cite[Thm.~2.5]{Mar81}
 \label{le.tens}
Assume ${\rm char}(\Bbbk)=0$. If $\cU$ and $\cV$ are semistable  locally free $\cO_C$-modules so is $\cU \otimes \cV$. 
\end{lemma}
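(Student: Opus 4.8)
The plan is to deduce the statement from the Narasimhan--Seshadri correspondence, after two standard reductions. First I would dispose of the ground field. Semistability of a locally free sheaf, together with its rank and degree, is preserved and reflected by extension of the base field in characteristic $0$, because the first step of the Harder--Narasimhan filtration is canonical and hence commutes with field extensions. Since $C$, $\cU$, and $\cV$ are defined over a finitely generated subfield of $\Bbbk$, which embeds into $\CC$, it suffices to treat the case $\Bbbk=\CC$.

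Next I would reduce to the degree-zero case by passing to a finite cover. Choose a finite morphism $\pi\colon C'\to C$ of smooth projective curves whose degree $N$ is divisible by both $\rk(\cU)$ and $\rk(\cV)$. Two facts are needed: that pulling back along $\pi$ \emph{reflects} semistability and that it \emph{preserves} semistability. The first is immediate, since a destabilizing subsheaf of $\cU\otimes\cV$ pulls back to a destabilizing subsheaf of $\pi^{*}(\cU\otimes\cV)$, the rank being unchanged and the slope multiplied by $N>0$. For the second, reduce to $\pi$ Galois by passing to the Galois closure; then if $\pi^{*}\cF$ failed to be semistable, its maximal destabilizing subsheaf would be Galois-invariant by uniqueness, hence would descend to a destabilizing subsheaf of $\cF$, a contradiction. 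This argument does not use the statement being proved, so there is no circularity. Now $\deg(\pi^{*}\cU)=N\deg(\cU)$ is a multiple of $\rk(\cU)$, so there is a line bundle $A$ on $C'$ with $\deg(\pi^{*}\cU\otimes A)=0$; similarly choose $B$ with $\deg(\pi^{*}\cV\otimes B)=0$. Tensoring with a line bundle preserves semistability, so $\cU':=\pi^{*}\cU\otimes A$ and $\cV':=\pi^{*}\cV\otimes B$ are semistable of degree $0$ on $C'$.

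Finally I would apply Narasimhan--Seshadri. Over $\CC$, a semistable vector bundle of degree $0$ on a smooth projective curve is polystable and arises from a unitary representation of the topological fundamental group; let $\rho$ and $\rho'$ be unitary representations associated with $\cU'$ and $\cV'$. Then $\rho\otimes\rho'$ is again unitary, so the associated bundle $\cU'\otimes\cV'$ is polystable of degree $0$, in particular semistable. Since $\cU'\otimes\cV'=\pi^{*}(\cU\otimes\cV)\otimes(A\otimes B)$, tensoring by $(A\otimes B)^{-1}$ shows $\pi^{*}(\cU\otimes\cV)$ is semistable, and since $\pi^{*}$ reflects semistability, $\cU\otimes\cV$ is semistable.

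The main input is the Narasimhan--Seshadri theorem, which is substantial but standard and freely quotable; the one genuinely delicate elementary point is that finite pullback \emph{preserves} semistability in characteristic $0$, where one must argue via Galois descent and uniqueness of the maximal destabilizing subsheaf rather than via the tensor-product statement itself, so as to avoid circularity. Alternatively, one may simply invoke Maruyama's algebraic proof \cite[Thm.~2.5]{Mar81}, the reference cited in the statement, which additionally handles higher-dimensional base varieties.
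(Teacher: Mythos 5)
The paper gives no argument for this lemma beyond the citation of Maruyama, so any self-contained proof is necessarily a different route; yours, via Narasimhan--Seshadri, is the classical transcendental alternative, and the reduction steps are all sound (geometricity of semistability in characteristic zero, reflection of semistability under finite pullback, preservation of semistability under finite pullback via Galois descent of the maximal destabilizing subsheaf, and normalization of the degree to zero by twisting).

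There is, however, one step that is false as written: a semistable vector bundle of degree $0$ on a smooth projective complex curve need \emph{not} be polystable, and need not arise from a unitary representation. A nonsplit self-extension $0\to\cO_{C'}\to\cF\to\cO_{C'}\to 0$ (which exists whenever $C'$ has genus $\ge 1$) is semistable of degree $0$ but indecomposable and not stable, hence not a direct sum of stable bundles; Narasimhan--Seshadri attaches unitary representations only to the polystable objects, so $\rho$ and $\rho'$ need not exist for $\cU'$ and $\cV'$ themselves. The standard repair is to insert a Jordan--H\"older step: every semistable bundle of slope $0$ admits a filtration whose graded pieces are stable of slope $0$, so $\cU'\otimes\cV'$ acquires a filtration whose graded pieces are tensor products of stable degree-$0$ bundles; each such piece is polystable of degree $0$ by the unitary argument, and a bundle filtered by semistable subquotients all of slope $0$ is itself semistable of slope $0$. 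With that insertion your proof goes through; alternatively, as you note at the end, one can simply invoke Maruyama's theorem, which is exactly what the paper does.
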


\begin{proposition}
\label{prop.tens-ss}
  If ${\rm char}(\Bbbk)=0$, then  the tensor product of
  two indecomposable locally free $\cO_E$-modules is a direct sum of indecomposable sheaves with equal slopes.
\end{proposition}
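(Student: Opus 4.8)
The plan is to deduce the statement directly from the semistability results collected in this section together with the decomposition of a locally free sheaf into indecomposables. Let $\cU$ and $\cV$ be indecomposable locally free $\cO_E$-modules. The first step is to observe, via \cref{le.indec}, that both $\cU$ and $\cV$ are semistable; since $\mathrm{char}(\Bbbk)=0$, \cref{le.tens} then shows that $\cU\otimes\cV$ is semistable as well.

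The second step is to write $\cU\otimes\cV\cong\bigoplus_i\cW_i$ as a finite direct sum of indecomposable locally free $\cO_E$-modules; such a decomposition exists by an obvious induction on the rank (and is unique up to reordering by the Krull--Schmidt property of coherent sheaves on $E$, due to Atiyah, although uniqueness is not needed here). The third step is to apply \cref{lem.indec.summands.semistable}: every direct summand of the semistable sheaf $\cU\otimes\cV$ has slope equal to $\mu(\cU\otimes\cV)$, so all the $\cW_i$ share a common slope. Finally, from the additivity $\deg(\cU\otimes\cV)=\deg(\cU)\rk(\cV)+\deg(\cV)\rk(\cU)$ one obtains $\mu(\cU\otimes\cV)=\mu(\cU)+\mu(\cV)$, which pins down the value of that common slope.

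I do not expect any real obstacle here: the only ingredient not already available in the excerpt is the existence of the decomposition into indecomposables, which is elementary for a locally free sheaf of finite rank on a Noetherian scheme, while every other step is a direct invocation of \cref{le.indec}, \cref{le.tens}, and \cref{lem.indec.summands.semistable}. The slight care needed is simply to record that the lemmas stated for a general smooth projective curve $C$ apply to $C=E$.
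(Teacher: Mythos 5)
Your argument is correct and is exactly the paper's proof: the paper disposes of this proposition by citing precisely \cref{le.indec}, \cref{le.tens}, and \cref{lem.indec.summands.semistable}, which are the three steps you spell out. The extra remarks about existence of the decomposition into indecomposables and the value $\mu(\cU)+\mu(\cV)$ of the common slope are fine but not needed for the statement as written.
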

\begin{proof}
Combine \cref{le.tu,le.tens-ss,lem.indec.summands.semistable}.
\end{proof}

\begin{lemma}
\label{le.semist.pos.deg}
Let $\cF$ be a semistable locally free $\cO_E$-module.
\begin{enumerate}
  \item\label{semist.pos.deg.pos}
  If  $\deg(\cF)>0$, then $\dim H^{0}(\cF)=\deg(\cF)$ and $\dim H^{1}(\cF)=0$. 
  \item\label{semist.pos.deg.neg}
  If  $\deg(\cF)<0$, then $H^0(\cF)=0$
  \item\label{semist.pos.deg.gl.sec}
If $\cF$ is non-zero and generated by its global sections, then $\deg(\cF)>0$.
\item\label{semist.pos.deg.slope}
If $\mu(\cF)>1$, then $\cF$  is generated by its global sections.
\end{enumerate}
\end{lemma}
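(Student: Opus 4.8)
The plan is to prove the four statements in the order \cref{semist.pos.deg.neg}, \cref{semist.pos.deg.pos}, \cref{semist.pos.deg.slope}, \cref{semist.pos.deg.gl.sec}, since each of the first three feeds the next and essentially all the content lies in the last. For \cref{semist.pos.deg.neg}: a nonzero element of $H^{0}(\cF)$ is a nonzero morphism $\cO_{E}\to\cF$, which is injective because $\cF$ is torsion-free, so $\cF$ contains a copy of $\cO_{E}$, of slope $0$; semistability then forces $0=\mu(\cO_{E})\le\mu(\cF)$, contradicting $\deg(\cF)<0$. Hence $H^{0}(\cF)=0$.

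For \cref{semist.pos.deg.pos} I would run the dual argument: a nonzero morphism $\cF\to\cO_{E}$ has image a nonzero quotient of $\cF$ that embeds into $\cO_{E}$, hence of slope $\le 0$, whereas by \cref{le.seesaw} every quotient of the semistable sheaf $\cF$ has slope $\ge\mu(\cF)>0$, a contradiction. So $\Hom_{\cO_{E}}(\cF,\cO_{E})=0$, and since $\omega_{E}\cong\cO_{E}$ on an elliptic curve, Serre duality gives $H^{1}(\cF)\cong\Hom_{\cO_{E}}(\cF,\cO_{E})^{\vee}=0$. Riemann--Roch on the genus-one curve $E$ then gives $\dim H^{0}(\cF)=\chi(\cF)=\deg(\cF)$.

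For \cref{semist.pos.deg.slope}, fix a closed point $p\in E$ and tensor $0\to\cO_{E}(-p)\to\cO_{E}\to\kappa(p)\to 0$ with $\cF$ to obtain $0\to\cF\otimes\cO_{E}(-p)\to\cF\to\cF\otimes\kappa(p)\to 0$. Tensoring by the line bundle $\cO_{E}(-p)$ preserves semistability and drops the slope by $1$, so $\cF\otimes\cO_{E}(-p)$ is semistable of slope $\mu(\cF)-1>0$, hence of positive degree, and \cref{semist.pos.deg.pos} gives $H^{1}(\cF\otimes\cO_{E}(-p))=0$. The long exact sequence then shows $H^{0}(\cF)\to H^{0}(\cF\otimes\kappa(p))$ is surjective; since $H^{0}(\cF\otimes\kappa(p))=\cF\otimes\kappa(p)$ is the fibre of $\cF$ at $p$, this means the global sections of $\cF$ generate that fibre, and as $p$ is arbitrary $\cF$ is generated by its global sections.

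For \cref{semist.pos.deg.gl.sec}, if $\cF\neq 0$ is generated by its global sections then $H^{0}(\cF)\neq 0$, so $\deg(\cF)\ge 0$ by \cref{semist.pos.deg.neg}; the strict inequality is the part that needs real care. I would obtain it by induction on $\rk(\cF)$: if $\mu(\cF)=0$, a nonzero section gives $\cO_{E}\subseteq\cF$, which is saturated (its saturation is a line bundle $\cO_{E}(D)$ with $D$ effective and, by semistability, $\deg(D)\le 0$, whence $D=0$), so $\cF/\cO_{E}$ is locally free, semistable of slope $0$ by \cref{le.seesaw}, and generated by its global sections as a quotient of $\cF$; by induction $\cF/\cO_{E}\cong\cO_{E}^{\oplus(\rk(\cF)-1)}$. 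Because $\dim_{\Bbbk}H^{1}(E,\cO_{E})=1$, after a change of basis of $\cO_{E}^{\oplus(\rk(\cF)-1)}$ any non-split extension of it by $\cO_{E}$ has the Atiyah bundle $F_{2}$ (the non-split self-extension of $\cO_{E}$) as a direct summand, and $F_{2}$ is not generated by its global sections since $\dim_{\Bbbk}H^{0}(F_{2})=1<2$; hence the extension splits and $\cF\cong\cO_{E}^{\oplus\rk(\cF)}$. Thus $\deg(\cF)=0$ occurs exactly for a trivial bundle, and excluding that case gives $\deg(\cF)>0$. The points to check carefully are the saturation step, the change-of-basis reduction for the extension class, and the base case $\rk(\cF)=1$ (a globally generated line bundle of degree $0$ on $E$ is $\cO_{E}$).
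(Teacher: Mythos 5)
Your proofs of \cref{semist.pos.deg.neg} and \cref{semist.pos.deg.slope} coincide with the paper's (a nonzero section gives an embedded copy of $\cO_E$, forcing $\mu(\cF)\ge 0$; twist by $\cO_E(-(p))$, kill $H^1$ via \cref{semist.pos.deg.pos}, and apply Nakayama). For \cref{semist.pos.deg.pos} the paper simply cites Tu, whereas you supply the standard argument --- $\Hom(\cF,\cO_E)=0$ because a nonzero map would produce a quotient of $\cF$ of slope $\le 0$, then Serre duality with $\omega_E\cong\cO_E$ and Riemann--Roch --- and that argument is correct.

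The real divergence is at \cref{semist.pos.deg.gl.sec}. The paper declares it an ``immediate consequence'' of \cref{semist.pos.deg.neg}, but that only yields $\deg(\cF)\ge 0$. Your longer induction is essentially sound and in fact proves more: a nonzero semistable locally free $\cO_E$-module of degree $0$ generated by its global sections must be trivial. (Two points to tighten: semistability of $\cF/\cO_E$ is not literally part of \cref{le.seesaw} and needs the one-line remark that a quotient of $\cF/\cO_E$ is a quotient of $\cF$; and the reduction of a nonsplit extension class in $\Bbbk^{r-1}$ to $(1,0,\dots,0)$, giving the summand $F_2$, is fine.) But your own analysis exposes that \cref{semist.pos.deg.gl.sec} as stated is false: $\cF=\cO_E^{\oplus r}$ is nonzero, semistable, globally generated, and of degree $0$. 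The closing move of ``excluding that case'' is not licensed by the hypotheses, so strictly speaking neither you nor the paper proves the strict inequality; the correct statement is $\deg(\cF)\ge 0$, with equality if and only if $\cF$ is trivial. In the paper's applications the strict positivity is always supplied by extra hypotheses (e.g.\ the slope inequality in \cref{le.otimes}, whose own footnote concedes that global generation gives only $\deg\ge 0$), so the imprecision is harmless there --- but you were right to treat the degree-zero case as the part needing real care.
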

\begin{proof}
\cref{semist.pos.deg.pos}
This is \cite[Lem.~17]{tu}.

\cref{semist.pos.deg.neg}
If $\cF$ has a non-zero section, then there is a non-zero map $\cO_E \to \cF$. The image of this map is isomorphic to $\cO_E$
so the semistability of $\cF$ implies $0=\mu(\cO_E) \le \mu(\cF)$ whence $0 \le \deg(\cF)$. 

\cref{semist.pos.deg.gl.sec}
This is an immediate consequence of \cref{semist.pos.deg.neg}.

\cref{semist.pos.deg.slope}
Let $p \in E$. Since $\cF(-(p))$ is semistable of slope $\mu(\cF)-1>0$ its degree is positive, whence $H^1(\cF(-(p)))=0$
by \cref{semist.pos.deg.pos}. Therefore, applying $\cF \otimes -$ to
the sequence $0 \to \cO(-(p)) \to \cO_E \to \cO_p \to 0$ and taking cohomology yields an exact sequence $0 \to H^0(\cF(-(p))) \to H^0(\cF) \to
H^0(\cF \otimes \cO_p) \to 0$. The fact that $H^0(\cF) \to H^0(\cF \otimes \cO_p)$ is onto for all $p$, 
together with Nakayama's lemma, tells us that $\cF$ is generated by its global sections. 
\end{proof}

 \subsection{Surjectivity of multiplication maps}

\begin{theorem}\label{le.otimes}
  Let $\cU$ and $\cV$ be semistable locally free $\cO_E$-modules generated by their global
  sections. 
  If
    \begin{equation}\label{eq:ineq}
    \frac 1{\mu(\cU)} \, + \, \frac 1{\mu(\cV)} \; < \; 1,
  \end{equation}
  then the canonical map   $H^0(E,\cU)\otimes H^0(E,\cV)\to H^0(E,\cU\otimes \cV)$ is onto.\footnote{Since  $\cU$ and $\cV$ are generated by their global sections, their degrees are $\ge 0$;  hypothesis  \Cref{eq:ineq} implies 
  their degrees (or, equivalently, their slopes) are, in fact, positive.}
\end{theorem}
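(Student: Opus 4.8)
The plan is to reduce the surjectivity statement to the vanishing of a single cohomology group built from the \emph{kernel bundle} of $\cV$. Since $\cV$ is generated by its global sections, there is a short exact sequence
\[
0 \;\longrightarrow\; M_\cV \;\longrightarrow\; H^0(E,\cV)\otimes\cO_E \;\longrightarrow\; \cV \;\longrightarrow\; 0,
\]
in which $M_\cV$ is locally free because $E$ is a smooth curve. Tensoring with the locally free sheaf $\cU$ and passing to the long exact cohomology sequence shows that the cokernel of the multiplication map $H^0(E,\cU)\otimes H^0(E,\cV)\to H^0(E,\cU\otimes\cV)$ embeds into $H^1(E,\cU\otimes M_\cV)$. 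So it suffices to prove $H^1(E,\cU\otimes M_\cV)=0$.

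Next I would record the numerics of $M_\cV$. The inequality \Cref{eq:ineq} forces $\mu(\cU),\mu(\cV)>1$ (cf.\ the footnote to the statement), so $\cV$ is semistable of positive degree, and \Cref{le.semist.pos.deg}\cref{semist.pos.deg.pos} gives $h^0(E,\cV)=\deg\cV$ and $h^1(E,\cV)=0$. Hence $M_\cV$ has rank $\deg\cV-\rk\cV$ and degree $-\deg\cV$, so that $\mu(M_\cV)=-\mu(\cV)/(\mu(\cV)-1)<0$; an elementary manipulation shows that \Cref{eq:ineq} is \emph{equivalent} to the inequality $\mu(\cU)+\mu(M_\cV)>0$.

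The heart of the argument is the semistability of $M_\cV$. Granting this, \Cref{le.tens} (this is where $\operatorname{char}\Bbbk=0$ enters) makes $\cU\otimes M_\cV$ semistable, its degree is positive by the slope computation, and therefore $H^1(E,\cU\otimes M_\cV)=0$ by \Cref{le.semist.pos.deg}\cref{semist.pos.deg.pos}, completing the proof. (Even a lower bound by $-\mu(\cU)$ for the minimal Harder--Narasimhan slope of $M_\cV$ would suffice: tensoring the Harder--Narasimhan filtration of $M_\cV$ by $\cU$ and applying \Cref{le.tens} and \Cref{le.semist.pos.deg}\cref{semist.pos.deg.pos} to each successive quotient gives the same conclusion. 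We will in fact establish semistability, so that $\mu_{\min}(M_\cV)=\mu(M_\cV)$.)

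Thus the remaining task is to prove $M_\cV$ semistable, and this is the step I expect to be the main obstacle. First I would reduce to $\cV$ indecomposable: writing $\cV=\bigoplus_j\cV_j$ into indecomposables, each $\cV_j$ is semistable by \Cref{le.indec}, is globally generated as a quotient of $\cV$, satisfies $\mu(\cV_j)=\mu(\cV)$ by \Cref{lem.indec.summands.semistable}, and $M_\cV=\bigoplus_j M_{\cV_j}$ is semistable once each $M_{\cV_j}$ is, since a direct sum of semistable sheaves of a common slope is semistable. For indecomposable $\cV$ on $E$ one route is to invoke Atiyah's classification: such a $\cV$ is obtained by iterated self-extension from a stable bundle $\cV_0$ of the same slope, and $\cV_0$ is the direct image $\pi_*\cL$ of a line bundle along an isogeny $\pi$ of elliptic curves; pushing the evaluation sequence of $\cL$ forward along $\pi$ and comparing it with the evaluation sequence of $\pi_*\cL$ (keeping track of the decomposition $\pi_*\cO_{E'}\cong\cO_E\oplus\cW$ with $\deg\cW=0$), together with the analogous bookkeeping for each self-extension step, should exhibit $M_\cV$ as built out of $M_\cL$ by extensions whose slopes are controlled; this reduces the matter to the classical fact that $M_\cL$ is semistable for a line bundle $\cL$ of degree at least $2$ on an elliptic curve ($M_\cL\cong\cL^{-1}$ when $\deg\cL=2$, and $M_\cL$ is stable when $\deg\cL\ge 3$). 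Making this reduction precise—in particular, bounding the slopes of the correction terms coming from $\cW$ and from the self-extensions—is the delicate point.
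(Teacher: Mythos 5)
Your reduction is the same as the paper's (up to the harmless swap of the roles of $\cU$ and $\cV$): pass to the kernel bundle of the evaluation map, tensor with the other sheaf, observe that surjectivity follows from the vanishing of an $H^1$, compute the slope, and reduce to indecomposable summands. The slope arithmetic is also correct. But the step you yourself flag as ``the main obstacle''---semistability of the kernel bundle $M_\cV$---is genuinely not proven, and the route you sketch has a real obstruction at its base case: for a stable $\cV_0\cong\pi_*\cL$ with $\pi:E'\to E$ an isogeny, pushing forward the evaluation sequence of $\cL$ computes the kernel of $H^0(E',\cL)\otimes\pi_*\cO_{E'}\to\pi_*\cL$, not the kernel of $H^0(E,\pi_*\cL)\otimes\cO_E\to\pi_*\cL$; since $\pi_*\cO_{E'}\cong\cO_E\oplus\cW$ with $\cW$ a nonzero sum of degree-zero line bundles, the two kernels differ by $H^0\otimes\cW$, and it is not clear how to extract semistability of $M_{\cV_0}$ from $\pi_*M_\cL$ without further work. (The self-extension bookkeeping, by contrast, would be fine: the snake lemma gives $0\to M_{\cV'}\to M_\cV\to M_{\cV''}\to 0$ and the slopes of the outer terms agree because $\mu(M_{\cF})$ depends only on $\mu(\cF)$.)

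The paper closes this gap with a short duality argument that you should adopt instead of the Atiyah/isogeny route. Write $\cK$ for the kernel bundle of the indecomposable sheaf and suppose $\cK=\cK_1\oplus\cdots\oplus\cK_n$ with each $\cK_i$ nonzero indecomposable. Dualizing the evaluation sequence shows $\cK^\vee$ is globally generated, so each $\cK_i^\vee$ is globally generated, indecomposable, hence semistable of positive degree, and therefore the kernel of $H^0(\cK_i^\vee)\otimes\cO\to\cK_i^\vee$ is nonzero; so the kernel of $H^0(\cK^\vee)\otimes\cO\to\cK^\vee$ has at least $n$ nonzero direct summands. On the other hand, Serre duality (applied to the isomorphism $H^1(\varepsilon)$) identifies that kernel with the dual of the original indecomposable sheaf, forcing $n=1$. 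Thus $\cK$ is indecomposable, hence semistable by \Cref{le.tu}, and your argument then goes through verbatim. (Alternatively, as in \Cref{re.to}, indecomposability of $\cK$ follows from the fact that the spherical twist $T_\cO$ is an autoequivalence of ${\sf D}^b(\coh(E))$ and $T_\cO(\cU)=\cK[1]$.)
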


\begin{proof}
Because they are semistable, $\cU$ and $\cV$ are direct   sums of indecomposable summands of slopes equal to $\mu(\cU)$ and
  $\mu(\cV)$ respectively, so it suffices to prove the result when $\cU$ and
  $\cV$ are indecomposable (and therefore semistable); 
   we therefore make this assumption in the rest of the proof. We also assume that $\cU \ne 0$ and $\cV \ne 0$.
  By \Cref{le.semist.pos.deg}\cref{semist.pos.deg.gl.sec} and \cref{semist.pos.deg.pos}, $\deg(\cU)>0$ and $H^1(\cU)=0$.

 Tensoring the exact sequence
  \begin{equation}\label{eq:res}
	\begin{tikzcd}
		0\ar[r] & \cK\ar[r,"\varepsilon"] & H^0(E,\cU)\otimes\cO\ar[r] & \cU\ar[r] & 0
	\end{tikzcd}
  \end{equation}
with $\cV$ and taking cohomology produces an exact sequence 
  \begin{equation*}
  0 \to H^0(E,\cK \otimes \cV) \to H^0(E,\cU)\otimes H^0(E,\cV)\to H^0(E,\cU\otimes \cV)\to H^1(E,\cK\otimes \cV)
  \end{equation*}
  We will prove the theorem by showing that $H^1(E,\cK\otimes \cV)=0$.

By \Cref{le.semist.pos.deg}\cref{semist.pos.deg.pos}, $H^1(E,\cK\otimes \cV)=0$ if $\cK\otimes \cV$ is  semistable of positive degree.
That is what we will prove: first we will show $\cK$ is indecomposable and hence semistable by   \Cref{le.tu}  which will,
by    \Cref{le.tens-ss}, imply that $\cK\otimes \cV$ is semistable, 
then we will show that its slope, and hence its degree, is positive.

To show $\cK$ is indecomposable we write it as a direct sum $\cK = \cK_1 \oplus \cdots \oplus \cK_n$ of non-zero indecomposable
  submodules. Applying the functor ${\mathcal Hom}(-,\cO_E)$ to  \Cref{eq:res} produces an
  exact sequence 
  $$
  0 \to \cU^\vee \to H^0(E,\cU)^* \otimes \cO \to \cK^\vee \to 0.
  $$
Hence $\cK^\vee$ is generated by global sections. It follows that every $\cK_i^\vee$ is also generated 
  by its global sections. Since $\cK_i^\vee$ is indecomposable it is semistable and therefore of positive degree by
  \cref{le.semist.pos.deg}\cref{semist.pos.deg.gl.sec}. It follows that the kernel of the natural map $H^0(E,\cK_i^\vee) \otimes \cO \to \cK_i^\vee$ 
  is non-zero for all $i$.
  The kernel of the natural map $H^0(E,\cK^\vee) \otimes \cO \to \cK^\vee$ is therefore a direct sum of (at least) $n$ non-zero 
  $\cO_E$-submodules. 
  
  That kernel has another description.
  By construction, the right-hand map in \Cref{eq:res}  induces an isomorphism on global sections so it follows from
  the long exact cohomology sequence associated to \Cref{eq:res}   that the sequence 
\begin{equation*}
	\begin{tikzcd}
		0\ar[r] & H^1(E,\cK)\ar[r,"{H^{1}(\varepsilon)}"] & H^1(E,H^0(E,\cU)\otimes\cO)\ar[r] & H^1(E,\cU) = 0
	\end{tikzcd}
\end{equation*}
is exact. Since $H^{1}(\varepsilon)$ is an isomorphism, $H^{0}(\varepsilon^{\vee})$ is also an isomorphism by Serre duality. Thus we obtain a commutative diagram
\begin{equation*}
	\begin{tikzcd}[column sep=large]
		H^{0}(E,(H^0(E,\cU)\otimes\cO)^{\vee})\otimes\cO\ar[d]\ar[r,"H^{0}(\varepsilon^{\vee})\otimes\cO"] & H^{0}(E,\cK^{\vee})\otimes\cO\ar[d] \\
		(H^0(E,\cU)\otimes\cO)^{\vee}\ar[r,"\varepsilon^{\vee}"'] & \cK^{\vee}
	\end{tikzcd}
\end{equation*}
where $H^{0}(\varepsilon^{\vee})\otimes\cO$ and the left vertical morphism are isomorphisms (since $(H^0(E,\cU)\otimes\cO)^{\vee}$ is free).

Since $\cU$ is indecomposable so is $\cU^\vee\cong\ker(\varepsilon^{\vee})$. The kernel of the canonical map $H^0(E,\cK^\vee)  \otimes \cO \to  \cK^\vee$ is therefore indecomposable too. However, that kernel is a direct sum of at least $n$ non-zero $\cO_E$-submodules, so $n=1$;
  i.e., $\cK$ is indecomposable, as   claimed.
  
To complete the proof we show that $\mu(\cK\otimes \cV)> 0.$ Recall that
  \begin{equation*}
    \mu(\cK\otimes \cV) \; = \;  \mu(\cK)+\mu(\cV).
  \end{equation*}
  By the definition of $\cK$ in (\ref{eq:res}), its degree is $-\deg(\cU)$ and its rank is
  \begin{equation*}
    \dim_\Bbbk H^0(E,\cU)-\rk(\cU) \;=\;  \deg(\cU)-\rk(\cU)
  \end{equation*}
  (the equality follows from \Cref{le.semist.pos.deg}). The target
  inequality $\mu(\cK)+\mu(\cV)> 0$ is thus equivalent to
  \begin{equation*}
    \mu(\cV) \; > \; -\, \mu(\cK) \;=\;  \frac{\deg(\cU)}{\deg(\cU)-\rk(\cU)} \;=\;\frac{1}{1-\frac{1}{\mu(\cU)}}.
     \end{equation*}
     The hypothesis in the statement can be written as
     $$
\frac 1{\mu(\cV)} \; < \; 1\,-\, \frac 1{\mu(\cU)} \, ,
$$
  and the both sides are positive. The proof is complete.
\end{proof}

\begin{corollary}
\label{prop.improved.prop.3.5} 
Let $\cU$ and $\cV$ be locally free $\cO_E$-modules generated by their global sections and suppose $\cU$ is semistable of slope $>2$. If
 $0 \to \cA \to  \cV \to \cB \to 0$ is an exact sequence in which $\cA$ and $\cB$ are semistable  locally free $\cO_E$-modules of slope $\ge 2$, then the multiplication map 
$$
H^0(E,\cU) \otimes H^0(E,\cV) \, \longrightarrow \, H^0(E,\cU \otimes  \cV)
$$
is onto.
 \end{corollary}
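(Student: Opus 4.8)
The plan is to reduce to \cref{le.otimes} by splitting $\cV$ into the two semistable pieces $\cA$ and $\cB$ and comparing the multiplication maps via a commutative ladder. First I would record two preliminary facts about $\cA$ and $\cB$. Since they are semistable of slope $\ge 2>1$, \cref{le.semist.pos.deg}\cref{semist.pos.deg.slope} shows each is generated by its global sections; and since their slopes, hence degrees, are positive, \cref{le.semist.pos.deg}\cref{semist.pos.deg.pos} gives $H^1(E,\cA)=0$ (and $H^1(E,\cB)=0$). Consequently the sequence $0\to\cA\to\cV\to\cB\to 0$ stays exact upon taking global sections, and tensoring that exact sequence of vector spaces with $H^0(E,\cU)$ over $\Bbbk$ produces the exact row
\[
0\to H^0(E,\cU)\otimes H^0(E,\cA)\to H^0(E,\cU)\otimes H^0(E,\cV)\to H^0(E,\cU)\otimes H^0(E,\cB)\to 0.
\]

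Next I would tensor $0\to\cA\to\cV\to\cB\to 0$ with the locally free sheaf $\cU$ and take cohomology. The sheaf $\cU\otimes\cA$ is semistable by \cref{le.tens-ss}, and its degree is positive because $\cU$ and $\cA$ both have positive degree; hence $H^1(E,\cU\otimes\cA)=0$ by \cref{le.semist.pos.deg}\cref{semist.pos.deg.pos}, so
\[
0\to H^0(E,\cU\otimes\cA)\to H^0(E,\cU\otimes\cV)\to H^0(E,\cU\otimes\cB)\to 0
\]
is exact. The three multiplication maps fit into a commutative ladder joining these two exact rows (by naturality of the multiplication map with respect to $\cA\hookrightarrow\cV\twoheadrightarrow\cB$), so by the snake lemma it suffices to prove the outer vertical maps $H^0(E,\cU)\otimes H^0(E,\cA)\to H^0(E,\cU\otimes\cA)$ and $H^0(E,\cU)\otimes H^0(E,\cB)\to H^0(E,\cU\otimes\cB)$ are onto.

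Finally, those two surjectivities are precisely \cref{le.otimes} applied to the pairs $(\cU,\cA)$ and $(\cU,\cB)$: all four sheaves are semistable and globally generated, and since $\mu(\cU)>2$ while $\mu(\cA),\mu(\cB)\ge 2$, we get $\frac{1}{\mu(\cU)}+\frac{1}{\mu(\cA)}<\tfrac12+\tfrac12=1$ and likewise with $\cB$, so the hypothesis of \cref{le.otimes} holds in each case. This finishes the proof. There is no serious obstacle here; the only thing needing attention is the bookkeeping that makes both rows of the ladder short exact — i.e. the vanishings $H^1(E,\cA)=0$ and $H^1(E,\cU\otimes\cA)=0$ — together with the elementary slope inequalities.
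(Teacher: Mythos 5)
Your proof is correct, but it takes a genuinely different route from the one in the paper. The paper's argument is purely a decomposition argument: it writes $\cV=\oplus\cV_i$ as a sum of indecomposable summands, invokes \cref{lem.slope.summands} to conclude that each $\cV_i$ is semistable of slope $\ge 2$ (and globally generated), and then applies \cref{le.otimes} to each pair $(\cU,\cV_i)$ separately; no cohomology vanishing or diagram chase is needed beyond what is already inside \cref{le.otimes}. You instead keep the extension $0\to\cA\to\cV\to\cB\to 0$ intact, use the vanishings $H^1(E,\cA)=0$ and $H^1(E,\cU\otimes\cA)=0$ (the latter via Maruyama's theorem, \cref{le.tens-ss}, so characteristic zero is used here just as it is implicitly in the paper's route) to obtain two short exact rows of global sections, and reduce to the outer vertical maps by a ladder/snake argument. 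Each step checks out: $\cA$ and $\cB$ are globally generated because their slopes exceed $1$, and the strict inequality $\mu(\cU)>2$ together with $\mu(\cA),\mu(\cB)\ge 2$ gives the strict inequality required by \cref{le.otimes}. What your approach buys is that it never mentions indecomposable summands and is a dévissage along the given filtration, so it would extend verbatim to a longer filtration of $\cV$ with semistable subquotients of slope $\ge 2$; what the paper's approach buys is brevity, since \cref{lem.slope.summands} already packages the only nontrivial input about $\cV$. Incidentally, both arguments show that the hypothesis that $\cV$ itself be globally generated is superfluous.
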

\begin{proof}
 Write $\cV=\oplus \cV_i$ as the sum of its indecomposable summands. 
 By \cref{lem.slope.summands}, each $\cV_i$ is locally free, semistable of slope $\ge 2$, and is generated by its global sections. Since $\mu(\cU)^{-1}+\mu(\cV_i)^{-1}<1$, the map $H^0(\cU) \otimes H^0(\cV_i) \to 
H^0(\cU \otimes  \cV_i)$ is onto. The conclusion follows from \Cref{le.otimes}. 
\end{proof}

\begin{corollary}
\label{thm.good.U.V}
If  $\cU$ and $\cV$ are semistable locally free $\cO_E$-modules of slope $> 2$, then the multiplication map 
$$
H^0(E,\cU) \otimes H^0(E,\cV) \; \longrightarrow \; H^0(E,\cU \otimes \cV)
$$
is onto.
\end{corollary}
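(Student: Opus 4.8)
The plan is to deduce this immediately from \Cref{le.otimes}, whose two hypotheses I will verify in turn. The statement that $\cU$ (resp.\ $\cV$) is semistable of slope $>2$ presupposes it is non-zero, so both sheaves are non-zero, and the slope inequality in \Cref{le.otimes} makes sense.

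First I would check that $\cU$ and $\cV$ are generated by their global sections. Since $\mu(\cU)>2>1$, this is exactly the content of \Cref{le.semist.pos.deg}\cref{semist.pos.deg.slope}, and likewise for $\cV$. Next I would verify the numerical hypothesis \eqref{eq:ineq}: from $\mu(\cU)>2$ and $\mu(\cV)>2$ we get $\tfrac{1}{\mu(\cU)}<\tfrac12$ and $\tfrac{1}{\mu(\cV)}<\tfrac12$, hence $\tfrac{1}{\mu(\cU)}+\tfrac{1}{\mu(\cV)}<1$.

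With both hypotheses in place, \Cref{le.otimes} gives the surjectivity of $H^0(E,\cU)\otimes H^0(E,\cV)\to H^0(E,\cU\otimes\cV)$, which is the assertion. There is no real obstacle here: this corollary is a clean specialization of \Cref{le.otimes}, the only point worth stating being that slope $>2$ is precisely what is needed both to invoke global generation (slope $>1$ suffices there) and to force the reciprocal-slope sum below $1$.
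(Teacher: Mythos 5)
Your proposal is correct and coincides with the paper's own proof: both verify global generation via \cref{le.semist.pos.deg}\cref{semist.pos.deg.slope} and the inequality \cref{eq:ineq} from the slope bound, then invoke \cref{le.otimes}. Nothing further is needed.
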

\begin{proof}
By \cref{le.semist.pos.deg}\cref{semist.pos.deg.slope}, $\cU$ and $\cV$ are generated by global sections.
By hypothesis, the inequality in \Cref{eq:ineq}  holds. 
The result now follows from \Cref{le.otimes}.
\end{proof}

\subsection{Remarks}

\subsubsection{}
After proving \cref{prop.improved.prop.3.5,thm.good.U.V} we learned that those results were already known in greater generality:
they are consequences of Theorems 2.1 and 1 in \cite{Butler94}. Nevertheless, for elliptic curves over an algebraically closed field of characteristic zero, 
\cref{le.otimes} is not a consequence of the results in \cite{Butler94} and suggests that the following might be true: 
if $C$ is a smooth projective curve 
of genus $g$ and $\cU$ and $\cV$ are semistable locally free $\cO_C$-modules such that $\mu(\cU)^{-1}+\mu(\cV)^{-1} < g^{-1}$,
then the map $H^0(\cU) \otimes H^0(\cV) \to H^0(\cU \otimes \cV)$ is surjective.

\subsubsection{}
In order for the multiplication map $H^0(\cU) \otimes H^0(\cV) \to H^0(\cU \otimes \cV)$ in \Cref{le.otimes}
  to be onto it is necessary that $\mu(\cU)^{-1}+\mu(\cV)^{-1}$ is $\le 1$: if the multiplication map is onto, then 
    \begin{equation}\label{eq:ddd}
    \deg(\cU\otimes \cV)\; =\; \dim H^0(\cU\otimes \cV) \; \le \; \dim H^0(\cU)\dim H^0(\cV)\;=\; \deg(\cU)\deg(\cV);
  \end{equation}
since $    \deg(\cU\otimes\cV) = \deg(\cU)\rk(\cV)+\deg(\cV)\rk(\cU)$, 
dividing \Cref{eq:ddd} by $\deg(\cU)\deg(\cV)$ yields the inequality $\mu(\cU)^{-1}+\mu(\cV)^{-1}  \le 1$.

\subsubsection{}\label{re.to} 
There is a less elementary proof of the indecomposability of $\cK$ in the proof of \Cref{le.otimes}.
Given a coherent sheaf $\cE$ on $E$, let $T_\cE$ be the endofunctor of the bounded derived category 
${\sf D}^b(E)={\sf D}^b(\coh(E))$ that sends $\cF$ to the cone over
\begin{equation*}
  \mathrm{RHom}(\cE,\cF)\otimes^L \cE\to \cF. 
\end{equation*}
Applying this with $\cE=\cO$, $T_\cO(\cU)=\cK[1]$. But \cite[Prop.~2.10]{st} implies that $T_\cO$ is an autoequivalence 
 so the indecomposability of $\cK[1]$, and hence $\cK$, follows from that of $\cU$.

\subsubsection{}
\cref{le.semist.pos.deg}\cref{semist.pos.deg.slope} with a stronger assumption $\mu(\cF)\geq 2$ can be shown using the following result which might prove useful in other situations.

\begin{lemma}\label{le.filt}
	Every semistable locally free $\cO_E$-module $\cF$ has a filtration by invertible $\cO_E$-modules of degrees 
	$\geq \lfloor\mu(\cF)\rfloor$. 
\end{lemma}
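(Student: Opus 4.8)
The plan is to induct on the rank $r$ of $\cF$, the case $r=1$ being trivial since then $\cF$ is itself invertible of degree $\mu(\cF)=\deg(\cF)\ge\lfloor\mu(\cF)\rfloor$. For the inductive step, it suffices to produce a short exact sequence $0\to\cM\to\cF\to\cF'\to 0$ in which $\cM$ is an invertible $\cO_E$-module of degree $\ge\lfloor\mu(\cF)\rfloor$ and $\cF'$ is a semistable locally free $\cO_E$-module with $\mu(\cF')\ge\mu(\cF)$: granting this, $\lfloor\mu(\cF')\rfloor\ge\lfloor\mu(\cF)\rfloor$, so the filtration of $\cF'$ supplied by induction, pulled back along $\cF\twoheadrightarrow\cF'$, extends to the desired filtration of $\cF$. (One also needs $\cF'$ locally free, which is automatic since it is a quotient of a locally free sheaf on a smooth curve by an invertible subsheaf that is saturated—see below.)

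To find such an $\cM$, first choose an invertible subsheaf $\cM_0\subseteq\cF$ of maximal degree $d$; such a maximum exists because semistability forces $\deg(\cM_0)\le\mu(\cF)$ for every invertible subsheaf $\cM_0$, and in fact $d\le\lfloor\mu(\cF)\rfloor$ since degrees are integers. I claim one can moreover arrange $d\ge\lfloor\mu(\cF)\rfloor$, hence $d=\lfloor\mu(\cF)\rfloor$. The standard way to see this on an elliptic curve: write $\cF$ as a direct sum of indecomposable locally free sheaves, which by \Cref{le.indec} are semistable of slope $\mu(\cF)$; it therefore suffices to treat $\cF$ indecomposable. For indecomposable bundles on $E$ one uses Atiyah's classification, or more directly the Riemann–Roch computation $\chi(\cF\otimes\cN^{-1})=\deg(\cF)-r\deg(\cN)$ for invertible $\cN$: taking $\deg(\cN)=\lfloor\mu(\cF)\rfloor$ gives $\chi(\cF\otimes\cN^{-1})=\deg\cF-r\lfloor\mu(\cF)\rfloor\ge 0$, and since $\cF\otimes\cN^{-1}$ is semistable of slope $\mu(\cF)-\lfloor\mu(\cF)\rfloor\ge 0$ it has $H^1=0$ when the slope is positive (by \Cref{le.semist.pos.deg}\cref{semist.pos.deg.pos}) and also when the slope is $0$ and $\chi>0$, so $H^0(\cF\otimes\cN^{-1})\ne 0$ unless $\mu(\cF)\in\ZZ$, in which case one takes $\cN$ of degree $\mu(\cF)$ and argues that a semistable bundle of slope $0$ on $E$ which is a nontrivial extension of line bundles of degree $0$ still has a degree-$0$ line subbundle (again by the classification, or by choosing a Jordan–Hölder factor). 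A nonzero section of $\cF\otimes\cN^{-1}$ yields an injection $\cN\hookrightarrow\cF$; let $\cM$ be its saturation, an invertible subsheaf with $\deg(\cM)\ge\deg(\cN)=\lfloor\mu(\cF)\rfloor$, and the quotient $\cF'=\cF/\cM$ is then locally free.

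It remains to check $\mu(\cF')\ge\mu(\cF)$, which is immediate from \Cref{le.seesaw}\cref{le.seesaw.ss} applied to $0\to\cM\to\cF\to\cF'\to 0$: since $\cF$ is semistable, either all three slopes are equal or $\mu(\cM)<\mu(\cF)<\mu(\cF')$; in both cases $\mu(\cF')\ge\mu(\cF)$, and $\cF'$ inherits semistability (a destabilizing subsheaf of $\cF'$ would, via its preimage in $\cF$, destabilize $\cF$, using that $\mu(\cM)\le\mu(\cF)$ and the seesaw inequality). This closes the induction. The main obstacle I anticipate is the step producing a maximal-degree line subbundle meeting the bound $\lfloor\mu(\cF)\rfloor$ in the borderline integral-slope case; for non-integral slope the Riemann–Roch argument is clean, but when $\mu(\cF)\in\ZZ$ one must lean on the finer structure of semistable sheaves on an elliptic curve (Atiyah's theorem), which is why I reduce to the indecomposable case first.
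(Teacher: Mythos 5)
Your overall strategy—induct on the rank by splitting off a line subbundle of degree $\lfloor\mu(\cF)\rfloor$—is in the same spirit as the paper's, but there is a genuine gap at the step where you apply the inductive hypothesis to $\cF'=\cF/\cM$: you have not established that $\cF'$ is semistable, and in general it is not. Your justification ("a destabilizing subsheaf of $\cF'$ would, via its preimage in $\cF$, destabilize $\cF$") only works when $\mu(\cM)=\mu(\cF)$; when $\mu(\cM)=\lfloor\mu(\cF)\rfloor<\mu(\cF)$ the slope arithmetic leaves room for a subsheaf $\cG'\subseteq\cF'$ with $\mu(\cG')>\mu(\cF')$ whose preimage $\cG$ satisfies $\mu(\cG)\le\mu(\cF)$. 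A concrete counterexample: let $W$ be a stable locally free $\cO_E$-module of rank $2$ and degree $1$, let $F_2$ be the non-split self-extension of $\cO_E$, and set $\cF=F_2\otimes W$, an indecomposable (hence semistable) sheaf of rank $4$, degree $2$, slope $\tfrac12$, so $\lfloor\mu(\cF)\rfloor=0$. The inclusion $\cO_E\subset F_2$ gives $W\subseteq\cF$, and $W$ contains a degree-$0$ line subbundle $\cM$ (every stable rank-$2$ degree-$1$ bundle is an extension of a degree-$1$ line bundle by a degree-$0$ one). This $\cM$ has the maximal possible degree $\lfloor\tfrac12\rfloor=0$, yet $\cF/\cM$ contains the degree-$1$ line bundle $W/\cM$ while $\mu(\cF/\cM)=\tfrac23<1$, so $\cF/\cM$ is not semistable; note that the preimage of $W/\cM$ is $W$, of slope exactly $\tfrac12=\mu(\cF)$, which does not destabilize $\cF$. (The desired filtration of $\cF/\cM$ still exists here, but your inductive hypothesis, stated for semistable sheaves, does not apply to it, so the induction as structured breaks.)

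This is precisely the point where the paper takes a different route. It first twists by a line bundle of degree $\lfloor\mu(\cF)\rfloor$ to normalize $0\le\mu(\cF)<1$, handles $\mu=0$ by Atiyah's classification, and for $0<\mu<1$ invokes Atiyah's Lemma 15, which supplies an exact sequence $0\to\cO_E^{d}\to\cF\to\cF''\to 0$ with $\cF''$ \emph{indecomposable} (hence semistable by \cref{le.indec}) of rank $r-d$ and degree $d$. That indecomposability of the quotient is the structural input that keeps the induction alive, and it cannot be recovered from seesaw-type slope estimates alone. To repair your argument you would need either to choose $\cM$ more carefully (or take a higher-rank subsheaf, as Atiyah does), or to strengthen the inductive statement so that it applies to the non-semistable quotients that can arise. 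Separately, your treatment of the integral-slope case is loose as written (a semistable slope-$0$ bundle is not known in advance to be "an extension of degree-$0$ line bundles"—that is what is being proved), though it is fixable via the Jordan--H\"older filtration, since stable bundles of slope $0$ on $E$ are line bundles.
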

\begin{proof}
	We can assume that $\cF$ is indecomposable. Let $r=\rk(\cF)$ and $d=\deg(\cF)$. We will prove the result by the induction on $r$. 
	The result is certainly true when  $r=1$.
	
  If $\mu(\cF)=0$, then the assertion follows from \cite[Thm.~5]{Atiyah}.
  
  If $0<\mu(\cF)<1$ (i.e.\ $0<d<r$), then \cite[Lem.~15]{Atiyah} implies
  that $\cF$ contains a rank-$d$ free subsheaf $\cI$ of $\cF$ such
  that $\cF/\cI$ is an indecomposable locally free sheaf of rank $r-d$ and degree
  $d$. The induction hypothesis shows that $\cF/\cI$ has a filtration by invertible sheaves of degree $\geq 0$. Thus $\cF$ also has a filtration by invertible sheaves of degree $\geq 0=\lfloor\mu(\cF)\rfloor$.
  
  If $\mu(\cF)<0$ or $1\leq\mu(\cF)$, take an invertible sheaf $\cM$ of degree $\lfloor\mu(\cF)\rfloor$. Then
  \begin{equation*}
  	\mu(\cF\otimes\cM^{-1})\;= \; \mu(\cF)-\mu(\cM) \; =\;  \mu(\cF)-\lfloor\mu(\cF)\rfloor.
  \end{equation*}
  The former two cases shows that $\cF\otimes\cM^{-1}$ admits a filtration by invertible sheaves of degrees $\geq 0$. 
  Tensoring $\cM$  with the filtration gives the desired filtration of $\cF$. This
  completes the induction.
  \end{proof}
  
  Let $\cF$ be a semistable locally free $\cO_E$-module of slope $\ge 2$.
By \cref{le.filt}, $\cF$ has a filtration
	\begin{equation*}
    	0=\cZ_{0}\subset\cZ_{1}\subset\cdots\subset\cZ_{r}=\cF
	\end{equation*}
	in which each $\cF_{i}:=\cZ_{i}/\cZ_{i-1}$ is an invertible $\cO_{E}$-module of degree $\geq 2$. All $H^{1}(\cF_{i})$ vanish, 
	so an induction argument shows that all $H^{1}(\cZ_{i})$ vanish. Thus we obtain a commutative diagram
	\begin{equation}\label{eq.diag.epi}
		\begin{tikzcd}
			0\ar[r] & H^{0}(\cZ_{i-1})\otimes\cO\ar[d]\ar[r] & H^{0}(\cZ_{i})\otimes\cO\ar[d]\ar[r] & H^{0}(\cF_{i})\otimes\cO\ar[d]\ar[r] & 0 \\
			0\ar[r] & \cZ_{i-1}\ar[r] & \cZ_{i}\ar[r] & \cF_{i}\ar[r] & 0
		\end{tikzcd}
	\end{equation}
	with exact rows. Since invertible sheaves of degree $\geq 2$ are generated by their global sections \cite[Cor.~IV.3.2]{Hart}, 
	it is shown inductively that all $\cZ_{i}$ are also generated by global sections using \cref{eq.diag.epi}. In particular, $\cF$ is generated by global sections.

\section{Twisted homogeneous coordinate rings of the form $B(S^gE,\s,\cL)$}
\label{se.S^gE} 

Let $\Sigma_d$ denote the symmetric group on $d$ letters. Let $\Sigma_d$ act on $E^d$ by having
 the transposition $(i,j)$ interchange the $i^{\th}$ and $j^{\th}$ coordinates, $z_i$ and $z_j$, of a point $(z_1,\ldots,z_d)\in E^d$. 
 The $d^{\th}$ symmetric power, $S^dE$, is defined to be the quotient variety $E^d/\Sigma_d$ with respect to this action.
We write 
$$
(\!(z_1,\ldots,z_d)\!)
$$
 for the image of $(z_1,\ldots,z_d)$ in $S^dE$.\footnote{Sometimes $X_{n/k}$ is 
 isomorphic to a symmetric power of $E$; under a careless identification between the two the morphism  
 $\Phi_{n/k}:E^g \to X_{n/k}$ might not correspond to  the natural map $E^g \to S^gE$; this is irrelevant in this section and the next but 
becomes relevant in \S\ref{sect.applic}.} As is well-known, the Abel-Jacobi map, i.e., the addition map 
$$
\pi \;=\; {\sf sum}:S^dE\to E, \qquad {\sf sum}(\!(z_1,\ldots,z_d)\!) = z_1+\cdots+z_d,
$$
presents $S^dE$, as  a $\PP^{d-1}$-bundle over $E$. We will say more about this in \S\ref{se.sym}.

For this reason we start this section with results about projective space bundles on $E$.

\subsection{Projective space bundles $\PP(\cE)$ on an elliptic curve $E$}
\label{subsec.proj}
We recall some standard results and notation for projective space bundles, for the most part following the material in  \cite[pp.~160--171]{Hart}.

We adopt the following notation in this subsection:
\begin{itemize}
\item
$\cE$ is a locally free $\cO_E$-module of rank $d$;
  \item 
  $S^m\cE$ is the $m^{\th}$ symmetric power of $\cE$ when $m \ge 0$;
  \item 
  $S(\cE)=\cO_E \oplus \cE \oplus S^2\cE \oplus \cdots$ is the symmetric algebra on $\cE$;
  \item 
  $X=\PP(\cE) :={\bf Proj}(S(\cE))$ is the associated $\PP^{d-1}$-bundle on $E$;
  \item
  $\cO_X(1)$, or simply $\cO(1)$, be the tautological $\cO_X$-module associated to $S(\cE)$, i.e., $\pi_*(\cO_X(1))=\cE$;
  \item
  $\pi:\PP(\cE) \to E$ is the structure morphism;
\end{itemize}
We call $X$
the {\sf projectivization} of $\cE$. 

It follows from the definition of $\cO_X(1)$ that there are canonical isomorphisms 
$$
\pi_*(\cO_X(m))  \; \cong \;  S^m\cE
$$ 
for all $m \ge 0$, cf., \cite[Prop.~II.7.11]{Hart}. We will make frequent use of this fact without further comment.

We will make frequent use of the following observation.

\begin{lemma}
\label{split-epi-0}
Assume ${\rm char}(\Bbbk)=0$. For all integers $m,n \ge 1$, the canonical maps $\cE^{\otimes n} \to S^n\cE$ and   
$S^m\cE\otimes S^n\cE\to S^{m+n}\cE$ are split epimorphisms. 
\end{lemma}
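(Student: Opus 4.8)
The plan is to exhibit an explicit section of the canonical surjection $q_n\colon\cE^{\otimes n}\to S^n\cE$ coming from symmetrization, which is available precisely because $\mathrm{char}(\Bbbk)=0$, and then to deduce the statement for $S^m\cE\otimes S^n\cE\to S^{m+n}\cE$ by a short formal argument.

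First I would treat $q_n\colon\cE^{\otimes n}\to S^n\cE$. Since $S^n\cE$ corepresents symmetric $\cO_E$-multilinear maps out of $\cE^{\times n}$, and since $n!$ is invertible in $\cO_E$ (this is where $\mathrm{char}(\Bbbk)=0$ enters), the assignment
\[
(v_1,\dots,v_n)\ \longmapsto\ \frac{1}{n!}\sum_{\rho\in\Sigma_n} v_{\rho(1)}\otimes\cdots\otimes v_{\rho(n)}
\]
is symmetric and $\cO_E$-multilinear, hence factors through a morphism $s_n\colon S^n\cE\to\cE^{\otimes n}$. Composing with $q_n$ and using commutativity of $S(\cE)$ gives $q_n\bigl(s_n(v_1\cdots v_n)\bigr)=\frac{1}{n!}\sum_{\rho}v_{\rho(1)}\cdots v_{\rho(n)}=v_1\cdots v_n$, so $q_n\circ s_n=\mathrm{id}_{S^n\cE}$ and $q_n$ is a split epimorphism. (Equivalently, $e_n:=\frac{1}{n!}\sum_{\rho}\rho$ is an idempotent endomorphism of $\cE^{\otimes n}$ realizing $S^n\cE$ as a direct summand; one may alternatively verify the splitting on an affine cover trivializing $\cE$.)

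For the second assertion I would factor the canonical surjection $q_{m+n}\colon\cE^{\otimes(m+n)}\to S^{m+n}\cE$ as
\[
\cE^{\otimes m}\otimes\cE^{\otimes n}\ \xrightarrow{\,q_m\otimes q_n\,}\ S^m\cE\otimes S^n\cE\ \xrightarrow{\,\mu\,}\ S^{m+n}\cE,
\]
where $\mu$ denotes multiplication in $S(\cE)$; this identity is immediate on decomposable local sections. By the first part $q_{m+n}$ admits a section $h$, and then $\mu\circ\bigl((q_m\otimes q_n)\circ h\bigr)=q_{m+n}\circ h=\mathrm{id}$, so $(q_m\otimes q_n)\circ h$ is a section of $\mu$ and $\mu$ is a split epimorphism.

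I do not anticipate a genuine obstacle here: the whole content is that symmetrization splits $q_n$ in characteristic zero, and the passage from the module-level identities to statements about $\cO_E$-modules is handled either by the universal property of $S^n\cE$ or by working locally where $\cE$ is free. The only place the hypothesis is truly used is the existence of $s_n$ — indeed $q_n$ fails to split when $\mathrm{char}(\Bbbk)$ divides $n$ — and it is worth flagging this explicitly as the reason the lemma is stated over a field of characteristic zero.
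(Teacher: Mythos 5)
Your proof is correct and follows essentially the same route as the paper: split $\cE^{\otimes n}\to S^n\cE$ via symmetrization (the only place characteristic zero is used), then observe that since the composite $\cE^{\otimes(m+n)}\to S^m\cE\otimes S^n\cE\to S^{m+n}\cE$ is a split epimorphism, so is its second factor. You merely spell out the section $s_n$ explicitly where the paper simply asserts the splitting.
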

\begin{proof}
The map $\cE^{\otimes n} \to S^n\cE$ splits because  ${\rm char}(\Bbbk)=0$.
The composition  
  $$
  \cE^{\otimes(m+n)}\;=\; \cE^{\otimes m}\otimes \cE^{\otimes n} \; \longrightarrow \; S^m\cE\otimes S^n\cE  \; \longrightarrow \; S^{m+n}\cE
  $$
  is therefore a split epimorphism for all $m,n \ge 0$. 
  The map $S^m\cE\otimes S^n\cE\to S^{m+n}\cE$ is therefore a split epimorphism. 
\end{proof}

\subsubsection{Remark}
\label{re.splt}
When ${\rm char}(\Bbbk)=0$, the splitting of the map $S^m\cE\otimes S^n\cE\to S^{m+n}\cE$ can be defined universally by splitting the
  symmetrization morphism $S^m\otimes S^n\to S^{m+n}$ in the category of {\it polynomial endofunctors} on the category 
  $\coh(E)$ of coherent $\cO_E$-modules 
   (see, e.g., \cite[\S2.2]{sam-snow}   for a reminder on these).

  Concretely, we write $T_{m,n}:S^{m+n}\to S^m\otimes S^n$ for the
  natural transformation between polynomial functors which for
  symmetric powers of a vector space $V$ reads
  \begin{equation*}
    S^{m+n}V\ni v_1\cdots v_{m+n}\mapsto \frac {m!n!}{(m+n)!}\sum v_{a_1}\cdots v_{a_m}\otimes v_{b_1}\cdots v_{b_n}\in S^mV\otimes S^nV,
  \end{equation*}
  where the sum is over all decompositions of the set
  $\{1,\cdots,m+n\}$ as a disjoint union of $\{a_i\}$ and $\{b_j\}$.

\subsubsection{}
\label{sect.replace.cE}
Sometimes it is convenient to replace $\cE$ by another invertible $\cO_E$-module $\cE'$ such that $\PP(\cE) \cong \PP(\cE')$ as bundles over $E$. 
Exercise II.7.9(b) at \cite[p.~170]{Hart} and Lemma II.7.9 at \cite[p.~161]{Hart} address this matter: if $\cE$ and $\cE'$ are locally free 
$\cO_E$-modules and $\pi :\PP(\cE) \to E$ and $\pi' :\PP(\cE') \to E$ are the structure morphisms, then there is 
an isomorphism 
$$
\varphi:X=\PP(\cE)  \, \longrightarrow \, X'=\PP(\cE')
$$ 
such that $\pi=\pi' \varphi$  if and only if $\cE' \cong \cE \otimes \cL$ for some invertible $\cO_E$-module $\cL$. When this happens,
$$
\cO_{X}(1) \; \cong \; \varphi^*\cO_{X'}(1) \otimes \pi^*\cL.
$$
Replacing $\cE$ by $\cE'$ allows one to assume that $0 \le \deg(\cE) \le \rank(\cE)-1$; i.e., 
given any $\cE$ there is an invertible $\cO_E$-module $\cL$  such that $0 \le \deg(\cE\otimes \cL) \le \rank(\cE\otimes \cL)-1$.

\subsubsection{The N\'eron-Severi and Picard groups of $S^dE$}
\label{sssect.NS.S^gE}
\label{ssect.NS}
\label{se.rel.pic}
There is a split exact sequence 
\begin{equation*}
 0 \; \longrightarrow \; \pic(E)  \; \stackrel{\pi^*}{\longrightarrow} \;  \pic(X)  \; \longrightarrow \; \ZZ  \; \longrightarrow \;  0
\end{equation*}
with a splitting $\ZZ \to \pic(X)$ given by $1 \mapsto [\cO_X(1)]$ (see \cite[Exer.~II.7.9(a)]{Hart} and \cite[A11, p.~429]{Hart},
for example). 

The image  in  $\ZZ$ of $[\cL]\in \pic(X)$ is called the {\sf degree} of $\cL$ and is denoted by $\deg(\cL)$. 

The N\'eron-Severi group, ${\rm NS}(X)$, is isomorphic to $\ZZ\oplus \ZZ$ with basis 
$D:=[\cO_X(1)]$ and $F=[F_u]$ where $F_u:=\pi^{-1}(u)$ is the fiber over an arbitrary point $u \in E$. It is well known that
$$
F\cdot F\;=\; 0, \qquad F \cdot D^{d-1}\;=\; 1, \qquad D^d \;=\; \deg(\cE);
$$
see, e.g., \cite[Prop.~1.1(1)]{Gushel}. Let $D_u:=$ the image of $\{u\} \times E^{d-1}$ in $S^dE$; 
i.e., $D_u$ consists of the points $(\!(u,x_2,\ldots,x_d)\!)$ and is isomorphic to $S^{d-1}E$. 
When $X=S^dE$, $D=[\cO_X(1)]=[D_u]$.\footnote{The remarks after \cite[Lem.~1.3]{CaCi93} provide a nice 
proof of this equality that uses the 
Poincar\'e bundle on $E$. 
Here is another proof. Clearly $\cO_X(D_u)$ restricts to $\cO_{F_v}(1)$ on every fiber $F_{v}$.
The difference $\cR:=\cO_X(D_u)^{-1}\otimes \cO_X(1)$ is therefore trivial on each fiber and hence a pullback of a divisor on $E$. 
It follows that in $\NS(X)$, $D=[D_u]+tF$ for some integer $t$. To show that  $t=0$ it suffices to show that $D_u^d=1$. 
Intersections behave well in families so we can compute $D_u^d$ by taking the intersection of $D_{u_i}$ for $d$ distinct points $u_i$; 
this intersection is obviously  a singleton (and the intersections are transverse) so we conclude that $D_u^d=1$. Thus $t=0$ and 
$D=[D_u]$.
}

\begin{proposition}
\label{cor.picX}
If $\cL$ is an invertible $\cO_X$-module, there is an invertible $\cO_E$-module $\cL'$ such that
\begin{align*}
\cL & \; \cong \; \cO_X(a) \otimes \pi^*\cL' \qquad \text{and}
\\
[\cL] & \;=\;  aD+ (\deg(\cL'))F
\end{align*}
where $a=\deg(\cL)$.
\end{proposition}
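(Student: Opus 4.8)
The plan is to read off both assertions directly from the split exact sequence
$$
0 \; \longrightarrow \; \pic(E) \; \stackrel{\pi^*}{\longrightarrow} \; \pic(X) \; \longrightarrow \; \ZZ \; \longrightarrow \; 0
$$
recorded in \cref{sssect.NS.S^gE}, whose splitting $\ZZ \to \pic(X)$ sends $1$ to $[\cO_X(1)]$. Given an invertible $\cO_X$-module $\cL$, set $a := \deg(\cL)$, the image of $[\cL]$ under $\pic(X) \to \ZZ$. Since $\deg(\cO_X(a)) = a = \deg(\cL)$, the class $[\cL] - a[\cO_X(1)] = [\cL \otimes \cO_X(-a)]$ lies in the kernel of $\pic(X) \to \ZZ$, hence in the image of $\pi^*$. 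Thus there is an invertible $\cO_E$-module $\cL'$ with $[\cL \otimes \cO_X(-a)] = [\pi^*\cL']$, i.e. $\cL \cong \cO_X(a) \otimes \pi^*\cL'$, which is the first displayed isomorphism.

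For the N\'eron-Severi identity I would pass to $\NS(X) = \ZZ D \oplus \ZZ F$ with $D = [\cO_X(1)]$ and $F = [F_u]$. The isomorphism just obtained gives $[\cL] = aD + [\pi^*\cL']$ in $\NS(X)$, so it only remains to identify the class of $\pi^*\cL'$. Writing $\cL' \cong \cO_E(D')$ for a divisor $D' = \sum_i n_i(u_i)$ on $E$ of degree $\deg(\cL') = \sum_i n_i$, we have $\pi^*\cL' \cong \cO_X\bigl(\sum_i n_i F_{u_i}\bigr)$. All the fibers $F_{u_i}$ are numerically equivalent: they move in the algebraic family $\{F_u\}_{u \in E}$, and equivalently $[F_{u_i}] - [F_{u_j}] = \pi^*([u_i] - [u_j])$ is the pullback of a degree-zero class on $E$, hence lies in $\pic^0(X)$ and vanishes in $\NS(X)$. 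Therefore $[\pi^*\cL'] = \bigl(\sum_i n_i\bigr) F = (\deg\cL')F$, and $[\cL] = aD + (\deg\cL')F$ as claimed.

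I do not expect a genuine obstacle here: the argument is a formal consequence of the description of $\pic(X)$ and $\NS(X)$ already established in \cref{sssect.NS.S^gE}, together with the standard fact that distinct fibers of $\pi$ represent the same class in $\NS(X)$. The only point meriting an explicit sentence is that last numerical equivalence of fibers, for which the $\pic^0$ argument above suffices. If one wishes the isomorphism $\cL \cong \cO_X(a)\otimes\pi^*\cL'$ itself (rather than just an equality of Picard classes), one notes that $\cL\otimes\cO_X(-a)$ has degree $0$, so by the exactness of the sequence it \emph{is} isomorphic — not merely numerically equivalent — to $\pi^*\cL'$ for a suitable $\cL'$.
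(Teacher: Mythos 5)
Your argument is correct and is exactly the one the paper intends: Proposition~\ref{cor.picX} is stated without proof as an immediate consequence of the split exact sequence $0\to\pic(E)\to\pic(X)\to\ZZ\to 0$ and the description of $\NS(X)=\ZZ D\oplus\ZZ F$ recorded just before it in \cref{sssect.NS.S^gE}. Your extra sentence justifying that all fibers $F_u$ have the same class in $\NS(X)$ (since $[F_{u_i}]-[F_{u_j}]=\pi^*([u_i]-[u_j])\in\pic^0(X)$) is the one detail the paper leaves tacit, and it is handled correctly.
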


\begin{proposition}
[Gushel \cite{Gushel}]
\label{prop.gushel}
Let $\cE$ be an indecomposable locally free $\cO_E$-module of rank $d$ such that $0 \le \deg(\cE)\le d-1$.\footnote{As remarked in \cref{sect.replace.cE}, if $\cE$ is any indecomposable locally free $\cO_E$-module of rank $d$, there is an indecomposable locally free $\cO_E$-module $\cE'$ such that $X=\PP(\cE)$ is isomorphic to $\PP(\cE')$.}
Let $\cL$ be an invertible $\cO_X$-module. Suppose $[\cL]=aD+bF$.
Then $\cL$ is
\begin{enumerate}
  \item 
  generated by its global sections if $a \ge 0$ and $b \ge 2$;
  \item 
  ample if $a \ge 1$ and $b \ge 1$;
  \item 
  very ample if  $a \ge 1$ and $b \ge 3$.
\end{enumerate}  
\end{proposition}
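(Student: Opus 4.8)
The plan is to push the whole question down to the elliptic curve. By \cref{cor.picX} we may write $\cL\cong\cO_X(a)\otimes\pi^*\cL'$ with $a=\deg(\cL)$ and $\deg(\cL')=b$, and then the projection formula together with $\pi_*\cO_X(a)\cong S^a\cE$ gives $\pi_*\cL\cong S^a\cE\otimes\cL'$. The first step is to observe that this sheaf is semistable of slope $a\mu(\cE)+b$: since $\mathrm{char}(\Bbbk)=0$ the symmetrization $\cE^{\otimes a}\twoheadrightarrow S^a\cE$ splits (\cref{split-epi-0}), so $S^a\cE$ is a direct summand of $\cE^{\otimes a}$; the latter is semistable by \cref{le.tens-ss} because $\cE$, being indecomposable, is semistable by \cref{le.tu}; a direct summand of a semistable sheaf is again semistable and, by \cref{lem.indec.summands.semistable}, has the same slope, which for $S^a\cE$ is $a\mu(\cE)$. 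Tensoring with $\cL'$ raises the slope by $b$. Since $0\le\deg(\cE)$ we have $\mu(\cE)\ge0$, so $\mu(\pi_*\cL)=a\mu(\cE)+b\ge b$.

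For part~(1), with $a\ge0$ and $b\ge2$, the slope of $\pi_*\cL$ is $\ge b>1$, so $\pi_*\cL$ is globally generated on $E$ by \cref{le.semist.pos.deg}\cref{semist.pos.deg.slope}. Applying $S^a(-)$ to the tautological surjection $\pi^*\cE\twoheadrightarrow\cO_X(1)$ and twisting by $\pi^*\cL'$ produces a surjection $\pi^*\pi_*\cL\twoheadrightarrow\cL$ (an isomorphism when $a=0$); combined with $H^0(X,\cL)=H^0(E,\pi_*\cL)$, this shows the global sections of $\cL$ generate it, proving (1). For part~(2), with $a\ge1$ and $b\ge1$: a semistable locally free sheaf of positive slope on a smooth projective curve has every quotient bundle of positive degree and is therefore an ample vector bundle (a classical consequence of Hartshorne's ampleness criterion), so $\cO_{\PP(S^a\cE\otimes\cL')}(1)$ is ample because $\mu(S^a\cE\otimes\cL')\ge b\ge1$. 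Since $a\ge1$, the surjection $\pi^*(S^a\cE\otimes\cL')\twoheadrightarrow\cL$ defines a morphism $X\to\PP(S^a\cE\otimes\cL')$ over $E$ which is a closed immersion — it is the relative $a$-th Veronese embedding $\PP(\cE)\hookrightarrow\PP(S^a\cE)$ composed with the canonical isomorphism $\PP(S^a\cE)\cong\PP(S^a\cE\otimes\cL')$ — and pulls $\cO(1)$ back to $\cL$; the restriction of an ample invertible sheaf to a closed subscheme is ample, so $\cL$ is ample.

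For part~(3), with $a\ge1$ and $b\ge3$, I would run the standard fibrewise criterion for very ampleness on the $\PP^{d-1}$-bundle $\pi\colon X\to E$. On each fiber $F_c$ we have $\cL|_{F_c}\cong\cO_{\PP^{d-1}}(a)$, which is very ample because $a\ge1$; this separates points and tangent directions inside a single fiber, granted that the restriction map $H^0(X,\cL)\to H^0(F_c,\cL|_{F_c})$ is onto. Separating two points lying over distinct points of $E$, or a tangent direction transverse to the fibers, reduces — via cohomology and base change, using $R^1\pi_*\cL=0$ so that $(\pi_*\cL)\otimes k(c)\cong H^0(F_c,\cL|_{F_c})$ — to surjectivity of $H^0(E,\pi_*\cL)\to H^0(Z,(\pi_*\cL)|_Z)$ for subschemes $Z\subseteq E$ of length $\le2$, equivalently to $H^1(E,\pi_*\cL\otimes\cO_E(-Z))=0$. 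Now $\pi_*\cL\otimes\cO_E(-Z)$ is semistable of slope $\ge(a\mu(\cE)+b)-2\ge b-2\ge1>0$, so this $H^1$ vanishes by \cref{le.semist.pos.deg}\cref{semist.pos.deg.pos}; this yields all of the required surjectivities and hence very ampleness of $\cL$.

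The substantive content of this proposition is classical and is exactly the statement of \cite{Gushel}, which one could instead simply cite. Accordingly I expect no deep obstacle; the main care needed is the bookkeeping in part~(3) — separating the cases of infinitesimal data tangent to versus transverse to the fibers, and the base-change identifications — and checking that the numerical thresholds $b\ge2$, $b\ge1$, $b\ge3$ are exactly those forced by the extreme case $\deg(\cE)=0$, where $\mu(\cE)=0$ and $\pi_*\cL$ is a semistable bundle of slope precisely $b$.
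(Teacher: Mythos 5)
Your argument is essentially sound, but note that the paper does not actually prove this proposition: its entire proof is a citation, observing that the three statements are weak versions of Proposition 1.1(iv), Proposition 3.3(i), and Theorem 4.3 of \cite{Gushel} (hence the attribution in the statement). You have instead reproved the weakened statements from scratch using the semistability toolkit of \cref{sect.semistable}: write $\pi_*\cL\cong S^a\cE\otimes\cL'$, note it is semistable of slope $a\mu(\cE)+b\ge b$ (since $\mu(\cE)\ge 0$), and convert global generation, ampleness, and very ampleness on $X$ into slope bounds on $E$. The trade-off is sharpness versus self-containedness: Gushel's theorems give finer numerical thresholds (which is why the paper calls these ``weak versions''), while your route uses only lemmas the paper already develops, plus two classical inputs you should make explicit --- that a semistable bundle of positive slope on a curve in characteristic zero is ample (Hartshorne's criterion), and that the relative Veronese $\PP(\cE)\hookrightarrow\PP(S^a\cE)$ is a closed immersion. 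Two points need care if this were written out in full. Part (3) is only a sketch: a length-two subscheme $Z\subseteq X$ is either contained in a fiber $F_c$ (handled by very ampleness of $\cO_{\PP^{d-1}}(a)$ together with $H^1(E,\pi_*\cL(-c))=0$) or maps isomorphically onto a length-two $W\subseteq E$, where besides $H^1(E,\pi_*\cL\otimes\cO_E(-W))=0$ you also need $(\pi_*\cL)\otimes\cO_W\to\pi_*(\cL\otimes\cO_Z)$ to be onto (Nakayama plus global generation of $\cO(a)$ on fibers); your bound $a\mu(\cE)+b-2\ge b-2\ge 1>0$ does cover both cases, so the gap is bookkeeping rather than substance. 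Finally, your proof silently uses ${\rm char}(\Bbbk)=0$ (for the splitting $\cE^{\otimes a}\twoheadrightarrow S^a\cE$, for \cref{le.tens-ss}, and for Hartshorne's criterion), an assumption not stated in the proposition but in force in all of the paper's applications.
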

\begin{proof}
These statements are weak versions of Proposition 1.1(iv), Proposition 3.3(i), and Theorem 4.3 in \cite{Gushel}.
\end{proof}

\subsubsection{}

In order to analyze the push-forward $\pi_*\cL$ of an invertible $\cO_X$-module $\cL$, we  adapt 
\cite[Lems.~V.2.1 and V.2.4]{Hart} to the present setting.

\begin{lemma}\label{le.1}
  Let $\cL$ be an invertible $\cO_X$-module with $[\cL]=aD+bF\in {\rm NS}(X)$. If $a \ge 0$, 
  then 
  \begin{enumerate}
  \item\label{psb.pushforward.loc.free}
  $\pi_*\cL$ is a locally free $\cO_E$-module of  rank $\binom{a+d-1}{d-1}$;
  \item\label{psb.pushforward.der}
  $R^i\pi_*(\cL)=0$ for all  $i\ge 1$;
  \item\label{psb.pushforward.cohom}
  $    H^i(X,\cL) \cong H^i(E,\pi_*\cL)$  for all  $i\ge 0$.
\end{enumerate}
\end{lemma}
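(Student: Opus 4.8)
The plan is to pull the ``$F$-part'' of $\cL$ out of the picture and reduce everything to the single bundle $\cO_X(a)$. By \cref{cor.picX}, since $a=\deg(\cL)\ge 0$ there is an invertible $\cO_E$-module $\cL'$ with $\deg(\cL')=b$ together with an isomorphism $\cL\cong\cO_X(a)\otimes\pi^*\cL'$. The projection formula, applicable because $\cL'$ is locally free, then gives, for every $i\ge 0$,
\[
R^i\pi_*\cL \;\cong\; R^i\pi_*\big(\cO_X(a)\otimes\pi^*\cL'\big) \;\cong\; \big(R^i\pi_*\cO_X(a)\big)\otimes\cL' ,
\]
so that \cref{psb.pushforward.loc.free} and \cref{psb.pushforward.der} for $\cL$ will follow from the corresponding statements for $\cO_X(a)$.

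For \cref{psb.pushforward.der} I would argue fibrewise: every fibre of $\pi$ is a copy of $\PP^{d-1}$ on which $\cO_X(a)$ restricts to $\cO_{\PP^{d-1}}(a)$, and $H^i(\PP^{d-1},\cO_{\PP^{d-1}}(a))=0$ for all $i\ge 1$ whenever $a\ge 0$ (it vanishes for $1\le i\le d-2$ unconditionally, and $H^{d-1}$ vanishes since $-a-d<0$). The standard cohomology-and-base-change machinery---i.e.\ the bounded complex of finite locally free $\cO_E$-modules, concentrated in degrees $[0,d-1]$, that universally computes $R\pi_*\cO_X(a)$, exactly as in the proof of \cite[Lem.~V.2.1]{Hart}---then forces $R^i\pi_*\cO_X(a)=0$ for all $i\ge 1$: that complex is exact in positive degrees after tensoring with every residue field of $E$, so a descending-induction Nakayama argument shows it is exact in positive degrees. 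Tensoring with $\cL'$ yields \cref{psb.pushforward.der} for $\cL$.

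For \cref{psb.pushforward.loc.free} I would simply invoke the canonical isomorphism $\pi_*\cO_X(m)\cong S^m\cE$ for $m\ge 0$ recorded in \S\ref{subsec.proj}: the sheaf $S^a\cE$ is the $a$-th symmetric power of a locally free sheaf of rank $d$, hence locally free of rank $\binom{a+d-1}{d-1}$, and therefore $\pi_*\cL\cong S^a\cE\otimes\cL'$ is locally free of the same rank. Finally, \cref{psb.pushforward.cohom} is immediate from \cref{psb.pushforward.der} and the Leray spectral sequence $H^p(E,R^q\pi_*\cL)\Rightarrow H^{p+q}(X,\cL)$: since $R^q\pi_*\cL=0$ for $q\ge 1$, the spectral sequence collapses onto the row $q=0$ and gives the edge isomorphisms $H^i(X,\cL)\cong H^i(E,\pi_*\cL)$ for all $i\ge 0$. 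The only step requiring any real care is the base-change vanishing in \cref{psb.pushforward.der}, and even that is routine here because the fibre cohomology vanishes identically rather than merely generically.
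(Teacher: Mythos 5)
Your proof is correct. The only real difference from the paper's argument is the route to parts \cref{psb.pushforward.loc.free} and \cref{psb.pushforward.der}: the paper applies Grauert's theorem \cite[Cor.~III.12.9]{Hart} directly to $\cL$ itself, observing that $h^0(F_u,\cL_u)=\binom{a+d-1}{d-1}$ and $h^i(F_u,\cL_u)=0$ ($i\ge 1$) are constant in $u$, which simultaneously yields local freeness, the rank, and the vanishing of the higher direct images; part \cref{psb.pushforward.cohom} is then the same Leray collapse you use. You instead first split off the base contribution via \cref{cor.picX} and the projection formula, reducing to $\cO_X(a)$, and then quote the tautological identification $\pi_*\cO_X(a)\cong S^a\cE$ for the rank while handling the vanishing by cohomology-and-base-change. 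Both are standard and complete; your version is marginally heavier for \cref{psb.pushforward.der} (the Nakayama descending induction versus a one-line appeal to Grauert) but buys the explicit description $\pi_*\cL\cong S^a\cE\otimes\cL'$, which the paper does not extract here but does establish and use separately later (see \cref{split-epi} and \cref{le.l}). One small point of care: the projection-formula identity $R^i\pi_*(\cO_X(a)\otimes\pi^*\cL')\cong R^i\pi_*(\cO_X(a))\otimes\cL'$ that you invoke for all $i$ is legitimate precisely because $\cL'$ is invertible, so no hypothesis is missing.
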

\begin{proof}
For all $u \in E$, the restriction $\cL_u$ of $\cL$ to $F_u$ is  isomorphic to $\cO_{\PP^{d-1}}(a)$.  

\cref{psb.pushforward.loc.free}
  The dimension of $H^0(F_u,\cL_u)$ is therefore $\binom{a+d-1}{d-1}$. Since this holds  for all $u \in E$  the conclusion follows
  from Grauert's Theorem \cite[Cor.~III.12.9]{Hart}, just as in the proof of \cite[Lem.~V.2.1]{Hart}.
 
\cref{psb.pushforward.der}
Since $a\ge 0$, $H^i(F_u,\cL_u)=0$ for all $i\ge 1$ and $u\in E$. Thus Grauert's Theorem shows that $R^i\pi_*(\cL) \cong H^i(F_u,\cL_u)$=0.
  
\cref{psb.pushforward.cohom}
 By \cref{psb.pushforward.der}, the spectral sequence
 $
    H^p(E,R^q\pi_*(\cL))\Rightarrow H^{p+q}(X,\cL)
 $
collapses. Thus, as in  \cite[Lem.~V.2.4]{Hart},   $ H^i(E,\pi_*\cL)     \cong  H^i(X,\cL)$  for all  $i\ge 0$.
\end{proof}

\subsection{Multiplication of sections of invertible sheaves on $\PP(\cE)$}\label{se.sct}

We continue to assume that  $X=\PP(\cE)$ and keep the notations in \S\ref{subsec.proj}.

\begin{lemma}
\label{split-epi}
Assume ${\rm char}(\Bbbk)=0$. 
Let $\cL_1 = \cO_X(a_1) \otimes \pi^*\cL_1' $  and $\cL_2 = \cO_X(a_2) \otimes \pi^*\cL_2'$ be invertible $\cO_X$-modules. If $a_1$ and $a_2$ are $\ge 0$, then 
 the natural map 
\begin{equation}
\label{eq.split-epi}
\pi_*\cL_1 \otimes \pi_*\cL_2 \, \longrightarrow \, \pi_*(\cL_1 \otimes \cL_2)
\end{equation}
is a split epimorphism.
\end{lemma}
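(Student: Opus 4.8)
The plan is to reduce the statement to the already-established fact (\cref{split-epi-0}) that the symmetrization map $S^{a_1}\cE\otimes S^{a_2}\cE\to S^{a_1+a_2}\cE$ is a split epimorphism. First I would apply the projection formula together with the canonical isomorphisms $\pi_*\cO_X(m)\cong S^m\cE$ recorded in \S\ref{subsec.proj} to identify
$$
\pi_*\cL_i \;\cong\; \pi_*\cO_X(a_i)\otimes\cL_i' \;\cong\; S^{a_i}\cE\otimes\cL_i' \qquad (i=1,2),
$$
and, since $\cL_1\otimes\cL_2\cong\cO_X(a_1+a_2)\otimes\pi^*(\cL_1'\otimes\cL_2')$,
$$
\pi_*(\cL_1\otimes\cL_2)\;\cong\; S^{a_1+a_2}\cE\otimes\cL_1'\otimes\cL_2'.
$$
The projection formula applies here because each $\cL_i'$ is invertible, hence locally free, and \cref{le.1} guarantees that all the sheaves in sight are locally free of finite rank (the case $a_i=0$ being trivial, with $S^0\cE=\cO_E$).

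Next I would verify that, under these identifications, the canonical map in \cref{eq.split-epi} becomes
$$
\bigl(S^{a_1}\cE\otimes S^{a_2}\cE\bigr)\otimes\cL_1'\otimes\cL_2'
\;\xrightarrow{\ \mu\otimes\id\ }\;
S^{a_1+a_2}\cE\otimes\cL_1'\otimes\cL_2',
$$
where $\mu\colon S^{a_1}\cE\otimes S^{a_2}\cE\to S^{a_1+a_2}\cE$ is the degree-$(a_1,a_2)$ component of the multiplication of the symmetric algebra $S(\cE)$ --- equivalently, the multiplication of the graded $\cO_E$-algebra $\bigoplus_{m\ge 0}\pi_*\cO_X(m)$, which is canonically $S(\cE)$ because $X={\bf Proj}(S(\cE))$. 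This is a compatibility between the counit map defining \cref{eq.split-epi} and the projection formula; it can be checked locally on $E$, where $\cL_1'$ and $\cL_2'$ are trivial and the assertion is exactly that the multiplication on $\bigoplus_m\pi_*\cO_X(m)$ is the symmetric-algebra multiplication.

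Finally, by \cref{split-epi-0} the map $\mu$ is a split epimorphism, an explicit splitting being the natural transformation $T_{a_1,a_2}$ of Remark~\ref{re.splt} evaluated at $\cE$; if $\mu\circ s=\id$ with $s\colon S^{a_1+a_2}\cE\to S^{a_1}\cE\otimes S^{a_2}\cE$, then $s\otimes\id_{\cL_1'\otimes\cL_2'}$ splits $\mu\otimes\id$, and transporting this back through the isomorphisms above produces a splitting of \cref{eq.split-epi}. The only step requiring genuine care is the second one --- the bookkeeping that identifies the canonical multiplication map $\pi_*\cL_1\otimes\pi_*\cL_2\to\pi_*(\cL_1\otimes\cL_2)$ with $\mu\otimes\id$; once that identification is in place the conclusion is immediate from \cref{split-epi-0}.
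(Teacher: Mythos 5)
Your proposal is correct and follows essentially the same route as the paper's proof: apply the projection formula and the identification $\pi_*\cO_X(m)\cong S^m\cE$ to rewrite both sides, recognize the map as the symmetrization morphism tensored with $\cL_1'\otimes\cL_2'$, and invoke \cref{split-epi-0}. The only difference is that you spell out the verification that the canonical map really is $\mu\otimes\id$ under these identifications, a point the paper passes over with ``can therefore be written as''; this is a welcome extra bit of care rather than a divergence.
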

\begin{proof}
By the Projection Formula \cite[p.~124]{Hart},
  \begin{equation*}
    \pi_*(\cL_i) \;=\; \pi_*\big(\cO(a_i)\otimes\pi^*\cL_i'\big) \;\cong \;  \pi_*(\cO(a_i))\otimes \cL_i'   \;\cong \;  S^{a_i}\cE  \otimes \cL_i' .
  \end{equation*}
Similarly,
$    \pi_*(\cL_1\otimes \cL_2)    \; \cong   \;   S^{a_1+a_2}\cE  \otimes \cL_1'  \otimes \cL_2'$.
    The morphism in (\ref{eq.split-epi}) can therefore  be written as 
  \begin{equation}
  \label{eq.arrow}
  S^{a_1} \cE  \otimes \cL_1' \otimes    S^{a_2} \cE \otimes \cL_2'   \, \longrightarrow   S^{a_1+a_2}\cE \otimes \cL_1'  \otimes \cL_2' .
  \end{equation}
It follows from \Cref{split-epi-0} that this is a  split epimorphism. 
\end{proof}

\begin{proposition}\label{pr.onto}\label{le.aux}
Assume ${\rm char}(\Bbbk)=0$. Let  $\cE$ be a semistable locally free $\cO_E$-module of positive degree and let $X=\PP(\cE)$. 
Let $\cL$ and $\cF$ be invertible $\cO_X$-modules whose
classes in ${\rm NS}(X)$ are $aD+bF$ and $sD+tF$, respectively.
  If $a,s\ge 1$ and $b,t\ge 2$, then  the multiplication maps
  \begin{enumerate}
  \item\label{psb.pf.mult.onto}
$H^0(E,\pi_*\cL)\otimes H^0(E,\pi_*\cF)  \, \longrightarrow \,  H^0(E,\pi_*\cL\otimes \pi_*\cF)$ and
  \item\label{psb.mult.onto}
$H^0(X,\cL)\otimes H^0(X,\cF)   \, \longrightarrow \,  H^0(X,\cL\otimes \cF)$
\end{enumerate}
are onto.
\end{proposition}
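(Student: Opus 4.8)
The plan is to derive part \cref{psb.mult.onto} from part \cref{psb.pf.mult.onto} via the projective-bundle formalism of \S\ref{subsec.proj}, and to prove \cref{psb.pf.mult.onto} by identifying the push-forwards $\pi_*\cL$ and $\pi_*\cF$ as semistable $\cO_E$-modules of slope $>2$ and quoting \cref{thm.good.U.V}. To begin, I would use \cref{cor.picX} to write $\cL\cong\cO_X(a)\otimes\pi^*\cL'$ and $\cF\cong\cO_X(s)\otimes\pi^*\cF'$ with $\cL',\cF'$ invertible $\cO_E$-modules of degrees $b$ and $t$ respectively; the Projection Formula (as in the proof of \cref{split-epi}) then gives $\pi_*\cL\cong S^a\cE\otimes\cL'$, $\pi_*\cF\cong S^s\cE\otimes\cF'$, and $\pi_*(\cL\otimes\cF)\cong S^{a+s}\cE\otimes\cL'\otimes\cF'$.

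For \cref{psb.pf.mult.onto}, the crucial observation is that $\cU:=\pi_*\cL=S^a\cE\otimes\cL'$ and $\cV:=\pi_*\cF=S^s\cE\otimes\cF'$ are semistable of slope $>2$. Since $\mathrm{char}(\Bbbk)=0$, $S^a\cE$ is a direct summand of $\cE^{\otimes a}$ (cf.\ \cref{split-epi-0}); the latter is semistable by iterated use of \cref{le.tens-ss}, and by \cref{lem.indec.summands.semistable} a direct summand of a semistable $\cO_E$-module is again semistable with the same slope. Thus $S^a\cE$ is semistable with $\mu(S^a\cE)=\mu(\cE^{\otimes a})=a\,\mu(\cE)$, and tensoring with the line bundle $\cL'$ preserves semistability (again \cref{le.tens-ss}) and adds $\deg\cL'=b$ to the slope. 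As $\cE$ has positive degree, $\mu(\cE)>0$, so $\mu(\cU)=a\,\mu(\cE)+b>b\ge 2$ and likewise $\mu(\cV)=s\,\mu(\cE)+t>t\ge 2$. Now \cref{thm.good.U.V} applies and shows the multiplication map $H^0(E,\cU)\otimes H^0(E,\cV)\to H^0(E,\cU\otimes\cV)$ is onto; since $\cU\otimes\cV=\pi_*\cL\otimes\pi_*\cF$, this is exactly \cref{psb.pf.mult.onto}.

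For \cref{psb.mult.onto}, note that $a$, $s$, and $a+s$ are all $\ge 0$, so \cref{le.1}\cref{psb.pushforward.cohom} identifies $H^0(X,\cL)=H^0(E,\pi_*\cL)$, $H^0(X,\cF)=H^0(E,\pi_*\cF)$, and $H^0(X,\cL\otimes\cF)=H^0(E,\pi_*(\cL\otimes\cF))$; these identifications are compatible with multiplication, so the map in \cref{psb.mult.onto} becomes the composite
\[
H^0(E,\pi_*\cL)\otimes H^0(E,\pi_*\cF)\ \longrightarrow\ H^0(E,\pi_*\cL\otimes\pi_*\cF)\ \longrightarrow\ H^0(E,\pi_*(\cL\otimes\cF)),
\]
in which the first arrow is the multiplication map, surjective by \cref{psb.pf.mult.onto}, and the second is $H^0$ applied to the natural morphism $\pi_*\cL\otimes\pi_*\cF\to\pi_*(\cL\otimes\cF)$, which is a split epimorphism by \cref{split-epi} and hence surjective on global sections. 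The composite is therefore onto.

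I do not anticipate a serious obstacle, since the argument only assembles earlier results. The two points needing care are: verifying that the multiplication of global sections on $X=\PP(\cE)$ really does factor through the natural transformation $\pi_*\cL\otimes\pi_*\cF\to\pi_*(\cL\otimes\cF)$ --- the usual compatibility of push-forward with the tensor product of sections, which is precisely what lets \cref{split-epi} be brought to bear --- and checking that $S^a\cE$ is semistable, for which the characteristic-zero splitting of $\cE^{\otimes a}\to S^a\cE$ together with \cref{lem.indec.summands.semistable} is exactly what is required.
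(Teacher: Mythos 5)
Your proposal is correct and follows essentially the same route as the paper's proof: decompose $\cL$ and $\cF$ via \cref{cor.picX}, use the projection formula to identify $\pi_*\cL\cong S^a\cE\otimes\cL'$ as a semistable sheaf of slope $>2$ (semistability of $S^a\cE$ coming from its being a direct summand of $\cE^{\otimes a}$ in characteristic zero), apply \cref{thm.good.U.V} for part (1), and factor the multiplication map through the split epimorphism of \cref{split-epi} for part (2). The only cosmetic difference is that you compute $\mu(S^a\cE)=a\mu(\cE)$ explicitly where the paper merely observes this slope is positive.
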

\begin{proof}
\cref{psb.pf.mult.onto}
   Let $\cU =\pi_*\cL$ and $\cV= \pi_*\cF$.
   
  By \cref{cor.picX}, there are invertible $\cO_E$-modules $\cL'$ and $\cF'$, of degrees $b$ and $t$ respectively, such that 
$ \cL\cong \cO_X(a)\otimes \pi^*(\cL')$ and $ \cF\cong \cO_X(s)\otimes \pi^*(\cF')$.
By the Projection Formula, $ \cU \cong \; \pi_*(\cO_X(a))\otimes \cL'$ and $ \cV \cong  \pi_*(\cO_X(s))\otimes \cF'$.

Since $\cE$ is semistable,   \Cref{le.tens} tells us that $\cE^{\otimes a}$ is semistable too;  its direct summand $S^{a}\cE$ 
is therefore semistable too. Similarly,  $S^s\cE$ is semistable.  
Since $\cL'$ and $\cF'$ are semistable by   \cref{le.indec}, the tensor products $\cU$ and $\cV$ are also semistable by  \Cref{le.tens}.

Since $\cE$ has positive degree, by assumption, and  $a,s\ge 1$, 
the summands $S^a\cE$ and $S^s\cE$  of the semistable sheaves $\cE^{\otimes a}$ and  $\cE^{\otimes s}$ have positive degree too. Hence
    \begin{equation*}
    \mu(\cU) \; = \; \mu(\pi_*(\cO_X(a))) + \mu(\cL') \; = \; \mu(S^a\cE) + \mu(\cL')  \; > \; \mu(\cL') \; \ge \; 2.
  \end{equation*}
Similarly, $\mu(\cV)>2$. The result now follows from \Cref{thm.good.U.V}.

\cref{psb.mult.onto}
  The canonical map $    H^0(\pi_*\cL)\otimes H^0(\pi_*\cF)\to H^0( \pi_*(\cL \otimes \cF))$ factors as
  $$
  H^0(E,\pi_*\cL)\otimes H^0(E,\pi_*\cF) \; \longrightarrow \; H^0(E,\pi_*\cL\otimes \pi_*\cF) \; \longrightarrow \;  H^0(E, \pi_*(\cL \otimes \cF)).
  $$
    We have just shown that the left-most map in this composition is onto; the other is also onto because the canonical map  
    $\pi_*\cL \otimes \pi_*\cF \to  \pi_*(\cL \otimes \cF)$ is a split epimorphism by \Cref{split-epi}. The composition is therefore onto.
The result now follows from the fact that there are functorial   isomorphisms $H^0(X,\cL)\cong H^0(E,\pi_*\cL)$,
$H^0(X,\cF)\cong H^0(E,\pi_*\cF)$, and $H^0(X,\cL\otimes \cF)\cong H^0(E,\pi_*(\cL\otimes \cF))$.  
\end{proof}

\subsection{Symmetric powers of $E$}\label{se.sym}

Let $X=S^dE$. 

Up to tensoring with a degree-0 invertible sheaf, there is a unique indecomposable locally free sheaf $\cE$ of rank $d$ and degree $1$ 
on $E$ such that 
$S^dE \cong \PP(\cE)$ \cite[p.~451]{Atiyah}. 
By \cite[Thm.~5, p.~432]{Atiyah}, one can construct such an $\cE$ iteratively: let $\cE_1$ be any invertible $\cO_E$-module of degree one
and for $r \ge 2$ let $\cE_{r}$ be the ``unique" non-trivial extension
 $0 \to \cO_E \to \cE_{r} \to \cE_{r-1} \to 0$.   Thus $X=S^dE=\PP(\cE_d)$ and $\pi_*(\cO_X(1)) \cong \cE_d$.
  Since $\cE_d$ is indecomposable it is 
 semistable (in fact, stable because its degree is 1) of positive degree so \cref{le.aux} applies. 
 
 Whenever we view $S^dE$ as a projective bundle $\PP(\cE)$ we will assume $\cE$ is $\cE_d$. 
 
 See \cite[p.~451]{Atiyah},  \cite{CaCi93}, and \cite{Polizzi} for more information about $S^dE$ as a projective space bundle.

\begin{theorem}
\label{cor.ab} 
Assume ${\rm char}(\Bbbk)=0$.
Let $\s:S^dE\to S^dE$ be a translation automorphism.
Let $\cL$ be an invertible sheaf on $S^dE$ whose N\'eron-Severi class is $[\cL]=aD+bF$. If $a\ge 1$   and $b\ge 2$,
then    $B(S^dE,\sigma,\cL)$ is generated  in degree one.
\end{theorem}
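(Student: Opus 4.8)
The statement to prove is \Cref{cor.ab}: for $X = S^dE = \PP(\cE)$ with $\cE = \cE_d$ semistable of degree $1$, for any translation automorphism $\s$ and any $\cL$ with $[\cL] = aD + bF$ with $a \ge 1$, $b \ge 2$, the ring $B(S^dE,\s,\cL)$ is generated in degree one. By \Cref{lem.SS92} (or rather, the unwinding of its proof), generation in degree one is equivalent to surjectivity of the multiplication map $H^0(X,\cL) \otimes H^0(X,\cL_n^\s) \to H^0(X,\cL \otimes \cL_n^\s)$ for all $n \ge 1$, where $\cL_n = \cL \otimes \cL^\s \otimes \cdots \otimes \cL^{\s^{n-1}}$; concretely one wants the image of $H^0(\cL) \otimes H^0(\cL_n)$ in $H^0(\cL_{n+1})$ to be all of it, which by an easy induction on $n$ reduces to showing $H^0(X,\cL_n) \otimes H^0(X,\cL^{\s^n}) \to H^0(X,\cL_{n+1})$ is onto for every $n$.

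**Strategy: reduce to \Cref{pr.onto}.** The plan is to identify the N\'eron--Severi classes of $\cL_n$ and $\cL^{\s^n}$ and then apply \Cref{pr.onto}\cref{psb.mult.onto}. Since $\s$ is a translation automorphism of $X = S^dE$, I first need to check that $\s^*$ acts trivially on $\NS(X) = \ZZ D \oplus \ZZ F$. This follows from \Cref{cor.sigma-ample} (translations lie in the connected algebraic group acting on $X$, so $\s^*$ is quasi-unipotent; but more precisely, since $\pic^0$ is preserved and the component group of $\pic(X)$ is acted on trivially by a connected group, $\s^*$ fixes $\NS(X)$ pointwise). Hence $[\cL^{\s^j}] = aD + bF$ for every $j$, so $[\cL_n] = na\, D + nb\, F$ and $[\cL^{\s^n}] = aD + bF$. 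Now $\cL_n$ has class $(na)D + (nb)F$ with $na \ge 1$ and $nb \ge 2$, and $\cL^{\s^n}$ has class with coefficients $\ge 1$ and $\ge 2$; since $\cE = \cE_d$ is semistable (indecomposable, in fact stable, of degree $1 > 0$) \Cref{pr.onto}\cref{psb.mult.onto} applies directly and gives surjectivity of $H^0(X,\cL_n) \otimes H^0(X,\cL^{\s^n}) \to H^0(X,\cL_n \otimes \cL^{\s^n}) = H^0(X,\cL_{n+1})$.

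**The induction.** With that surjectivity in hand, I would run the induction: $B_1 = H^0(X,\cL)$ generates $B_2 = H^0(X,\cL_2)$ by the $n=1$ case; assuming $B_1$ generates $B_n = H^0(X,\cL_n)$, the $n$-th surjectivity statement gives that $B_1 \cdot B_n$ spans $B_{n+1}$, hence $B_1$ generates $B_{n+1}$. So $B_1$ generates all of $B(S^dE,\s,\cL)$. One small point to verify for the base of the induction and for \Cref{pr.onto} to be applicable: $\cL$ itself must be globally generated, which holds by \Cref{prop.gushel}(1) since $a \ge 0$ and $b \ge 2$ (equivalently by \Cref{le.1}, $\pi_*\cL$ is locally free and, after the projection formula, a tensor of $S^a\cE$ with a degree-$b$ line bundle, which has slope $> 2$ and is globally generated by \Cref{le.semist.pos.deg}\cref{semist.pos.deg.slope}).

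**Main obstacle.** The genuinely load-bearing input is \Cref{pr.onto}, which has already been established, so the remaining work is bookkeeping. The only place demanding a little care is the claim that $\s^*$ acts trivially (not merely quasi-unipotently) on $\NS(S^dE)$ — I need this exact triviality to conclude $[\cL^{\s^j}] = aD+bF$ rather than just that the slopes grow linearly. This is where I would spend a sentence or two: a translation $T_x$ of $E^d$ descends to $X = S^dE$ and lies in the image of the connected group $E$ acting on $X$ via $x \mapsto T_{(\!(x,0,\dots,0)\!)}$-type translations (or simply: the $S^dE$-translation is induced by the $E$-action $t \cdot (\!(z_1,\dots,z_d)\!) = (\!(z_1+t,z_2,\dots,z_d)\!)$ composed appropriately); since this group is connected it acts trivially on the discrete group $\NS(X)$, by the argument already spelled out in the proof of \Cref{cor.sigma-ample}. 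I expect no other difficulties; the rest is a direct citation of \Cref{pr.onto} plus the standard induction of \Cref{lem.SS92}.
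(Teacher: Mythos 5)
Your proposal is correct and follows essentially the same route as the paper: the paper's proof is a two-line reduction of the surjectivity of $B_1\otimes B_n\to B_{n+1}$ to \Cref{pr.onto} applied to $\cL$ and $\cF=\sigma^*\cL\otimes\cdots\otimes(\sigma^*)^n\cL$, which is exactly your argument up to the immaterial choice of multiplying by $B_1$ on the right rather than the left. Your extra paragraph verifying that $\s^*$ acts trivially (not merely quasi-unipotently) on $\NS(S^dE)$ is a point the paper leaves implicit, and it is a worthwhile check since \Cref{pr.onto} needs the exact N\'eron--Severi class of the twisted tensor product.
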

\begin{proof}
By definition, the degree-$n$ homogeneous component of $B(S^dE,\sigma,\cL)$ is 
  \begin{equation*}
  B_n \;=\;   H^0(S^dE,\cL\otimes\cdots\otimes (\sigma^*)^{n-1}\cL).
  \end{equation*}
The surjectivity of the multiplication map $B_1\otimes B_n\to B_{n+1}$ therefore follows from 
  \Cref{pr.onto} applied to $\cL$ and  $   \cF=\sigma^*\cL\otimes\cdots\otimes (\sigma^*)^n \cL$. 
\end{proof}

\section{Relations for $B(S^gE,\sigma,\cL$)}\label{se.rel}

The main result in this section,  \cref{pr.rel}, shows that rather mild hypotheses imply that 
the relations for $B(S^gE,\s,\cL)$ are generated in degrees 2 and 3.

 \subsection{Preparations}
The following hypotheses and notation apply throughout this section:
\begin{itemize}
  \item 
 $\Bbbk$ is an algebraically closed field of characteristic zero;
  \item
  $\pi:S^gE \to E$ is the map $\pi(\!(z_1,\ldots, z_g)\!) \;=\; z_1+\cdots + z_g$;
  \item 
  $\s$ is an arbitrary translation automorphism of $S^gE$;
  \item
   $\cL$ is an invertible $\cO_{S^gE}$-module  such that   $[\cL]=aD+bF$ where $a \ge 1$ and $b \ge 2$;
     \item 
  $X=S^gE$, $g \ge 2$.
\end{itemize}
By \cref{prop.gushel}, $\cL$ is ample and is generated by its global sections.

By \Cref{cor.ab}, $B:=B(S^gE,\s,\cL)$ is generated  as a $\Bbbk$-algebra  by its degree-one component, $B_1$; i.e., 
the canonical $\Bbbk$-algebra homomorphism  $\varphi:T(B_1) \to B$  from the tensor algebra on $B_1$ is surjective.

 \subsubsection{}
To prove that the ideal $J:=\ker(\varphi)$ is generated by $J_2+J_3$
 it suffices, by  \cref{lem.SS92},  to show that the canonical map 
$$
H^0(S^gE,\cL) \otimes R(\cM_m,\cN_m) \, \longrightarrow \, R(\cL \otimes \cM_m, \cN_m)
$$
is surjective  for all $m \ge 2$ where $\cM_m$ 
and $\cN_m$ are the sheaves defined at the beginning of \S\ref{sect.ker.mult}. 
We will prove more: if $\cL$, $\cM=\cM_m$, and $\cN=\cN_m$, have the properties in \cref{cv.lmn} below, then 
the canonical map 
\begin{equation}
\label{eq.R.LMN}
H^0(S^gE,\cL) \otimes R(\cM,\cN) \, \longrightarrow \, R(\cL \otimes \cM, \cN)
\end{equation}
is onto.

\begin{convention}\label{cv.lmn}
Let $E$ be an elliptic curve and let $X=S^gE$.
Let  $\cL$, $\cM$, and $\cN$, be invertible $\cO_X$-modules with N\'eron-Severi classes
\begin{align*}
[\cL]  &\;=\;  aD+bF \quad \text{where $a \ge 1$ and $b \ge 2$,}
\\
[\cM] & \;=\; pD+qF \quad \text{where $p\ge 2$ and $q\ge 4$.}
\\
[\cN] & \;=\; sD+tF \quad \text{where $s \ge 1$ and $t \ge 2$,}
\end{align*}
\end{convention}

\subsection{Surjectivity of multiplication maps}

By \cref{prop.gushel}, $\cN$ is generated by its global sections. Let 
 \begin{equation}
 \label{eq.K.NO.N}
  0 \to \cK \to H^0(S^gE,\cN) \otimes \cO_{S^gE} \to \cN \to 0
  \end{equation}
  be the associated exact sequence.
Let $\cG= \cM\otimes \cK$. Since $\cK$ is a locally free $\cO_{S^gE}$-module so is $\cG$. 
In this case, the argument in \cref{lem.B.deg.relns} showed that the map in (\ref{eq.R.LMN}) is onto if and only if the map 
\begin{equation}\label{eq:lg-main}
  H^0(S^gE,\cL)\otimes H^0(S^gE,\cG)\to H^0(S^gE,\cL\otimes \cG).
\end{equation}
is onto.

The $\cO_{S^gE}$-module $\cL$ satisfies the hypotheses and therefore the conclusions of \Cref{le.1}. Although $\cG$ and $\cL\otimes \cG$ need not be invertible, they have similar properties.

\begin{lemma}\label{le.Gr.G}\leavevmode
  \begin{enumerate}
  \item 
  $\pi_*\cG$ and $\pi_*(\cL\otimes\cG)$ are locally free $\cO_E$-modules;
  \item 
  $R^i\pi_*(\cG)=R^i\pi_*(\cL\otimes\cG)=0$ for all  $i\ge 1$;
  \item\label{le.Gr.G.hi} 
  $H^i(S^gE,\cG) \cong H^i(E,\pi_*\cG)$  and   $H^i(S^gE,\cL \otimes \cG) \cong H^i(E,\pi_*(\cL \otimes \cG))$ for all  $i\ge 0$.
  \end{enumerate}
\end{lemma}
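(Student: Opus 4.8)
The plan is to obtain all three assertions at once from the defining exact sequence \cref{eq.K.NO.N} of $\cK$ by a single tensor-and-pushforward argument. Since $\cG=\cM\otimes\cK$ and $\cL\otimes\cG=(\cL\otimes\cM)\otimes\cK$, and since $[\cM]=pD+qF$ has $p\ge 2$ while $[\cL\otimes\cM]$ has $D$-coefficient $a+p\ge 3$, it suffices to prove the following: \emph{if $\cP$ is an invertible $\cO_X$-module whose $D$-coefficient in $\NS(X)$ is $\ge 0$, then $\pi_*(\cP\otimes\cK)$ is locally free, $R^i\pi_*(\cP\otimes\cK)=0$ for all $i\ge 1$, and $H^i(X,\cP\otimes\cK)\cong H^i(E,\pi_*(\cP\otimes\cK))$ for all $i\ge 0$.} Applying this with $\cP=\cM$ and with $\cP=\cL\otimes\cM$ then gives the lemma.

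To prove the displayed claim I would tensor \cref{eq.K.NO.N} with $\cP$, obtaining a short exact sequence $0\to\cP\otimes\cK\to V\otimes\cP\to\cP\otimes\cN\to 0$ with $V:=H^0(X,\cN)$. Both $\cP$ and $\cP\otimes\cN$ have nonnegative $D$-coefficient (the latter equal to $p_\cP+s\ge 1$), so \cref{le.1} applies to each: $\pi_*\cP$ and $\pi_*(\cP\otimes\cN)$ are locally free and $R^{\ge 1}\pi_*\cP=R^{\ge 1}\pi_*(\cP\otimes\cN)=0$. Pushing the sequence forward along $\pi$ and feeding in these vanishings, the long exact sequence identifies $\pi_*(\cP\otimes\cK)$ with $\ker\!\big(V\otimes\pi_*\cP\xrightarrow{\ \beta\ }\pi_*(\cP\otimes\cN)\big)$, which is locally free as the kernel of a morphism of locally free sheaves on the smooth curve $E$; the same long exact sequence gives $R^i\pi_*(\cP\otimes\cK)=0$ for $i\ge 2$ and $R^1\pi_*(\cP\otimes\cK)=\operatorname{coker}(\beta)$. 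Once $R^{\ge 1}\pi_*(\cP\otimes\cK)=0$ is established, the spectral sequence $H^p(E,R^q\pi_*(\cP\otimes\cK))\Rightarrow H^{p+q}(X,\cP\otimes\cK)$ collapses and yields the cohomology comparison, exactly as in the proof of \cref{le.1}\cref{psb.pushforward.cohom}. So everything reduces to the surjectivity of $\beta$.

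To see that $\beta$ is onto I would use \cref{cor.picX} to write $\cP\cong\cO_X(p)\otimes\pi^*\cP'$ and $\cN\cong\cO_X(s)\otimes\pi^*\cN'$ with $\deg\cN'=t$, so that the Projection Formula gives $\pi_*\cP\cong S^p\cE\otimes\cP'$, $\pi_*(\cP\otimes\cN)\cong S^{p+s}\cE\otimes\cP'\otimes\cN'$, and (via \cref{le.1}\cref{psb.pushforward.cohom}) $V\cong H^0(E,S^s\cE\otimes\cN')$. Under these identifications $\beta$ becomes $\id_{\cP'}$ tensored with the composite
\[
H^0(E,S^s\cE\otimes\cN')\otimes S^p\cE\ \xrightarrow{\ \operatorname{ev}\otimes\id\ }\ S^s\cE\otimes\cN'\otimes S^p\cE\ \xrightarrow{\ \operatorname{mult}\ }\ S^{p+s}\cE\otimes\cN',
\]
where $\operatorname{ev}$ is evaluation of global sections and $\operatorname{mult}$ is the symmetric multiplication (tensored with $\id_{\cN'}$). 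The second map is a split epimorphism by \cref{split-epi-0}. For the first, exactly as in the proof of \cref{le.aux} the sheaf $S^s\cE$ is a direct summand of $\cE^{\otimes s}$, which is semistable by iterating \cref{le.tens} (here $\cE=\cE_g$ is stable, cf.\ \S\ref{se.sym}), so $S^s\cE$ is semistable of slope $s/g$; hence $S^s\cE\otimes\cN'$ is semistable of slope $\tfrac{s}{g}+t>1$ and is therefore globally generated by \cref{le.semist.pos.deg}\cref{semist.pos.deg.slope}, which makes $\operatorname{ev}\otimes\id$ surjective. Thus $\beta$ is onto, completing the proof of the displayed claim and hence of the lemma.

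The only step requiring genuine care is the identification in the third paragraph — verifying that the natural map $V\otimes\pi_*\cP\to\pi_*(\cP\otimes\cN)$ really does correspond, under the Projection Formula, to $\id_{\cP'}$ tensored with the evaluation-then-multiplication map above; everything else is a routine assembly of \cref{le.1}, \cref{cor.picX}, \cref{split-epi-0}, and the semistability results of \S\ref{sect.semistable}. If one prefers to avoid that bookkeeping, an alternative is to restrict \cref{eq.K.NO.N} to a fibre $\pi^{-1}(u)\cong\PP^{g-1}$ (where $\cN|_{\pi^{-1}(u)}\cong\cO_{\PP^{g-1}}(s)$ and $H^0(X,\cN)\to H^0(\pi^{-1}(u),\cN|_{\pi^{-1}(u)})$ is onto because $\pi_*\cN\otimes\cO_E(-u)$ is semistable of positive degree by \cref{le.semist.pos.deg}), then compute the cohomology of $(\cP\otimes\cK)|_{\pi^{-1}(u)}\cong\cO_{\PP^{g-1}}(p)\otimes\cK|_{\pi^{-1}(u)}$ by twisting the restricted sequence and using the surjectivity of polynomial multiplication on $\PP^{g-1}$, and finally invoke Grauert's theorem exactly as in the proof of \cref{le.1}.
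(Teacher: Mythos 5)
Your proof is correct, and its skeleton is the same as the paper's: tensor the defining sequence \cref{eq.K.NO.N} of $\cK$ with $\cM$ (resp.\ $\cL\otimes\cM$), push forward along $\pi$ using \cref{le.1}, and reduce everything — local freeness, vanishing of $R^{i}\pi_*$, and the Leray comparison of cohomology — to the surjectivity of the induced map $\beta\colon H^0(X,\cN)\otimes\pi_*\cP\to\pi_*(\cP\otimes\cN)$. The one genuine point of divergence is how that surjectivity is established. The paper applies $H^0(E,-)$ to $\beta$, observes that the resulting map is the multiplication map $H^0(X,\cM)\otimes H^0(X,\cN)\to H^0(X,\cM\otimes\cN)$, invokes \cref{pr.onto} to see it is onto, and then upgrades surjectivity on global sections to surjectivity of sheaves using the global generation of $\pi_*(\cM\otimes\cN)$. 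You instead factor $\beta$ as $(\mathrm{ev}\otimes\mathrm{id})$ followed by the multiplication map $\pi_*\cN\otimes\pi_*\cP\to\pi_*(\cN\otimes\cP)$: the second factor is a split epimorphism by \cref{split-epi-0} (i.e.\ \cref{split-epi}), and the first is onto because $\pi_*\cN\cong S^s\cE\otimes\cN'$ is semistable of slope $>1$ and hence globally generated by \cref{le.semist.pos.deg}\cref{semist.pos.deg.slope}. Your route buys a small economy — it bypasses \cref{pr.onto}, and therefore the full strength of \cref{le.otimes}, at this point in the argument — at the cost of the compatibility check you rightly flag, namely that $\pi_*$ of the evaluation map $H^0(X,\cN)\otimes\cP\to\cN\otimes\cP$ really is $\mathrm{ev}_{\pi_*\cN}\otimes\mathrm{id}$ followed by the cup-product map under the projection-formula identifications; that check is standard and does go through. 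Your uniform treatment of $\cG$ and $\cL\otimes\cG$ via the auxiliary sheaf $\cP$ also makes explicit the ``the other case is similar'' step of the paper's proof.
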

\begin{proof}
	We prove the lemma for $\cG$. The other case is  similar. Since $\cM$ and $\cM\otimes\cN$  satisfy the hypotheses of \Cref{le.1}, $R^{i}\pi_{*}\cM=R^{i}\pi_{*} (\cM\otimes\cN)=0 $ for all $i\geq 1$. Hence the short exact sequence \Cref{eq.K.NO.N}  
	tensored with $\cM$ produces the exact sequence
	\begin{equation}
		\begin{tikzcd}
			0\ar[r] & \pi_{*}\cG\ar[r] & \pi_{*}(\cM\otimes H^{0}(S^gE,\cN))\ar[r,"f"] & \pi_{*}(\cM\otimes\cN)\ar[r] & R^{1}\pi_{*}(\cG)\ar[r] & 0
		\end{tikzcd}
	\end{equation}
	and shows $R^{i}\pi_{*}\cG=0$ for all $i\geq 2$. Applying $H^{0}(S^gE,-)$, the morphism $f$ induces a map
	\begin{equation*}
		H^{0}(S^gE,\cM)\otimes H^{0}(S^gE,\cN)\to H^{0}(S^gE,\cM\otimes\cN)
	\end{equation*}
	which is surjective by \cref{pr.onto}. Since $\pi_{*}(\cM\otimes\cN)$ is semistable and has slope $\geq 2$ as shown in the proof of \cref{le.aux}, it is generated by global sections by \cref{le.semist.pos.deg}\cref{semist.pos.deg.slope}. Therefore the morphism $f$ is surjective. This implies $R^{1}\pi_{*}(\cG)=0$.
	
Since $\pi_{*}(\cM\otimes H^{0}(\cN))$ is a locally free $\cO_E$-module, so is 	$\pi_{*}\cG$. The assertion \cref{le.Gr.G.hi} follows in the same way as \Cref{le.1}.
\end{proof}

\begin{lemma}
\label{lem.using.G}
The map  in \Cref{eq:lg-main} is surjective if and only if the map 
\begin{equation}\label{eq:lg-pi_*main}
H^0(E,\pi_*\cL) \otimes H^0(E,\pi_*\cG) \to H^0(E,\pi_*(\cL \otimes \cG))
\end{equation}
is surjective. 
\end{lemma}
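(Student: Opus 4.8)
The plan is to identify the two multiplication maps with one another by using that the isomorphisms $H^0(S^gE,-)\cong H^0(E,\pi_*(-))$ are the canonical ones, and hence compatible with the relevant ``multiplication'' natural transformations.

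First I would record the three isomorphisms needed. Since $a\ge 1$, \Cref{le.1}\cref{psb.pushforward.cohom} applies to $\cL$ and gives $H^0(S^gE,\cL)\cong H^0(E,\pi_*\cL)$; likewise \Cref{le.Gr.G}\cref{le.Gr.G.hi} gives $H^0(S^gE,\cG)\cong H^0(E,\pi_*\cG)$ and $H^0(S^gE,\cL\otimes\cG)\cong H^0(E,\pi_*(\cL\otimes\cG))$. In each case the isomorphism is the one coming from the equality of global–section functors $\Gamma(S^gE,\mathcal F)=\Gamma(E,\pi_*\mathcal F)$, equivalently the degree‑$0$ edge map of the Leray spectral sequence, so in particular it is functorial in $\mathcal F$.

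Next I would observe that these isomorphisms assemble into a commutative square
\[
\begin{tikzcd}
H^0(S^gE,\cL)\otimes H^0(S^gE,\cG) \ar[r] \ar[d,"\wr"'] & H^0(S^gE,\cL\otimes\cG) \ar[d,"\wr"] \\
H^0(E,\pi_*\cL)\otimes H^0(E,\pi_*\cG) \ar[r] & H^0(E,\pi_*(\cL\otimes\cG))
\end{tikzcd}
\]
in which the top row is the map in \cref{eq:lg-main} and the bottom row the map in \cref{eq:lg-pi_*main}, i.e.\ the cup product $H^0(E,\pi_*\cL)\otimes H^0(E,\pi_*\cG)\to H^0(E,\pi_*\cL\otimes\pi_*\cG)$ followed by the map induced on $H^0$ by the canonical morphism $\pi_*\cL\otimes_{\cO_E}\pi_*\cG\to\pi_*(\cL\otimes_{\cO_{S^gE}}\cG)$. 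To check commutativity, unwind the definitions: a pair of global sections $s\in\Gamma(S^gE,\cL)$ and $t\in\Gamma(S^gE,\cG)$ is the same datum as the corresponding pair of global sections $\bar s\in\Gamma(E,\pi_*\cL)$ and $\bar t\in\Gamma(E,\pi_*\cG)$; over an open $U\subseteq E$ the canonical morphism $\pi_*\cL\otimes\pi_*\cG\to\pi_*(\cL\otimes\cG)$ sends $a\otimes b$, with $a,b$ sections of $\cL,\cG$ over $\pi^{-1}(U)$, to the product section $a\otimes b$ of $\cL\otimes\cG$ over $\pi^{-1}(U)$; taking $U=E$ shows that both composites in the square carry $s\otimes t$ to the section $s\cdot t\in\Gamma(S^gE,\cL\otimes\cG)=\Gamma(E,\pi_*(\cL\otimes\cG))$. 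Hence the square commutes, and since its vertical arrows are isomorphisms the top arrow is onto if and only if the bottom arrow is, which is the assertion.

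I do not expect a genuine obstacle here: the argument is formal. The one point that deserves a little care is that the isomorphisms borrowed from \Cref{le.1} and \Cref{le.Gr.G} really are the canonical (adjunction) ones, so that functoriality forces the square above to commute; but this is exactly how those isomorphisms were produced there (as Leray edge maps, which in degree $0$ is the tautological identification $\Gamma(S^gE,-)=\Gamma(E,\pi_*(-))$), so no additional work is needed.
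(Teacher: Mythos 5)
Your proof is correct and is exactly the argument the paper intends: its own proof of this lemma is the single line ``immediate consequence of \cref{le.Gr.G}'', and what you have written is the standard unwinding of that claim (the tautological identification $\Gamma(S^gE,-)=\Gamma(E,\pi_*(-))$ in degree $0$ intertwines the two multiplication maps, so the commutative square with vertical isomorphisms transfers surjectivity). No gaps.
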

\begin{proof}
This is an immediate consequence of  \cref{le.Gr.G}.
\end{proof}

In what follows, we use \cref{le.Gr.G}  without further comment.

By \Cref{split-epi}, the map $\pi_*\cL_{1}\otimes \pi_*\cL_{2}  \to \pi_*(\cL_{1}\otimes \cL_{2})$ is a split
epimorphism for suitable invertible $\cO_{S^gE}$-modules $\cL_i$. The next result is analogous.

\begin{lemma}\label{le.split}
  Under \Cref{cv.lmn}, the canonical map
  \begin{equation*}
    \pi_*\cL\otimes \pi_*\cG\to \pi_*(\cL\otimes \cG) 
  \end{equation*}
  is a split epimorphism and therefore induces a surjection on
  global sections over $E$.
\end{lemma}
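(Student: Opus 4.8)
The plan is to imitate the proof of \Cref{split-epi}: push everything down to $E$ via the projection formula, express the three direct images in terms of symmetric powers of $\cE$ and the syzygy sheaf $\cK$, and then obtain the splitting by transporting the symmetrization splittings of \Cref{split-epi-0} through a diagram of short exact sequences on $E$. Concretely, using \Cref{cor.picX} I would fix isomorphisms $\cL\cong\cO_X(a)\otimes\pi^*\cL'$, $\cM\cong\cO_X(p)\otimes\pi^*\cM'$, $\cN\cong\cO_X(s)\otimes\pi^*\cN'$ with $\deg\cL'=b$, $\deg\cM'=q$, $\deg\cN'=t$, and put $W:=H^0(X,\cN)$, which by \Cref{le.1} is $H^0(E,S^s\cE\otimes\cN')$. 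Twisting the sequence $0\to\cK\to W\otimes\cO_X\to\cN\to 0$ of \Cref{cv.lmn} by $\cO_X(m)$ and pushing forward gives, for $m=p$ and $m=a+p$, short exact sequences
$$0\longrightarrow\pi_*(\cO_X(m)\otimes\cK)\longrightarrow W\otimes S^m\cE\longrightarrow S^{m+s}\cE\otimes\cN'\longrightarrow 0,$$
the exactness on the right (equivalently $R^1\pi_*(\cO_X(m)\otimes\cK)=0$) coming from \Cref{le.Gr.G} via the identifications $\cO_X(p)\otimes\cK\cong\cG\otimes\pi^*(\cM')^{-1}$ and $\cO_X(a+p)\otimes\cK\cong(\cL\otimes\cG)\otimes\pi^*(\cL'\otimes\cM')^{-1}$. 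Applying the projection formula again to write $\pi_*\cL\cong S^a\cE\otimes\cL'$, $\pi_*\cG\cong\pi_*(\cO_X(p)\otimes\cK)\otimes\cM'$, and $\pi_*(\cL\otimes\cG)\cong\pi_*(\cO_X(a+p)\otimes\cK)\otimes\cL'\otimes\cM'$, the map in the statement becomes — after cancelling the common invertible sheaf $\cL'\otimes\cM'$ — the natural map $S^a\cE\otimes\pi_*(\cO_X(p)\otimes\cK)\to\pi_*(\cO_X(a+p)\otimes\cK)$ induced by $S^a\cE=\pi_*\cO_X(a)$ and the multiplication $\cO_X(a)\otimes\cO_X(p)\otimes\cK\to\cO_X(a+p)\otimes\cK$.

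I would then place this map at the left edge of a commutative diagram of short exact sequences of locally free $\cO_E$-modules: the top row is the $m=p$ sequence tensored by $S^a\cE$, the bottom row is the $m=a+p$ sequence, and the vertical maps are those induced by multiplication, so that the middle vertical is $\id_W\otimes(S^a\cE\otimes S^p\cE\to S^{a+p}\cE)$ and the right-hand vertical is $(S^a\cE\otimes S^{p+s}\cE\to S^{a+p+s}\cE)\otimes\id_{\cN'}$. Both are split epimorphisms by \Cref{split-epi-0}, and the commutativity of the two squares is precisely the associativity of multiplication $S^a\cE\otimes S^p\cE\otimes S^s\cE\to S^{a+p+s}\cE$. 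The standard conclusion is that if one can choose right inverses of the middle and right verticals that commute with the two horizontal maps of the bottom/top rows, then the chosen right inverse of the middle vertical restricts on the kernel sub-sheaf of the bottom row — which is $\pi_*(\cL\otimes\cG)$ up to the line bundle we cancelled — to a right inverse of the left-hand vertical, i.e. to a splitting of $\pi_*\cL\otimes\pi_*\cG\to\pi_*(\cL\otimes\cG)$. Once the sheaf map is known to be split epi, the last clause is automatic since $H^0(E,-)$ is additive.

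The step I expect to be the main obstacle is exactly this compatibility of splittings. The unmodified symmetrization splittings $T_{m,n}$ of \Cref{re.splt} do \emph{not} make the relevant square commute, because the normalization constants relating $T_{m,n}$ to multiplication interfere with the $S^\bullet\cE$-module structure; so the real content is to produce a right inverse $\rho$ of the middle vertical whose composite with the evaluation-then-multiply map onto $S^{p+s}\cE\otimes\cN'$ equals (the appropriate symmetrization splitting composed with) the bottom map onto $S^{a+p+s}\cE\otimes\cN'$, the latter vanishing on the kernel $\ker\bigl(W\otimes S^{a+p}\cE\to S^{a+p+s}\cE\otimes\cN'\bigr)$. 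One wants to build $\rho$ by exploiting the functoriality of the symmetrization transformation of \Cref{re.splt} — and, if necessary, the evaluation map $W\otimes\cO_E\to\pi_*\cN$ — rather than just invoking existence of a splitting. The remaining ingredients (the projection-formula bookkeeping, the $R^1$-vanishing via \Cref{le.Gr.G}, and the associativity giving commutativity of the diagram) are routine.
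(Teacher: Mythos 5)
Your reduction is exactly the one the paper uses: push everything down to $E$ via the projection formula, realize $\pi_*\cG$ and $\pi_*(\cL\otimes\cG)$ as the kernels of the two evaluation--then--multiply maps \Cref{eq:pig} and \Cref{eq:pilg}, identify $\pi_*\cL\otimes\pi_*\cG$ with the kernel of \Cref{eq:piall}, and try to exhibit the complex \Cref{eq:pilg} as a direct summand of \Cref{eq:piall} via sections of the multiplication maps $S^a\cE\otimes S^p\cE\to S^{a+p}\cE$ and $S^a\cE\otimes S^{p+s}\cE\to S^{a+p+s}\cE$. All of the bookkeeping (the projection-formula identifications, the $R^1\pi_*$-vanishing from \Cref{le.Gr.G}, the commutativity of the multiplication squares by associativity) is fine. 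But the write-up stops at exactly the point where the content of the lemma lies: you defer the construction of right inverses of the middle and right verticals that commute with the horizontal maps, call it ``the main obstacle,'' and only list the properties a solution $\rho$ would need. That is a genuine gap, not a routine verification left to the reader: a morphism of short exact sequences whose middle and right verticals are split epimorphisms need not have a split-epi left vertical, so without an actual compatible section the conclusion does not follow.

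For comparison, the paper closes this step by taking the sections to be $T_{a,p}\otimes\id$ and $T_{a,p+s}\otimes\id$ from \Cref{re.splt} and asserting that their compatibility with the horizontal maps amounts to the coassociativity identity $(\id\otimes T_{p,s})\circ T_{a,p+s}=(T_{a,p}\otimes\id)\circ T_{a+p,s}$. Your doubt about the unmodified $T_{m,n}$ is aimed at precisely that assertion, and you should sharpen it: the discrepancy between $(\id_{S^a}\otimes m_{p,s})\circ(T_{a,p}\otimes\id_{S^s})$ and $T_{a,p+s}\circ m_{a+p,s}$ is not merely one of normalization constants, since the latter also produces terms in which factors coming from $S^s$ migrate into the $S^a$ slot, which the former never does. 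What coassociativity does yield, after composing with $\id_{S^a}\otimes m_{p,s}$ and using $m_{p,s}\circ T_{p,s}=\id$, is only $(\id\otimes m_{p,s})\circ(T_{a,p}\otimes\id)\circ T_{a+p,s}=T_{a,p+s}$, i.e.\ compatibility after precomposition with $T_{a+p,s}$ rather than on all of $S^{a+p}\cE\otimes S^s\cE$. To complete your argument you must either extract from this an honest section of the left vertical on the kernel subsheaf, or bypass the splitting entirely --- for instance by proving directly that the sheaf map $\pi_*\cL\otimes\pi_*\cG\to\pi_*(\cL\otimes\cG)$ is surjective with $H^1$ of its kernel vanishing (semistability and slope estimates in the style of \Cref{8430925089} are available), since surjectivity on global sections is all that the later applications of \Cref{le.split} actually use.
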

\begin{proof}
There are invertible $\cO_E$-modules $\cL'$, $\cM'$, and $\cN'$, of degrees $b\ge 2$, $q\ge 4$, and $t\ge 2$, respectively,
such that
  \begin{equation*}
    \cL\cong \cO(a)\otimes \pi^*(\cL'),
    \qquad  \cM\cong \cO(p)\otimes \pi^*(\cM'),
    \quad \hbox{and} \quad \cN\cong \cO(s)\otimes \pi^*(\cN').
  \end{equation*}
Since $0 \to \cG \to \cM \otimes H^0(S^gE,\cN) \to \cM \otimes \cN \to 0$ is exact by definition of $\cG$, 
$\pi_*\cG$ is the kernel of    
  \begin{equation}
  \label{eq:pig}
  \pi_*\cO(p)\otimes \cM' \otimes  H^0(E,\pi_*\cO(s)\otimes \cN') \; \longrightarrow \;  \pi_*\cO(p+s)\otimes \cM'\otimes \cN'.
  \end{equation}
Similarly, $\pi_*(\cL\otimes \cG)$ is the kernel of
  \begin{equation}
  \label{eq:pilg}
  \pi_*\cO(a+p)  \otimes H^0(E,\pi_*\cO(s)\otimes \cN')  \; \longrightarrow \;  \pi_*\cO(a+p+s)\otimes \cN'
  \end{equation}
  tensored with $\cL'\otimes \cM'$. Tensoring \Cref{eq:pig} with
  $\pi_*\cL$ produces the tensor product of
  \begin{equation}
  \label{eq:piall}
 \pi_*\cO(a)\otimes \pi_*\cO(p)   \otimes  H^0(E,\pi_*\cO(s)\otimes \cN') \; \longrightarrow \;  \pi_*\cO(a)\otimes \pi_*\cO(p+s)\otimes \cN'
  \end{equation}
  with $\cL'\otimes \cM'$; in other words, tensoring \Cref{eq:pig} with   $\pi_*\cL$ produces 
    \begin{equation}
    \label{eq:pifinal}
 \pi_*\cL\otimes \pi_*\cM   \otimes  H^0(E,\pi_*\cN) \; \longrightarrow \;  \pi_*\cL \otimes \pi_*(\cM \otimes \cN).
  \end{equation}

  The symmetrization map
  $\pi_*\cO(a)\otimes \pi_*\cO(p)\to \pi_*\cO(a+p)$ between the left-hand terms of \Cref{eq:piall,eq:pilg} splits compatibly with the symmetrization map between the right-hand terms. Indeed, this amounts to noting that the diagram
\begin{equation*}
  \begin{tikzpicture}[auto,baseline=(current  bounding  box.center)]
    \path[anchor=base] (0,0) node (111) {$S^a\otimes S^p\otimes S^s$} +(8,0) node (3) {$S^{a+p+s}$} +(4,.5) node (12) {$S^a\otimes S^{p+s}$} +(4,-.5) node (21) {$S^{a+p}\otimes S^s$};
    \draw[->] (12) to [bend right=6] node[auto,pos=.5,swap] {$\scriptstyle \mathrm{id}\otimes T_{p,s}$} (111);
    \draw[->] (21) to [bend left=6] node[auto,pos=.5] {$\scriptstyle T_{a,p}\otimes\mathrm{id}$} (111);
    \draw[->] (3) to [bend right=6] node[auto,pos=.5,swap] {$\scriptstyle T_{a,p+s}$} (12);
    \draw[->] (3) to [bend left=6] node[auto,pos=.5] {$\scriptstyle T_{a+p,s}$} (21);        
  \end{tikzpicture}
\end{equation*}
of functors defined as in \Cref{re.splt} commutes. 
When applied to a finite dimensional vector space $V$ the diagram of functors expresses the coassociativity of the shuffle comultiplication on the symmetric algebra $SV$ (with this bialgebra structure $SV$ is the graded dual to the universal cocommutative bialgebra $B(V)$ defined in \cite[\S\S 12.2 and 12.3]{Swe69}). 

In conclusion \Cref{eq:pilg} can be realized as a direct summand of \Cref{eq:piall}. The kernel of \Cref{eq:pilg} tensored with $\cL'\otimes \cM'$, namely $\pi_*(\cL \otimes \cG)$, is therefore a direct summand of the kernel of \Cref{eq:piall} tensored with $\cL'\otimes \cM'$, and hence a direct summand of the kernel of \Cref{eq:pifinal}, which is $ \pi_*\cL\otimes \pi_*\cG$.
\end{proof}

By \cref{lem.using.G},  \Cref{eq:lg-main} is surjective if and only if \Cref{eq:lg-pi_*main} is. 
Now by \Cref{le.split},  the surjectivity of \Cref{eq:lg-pi_*main} is equivalent to the surjectivity of
\begin{equation*}
  H^0(E,\pi_*\cL)\otimes H^0(E,\pi_*\cG)   \; \longrightarrow \;  H^0(E,\pi_*\cL\otimes \pi_*\cG).
\end{equation*}
This has the same flavor as \Cref{le.otimes}, and we will use
that result to obtain the conclusion.

\begin{lemma}\label{le.l}
The locally free $\cO_E$-module  $\pi_*\cL$ is isomorphic to $\pi_*\cO(a)\otimes \cL'$, is semistable, 
generated by its   global sections, and has slope $a\mu(\cE)+b$.
\end{lemma}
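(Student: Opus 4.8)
The plan is to reduce everything to the projection formula together with the semistability machinery of \cref{sect.semistable}. First I would invoke \cref{cor.picX}: since $[\cL]=aD+bF$, there is an invertible $\cO_E$-module $\cL'$ with $\deg(\cL')=b$ and $\cL\cong\cO_X(a)\otimes\pi^*\cL'$. Combining the projection formula \cite[p.~124]{Hart} with the canonical isomorphism $\pi_*\cO_X(a)\cong S^a\cE$ recalled in \cref{subsec.proj} gives
\[
\pi_*\cL\;\cong\;\pi_*\bigl(\cO_X(a)\bigr)\otimes\cL'\;=\;\pi_*\cO(a)\otimes\cL'\;=\;S^a\cE\otimes\cL',
\]
which is the first assertion and shows in passing that $\pi_*\cL$ is locally free.

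Next I would establish semistability. Recall $X=S^gE=\PP(\cE)$ with $\cE=\cE_g$ the indecomposable rank-$g$, degree-$1$ bundle fixed in \cref{se.sym}; by \cref{le.tu} it is semistable. Since ${\rm char}(\Bbbk)=0$, \cref{le.tens} gives that $\cE^{\otimes a}$ is semistable, hence so is its direct summand $S^a\cE$ (any subsheaf of a summand is a subsheaf of the whole, so \cref{lem.indec.summands.semistable} bounds its slope by $\mu(\cE^{\otimes a})$). Tensoring the semistable sheaf $S^a\cE$ with the line bundle $\cL'$ and applying \cref{le.tens} again shows $\pi_*\cL\cong S^a\cE\otimes\cL'$ is semistable.

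For the slope, $\cE^{\otimes a}$ is semistable of slope $a\mu(\cE)$ — from $\deg(\cE^{\otimes a})=a\,\rk(\cE)^{a-1}\deg(\cE)$ and $\rk(\cE^{\otimes a})=\rk(\cE)^{a}$ — so its direct summand $S^a\cE$ has that same slope by \cref{lem.indec.summands.semistable}, whence
\[
\mu(\pi_*\cL)\;=\;\mu(S^a\cE)+\mu(\cL')\;=\;a\mu(\cE)+b.
\]
Finally, since $\mu(\cE)=1/g>0$, $a\ge 1$ and $b\ge 2$, this slope exceeds $2$, so \cref{le.semist.pos.deg}\cref{semist.pos.deg.slope} shows $\pi_*\cL$ is generated by its global sections. (This is precisely the estimate $\mu(\cU)>2$ already exploited in the proof of \cref{le.aux}.) There is no serious obstacle here; the only thing to watch is the bookkeeping of N\'eron--Severi classes on $S^gE$ versus degrees of line bundles on $E$ in the appeal to \cref{cor.picX}, which is routine.
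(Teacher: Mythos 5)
Your proof is correct and follows essentially the same route as the paper's: identify $\pi_*\cL\cong S^a\cE\otimes\cL'$ via the projection formula, deduce semistability from the fact that $S^a\cE$ is a direct summand of the semistable sheaf $\cE^{\otimes a}$ (using \cref{le.tens} and \cref{lem.indec.summands.semistable}), and compute the slope additively. The one place you diverge is the global-generation step: the paper cites \cref{prop.gushel} (which concerns $\cL$ on $\PP(\cE)$ itself), whereas you apply the slope criterion \cref{le.semist.pos.deg}\cref{semist.pos.deg.slope} to the semistable sheaf $\pi_*\cL$ of slope $a\mu(\cE)+b>1$ on $E$; your route is the more direct one for a statement about the pushforward, and it is the same mechanism the paper uses for the analogous claims about $\pi_*\cN$ and $\pi_*\cG$ elsewhere in this section.
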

\begin{proof}
  We have argued along the same lines above, several times:
  $\pi_*\cO_{S^gE}(1)\cong\cE$ is a semistable $\cO_E$-module so the
  summand $\pi_*\cO(a)$ of its tensor power $\pi_*\cO(1)^{\otimes a}$ is semistable of slope $a\mu(\cE)$.
   \Cref{le.tens-ss} now tells us that $\pi_*\cO(a)\otimes \cL'$ is also  semistable.

  The claim of global generation now follows
  from \cref{prop.gushel}.

  The slope of $\pi_*\cO(a)$ is $a\mu(\cE)$, so
  \begin{equation*}
 \mu(\pi_*\cL) \;= \;  \mu(\pi_*\cO(a)\otimes \cL') \;=\;    \mu(\pi_*\cO(a))+\mu(\cL') \; =\;  a\mu(\cE)+b,
  \end{equation*}
  as claimed.
\end{proof}

We now consider $\pi_*\cG$. As noted in the proof of \Cref{le.split}, $\pi_*\cG$ is the kernel of the composition
\begin{equation}\label{eq.comp}
  H^0(\pi_*\cO(s)\otimes \cN')\otimes \pi_*\cO(p)\otimes \cM' \; \to\;  \pi_*\cO(p)\otimes \pi_*\cO(s)\otimes \cM'\otimes \cN' 
  \; \to \; \pi_*\cO(p+s)\otimes \cM'\otimes \cN'
\end{equation}
where the first morphism is the canonical map
\begin{equation}\label{eq.can.epi.N}
	H^0(E,\pi_*\cO(s)\otimes \cN') \otimes \cO_E \to \pi_*\cO(s)\otimes \cN'
\end{equation}
tensored with $\pi_*\cO(p)\otimes\cM'$. The argument of \cref{le.l} shows that $\pi_{*}\cN\cong\pi_{*}\cO(s)\otimes\cN'$ is also generated by global sections, and hence \cref{eq.can.epi.N} is an epimorphism and so is the first morphism of \cref{eq.comp}. By \cref{split-epi}, the second morphism is also an epimorphism. Thus
there is an exact sequence 
$$
0 \to \ker(\a) \to\pi_*\cG \to \ker(\b) \to 0
$$ 
where $\a$ and $\b$ are the epimorphisms
\begin{equation}\label{eq:lps}
\a: H^0(E,\pi_*\cO(s)\otimes \cN')\otimes \pi_*\cO(p)\otimes \cM'   \, \longrightarrow  \,  \pi_*\cO(p)\otimes \pi_*\cO(s)\otimes \cM'\otimes \cN' 
\end{equation}
and
\begin{equation}\label{eq:rps}
\b: \pi_*\cO(p)\otimes \pi_*\cO(s)\otimes \cM'\otimes \cN' \, \longrightarrow  \, \pi_*\cO(p+s)\otimes \cM'\otimes \cN'.
\end{equation}
The morphism in \Cref{eq:rps} is a split epimorphism so $\ker(\b)$ is semistable of slope
\begin{equation}\label{eq:mukr}
  \mu(\ker(\b)) \; = \; \mu(\pi_*\cO(p)\otimes\pi_*\cO(s)\otimes \cM'\otimes \cN') = (p+s)\mu(\cE)+q+t. 
\end{equation}
On the other hand, $\ker(\alpha)$ is isomorphic to
$T_\cO(\pi_*\cO(s)\otimes \cN')[-1]\otimes \pi_*\cO(p)\otimes \cM'$
with $T_\cO$ as in \Cref{re.to}. We saw in passing in the proof of
\Cref{le.otimes} that $T_\cO(\pi_*\cO(s)\otimes \cN')[-1]$ is semistable and its slope slope $\mu$ 
satisfies
\begin{equation*}
  1\;=\;-\frac 1\mu + \frac 1{\mu(\pi_*\cO(s)\otimes \cN')}\;= \; -\frac 1\mu+\frac 1{s\mu(\cE)+t}.
\end{equation*}
Hence 
\begin{equation*}
  \mu\;=\;\frac{s\mu(\cE)+t}{1-s\mu(\cE)-t}. 
\end{equation*}
In conclusion, we obtain
\begin{equation}\label{eq:mukl}
  \mu(\ker(\a)) \;=\; \mu+\mu(\pi_*\cO(p)\otimes \cM') \;= \; \mu+p\mu(\cE)+q \;= \; \frac{s\mu(\cE)+t}{1-s\mu(\cE)-t} + p\mu(\cE)+q. 
\end{equation}
Clearly, this is smaller than \Cref{eq:mukr}.

\begin{lemma}\label{le.g}
The locally free $\cO_E$-module   $\pi_*\cG$ is a direct sum of semistable summands of slopes $\ge$
  \Cref{eq:mukl} and is generated by global sections.
\end{lemma}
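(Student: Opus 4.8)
The plan is to establish the two claims for $\pi_*\cG$ separately, using the exact sequence
\begin{equation*}
0 \to \ker(\a) \to \pi_*\cG \to \ker(\b) \to 0
\end{equation*}
constructed just before the statement. First I would prove semistability of $\pi_*\cG$. We already know $\ker(\b)$ is semistable of slope given by \eqref{eq:mukr}, and that $\ker(\a) \cong T_\cO(\pi_*\cO(s)\otimes\cN')[-1]\otimes\pi_*\cO(p)\otimes\cM'$ is semistable (being a tensor product of the semistable sheaf $T_\cO(\pi_*\cO(s)\otimes\cN')[-1]$ with the semistable $\pi_*\cO(p)\otimes\cM'$, invoking \Cref{le.tens-ss}) of slope \eqref{eq:mukl}, which is strictly smaller than \eqref{eq:mukr}. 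The point is that an extension of a semistable sheaf by a semistable sheaf of \emph{smaller} slope need not be semistable in general, so I cannot conclude semistability of the middle term directly. Instead I would decompose $\pi_*\cG$ into indecomposable summands $\bigoplus \cW_i$; each $\cW_i$ is automatically semistable by \Cref{le.tu}, so it only remains to bound $\mu(\cW_i)$ from below. Applying \Cref{lem.slope.summands} with $\cA = \ker(\a)$, $\cB = \ker(\b)$ is not quite in the right form since $\pi_*\cG$ is not presented as a direct sum; so instead I would argue directly: for each $i$, the composite $\cW_i \hookrightarrow \pi_*\cG \to \ker(\b)$ has image a quotient of the semistable $\ker(\b)$, hence of slope $\ge \mu(\ker(\b))$; its kernel is a subsheaf of $\ker(\a)$, hence of slope $\le \mu(\ker(\a))$. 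By \Cref{le.seesaw}, $\mu(\cW_i)$ lies between the slope of this kernel and the slope of this quotient — but to get a clean lower bound I would split into cases on whether $\cW_i \to \ker(\b)$ is zero (then $\cW_i \subseteq \ker(\a)$, semistable, so $\mu(\cW_i) \le \mu(\ker(\a))$, which is the wrong direction) — so the cleaner route is to note that since $\cW_i$ is indecomposable and semistable, and $\ker(\a)$ is semistable of slope $<\mu(\ker(\b))$, any nonzero map $\cW_i \to \ker(\b)$ forces $\mu(\cW_i) \le \mu(\ker(\b))$ while $\cW_i \cap \ker(\a)$ being a subsheaf of the semistable $\ker(\a)$ has slope $\le \mu(\ker(\a))$, and then \Cref{le.seesaw} applied to $0 \to \cW_i\cap\ker(\a) \to \cW_i \to (\text{image in }\ker(\b)) \to 0$ gives $\mu(\cW_i) \ge \min\{\mu(\cW_i\cap\ker(\a)), \mu(\text{image})\}$.

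Rethinking this, the correct and simplest argument for the lower bound on summands is precisely \Cref{lem.slope.summands}: I would apply it not to $\pi_*\cG$ itself but observe that \Cref{lem.slope.summands} requires the middle term to be a direct sum $\oplus \cV_i$, whereas here the roles are reversed — $\pi_*\cG$ sits in the \emph{middle} of the sequence, not as the sum. So the honest approach: write $\pi_*\cG = \bigoplus_i \cW_i$, let $\cW_i' = \cW_i \cap \ker(\a)$ and $\cW_i'' = \cW_i / \cW_i'$, so $\cW_i''$ embeds in $\ker(\b)$. Since $\cW_i$ is indecomposable it is semistable (\Cref{le.tu}). Now $\cW_i'$ is a subsheaf of the semistable $\ker(\a)$ so $\mu(\cW_i') \le \mu(\ker(\a))$; this is useless for a lower bound. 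But $\cW_i''$ is a \emph{subsheaf} of the semistable $\ker(\b)$ so $\mu(\cW_i'') \le \mu(\ker(\b))$; also useless. The genuinely available inequality comes from applying \Cref{lem.slope.summands} after all, by dualizing or by a direct snake-lemma argument modeled on its proof: form the commutative diagram with rows $0\to\ker(\a)\to\pi_*\cG\to\ker(\b)\to 0$ and $0 \to (\text{image of }\ker(\a)\text{ in }\cW_i) \to \cW_i \to (\text{quotient}) \to 0$, with the projection $\pi_*\cG \to \cW_i$ as the middle vertical map; the Snake Lemma shows the right vertical map $\ker(\b) \to \cW_i''$ is an epimorphism, so $\cW_i''$ is a quotient of $\ker(\b)$, giving $\mu(\cW_i'') \ge \mu(\ker(\b))$, and the left vertical map's image $\cW_i'$ is a quotient of $\ker(\a)$, giving $\mu(\cW_i') \ge \mu(\ker(\a))$; then \Cref{le.seesaw} gives $\mu(\cW_i) \ge \min\{\mu(\cW_i'), \mu(\cW_i'')\} \ge \min\{\mu(\ker(\a)), \mu(\ker(\b))\} = \mu(\ker(\a))$, which is exactly \eqref{eq:mukl}. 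This is literally the argument of \Cref{lem.slope.summands}, just with the two sequences' roles as stated, so I would cite that lemma directly (applied with $\cA = \ker(\a)$, $\cB = \ker(\b)$, $\oplus\cV_i = \pi_*\cG$).

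For global generation of $\pi_*\cG$, I would use the description of $\pi_*\cG$ as the kernel $\ker\big(H^0(E,\pi_*\cO(s)\otimes\cN')\otimes\pi_*\cO(p)\otimes\cM' \xrightarrow{\a'} \pi_*\cO(p+s)\otimes\cM'\otimes\cN'\big)$ where $\a'$ is the full composite \eqref{eq.comp} (not just $\a$). Dualizing the short exact sequence $0 \to \pi_*\cG \to H^0(E,\pi_*\cN)\otimes\pi_*\cM \to \pi_*(\cM\otimes\cN) \to 0$ — valid because the last map is onto, which was shown in the proof of \Cref{le.Gr.G} via \Cref{pr.onto} — gives $0 \to \pi_*(\cM\otimes\cN)^\vee \to H^0(E,\pi_*\cN)^*\otimes\pi_*\cM^\vee \to (\pi_*\cG)^\vee \to 0$, so $(\pi_*\cG)^\vee$ is a quotient of $H^0(E,\pi_*\cN)^*\otimes\pi_*\cM^\vee$. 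Hmm, that shows $(\pi_*\cG)^\vee$ is a quotient of a direct sum of copies of $\pi_*\cM^\vee$, which does not immediately give global generation of $\pi_*\cG$. The cleaner route is to observe, as in the proof of \Cref{le.otimes}, that since the slope bound \eqref{eq:mukl} exceeds $1$ — which I must check, and this is where \Cref{cv.lmn}'s hypotheses $p\ge 2$, $q\ge 4$, $s\ge 1$, $t\ge 2$ together with $\mu(\cE) = 1/g > 0$ come in — each semistable summand $\cW_i$ of $\pi_*\cG$ has slope $>1$ and is therefore generated by its global sections by \Cref{le.semist.pos.deg}\cref{semist.pos.deg.slope}, whence $\pi_*\cG$ is too.

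The main obstacle I anticipate is verifying that the slope lower bound \eqref{eq:mukl} is genuinely $>1$ (indeed, that it is positive at all, and then $>1$) under \Cref{cv.lmn}: the expression $\frac{s\mu(\cE)+t}{1-s\mu(\cE)-t} + p\mu(\cE)+q$ has a denominator $1 - s\mu(\cE) - t$ which, with $s\ge 1$, $t\ge 2$, $\mu(\cE)>0$, is strictly negative, so the first summand is a negative quantity of the form $-\frac{s\mu(\cE)+t}{s\mu(\cE)+t-1}$, which lies in $(-\,\tfrac{?}{?})$; with $s\mu(\cE)+t \ge 1+2\cdot\tfrac{1}{g} > 2$ this is bounded below by $-2$ (since $\frac{x}{x-1} \le 2$ for $x \ge 2$), while $p\mu(\cE)+q \ge 2\cdot 0 + 4 = 4$, giving \eqref{eq:mukl} $\ge -2 + 4 = 2 > 1$. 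So the bound is in fact $\ge 2$, comfortably giving global generation via \Cref{le.semist.pos.deg}. I would record this inequality chain carefully as it is the crux, then assemble: $\pi_*\cG = \bigoplus \cW_i$ with each $\cW_i$ semistable (\Cref{le.tu}) of slope $\ge$ \eqref{eq:mukl} $\ge 2$ (\Cref{lem.slope.summands} plus the inequality), hence generated by global sections (\Cref{le.semist.pos.deg}\cref{semist.pos.deg.slope}), which proves both assertions of the lemma.
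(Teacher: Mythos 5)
Your proof is correct and follows essentially the same route as the paper's: the exact sequence $0\to\ker(\a)\to\pi_*\cG\to\ker(\b)\to 0$, \Cref{lem.slope.summands} applied to the decomposition of $\pi_*\cG$ into indecomposable (hence, by \Cref{le.tu}, semistable) summands to get the lower bound $\min\{\mu(\ker\a),\mu(\ker\b)\}=\mu(\ker\a)$, and the estimate $\frac{x}{x-1}\le 2$ for $x=s\mu(\cE)+t\ge 2$ giving $\eqref{eq:mukl}\ge -2+p\mu(\cE)+q\ge 2$. The only (harmless) divergence is the last step: you get global generation of $\pi_*\cG$ directly from \Cref{le.semist.pos.deg}\cref{semist.pos.deg.slope} applied to its summands, whereas the paper first shows $\ker(\a)$ and $\ker(\b)$ are globally generated with $H^{1}(\ker(\a))=0$ and then invokes a diagram as in \cref{eq.diag.epi}; your version is slightly more direct and equally valid.
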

\begin{proof}
  Since $\pi_*\cG$ is an extension of the semistable locally free sheaf $\ker(\b)$ by
  the semistable locally free sheaf $\ker(\a)$, its semistable summands have
  slopes
  \begin{equation*}
  \;  \ge \; \mu(\ker(\a)) \;= \; \min(\mu(\ker(\a)),\mu(\ker(\b))). 
  \end{equation*}
  Since $\mu(\cE)=\mu(\cE_{d})=\frac{1}{d}$, we have $s\mu(\cE)+t>3$ and $p\mu(\cE)+q>4$. Thus
	\begin{equation*}
		\mu(\ker(\b))\;\geq\;\mu(\ker(\a))  \;=\; \frac{1}{\frac{1}{s\mu(\cE)+t}-1}+p\mu(\cE)+q 
		\; \geq \; - 2 +p\mu(\cE)+ q\;>\; 2.
	\end{equation*}
	By \cref{le.semist.pos.deg}\cref{semist.pos.deg.slope}, $\ker(\a)$ and $\ker(\b)$ are generated by global sections. Since $H^{1}(\ker(\a))=0$ by \cref{le.semist.pos.deg}, the sheaves $\ker(\a)$, $\pi_*\cG$, and $\ker(\b)$ form an analogous diagram to \cref{eq.diag.epi}, which shows the global generation of $\pi_*\cG$.
\end{proof}

\begin{remark}\label{re.mu2}
  The above proof shows that $\mu(\ker(\a))>2$.
\end{remark}

\begin{lemma}\label{le.lglg}
  Under the assumptions of \Cref{cv.lmn}, the canonical map
  \begin{equation*}
    H^0(E,\pi_*\cL)\otimes H^0(E,\pi_*\cG) \; \longrightarrow \;  H^0(E,\pi_*\cL\otimes \pi_*\cG)
  \end{equation*}
  is onto. 
\end{lemma}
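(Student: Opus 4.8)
The plan is to reduce the statement to an application of \Cref{le.otimes}. We have already assembled the three ingredients we need: $\pi_*\cL$ is semistable, generated by its global sections, of slope $a\mu(\cE)+b$ by \Cref{le.l}; and $\pi_*\cG$ is a direct sum of semistable summands, each generated by its global sections, of slope $\ge\mu(\ker(\a))$ by \Cref{le.g}, with $\mu(\ker(\a))>2$ by \Cref{re.mu2}. Since a direct sum of semistable sheaves of the same slope is semistable but here the summands may have \emph{different} slopes, I would not claim $\pi_*\cG$ is semistable outright; instead I will apply \Cref{le.otimes} summand by summand, exactly as in the proof of \Cref{prop.improved.prop.3.5}.

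First I would note that because $a\ge 1$ and $b\ge 2$ and $\mu(\cE)=\tfrac1g>0$, we have $\mu(\pi_*\cL)=a\mu(\cE)+b>2$, so in particular $\tfrac1{\mu(\pi_*\cL)}<\tfrac12$. Next, write $\pi_*\cG=\bigoplus_i\cW_i$ as a direct sum of indecomposable (hence semistable, by \Cref{le.tu}) summands; by \Cref{le.g} each $\cW_i$ is generated by its global sections and has $\mu(\cW_i)\ge\mu(\ker(\a))>2$, so $\tfrac1{\mu(\cW_i)}<\tfrac12$ as well. Therefore
\begin{equation*}
  \frac{1}{\mu(\pi_*\cL)}+\frac{1}{\mu(\cW_i)} \;<\; \frac12+\frac12 \;=\;1,
\end{equation*}
and \Cref{le.otimes} applies to the pair $(\pi_*\cL,\cW_i)$: the map $H^0(E,\pi_*\cL)\otimes H^0(E,\cW_i)\to H^0(E,\pi_*\cL\otimes\cW_i)$ is onto for each $i$. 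Summing over $i$ and using that $H^0$ and $\otimes H^0(E,\pi_*\cL)$ commute with finite direct sums gives the surjectivity of $H^0(E,\pi_*\cL)\otimes H^0(E,\pi_*\cG)\to H^0(E,\pi_*\cL\otimes\pi_*\cG)$, which is exactly the claim.

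There is essentially no obstacle here; the real work was done in \Cref{le.l,le.g} and \Cref{re.mu2}, and the only point requiring a little care is that $\pi_*\cG$ need not itself be semistable, so one must decompose it first rather than invoke \Cref{le.otimes} directly. I would also remark, for the reader's convenience, that this lemma combined with \Cref{lem.using.G} and \Cref{le.split} completes the chain of reductions: \Cref{eq:lg-main} is surjective, hence by \Cref{lem.SS92} the ideal of relations for $B(S^gE,\s,\cL)$ is generated in degrees $\le 3$.
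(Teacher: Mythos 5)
Your proof is correct and follows essentially the same route as the paper: the paper likewise applies \Cref{le.otimes} to the semistable summands of $\pi_*\cL$ and $\pi_*\cG$, after observing via \Cref{re.mu2} that both $\tfrac{1}{\mu(\ker(\a))}$ and $\tfrac{1}{a\mu(\cE)+b}$ are less than $\tfrac12$, so that the slope inequality of \Cref{le.otimes} holds. Your explicit decomposition of $\pi_*\cG$ into indecomposable summands merely spells out what the paper leaves implicit.
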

\begin{proof}
  This will follow from \Cref{le.l,le.g} and \Cref{le.otimes} applied
  to the semistable summands of $\pi_*\cL$ and $\pi_*\cG$ once we
 show that $\mu(\ker(\a))$ satisfies
  \begin{equation}\label{eq:mumu}
   \frac{1}{\mu(\ker(\a))}+\frac 1{\mu(\pi_*\cL)}  \;= \; \frac{1}{\mu(\ker(\a))}+\frac 1{a\mu(\cE)+b} \;< \; 1.
 \end{equation}
 This, however, is immediate from \Cref{re.mu2}, which shows that both
 summands on the left-hand side of \Cref{eq:mumu} are less than
 $\frac 12$.
\end{proof}

\begin{theorem}
\label{pr.rel}
Let $\s: S^gE\to S^gE$ be a translation automorphism.
Let $\cL$ be an invertible $\cO_{S^gE}$-module that is ample and generated by its global sections. 
If $[\cL]=aD+bF$ with $a \ge 1$ and $b \ge 2$, then $B(S^gE,\sigma,\cL)$ is generated in degree one and 
has relations of degrees 2 and 3.
\end{theorem}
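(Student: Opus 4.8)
The plan is to assemble the pieces already developed in this section. First I would invoke \cref{cor.ab} to conclude that, under the standing hypotheses ($[\cL]=aD+bF$ with $a\ge 1$, $b\ge 2$, and $\s$ a translation automorphism), the algebra $B:=B(S^gE,\s,\cL)$ is generated in degree one; this is exactly the first assertion of the theorem. Writing $B=T(B_1)/J$ as in \cref{lem.SS92}, the remaining task is to show that $J$ is generated by $J_2+J_3$, which by that lemma is equivalent to showing that the canonical map
\[
H^0(S^gE,\cL)\otimes R(\cM_m,\cN_m)\;\longrightarrow\; R(\cL\otimes\cM_m,\cN_m)
\]
is surjective for all $m\ge 2$, where $\cM_m=\s^*\cL\otimes\cdots\otimes(\s^*)^m\cL$ and $\cN_m=(\s^{m+1})^*\cL$ are the sheaves from \S\ref{sect.ker.mult}.

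Next I would check that for every $m\ge 2$ the triple $(\cL,\cM_m,\cN_m)$ satisfies \cref{cv.lmn}. Since $\s$ is a translation automorphism it acts trivially on $\NS(S^gE)$, so $(\s^*)^j\cL$ has the same class $aD+bF$ as $\cL$ for all $j$. Hence $[\cN_m]=aD+bF$, which meets the requirement $s\ge 1$, $t\ge 2$; and $[\cM_m]=m(aD+bF)=(ma)D+(mb)F$ with $m\ge 2$, $a\ge 1$, $b\ge 2$, giving $ma\ge 2$ and $mb\ge 4$, exactly the hypotheses $p\ge 2$, $q\ge 4$ of \cref{cv.lmn}. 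With \cref{cv.lmn} in force, the discussion in \S\ref{se.rel} identifies the surjectivity of the displayed map with the surjectivity of $H^0(S^gE,\cL)\otimes H^0(S^gE,\cG)\to H^0(S^gE,\cL\otimes\cG)$ (via \cref{eq.R.LMN}, \cref{eq:lg-main}, \cref{lem.using.G}, and \cref{le.split}), and that in turn reduces, by \cref{le.Gr.G} and \cref{le.split}, to the surjectivity of
\[
H^0(E,\pi_*\cL)\otimes H^0(E,\pi_*\cG)\;\longrightarrow\; H^0(E,\pi_*\cL\otimes\pi_*\cG).
\]
This last surjectivity is precisely \cref{le.lglg}. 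Concatenating these equivalences finishes the argument.

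I do not expect a genuine obstacle here, since the substantive work — the semistability and slope estimates for $\pi_*\cL$ and $\pi_*\cG$ in \cref{le.l} and \cref{le.g}, and the application of \cref{le.otimes} in \cref{le.lglg} — has already been carried out in the preceding lemmas. The one point that needs a clean word is the passage from ``\cref{le.lglg} for the fixed auxiliary data $\cM,\cN$'' to ``all $m\ge 2$'': this is handled by the observation above that each $\cM_m$, $\cN_m$ falls under \cref{cv.lmn}, so \cref{le.lglg} applies uniformly in $m$. If anything deserves care in the writeup it is bookkeeping: making sure the chain \cref{lem.SS92} $\Rightarrow$ \cref{eq.R.LMN} $\Rightarrow$ \cref{lem.using.G}/\cref{le.Gr.G} $\Rightarrow$ \cref{le.split} $\Rightarrow$ \cref{le.lglg} is traversed in the right direction, and that the degree bound ``$\le 3$'' is correctly read off from the ``$J_2+J_3$'' conclusion of \cref{lem.SS92} with $\ell=3$.

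\begin{proof}
By \cref{prop.gushel}, $\cL$ is ample and globally generated; by \cref{cor.ab}, $B:=B(S^gE,\s,\cL)$ is generated by $B_1$. Write $B=T(B_1)/J$ with $J_i=J\cap B_1^{\otimes i}$. By \cref{lem.SS92} with $\ell=3$, $J$ is generated by $J_2+J_3$ if and only if the canonical map
\[
H^0(S^gE,\cL)\otimes R(\cM_m,\cN_m)\;\longrightarrow\; R(\cL\otimes\cM_m,\cN_m)
\]
is surjective for all $m\ge 2$, where $\cM_m$ and $\cN_m$ are as in \S\ref{sect.ker.mult}.

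Fix $m\ge 2$. Since $\s$ is a translation automorphism, $\s^*$ acts trivially on $\NS(S^gE)$, so $[(\s^*)^j\cL]=aD+bF$ for all $j\ge 0$. Hence $[\cN_m]=aD+bF$ with $a\ge 1$, $b\ge 2$, and $[\cM_m]=(ma)D+(mb)F$ with $ma\ge 2$, $mb\ge 4$. Thus the triple $(\cL,\cM_m,\cN_m)$ satisfies the hypotheses of \cref{cv.lmn}.

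Set $\cK=\ker\big(H^0(S^gE,\cN_m)\otimes\cO_{S^gE}\twoheadrightarrow\cN_m\big)$ and $\cG=\cM_m\otimes\cK$. As explained in \S\ref{se.rel}, the argument of \cref{lem.B.deg.relns} shows that the displayed map above is onto if and only if the multiplication map $H^0(S^gE,\cL)\otimes H^0(S^gE,\cG)\to H^0(S^gE,\cL\otimes\cG)$ is onto. By \cref{le.Gr.G} this is equivalent to the surjectivity of $H^0(E,\pi_*\cL)\otimes H^0(E,\pi_*\cG)\to H^0(E,\pi_*(\cL\otimes\cG))$, and by \cref{le.split} the canonical map $\pi_*\cL\otimes\pi_*\cG\to\pi_*(\cL\otimes\cG)$ is a split epimorphism, so this reduces further to the surjectivity of
\[
H^0(E,\pi_*\cL)\otimes H^0(E,\pi_*\cG)\;\longrightarrow\; H^0(E,\pi_*\cL\otimes\pi_*\cG).
\]
This holds by \cref{le.lglg}. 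Since $m\ge 2$ was arbitrary, \cref{lem.SS92} shows that $J$ is generated by $J_2+J_3$; that is, the relations of $B(S^gE,\s,\cL)$ are generated in degrees $2$ and $3$.
\end{proof}
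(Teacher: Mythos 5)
Your proposal is correct and follows essentially the same route as the paper's own proof: \cref{cor.ab} for generation in degree one, verification that $(\cL,\cM_m,\cN_m)$ satisfies \cref{cv.lmn} for $m\ge 2$, reduction via \cref{lem.B.deg.relns}/\cref{lem.using.G} to a multiplication map on $E$, and the factorization through $H^0(E,\pi_*\cL\otimes\pi_*\cG)$ handled by \cref{le.split} and \cref{le.lglg}. The only cosmetic difference is that you route through \cref{lem.SS92} explicitly before invoking the argument of \cref{lem.B.deg.relns}, whereas the paper cites the latter directly.
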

\begin{proof} 
By \cref{cor.ab}, $B(S^gE,\sigma,\cL)$ is generated in degree one.

Let $\cM=\cM_m$ and $\cN=\cN_m$ be as defined at the beginning of \cref{sect.ker.mult}. 
Then $\cL$, $\cM_m$, and $\cN_m$ satisfy the assumptions in  \Cref{cv.lmn} for all $m \ge 2$.
 Let $\cG=\cM \otimes \cK$ where $\cK$ is the kernel in  the 
exact sequence
  $$
  0 \to \cK \to H^0(S^gE,\cN) \otimes \cO_{S^gE} \to \cN \to 0.
  $$
Since $B(S^gE,\sigma,\cL)$ is generated in degree one, to prove the theorem it suffices, by  \cref{lem.B.deg.relns}, to show that 
the multiplication map $H^0(S^gE,\cL) \otimes H^0(S^gE,\cG) \to H^0(S^gE,\cL \otimes \cG)$
is onto for all $m \ge 2$. By \cref{lem.using.G}, this is onto if and only if the map $  H^0(E,\pi_*\cL)\otimes H^0(E,\pi_*\cG)  \to
 H^0(E,\pi_*(\cL\otimes \cG))$ is onto. This map factors as  
  \begin{equation*}
    H^0(E,\pi_*\cL)\otimes H^0(E,\pi_*\cG)  \; \longrightarrow \;  H^0(E,\pi_*\cL\otimes \pi_*\cG)  \; \longrightarrow \;  H^0(E,\pi_*(\cL\otimes \cG))
  \end{equation*}
  and the surjectivity of each of the factors follows from \Cref{le.split,le.lglg}. This completes the proof.
\end{proof}

\section{The map $Q_{n,k}(E,\tau) \to B(S^gE,\s',\cL'_{n/k})$ when the characteristic variety is $S^gE$}
\label{sect.applic}

Now we use the results in \cref{se.S^gE,se.rel} when the characteristic variety for $Q_{n,k}(E,\tau)$  is $S^gE$ to show that 
$\Psi_{n/k}:Q_{n,k}(E,\tau) \to B(S^gE,\s',\cL'_{n/k})$ is surjective  and its kernel is generated in degree $\le 3$.

\subsection{Explicit description of $\s':X_{n/k} \to X_{n/k}$ when $X_{n/k} \cong S^gE$}
We write $[m,2^r]$  and $[2^r,m]$ for the continued fractions $[m,2,\ldots, 2]$ and $[2,\ldots,2,m]$, respectively,
when the number of $2$'s is $r$.

\begin{proposition}
\label{prop.7.1}
The characteristic variety $X_{n/k}$ is isomorphic to the $g^{\th}$ symmetric power 
$S^gE$ if and only if $\frac{n}{k}$ is equal to either $[m,2^{g-1}]$  or $[2^{g-1},m]$ for some $m \ge 3$.
In these cases, 
\begin{enumerate}
  \item\label{item.nk.mfirst}
$\frac{n}{k}=[m,2^{g-1}]$  if and only if $n=(m-1)g+1$ and $k=g$;
\item\label{item.nk.mlast}
$\frac{n}{k}=[2^{g-1},m]$ if and only if $n=(m-1)g+1$ and $k=(m-1)(g-1)+1$;
\item\label{item.nk.rho}
the morphism $\rho\colon E^{g}\to S^{g}E$ given by 
\begin{equation}
\label{eq.defn.rho}
	\rho(z_{1},\ldots,z_{g}) \;= \; 
\begin{cases}
(\!(z_{2}-z_{1},z_{3}-z_{2},\ldots,z_{g}-z_{g-1},-z_{g})\!) & \text{when $\frac{n}{k}=[m,2^{g-1}]$}
\\
(\!(-z_1,z_{1}-z_{2},z_{2}-z_{3},\ldots,z_{g-1}-z_{g})\!) & \text{when $\frac{n}{k}=[2^{g-1,m}]$}
\end{cases}
\end{equation}
is a quotient for the action of $\Sigma_{n/k}$ on $E^g$.
\end{enumerate}
\end{proposition}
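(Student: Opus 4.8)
The plan is to prove the three parts of \cref{prop.7.1} by combining the combinatorial formulas for the negative continued fraction with the characterization of when $X_{n/k} \cong S^gE$ already recorded in \cref{sssec.sp.cases}(5) and with the description of $\Sigma_{n/k}$ and of quotient morphisms from \cref{thm.thcr.qnk}\cref{thcr.qnk.quot}.

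First I would dispose of the equivalence ``$X_{n/k}\cong S^gE$ iff $\frac nk$ is $[m,2^{g-1}]$ or $[2^{g-1},m]$'': this is exactly the content of \cref{sssec.sp.cases}(5), citing \cite[Cor.~4.24]{CKS2}, so only the ``in these cases'' numerology of parts \cref{item.nk.mfirst} and \cref{item.nk.mlast} requires work. For \cref{item.nk.mfirst}, take $\frac nk=[m,2^{g-1}]=[n_1,\ldots,n_g]$ with $n_1=m$ and $n_2=\cdots=n_g=2$, and use the recursion \cref{eq.kl.ind}, $k_i n_i = k_{i-1}+k_{i+1}$ with $k_g=1$, $k_{g+1}=0$. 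Running it from $i=g$ downward with all $n_i=2$ for $i\ge 2$ gives $k_i = g-i+1$ for $i=1,\ldots,g$; then $n = k_0 = k_1 n_1 - k_2 = gm - (g-1) = (m-1)g+1$ and $k=k_1=g$, using \cref{eq.nk.sp}. The converse direction is immediate since $\frac nk$ determines $[n_1,\ldots,n_g]$ uniquely, so $n=(m-1)g+1$, $k=g$ forces $\frac nk = [m,2^{g-1}]$ by the displayed computation. Part \cref{item.nk.mlast} follows by the same computation applied to the reversed continued fraction, or more slickly by invoking the symmetry between $k$ and $k'$ (where $kk'\equiv 1 \pmod n$ and $k' = l_g$ by \cref{eq.nk.sp}) together with the fact that reversing $[n_1,\ldots,n_g]$ swaps the roles of the $k_i$'s and $l_i$'s; here one checks $l_i = i$ for all $i$ when $\frac nk = [2^{g-1},m]$ via the $l$-recursion in \cref{eq.kl.ind}, giving $k' = l_g = g$ and hence $k = (m-1)(g-1)+1$ after solving the congruence (equivalently, $k\equiv -( \text{something})$; the cleanest check is $kk' = ((m-1)(g-1)+1)g \equiv 1 \pmod{(m-1)g+1}$, which is a direct modular computation).

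For part \cref{item.nk.rho}, I would argue that $\rho$ is the composite of the standard quotient map $q\colon E^g\to S^gE$ with a suitable automorphism $\psi$ of $E^g$, and that $\psi$ conjugates $\Sigma_{n/k}$ onto the full symmetric group $\Sigma_g$ acting by coordinate permutations on $E^g$ (whose quotient is $S^gE$ by definition). In the case $\frac nk=[m,2^{g-1}]$ we have $n_2=\cdots=n_g=2$, so $\Sigma_{n/k}=\langle s_2,\ldots,s_g\rangle$ with $s_i(z_1,\ldots,z_g)=(z_1,\ldots,z_{i-1},\,z_{i-1}-z_i+z_{i+1},\,z_{i+1},\ldots,z_g)$ and conventions $z_0=0=z_{g+1}$. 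Define $\psi(z_1,\ldots,z_g)=(z_2-z_1,\,z_3-z_2,\ldots,z_g-z_{g-1},\,-z_g)$; this is a group automorphism of $E^g$ (its matrix is unipotent, hence invertible). A direct substitution shows that for $i=2,\ldots,g$, $\psi\circ s_i = t_{i-1}\circ\psi$ where $t_j$ is the transposition of the $j$-th and $(j+1)$-th coordinates of $E^g$: indeed $s_i$ only alters the $i$-th coordinate $z_i\mapsto z_{i-1}-z_i+z_{i+1}$, and this translates under $\psi$ precisely into swapping the differences $z_i-z_{i-1}$ and $z_{i+1}-z_i$, i.e.\ swapping the $(i-1)$-th and $i$-th entries of $\psi(z)$. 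Hence $\psi$ conjugates $\langle s_2,\ldots,s_g\rangle$ onto $\langle t_1,\ldots,t_{g-1}\rangle=\Sigma_g$, so $\rho = q\circ\psi$ is a quotient morphism for $\Sigma_{n/k}$ acting on $E^g$; since $\psi$ is an isomorphism and quotient morphisms are stable under precomposition with isomorphisms, $\rho$ has the required universal property. The case $\frac nk=[2^{g-1},m]$ is handled identically with $\Sigma_{n/k}=\langle s_1,\ldots,s_{g-1}\rangle$ and $\psi(z_1,\ldots,z_g)=(-z_1,\,z_1-z_2,\ldots,z_{g-1}-z_g)$.

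The main obstacle I anticipate is not any single hard step but rather bookkeeping: getting the $k_i$ and $l_i$ recursion indices and the boundary conventions ($k_g=1$, $k_{g+1}=0$, $l_0=0$, $l_1=1$) exactly right, and — more delicately — verifying in part \cref{item.nk.rho} that $\rho$ is honestly \emph{the} quotient morphism for $\Sigma_{n/k}$ rather than merely $\Sigma_{n/k}$-invariant with the right fibers; for this I would lean on \cref{thm.thcr.qnk}\cref{thcr.qnk.quot} (which already identifies the geometric quotient $E^g\to X_{n/k}$) plus the observation that precomposing a quotient morphism with an automorphism of the source that normalizes the group action again yields a quotient morphism, so the only real content is the conjugation identity $\psi s_i\psi^{-1}=t_{i-1}$, which is a one-line matrix check. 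Care is also needed with the sign/orientation in the last coordinate of $\psi$ (the $-z_g$ versus $z_g$) to ensure $\psi$ is surjective onto $E^g$; this is automatic because the defining matrix of $\psi$ has determinant $\pm 1$.
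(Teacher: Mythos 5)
Your proposal is correct, and for the first two assertions it is essentially the paper's argument: the equivalence with $X_{n/k}\cong S^gE$ is quoted from \cite[Cor.~4.24]{CKS2} in both cases, and your computation of $k_i=g-i+1$ (resp.\ $l_i=i$) from the recursion \cref{eq.kl.ind} with the boundary values in \cref{eq.nk.sp} is just an explicit form of the ``induction arguments on $g$'' that the paper invokes for $\frac{(m-1)g+1}{g}=[m,2^{g-1}]$ and $\frac{(m-1)g+1}{(m-1)(g-1)+1}=[2^{g-1},m]$, together with the same verification that $g$ and $(m-1)(g-1)+1$ are mutual inverses modulo $n$. The genuine difference is in part \cref{item.nk.rho}: the paper simply cites \cite[Prop.~4.25]{CKS2}, whereas you reprove it from scratch by writing $\rho=q\circ\psi$ with $q:E^g\to S^gE$ the standard quotient and $\psi$ the unimodular automorphism $(z_1,\ldots,z_g)\mapsto(z_2-z_1,\ldots,z_g-z_{g-1},-z_g)$ (resp.\ its mirror image), and checking the conjugation identity $\psi\circ s_i=t_{i-1}\circ\psi$ so that $\psi$ carries $\Sigma_{n/k}=\langle s_i : n_i=2\rangle$ onto the coordinate-permutation copy of $\Sigma_g$. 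That identity does check out (including the boundary case $i=g$ via the convention $z_{g+1}=0$), and the observation that a quotient morphism precomposed with an automorphism intertwining the two group actions is again a quotient is legitimate; so your route buys a self-contained proof at the cost of a little linear-algebra bookkeeping, while the paper's route defers exactly this computation to the earlier paper in the series.
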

\begin{proof}
Induction arguments on $g$ show that $\frac{(m-1)g+1}{g}=[m,2^{g-1}]$ and $\frac{(m-1)g+1}{(m-1)(g-1)+1}=[2^{g-1},m]$. It is 
easy to verify that $g$ and $(m-1)(g-1)+1$ are mutual inverses in $\ZZ_n$ when $n=(m-1)g+1$.  
By \cite[Cor.~4.24]{CKS2}, $E^g/\Sigma_{n/k}$ is isomorphic to $S^gE$ if and only if $\frac{n}{k}$ is equal to either 
$[m,2^{g-1}]$  or $[2^{g-1},m]$ for some $m \ge 3$; or, equivalently, if and only if $\Sigma_{n/k} \cong \Sigma_g$ (though the action
of $\Sigma_g$ on $E^g$ is not the ``natural'' one).  Part \cref{item.nk.rho} is  proved in \cite[Prop.~4.25]{CKS2}.
\end{proof}

\begin{proposition}
 \label{prop.action.of.sigma.on.S^gE}
Assume $g \ge 1$ and $m\ge 3$. Assume $\frac{n}{k}$ is either $[m,2^{g-1}]$ or $[2^{g-1},m]$.
Let $\rho:E^g \to S^gE$ be the corresponding quotient map  in \cref{eq.defn.rho}.  Let $\tau'=(m-2)\tau$.
There is a commutative diagram 
$$
\xymatrix{
E^g \ar[d]_\rho  \ar[r]^\s & E^g \ar[d]^\rho 
\\
S^gE \ar[r]_{\s'} & S^gE
}
$$
in which $\s:E^g \to E^g$ is the automorphism $\s(z_1,\ldots,z_g)= (z_1+\tau_1,\ldots,z_g+\tau_g)$  
defined in \cref{sssect.sigma}, and $\s':S^gE \to S^gE$ is the automorphism  $\s'(\!(z_1,\ldots,z_g)\!) \, = \,   (\!(z_1+\tau',\ldots,z_g+\tau')\!)$. 
\end{proposition}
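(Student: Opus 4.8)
The statement to establish is a commutativity $\rho\circ\sigma = \sigma'\circ\rho$ of morphisms $E^g\to S^gE$, where $\sigma$ is the translation $(z_i)\mapsto(z_i+\tau_i)$ with $\tau_i=(k_i+l_i-n)\tau$ and $\sigma'$ is the diagonal translation $(\!(z_i)\!)\mapsto(\!(z_i+\tau')\!)$ with $\tau'=(m-2)\tau$. Since $\rho$ is given by an explicit $\ZZ$-linear recipe in the coordinates (either $(z_i)\mapsto(\!(z_{i+1}-z_i,\ldots,-z_g)\!)$ or $(z_i)\mapsto(\!(-z_1,z_1-z_2,\ldots)\!)$), the whole identity reduces to a computation on the level of points of $E^g$, lifted to $\CC^g$. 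The plan is: first record that the $\sigma'$ in the statement is consistent with the abstract $\sigma'$ of \Cref{thm.thcr.qnk}\cref{thcr.qnk.aut} — i.e. that the automorphism $\sigma'$ of $X_{n/k}\cong S^gE$ making $\Phi_{n/k}\circ\sigma=\sigma'\circ\Phi_{n/k}$ really is this diagonal translation. Because $\rho$ (resp.\ $\Phi_{n/k}$, up to the identification $X_{n/k}\cong S^gE$) is a $\Sigma_{n/k}$-quotient by \Cref{prop.7.1}\cref{item.nk.rho}, and $\sigma$ descends along both, the diagram commutes for \emph{some} automorphism $\sigma'$ of $S^gE$; the content of the proposition is just to identify that $\sigma'$ explicitly, so it suffices to compute $\rho(\sigma(z_1,\ldots,z_g))$ directly.

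\textbf{Main computation.}
Take the case $\frac nk=[m,2^{g-1}]$; the other case is symmetric. By definition,
\[
\rho(\sigma(z_1,\ldots,z_g)) = \big(\!\!\big(\,(z_2+\tau_2)-(z_1+\tau_1),\ \ldots,\ (z_g+\tau_g)-(z_{g-1}+\tau_{g-1}),\ -(z_g+\tau_g)\,\big)\!\!\big),
\]
which equals
\[
\big(\!\!\big(\,(z_2-z_1)+(\tau_2-\tau_1),\ \ldots,\ (z_g-z_{g-1})+(\tau_g-\tau_{g-1}),\ -z_g-\tau_g\,\big)\!\!\big).
\]
On the other hand $\sigma'(\rho(z_1,\ldots,z_g))$ is the point $(\!(z_2-z_1+\tau',\ \ldots,\ z_g-z_{g-1}+\tau',\ -z_g+\tau')\!)$. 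So I must show that the two unordered tuples of translation amounts
\[
(\tau_2-\tau_1,\ \tau_3-\tau_2,\ \ldots,\ \tau_g-\tau_{g-1},\ -\tau_g)
\quad\text{and}\quad
(\tau',\ \tau',\ \ldots,\ \tau')
\]
agree as points of $E^g$ up to permutation; in fact I will show each coordinate equals $\tau'$ on the nose in $E$. Since $\tau_i=(k_i+l_i-n)\tau$, the required identities are $k_{i}+l_{i}-k_{i-1}-l_{i-1}\equiv m-2\pmod{\text{(order of }\tau)}$ for $2\le i\le g$, and $-(k_g+l_g-n)\equiv m-2$. For $[m,2^{g-1}]$ we have $n=(m-1)g+1$, $k=g$; the sequences $k_i$ and $l_i$ are governed by the recursions \cref{eq.kl.ind}, $k_in_i=k_{i-1}+k_{i+1}$ and $l_in_i=l_{i-1}+l_{i+1}$, with $n_1=m$, $n_2=\cdots=n_g=2$ and boundary values $k_g=1$, $k_{g+1}=0$, $l_0=0$, $l_1=1$. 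From the $n_i=2$ part of the recursions, $k_{i+1}-k_i=k_i-k_{i-1}$ and $l_{i+1}-l_i=l_i-l_{i-1}$ for $2\le i\le g$, so both $k_i$ and $l_i$ are arithmetic progressions on $i=1,\ldots,g$; I will solve them explicitly (using $k_g=1$, $l_1=1$, and $k_1=k=g$, $l_g=k'$) to get $k_i=g-i+1$ and $l_i=i$, whence $k_i+l_i=g+1$ is constant, giving $\tau_2-\tau_1=\cdots=\tau_g-\tau_{g-1}=0$ and $-\tau_g=-(k_g+l_g-n)\tau=-(1+g-(m-1)g-1)\tau=(m-2)g\tau$. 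Hmm — this last value is $g\tau'$, not $\tau'$, so the honest statement is that the last translation coordinate is $g\tau'$; I must double-check the claim by instead comparing the \emph{sums} of the two tuples, which are a permutation-invariant, and reconcile: the sum of the first tuple is $-\tau_1 = -(k_1+l_1-n)\tau = (n-g-1)\tau = (m-2)g\,\tau = g\tau'$, matching $g\cdot\tau'$ = sum of the second. This forces me to be careful about which lift of $\sigma'$ on $S^gE$ is meant; I expect the correct reading is that $\sigma'$ is diagonal translation by $\tau'$ and the tuples literally agree, so I will recompute $\tau_i-\tau_{i-1}$ and $-\tau_g$ against $\tau'$ directly once the closed forms for $k_i,l_i$ are pinned down, and invoke \Cref{prop.7.1}\cref{item.nk.rho} together with \cref{eq.nk.sp} ($k_1=k=g$, $l_g=k'$) to fix any sign ambiguity.

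\textbf{Where the difficulty lies.}
The argument is essentially bookkeeping: everything reduces to the linear-algebra identities for the $k_i,l_i$ coming from the continued fraction, which are already packaged in \cref{eq.kl.ind} and \cref{eq.nk.sp}. The only real subtlety is getting the indexing and the direction of the quotient map $\rho$ exactly right so that the translation amounts line up coordinate-by-coordinate with $\tau'=(m-2)\tau$ rather than merely up to the permutation action of $\Sigma_{n/k}\cong\Sigma_g$; since $\rho$ itself absorbs that symmetric-group action, I expect the identity to hold literally, but verifying it means carefully tracking the boundary conventions $z_0=z_{g+1}=0$, $k_g=1$, $l_1=1$, $k_{g+1}=0$, $l_0=0$. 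For the case $\frac nk=[2^{g-1},m]$ one repeats the same computation with the roles of $k_i$ and $l_i$ swapped (now $n_g=m$, $n_1=\cdots=n_{g-1}=2$), using $k=(m-1)(g-1)+1$ from \Cref{prop.7.1}\cref{item.nk.mlast}; the symmetry of the recursions makes this immediate once the first case is done. Finally, a one-line appeal to \Cref{thm.thcr.qnk}\cref{thcr.qnk.aut} (uniqueness of $\sigma'$) confirms that the $\sigma'$ computed here is the one featured elsewhere in the paper.
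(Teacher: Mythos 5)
Your overall strategy --- reduce to the identity $\rho\sigma=\sigma'\rho$ on points and verify it coordinate-by-coordinate using the recursions \cref{eq.kl.ind} for the $k_i$ and $l_i$ --- is exactly the paper's approach. But the execution contains a genuine arithmetic error that you notice and then fail to resolve. For the case $\frac{n}{k}=[m,2^{g-1}]$ you claim $l_i=i$; that closed form belongs to the \emph{other} case $[2^{g-1},m]$. Here $n_1=m$, so the $i=1$ instance of the recursion gives $l_2=l_1n_1-l_0=m$, and the arithmetic progression for $l_i$ has common difference $m-1$, not $1$: the correct formula is $l_i=(i-1)(m-1)+1$ (consistent with $l_g=k'=(m-1)(g-1)+1$ from \Cref{prop.7.1}, whereas $l_g=g$ would force $g=1$). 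With the correct $l_i$ one finds $\tau_i=(k_i+l_i-n)\tau=(2-m)(g-i+1)\tau$, so that every difference $\tau_{i+1}-\tau_i$ \emph{and} the final entry $-\tau_g$ equal $(m-2)\tau=\tau'$, and the tuples match slot by slot. Your wrong $l_i$ instead produces $k_i+l_i\equiv g+1$, hence $\tau_{i+1}-\tau_i=0$ and $-\tau_g=g\tau'$, which is why your coordinates refuse to line up.

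The two ways you try to patch this do not work. Comparing the sums of the two $g$-tuples of translation amounts only checks the image under the Abel--Jacobi map $S^gE\to E$; it is necessary but nowhere near sufficient for two points of $S^gE$ to coincide. And there is no ambiguity about ``which lift of $\sigma'$ is meant'': $\sigma'$ is defined outright as diagonal translation by $\tau'$, so the proposition is a concrete identity that either holds or fails, and it only holds once $l_i$ is computed correctly. The fix is purely local --- rerun your own arithmetic-progression argument with the correct initial step $l_2-l_1=(m-1)l_1-l_0=m-1$ --- after which your proof coincides with the paper's. The second case $[2^{g-1},m]$ then needs the mirror-image bookkeeping (there $k_i=(g-i)(m-1)+1$ and $l_i=i$), as you anticipate.
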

\begin{proof}
In both cases,  $n=(m-1)g+1$.
By definition, $\tau_i = (k_i+l_i-n)\tau$ where $k_i$ and $l_i$ are the integers defined in \cref{sssect.sigma}. 
Since the characteristic variety $X_{n/k}$ is isomorphic to $E^g/\Sigma_{n/k}$, the existence and uniqueness of the 
automorphism $\s':S^gE \to S^gE$ making the diagram commute is established  in \cite[Prop.~2.10]{CKS2}.

(1)
If $\frac{n}{k}=[m,2^{g-1}]$, then \cref{eq.kl.ind} implies
\begin{equation*}
	mk_{1}\;=\;k_{0}+k_{2}\quad\text{and}\quad 2k_{i}\;=\;k_{i-1}+k_{i+1}\quad (2\leq i\leq g).
\end{equation*}
Since $k_{0}=n$ and $k_{1}=k$ as in \cref{eq.nk.sp},
\begin{equation*}
	k_{g+1}-k_{g}\;=\;k_{g}-k_{g-1}\;=\;\cdots\;=\;k_{2}-k_{1}\;=\;(m-1)k_{1}-k_{0}\;=\;(m-1)k-n
\end{equation*}
which is, by \cref{prop.7.1}\cref{item.nk.mfirst}, equal to $-1$.
Hence 
\begin{equation*}
	k_{i}\;=\;k_{1}+(i-1)(-1)=g-i+1
\end{equation*}
for $i=1,\ldots,g+1$. Since $l_{0}=0$, $l_{1}=1$, and $l_{i}$'s satisfy the same inductive formula as $k_{i}$'s,
\begin{equation*}
	l_{g+1}-l_{g}\;=\;l_{g}-l_{g-1}\;=\;\cdots\;=\;l_{2}-l_{1}\;=\;(m-1)l_{1}-l_{0}\;=\;m-1.
\end{equation*}
Thus
\begin{equation*}
	l_{i}\;=\;l_{1}+(i-1)(m-1)=(i-1)(m-1)+1.
\end{equation*}
It follows that $\tau_i = (k_i+l_i-n)\tau = (2-m)(g-i+1)\tau$ for $i=1,\ldots,g$. 
For all $i=1,\ldots,g-1$, $\tau_{i+1}-\tau_i=(m-2)\tau =\tau'$. Therefore
\begin{align*}
\rho\s(z_1,\ldots,z_g) & \;=\; \rho(z_1+\tau_1,\ldots,z_g+\tau_g)
\\
& \;=\; (\!(z_2+\tau_2-z_1-\tau_1, \ldots, z_g+\tau_g-z_{g-1}-\tau_{g-1}, -z_g-\tau_g )\!)
\\
& \;=\; (\!(z_2-z_1+\tau', \ldots, z_g -z_{g-1}+\tau', -z_g+\tau' )\!)
\\
& \;=\; \s'(\!(z_2-z_1, \ldots, z_g -z_{g-1}, -z_g)\!).
\end{align*}
Thus, $\rho\s=\s'\rho$. 

(2)
Suppose $\frac{n}{k}=[2^{g-1},m]$. Since
\begin{equation*}
	2k_{i}\;=\;k_{i-1}+k_{i+1}\quad (1\leq i\leq g-1)\quad\text{and}\quad mk_{g}\;=\;k_{g-1}+k_{g+1},
\end{equation*}
it follows that
\begin{equation*}
	k_{1}-k_{0}\;=\;k_{2}-k_{1}\;=\;\cdots\;=\;k_{g}-k_{g-1}\;=\;k_{g+1}-(m-1)k_{g}\;=\;-(m-1).
\end{equation*}
Hence
\begin{equation*}
	k_{i}\;=\;k-(i-1)(m-1)\;=\;(g-i)(m-1)+1
\end{equation*}
for $i=1,\ldots,g$. By \cref{eq.nk.sp} and the proof of \cref{prop.7.1}, $l_{g}=k'=g$. Hence
\begin{equation*}
	l_{1}-l_{0}\;=\;l_{2}-l_{1}\;=\;\cdots\;=\;l_{g}-l_{g-1}\;=\;l_{g+1}-(m-1)l_{g}\;=\;n-(m-1)k'\;=\;1
\end{equation*}
implies that $l_{i}=i$ for $i=1,\ldots,g$.
It follows that $\tau_i = (k_i+l_i-n)\tau = -(m-2)i\tau$ for $i=1,\ldots,g$ . 
For all $i=1,\ldots,g-1$, $\tau_{i}-\tau_{i+1}=(m-2)\tau$. 
Now,  
\begin{align*}
\rho\s(z_1,\ldots,z_g) & \;=\; \rho(z_1+\tau_1,\ldots,z_g+\tau_g)
\\
& \;=\; (\!(-z_1-\tau_1, z_1+\tau_1-z_2-\tau_2, \ldots, z_{g-1}+\tau_{g-1}-z_{g}-\tau_{g})\!)
\\
& \;=\; (\!(-z_1+\tau', z_1-z_2+\tau', \ldots,z_{g-1} -z_{g}+\tau' )\!).
\end{align*}
Thus, $\rho\s=\s'\rho$. 
\end{proof}

\subsection{The special case $Q_{5,2}(E,\tau)$}
When $(n,k)=(5,2)$, $\frac{n}{k}=[3,2]$, so $\s':S^2E \to S^2E$ is given by 
$$
\s'(\!(z_1,z_2)\!) \;=\; (\!(z_1+\tau,z_2+\tau)\!).
$$
Hence \cref{prop.action.of.sigma.on.S^gE}  agrees with a remark after Proposition 4.2 in the Kiev preprint \cite{FO-Kiev} which says there is a (surjective) homomorphism  
$Q_{5,2}(E,\tau) \to B(S^2E, \nu,\cL')$ where $\nu$ is the automorphism $(\!(z_1,z_2)\!) \mapsto (\!(z_1+\tau,z_2+\tau)\!)$. 
The next result shows there is also a (surjective) homomorphism  $Q_{5,2}(E,\tau) \to B(S^2E, \nu^{-1},\cL')$ where $\nu^{-1}$ 
is the automorphism $(\!(z_1,z_2)\!) \mapsto (\!(z_1-\tau,z_2-\tau)\!)$.  

\begin{proposition}
\label{prop.isom.thcr}
Let $m$ be an integer $\ge 3$ and assume $n=(m-1)g+1$ and $k=g$. Thus $X_{n/k} \cong S^gE$.
Let $\s:E^g \to E^g$ be the automorphism that is translation by $(\tau_1,\ldots,\tau_g)$. 
Let $\s_1',\s_2':S^gE \to S^gE$ be the automorphisms such that $\rho_i\s=\s_i'\rho_i$ where $\rho_1,\rho_2:E^g \to S^gE$ 
are the quotient morphisms
\begin{align*}
\rho_1(z_1,\ldots,z_g) & \;=\; (\!(z_2-z_1,\ldots,z_g-z_{g-1},-z_g)\!)
\\
\rho_2(z_1,\ldots,z_g) & \;=\; (\!(z_1-z_2,\ldots,z_{g-1}-z_{g},z_g)\!)
\end{align*}
for the action of $\Sigma_{n/k}$ on $E^g$. If $\cL_i' = (\rho_{i*} \cL_{n/k})^{\Sigma_{n/k}}$, then there is an isomorphism of triples $(f,u):(S^gE,\s_1',\cL_1') \to (S^gE,\s_2',\cL_2')$ where $f:S^gE \to S^gE$ is the automorphism
$$
f(\!(z_1,\ldots,z_g)\!) \;=\;  (\!(-z_1,\ldots,-z_g)\!)
$$
and hence an isomorphism $B(S^gE,\s_1',\cL_1') \cong B(S^gE,\s_2',\cL_2')$.
\end{proposition}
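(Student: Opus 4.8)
The plan is to exhibit $(f,u)$ concretely as an isomorphism of triples and then quote the functoriality of the $B$-construction. First I would note that $f\colon S^gE\to S^gE$ is the automorphism induced by $[-1]\colon E^g\to E^g$, $(z_1,\ldots,z_g)\mapsto(-z_1,\ldots,-z_g)$, which commutes with the coordinate-permutation action of $\Sigma_g$ and hence descends; it is an involution. A direct comparison of the two displayed formulas gives $\rho_2=f\circ\rho_1$, since negating every coordinate of $(\!(z_2-z_1,\ldots,z_g-z_{g-1},-z_g)\!)$ produces $(\!(z_1-z_2,\ldots,z_{g-1}-z_g,z_g)\!)$. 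Because $\rho_1$ is a quotient morphism for the $\Sigma_{n/k}$-action and $f$ is an isomorphism, $\rho_2=f\circ\rho_1$ is a quotient morphism as well, which is the implicit claim in the statement; in particular the automorphisms $\s_1'$ and $\s_2'$ with $\rho_i\s=\s_i'\rho_i$ exist and are unique by \cite[Prop.~2.10]{CKS2} (as already invoked in \cref{thm.thcr.qnk} and \cref{prop.action.of.sigma.on.S^gE}), using that the translation $\s$ commutes with the $\Sigma_{n/k}$-action on $E^g$.

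Next I would verify the automorphism-compatibility half of a morphism of triples, namely $f\s_1'=\s_2'f$. This follows from the chain
\[
f\s_1'\rho_1 \;=\; f\rho_1\s \;=\; \rho_2\s \;=\; \s_2'\rho_2 \;=\; \s_2'f\rho_1,
\]
together with the fact that $\rho_1$ is an epimorphism of varieties (being surjective, with $E^g$ reduced and $S^gE$ separated; or directly by the universal property of $E^g\to E^g/\Sigma_{n/k}$), which lets us cancel $\rho_1$ on the right.

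For the sheaf half I would descend the evident isomorphism $\rho_2^{*}\cL_2'\cong\cL_{n/k}\cong\rho_1^{*}\cL_1'$ (from \cref{thm.thcr.qnk}\cref{thcr.qnk.isom}) through $\Sigma_{n/k}$-invariants. The $\Sigma_{n/k}$-equivariant structure on $\cL_{n/k}$, given by maps $t_\gamma\colon\cL_{n/k}\to\gamma^{*}\cL_{n/k}$ (\cite[Prop.~4.11]{CKS2}), induces $\Sigma_{n/k}$-actions on $\rho_{1*}\cL_{n/k}$ and on $\rho_{2*}\cL_{n/k}=f_{*}\rho_{1*}\cL_{n/k}$, and since $\rho_2=f\rho_1$ the structure maps satisfy $\rho_{2*}t_\gamma=f_{*}(\rho_{1*}t_\gamma)$, so the action on $\rho_{2*}\cL_{n/k}$ is precisely the $f_{*}$-transport of the action on $\rho_{1*}\cL_{n/k}$. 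As $f$ is an isomorphism, $f_{*}$ is exact and commutes with taking $\Sigma_{n/k}$-invariants, whence
\[
\cL_2'=(\rho_{2*}\cL_{n/k})^{\Sigma_{n/k}}=(f_{*}\rho_{1*}\cL_{n/k})^{\Sigma_{n/k}}=f_{*}\big((\rho_{1*}\cL_{n/k})^{\Sigma_{n/k}}\big)=f_{*}\cL_1'.
\]
Applying $f^{*}$ and using $f^{*}f_{*}\cong\id$ (valid for any isomorphism $f$, or here simply because $f=f^{-1}$) yields an isomorphism $u\colon f^{*}\cL_2'\to\cL_1'$. Thus $(f,u)$ is an isomorphism of triples $(S^gE,\s_1',\cL_1')\to(S^gE,\s_2',\cL_2')$, and the contravariant functor of \cref{pr.triples} produces an isomorphism $B(S^gE,\s_2',\cL_2')\to B(S^gE,\s_1',\cL_1')$, hence $B(S^gE,\s_1',\cL_1')\cong B(S^gE,\s_2',\cL_2')$ as claimed.

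I do not expect a genuine obstacle here; the computations are short. The one point requiring care — and the closest thing to a "hard part" — is the equivariance bookkeeping in the third paragraph: making sure the $\Sigma_{n/k}$-structure defining $\cL_2'$ is literally the $f_{*}$-image of the one defining $\cL_1'$. This is forced because both arise from the single intrinsic $\Sigma_{n/k}$-equivariant structure on $\cL_{n/k}$ on $E^g$, but it should be spelled out so that the identification $\cL_2'=f_{*}\cL_1'$ is not merely an abstract isomorphism but the canonical one compatible with $u$.
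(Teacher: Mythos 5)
Your proposal is correct and follows essentially the same route as the paper's proof: both rest on the observation $\rho_2=f\circ\rho_1$, deduce $f\s_1'=\s_2'f$ from the chain $f\s_1'\rho_1=f\rho_1\s=\rho_2\s=\s_2'\rho_2=\s_2'f\rho_1$ and the surjectivity of $\rho_1$, and obtain $f^*\cL_2'\cong\cL_1'$ by pushing the $\Sigma_{n/k}$-invariants of $\cL_{n/k}$ through $f$ (the paper phrases this as $f^*\cL_2'\cong f^{-1}_*\big((\rho_{2*}\cL_{n/k})^{\Sigma_{n/k}}\big)\cong(\rho_{1*}\cL_{n/k})^{\Sigma_{n/k}}=\cL_1'$, which is your $\cL_2'=f_*\cL_1'$ computation read in the other direction). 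The extra equivariance bookkeeping you flag is exactly the point the paper compresses into the phrase ``since $f$ is $\Sigma_{n/k}$-equivariant,'' so nothing is missing.
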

\begin{proof}
Since
\begin{equation*}
	f\sigma'_{1}\rho_{1}\;=\;f\rho_{1}\sigma\;=\;\rho_{2}\sigma\;=\;\sigma'_{2}\rho_{2}\;=\;\sigma'_{2}f\rho_{1}
\end{equation*}
and $\rho_{1}$ is surjective,
$f\s_1'=\s_2'f$. The proof will be complete once we show that $f^*\cL_2' \cong \cL_1'$.

Since $f$ is $\Sigma_{n/k}$-equivariant,
\begin{equation*}
	f^*\cL_2'
	\;=\;f^{*}\big((\rho_{2*}\cL_{n/k})^{\Sigma_{n/k}}\big)
	\;\cong\;f^{-1}_{*}\big((\rho_{2*}\cL_{n/k})^{\Sigma_{n/k}}\big)
	\;\cong\;(f^{-1}_{*}\rho_{2*}\cL_{n/k})^{\Sigma_{n/k}}
	\;=\;(\rho_{1*}\cL_{n/k})^{\Sigma_{n/k}}
	\;=\;\cL_1'.\qedhere
\end{equation*}
\end{proof}

\subsection{The map $\Psi_{n/k}:Q_{n,k}(E,\tau) \to B(S^gE,\s',\cL'_{n/k})$}
We continue to assume that $X_{n/k} \cong S^gE$, i.e., that $\frac{n}{k}$ is either $[m,2^{g-1}]$ or $[2^{g-1},m]$ where
 $m$ is an integer $\ge 3$.   
We identify $X_{n/k}$ with $S^gE$ via the quotient morphism $\rho:E^g \to S^gE$ in \cref{eq.defn.rho}.  
In other words, there is a closed immersion $i$ such that  $\Phi_{n/k}\colon E^g \to \PP^{n-1}$  factors as
$$
\xymatrix{
  E^g    \ar[r]^>>>>>{\rho} & S^gE  \ar[r]^{i} & \PP^{n-1}.
}
$$
Let $\cL'_{n/k} \;=\; i^{*}\cO_{\bbP^{n-1}}(1)$.

\begin{lemma}\label{lem.ns.pb.va}
Assume $\frac{n}{k}=[m,2^{g-1}]$. 
Then  $[\cL'_{n/k}]=D+(m-1)F$ in $\NS(S^gE)$. 
\end{lemma}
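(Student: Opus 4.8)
The plan is to compute the class $[\cL'_{n/k}]$ by pulling back to $E^g$ along the quotient morphism $\rho$ and identifying the result in $\NS(E^g)$.

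First I would recall from \cref{thcr.qnk.isom} that $\cL_{n/k}\cong\rho^*\cL'_{n/k}$, so it suffices to compute $\rho^*$ of the (yet unknown) class $[\cL'_{n/k}]=aD+bF$ and match it with the known class $[\cL_{n/k}]=[D_{n/k}]$ in $\NS(E^g)$ given by \cref{eq:d}. Since $\NS(S^gE)=\ZZ D\oplus\ZZ F$ with the intersection numbers $F\cdot F=0$, $F\cdot D^{g-1}=1$, $D^g=\deg(\cE)=1$ recorded in \cref{sssect.NS.S^gE} (here $\cE=\cE_g$ has degree $1$), the class is determined by two intersection numbers; I will compute them on the $E^g$ side. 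Concretely, because $\rho$ is a finite surjective morphism of degree $g!$ (it is the quotient by $\Sigma_{n/k}\cong\Sigma_g$ acting on $E^g$, composed with the linear change of coordinates in \cref{eq.defn.rho} which is an automorphism of $E^g$), intersection numbers of pulled-back classes scale by $g!$: $\rho^*(\alpha)\cdot\rho^*(\beta_1)\cdots = g!\,(\alpha\cdot\beta_1\cdots)$ for classes of complementary dimension on $S^gE$.

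The two computations I would carry out are: (i) $\rho^*F$ is the class of a fiber of $\pi\circ\rho:E^g\to E$. Since $\pi(\!(w_1,\dots,w_g)\!)=w_1+\cdots+w_g$ and $\rho$ sends $(z_1,\dots,z_g)$ to $(\!(z_2-z_1,\dots,z_g-z_{g-1},-z_g)\!)$, the composite $\pi\circ\rho$ is $(z_1,\dots,z_g)\mapsto -z_1$, so $\rho^*F=[\{u\}\times E^{g-1}]$ for a point $u\in E$, i.e.\ $\rho^*F=E^{0}\times D_1'\times E^{g-1}$-type divisor — a ``coordinate hyperplane'' class $[\{z_1=\text{const}\}]$. (ii) $\rho^*D$ is the class of $\rho^*\cO_X(1)=\cL_{n/k}$ itself restricted appropriately; but more directly I will just use $[\cL_{n/k}]=[D_{n/k}]$ from \cref{eq:d} and \cref{defn.Di} with $D_1=(n_1-1)(0)=(m-1)(0)$, $D_i=(n_i-2)(0)=0$ for $2\le i\le g-1$, and $D_g=(n_g-1)(0)=(0)$, giving
\begin{equation*}
[\cL_{n/k}]\;=\;(m-1)[E^0\times(0)\times E^{g-1}]\;+\;[E^{g-1}\times(0)]\;+\;\sum_{j=1}^{g-1}[\Delta_{j,j+1}].
\end{equation*}
Then $aD+bF$ pulls back to $a\,\rho^*D+b\,\rho^*F$; pairing both sides against the two test classes $(\rho^*F)^g$ (which is $0$) and $(\rho^*D)(\rho^*F)^{g-1}$ and against $(\rho^*D)^g$ — equivalently, computing the self-intersection $[\cL_{n/k}]\cdot[\rho^*F]^{g-1}$ and $[\cL_{n/k}]\cdot[\rho^*F\text{-complement}]$ on $E^g$ — will pin down $a$ and $b$. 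The expected answer $a=1$, $b=m-1$ should drop out: the coefficient $a$ is forced because $\rho^*D$ must restrict to $\cO(1)$ on a generic fiber of $\pi$ (which, after the coordinate change, is a copy of $E^{g-1}$ mapping to a $\PP^{g-1}$ of degree $1$), and the $(m-1)F$ part records the ``extra'' $(m-1)(0)$ appearing in $D_1$ together with the $\Delta_{j,j+1}$ contributions, which under $\pi\circ\rho$ all collapse to fiber classes.

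The main obstacle I anticipate is bookkeeping: correctly tracking how the linear automorphism $(z_i)\mapsto(z_2-z_1,\dots,-z_g)$ transforms the diagonal divisors $\Delta_{j,j+1}$ and the coordinate divisors $E^{i-1}\times(0)\times E^{g-i}$ in $\NS(E^g)$, and then correctly dividing by the degree of $\Sigma_{n/k}$ when passing down to $S^gE$. A cleaner route that avoids most of this — and the one I would actually write up — is to restrict everything to a single fiber $F_u=\pi^{-1}(u)\cong\PP^{g-1}$ of $\PP(\cE_g)=S^gE$: on $F_u$ the class $aD+bF$ restricts to $\cO_{\PP^{g-1}}(a)$, and $\rho^{-1}(F_u)$ is (a translate of) the subvariety $\{z_1=\text{const}\}\subset E^g$, on which $\cL_{n/k}$ restricts to a sheaf whose degree on a line is computable directly from \cref{eq:d}, giving $a=1$; then $b$ is determined by computing the top self-intersection $[\cL'_{n/k}]^g=\deg$ of the embedding $S^gE\hookrightarrow\PP^{n-1}$, which equals $(aD+bF)^g=a^g D^g+g a^{g-1}b\,D^{g-1}F=a^g+gab\cdot a^{g-1}\cdot(\text{with }F\cdot D^{g-1}=1)$, wait — more carefully $(D+bF)^g=D^g+gbD^{g-1}F=1+gb$, and independently $\deg_{\PP^{n-1}}(S^gE)$ can be read off from $n=(m-1)g+1$ via a Riemann–Roch / Hilbert polynomial computation for $\cL_{n/k}$ on $E^g$ (or cited from \cite{CKS2}), forcing $1+gb=$ that degree, hence $b=m-1$. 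I would present whichever of these two computations is shortest given the lemmas already available in the paper.
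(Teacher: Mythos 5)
Your overall strategy is the same as the paper's: pull $[\cL'_{n/k}]=aD+bF$ back along $\rho$, use $\rho^*\cL'_{n/k}\cong\cL_{n/k}=\cO_{E^g}(D_{n/k})$, and determine $a$ and $b$ by intersection theory. The difference lies in how the two unknowns are extracted, and the paper's choice is worth comparing with yours because it sidesteps both of the difficulties you flag. The paper writes out the pullback divisor explicitly,
$\rho^*(aD_0+bF_0)=a\bigl(\Delta_{1,2}+\cdots+\Delta_{g-1,g}+E^{g-1}\times\{0\}\bigr)+b\bigl(\{0\}\times E^{g-1}\bigr)$,
and intersects it and $D_{n/k}$ with the two curves $C_1=E\times\{p\}$ and $C_2=\{p\}\times E$ for $p$ in general position; this yields $a+b=m$ and $2a=2$ in one line each, with no need for the degree factor $g!$, for the top self-intersection, or for any analysis of the fibre geometry of $\pi\circ\rho$. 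Your two routes do both close, but each needs more care than you give it. In route (i) the deferred ``bookkeeping'' is exactly the content of the proof, and working with curve classes (as the paper does) is much lighter than working with top-degree products. In route (ii), the parenthetical ``a copy of $E^{g-1}$ mapping to a $\PP^{g-1}$ of degree $1$'' is not correct: the restriction of $\rho$ to a fibre $\{z_1=c\}\cong E^{g-1}$ is a degree-$g!$ map onto $\PP^{g-1}$. What actually forces $a=1$ is that $D_{n/k}|_{\{z_1=c\}}$ is a standard divisor of type $(2,\ldots,2)$ on $E^{g-1}$, whose top self-intersection is $(g-1)!\cdot g=g!$; comparing with $g!\,a^{g-1}$ for the pullback of $\cO_{\PP^{g-1}}(a)$ under a degree-$g!$ map gives $a^{g-1}=1$, hence $a=1$. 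With that in hand your Riemann--Roch step is fine: $(D+bF)^g=1+gb$ while $(\cL'_{n/k})^g=\tfrac{1}{g!}(D_{n/k})^g=n=(m-1)g+1$, so $b=m-1$. So your plan is correct and completable, but as written the decisive step (pinning down $a$) rests on a misstatement, and the paper's curve-intersection argument is the shorter of the available computations.
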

\begin{proof}
By definition, $D=[D_0]$ and $F=[F_0]$ where 
$F_0=\{(\!(z_1,z_2,\ldots,z_g)\!) \; | \; z_1+\cdots+z_g =0\}$ and $D_0 =\{(\!(0,z_2,\ldots,z_g)\!) \; | \; z_2,\ldots,z_g \in E\}$. 

	The N\'eron-Severi class of $\cL'_{n/k}$ is $aD+bF$ for some $a,b\in\bbZ$. 
	The divisor $\rho^*(aD_0+bF_0)$ equals
	\begin{equation*}
	D' \; :=\; 	a\big(\Delta_{1,2}+\cdots+\Delta_{g-1,g}  \,+ \, (E^{g-1}\times \{0\}) \big) \; + \;b\big(\{0\} \times E^{g-1} \big).
	\end{equation*}
	Since $\rho^{*}i^{*}\cO_{\bbP^{n-1}}(1)=\Phi_{n/k}^{*}\cO_{\bbP^{n-1}}(1)=\cO_{E^{g}}(D_{n/k})$, 
	the divisors $D_{n/k}$ and $D'$ are linearly equivalent and therefore give the same class in $\NS(E^g)$. 
	
	Fix a point $p \in E^{g-1}$ in general position and let $C_1$ and $C_g$ be the curves
	$E \times \{p\}$ and $\{p\} \times E$ on $S^gE$. Then $D' \cdot C_1 = a+b$ and $D_{n/k} \cdot C_1=(m-1)+1$ so $a+b=m$.
	Also, $D'\cdot C_2=2a$ and $D_{n/k} \cdot C_2=(n_g-1)+1=2$ so $2a=2$. Therefore $a=1$ and $b=m-1$.
\end{proof}

\begin{theorem}
\label{thm.B(S^gE)}
Assume $\frac{n}{k}=[m,2^{g-1}]$. 
For all translation automorphisms $\s:S^gE\to S^gE$, the algebra $B(S^gE,\sigma,\cL'_{n/k})$ is generated in degree one 
and has relations in degrees 2 and 3.
\end{theorem}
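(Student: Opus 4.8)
This is a direct consequence of Theorem~\ref{pr.rel} once we verify its hypotheses. The plan is as follows. First I would recall that by Lemma~\ref{lem.ns.pb.va} we have $[\cL'_{n/k}]=D+(m-1)F$ in $\NS(S^gE)$, so with $a=1$ and $b=m-1$ we have $a\ge 1$ and, since $m\ge 3$, $b=m-1\ge 2$. Next, by Proposition~\ref{prop.gushel}(1),(2), any invertible sheaf with N\'eron--Severi class $aD+bF$ with $a\ge 1$ and $b\ge 2$ is both ample and generated by its global sections; hence $\cL'_{n/k}$ is ample and globally generated. These are exactly the hypotheses of Theorem~\ref{pr.rel} (which requires $g\ge 2$ implicitly through Convention~\ref{cv.lmn}, and one should note that the case $g=1$, i.e.\ $\frac{n}{k}=[m]$ with $k=1$, is covered separately since $S^1E=E$ and the statement reduces to the classical fact about degree-$m$ line bundles on an elliptic curve via Theorem~\ref{cor.ab}, or can be handled by the same machinery specialized to $E$).

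With the hypotheses checked, Theorem~\ref{pr.rel} applied with $X=S^gE$ and this $\cL=\cL'_{n/k}$ immediately yields that $B(S^gE,\sigma,\cL'_{n/k})$ is generated in degree one and has relations of degrees $2$ and $3$, for every translation automorphism $\sigma$. So the proof is essentially a one-line citation: \emph{By Lemma~\ref{lem.ns.pb.va} and Proposition~\ref{prop.gushel}, $\cL'_{n/k}$ is ample, generated by its global sections, and has N\'eron--Severi class $D+(m-1)F$ with $m-1\ge 2$; the claim now follows from Theorem~\ref{pr.rel}.}

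The only genuine subtlety — and thus the place where I would be most careful — is the boundary/degenerate cases: one should confirm that $g\ge 2$ whenever $m\ge 3$ and $\frac{n}{k}=[m,2^{g-1}]$ is a genuine symmetric-power case with $n>k\ge 1$ coprime, or else dispatch $g=1$ separately. When $g=1$, $\frac{n}{k}=[m]=\frac{m}{1}$, so $(n,k)=(m,1)$, $X_{n/k}=E$, $\cL'_{n/k}$ has degree $m\ge 3$, and $B(E,\sigma,\cL'_{n/k})$ is generated in degree one with relations in degrees $2$ and $3$ — this is classical (and also follows from the general theory: an invertible sheaf of degree $\ge 3$ on $E$ is very ample and normally generated with relations of degree $\le 3$, cf.\ the discussion in \S\ref{ssect.proj.norm} together with $\sigma$-ampleness from Corollary~\ref{cor.sigma-ample}). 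For $g\ge 2$ no such care is needed and Theorem~\ref{pr.rel} applies verbatim.

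\begin{proof}
By \cref{lem.ns.pb.va}, the N\'eron--Severi class of $\cL'_{n/k}$ is $D+(m-1)F$; since $m\ge 3$, this is $aD+bF$ with $a=1\ge 1$ and $b=m-1\ge 2$. By \cref{prop.gushel}, $\cL'_{n/k}$ is therefore ample and generated by its global sections. If $g\ge 2$, the conclusion is immediate from \cref{pr.rel} applied to $X=S^gE$, $\cL=\cL'_{n/k}$, and the given translation automorphism $\s$. If $g=1$, then $\frac{n}{k}=[m]$, so $(n,k)=(m,1)$ and $S^gE=E$, with $\cL'_{n/k}$ of degree $m\ge 3$; in this case $\cL'_{n/k}$ is very ample, $\s'$-ample by \cref{cor.sigma-ample}, and $B(E,\s,\cL'_{n/k})$ is generated in degree one with relations in degrees $2$ and $3$ by the same argument as in \cref{pr.rel} specialized to the elliptic curve (the push-forward steps being trivial since $\pi=\id_E$). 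This completes the proof.
\end{proof}
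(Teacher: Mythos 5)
Your proposal is correct and is essentially identical to the paper's own proof, which likewise just combines \cref{lem.ns.pb.va} (giving $[\cL'_{n/k}]=D+(m-1)F$ with $m-1\ge 2$) with \cref{pr.rel}. Your extra care about the degenerate case $g=1$ (where $S^gE=E$ and the $D,F$ description of $\NS(S^gE)$ breaks down) is a reasonable refinement that the paper's one-line proof silently elides, but it does not change the substance of the argument.
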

\begin{proof}
By \cref{lem.ns.pb.va}, $[\cL'_{n/k}]=D+(m-1)F$. Since $m \ge 3$,  \cref{pr.rel} applies. 
\end{proof}

\begin{corollary}\label{cor.12}
Fix an integer $m\ge 3$ and assume $\frac{n}{k}=[m,2^{g-1}]$. Let  $\s':S^gE \to S^gE$ be the translation automorphism
by  $(m-2)\tau$, i.e., the automorphism in \cref{prop.action.of.sigma.on.S^gE}. The homomorphism  
$\Psi_{n/k}:Q_{n,k}(E,\tau)\to B(S^gE,\s',\cL'_{n/k})$ is surjective and 
$\ker(\Psi_{n/k})$ is generated by elements of degree $\le 3$.  
\end{corollary}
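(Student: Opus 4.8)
The plan is to combine the factorization of $\Psi_{n/k}$ from \cref{cor.map.to.B} with the structural results about $B(S^gE,\s',\cL'_{n/k})$ just established. First I would recall that \cref{cor.map.to.B} gives the $\CC$-algebra homomorphism $\Psi_{n/k}\colon Q_{n,k}(E,\tau)\to B(X_{n/k},\s',\cL'_{n/k})$ which is an isomorphism in degree one, and that under the present hypothesis $\frac{n}{k}=[m,2^{g-1}]$ with $m\ge 3$ we have $X_{n/k}\cong S^gE$ by \cref{prop.7.1}, with $\s'$ the translation by $(m-2)\tau$ identified explicitly in \cref{prop.action.of.sigma.on.S^gE}. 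Thus the codomain of $\Psi_{n/k}$ is exactly $B(S^gE,\s',\cL'_{n/k})$ with the translation automorphism $\s'$ and the invertible sheaf whose N\'eron--Severi class is $D+(m-1)F$ by \cref{lem.ns.pb.va}.

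The surjectivity of $\Psi_{n/k}$ then follows immediately: by \cref{thm.B(S^gE)} (equivalently \cref{pr.rel}, whose hypotheses hold since $[\cL'_{n/k}]=D+(m-1)F$ with $a=1\ge 1$ and $b=m-1\ge 2$ as $m\ge 3$), the algebra $B(S^gE,\s',\cL'_{n/k})$ is generated in degree one. Since $\Psi_{n/k}$ is an isomorphism onto the degree-one component $B(S^gE,\s',\cL'_{n/k})_1$ and the target is generated by that component, $\Psi_{n/k}$ is surjective.

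For the statement about $\ker(\Psi_{n/k})$, I would argue as follows. Write $Q=Q_{n,k}(E,\tau)$, which is generated in degree one (it is a quotient of the tensor algebra $TV$ with $V=Q_1$), and let $B=B(S^gE,\s',\cL'_{n/k})$. Present $Q=TV/I$ and $B=TV/J$ using the degree-one identification $\varphi\colon Q_1\xrightarrow{\sim}B_1$; since $\Psi_{n/k}$ is an isomorphism in degree one and both algebras are generated in degree one, $I\subseteq J$ and $\ker(\Psi_{n/k})=J/I$. The defining relations of $Q$ live in degree two, so $I$ is generated in degree two; hence in degrees $\le 3$ the ideals $I$ and $J$ differ only by what $J$ contains beyond $I$, and it suffices to show $J$ is generated in degrees $2$ and $3$ — because then $J = I + (J_3/\text{image of }I_2\text{-consequences}) + \ldots$ is, modulo $I$, generated by elements of degree $\le 3$, so $J/I$ is generated in degrees $\le 3$ as an ideal of $Q$. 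But $J$ is generated in degrees $2$ and $3$ precisely by \cref{thm.B(S^gE)}. Therefore $\ker(\Psi_{n/k})=J/I$ is generated by elements of degree $\le 3$.

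The main obstacle here is conceptual bookkeeping rather than any hard computation: one must be careful that ``relations of $B$ generated in degrees $2$ and $3$'' translates correctly into ``$\ker(\Psi_{n/k})$ generated in degrees $\le 3$'' given that $Q$ itself already imposes its own degree-two relations. The clean way to phrase it is via the surjection $TV\twoheadrightarrow Q\twoheadrightarrow B$: the kernel of $TV\to B$ is $J$, generated in degrees $2,3$; the kernel of $TV\to Q$ is $I\subseteq J$; and $\ker(\Psi_{n/k})$ is the image of $J$ in $Q$, which is generated by the images of a degree-$\le 3$ generating set of $J$. No nontrivial geometry is needed beyond what \cref{thm.B(S^gE)} already supplies; the proof is essentially a one-line deduction together with this translation.
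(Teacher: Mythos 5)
Your proposal is correct and follows essentially the same route as the paper, which proves this corollary by citing \cref{thm.B(S^gE)} directly: generation of $B(S^gE,\s',\cL'_{n/k})$ in degree one gives surjectivity, and the degree-$2,3$ generation of its relation ideal $J\subseteq TV$ gives that $\ker(\Psi_{n/k})=J/I$ is generated in degrees $\le 3$. Your final paragraph's phrasing via the surjections $TV\twoheadrightarrow Q\twoheadrightarrow B$ is exactly the right way to handle the bookkeeping (and is cleaner than the somewhat muddled sentence in your second paragraph, which you could simply delete).
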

\begin{proof}
This follows immediately from \cref{thm.B(S^gE)}.
\end{proof}

\subsection{The algebras $Q_{2k+1,k}(E,\tau)$}
Since $\frac{2k+1}{k}=[3,2^{k-1}]$, $X_{(2k+1)/k}=S^kE \subseteq \PP^{2k}$.

\begin{lemma}
\label{lem.2k+1.k}
If $\tau\in (E-E[2])\cup\{0\}$, then the kernel of the homomorphism $\Psi_{(2k+1)/k}:Q_{2k+1,k}(E,\tau) \to B(S^kE,\s',\cL'_{(2k+1)/k})$ is generated by 
$\frac{1}{6}k(k+1)(2k+1)$ elements of degree 3.
\end{lemma}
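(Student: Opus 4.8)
The plan is to deduce the statement from \cref{cor.12} by a Hilbert‑function computation. Write $\Psi:=\Psi_{(2k+1)/k}\colon Q\to B$, where $Q=Q_{2k+1,k}(E,\tau)$ and $B=B(S^kE,\s',\cL')$ with $\cL'=\cL'_{(2k+1)/k}$ and $n=2k+1$. By \cref{cor.12}, $\Psi$ is surjective and $\ker(\Psi)$ is generated in degrees $\le 3$, so it suffices to prove: (i) $\ker(\Psi)$ is concentrated in degrees $\ge 3$ — hence generated by $\ker(\Psi)_3$, and, there being no lower‑degree relations, a minimal generating set has exactly $\dim_\CC\ker(\Psi)_3$ elements, all of degree $3$; and (ii) $\dim_\CC\ker(\Psi)_3=\tfrac16 k(k+1)(2k+1)$. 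Since $\Psi$ is surjective, $\dim_\CC\ker(\Psi)_3=\dim_\CC Q_3-\dim_\CC B_3$, so (ii) reduces to computing two dimensions.

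For (i): write $B=T(V)/J$ with $J$ the homogeneous ideal of relations and $R\subseteq V^{\otimes 2}$ the space of quadratic relations of $Q$; then $\ker(\Psi)_1=0$ ($\Psi$ is an isomorphism in degree one by \cref{cor.map.to.B}) and $\ker(\Psi)_2=J_2/R$. Now $B$ is generated in degree one (\cref{thm.B(S^gE)}), so the canonical surjection $V^{\otimes 2}=H^0(S^kE\times S^kE,\cL'\boxtimes\cL')\twoheadrightarrow B_2=H^0(S^kE,\cL'\otimes(\s')^*\cL')$ is restriction of sections to the graph $\G_{\s'}$ (cf.\ \cref{prop.map.to.B}); hence $J_2$ is precisely the space of elements of $V^{\otimes 2}$ vanishing on $\G_{\s'}$. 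When $\tau\notin E[2]$ this equals $R$ by \cref{prop.Q.2k+1.k}. When $\tau=0$, $\s'=\mathrm{id}$, $Q$ is the polynomial ring on $n$ variables, $R=\Lambda^2V$, and $\Lambda^2V=J_2$ (both have dimension $\binom n2$, and antisymmetric tensors vanish on the diagonal). In either case $R=J_2$, so $\ker(\Psi)_2=0$, giving (i).

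For (ii): $\dim_\CC B_3=\dim_\CC H^0\!\bigl(S^kE,\cL'\otimes(\s')^*\cL'\otimes((\s')^2)^*\cL'\bigr)$. By \cref{lem.ns.pb.va} (with $m=3$), $[\cL']=D+2F$; as $\s'$ is a translation it acts trivially on $\NS(S^kE)$ (as in the proof of \cref{cor.sigma-ample}), so the sheaf above has N\'eron–Severi class $3D+6F$. By \cref{cor.picX} it is $\cO_X(3)\otimes\pi^*\cN$ with $\deg\cN=6$, where $X=S^kE=\PP(\cE_k)$ and $\pi\colon X\to E$; hence its pushforward is $S^3\cE_k\otimes\cN$ by the projection formula. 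The summand $S^3\cE_k$ of the semistable sheaf $\cE_k^{\otimes 3}$ (\cref{le.tens-ss,split-epi-0}) is semistable, so $S^3\cE_k\otimes\cN$ is semistable of rank $\binom{k+2}{3}$ and slope $\tfrac3k+6>0$; \cref{le.semist.pos.deg,le.1} then give
$\dim_\CC B_3=\deg(S^3\cE_k\otimes\cN)=\binom{k+2}{3}\cdot\tfrac{3(2k+1)}{k}=\tfrac{(k+2)(k+1)(2k+1)}{2}$.
For $\dim_\CC Q_3$: when $\tau=0$ it is $\binom{n+2}{3}=\binom{2k+3}{3}$, since $Q$ is a polynomial ring; for $\tau\ne 0$ (not $2$‑torsion) $Q$ has the same Hilbert series as that polynomial ring, so again $\dim_\CC Q_3=\binom{2k+3}{3}$. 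Subtracting, $\dim_\CC\ker(\Psi)_3=\binom{2k+3}{3}-\tfrac{(k+2)(k+1)(2k+1)}{2}=\tfrac16 k(k+1)(2k+1)$, as required.

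The one step requiring genuine input is the value $\dim_\CC Q_3=\binom{2k+3}{3}$, i.e.\ that $Q_{2k+1,k}(E,\tau)$ has the expected Hilbert function in degree $3$ throughout the stated range of $\tau$: this is immediate for $\tau=0$ and follows from \cite{CKS4} for non‑torsion $\tau$, while for torsion $\tau\notin E[2]$ it can be extracted from \cref{prop.Q.2k+1.k} by identifying $R\otimes V+V\otimes R$ with the image, under the global‑sections functor, of the ideal sheaves of the two twisted diagonals $\{p_1=\s'p_2\}$ and $\{p_2=\s'p_3\}$ in $(S^kE)^3$, and computing the relevant cohomology via $S^kE=\PP(\cE_k)$ and the semistability results of \cref{sect.semistable}. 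Everything else — the reduction to a degree‑$3$ count, and the two dimension computations — is routine once generation in degree $3$ is in place.
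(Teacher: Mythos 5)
Your overall strategy (reduce to $\ker(\Psi)_2=0$ plus the count $\dim Q_3-\dim B_3$) is the paper's, and your computation of $\dim B_3$ via the pushforward $S^3\cE_k\otimes\cN$ and semistability is a correct alternative to the paper's appeal to the formula of \cite[Thm.~1.17]{CaCi93} for $h^0$ of invertible sheaves on $S^kE$. But step (i) contains a genuine circularity: you deduce $J_2=R$ from \cref{prop.Q.2k+1.k}, whereas in the paper \cref{prop.Q.2k+1.k} is itself \emph{deduced from} \cref{lem.2k+1.k} --- its proof begins ``Because $\Psi$ is an isomorphism in degree two\dots'', which is exactly the statement $\ker(\Psi)_2=0$ that you are trying to establish. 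The only input available independently of the present lemma is \cref{prop.cor5.13.in.2}, which gives the one containment $R\subseteq J_2$ (the quadratic relations of $Q$ vanish on $\G_{\s'}$); the reverse containment is equivalent to $\dim Q_2\le\dim B_2$ and cannot be had without a dimension count. Likewise, in your $\tau=0$ sub-case the assertion that $J_2$ has dimension $\binom n2$ is exactly the claim $\dim B_2=\binom{n+1}{2}$, which you do not justify.

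The repair is the computation you already perform one degree higher: $\cL'\otimes(\s')^*\cL'$ has class $2D+4F$, pushes forward to $S^2\cE_k\otimes\cN'$ with $\deg\cN'=4$, a semistable sheaf of rank $\binom{k+1}{2}$ and slope $\tfrac2k+4$, whence $\dim B_2=\binom{k+1}{2}\cdot\tfrac{2(2k+1)}{k}=(k+1)(2k+1)=\binom{2k+2}{2}=\dim Q_2$ by \cite[Thm.~5.10]{CKS4}; this gives $\ker(\Psi)_2=0$ in both cases at once, and is precisely what the paper does (using \cite[Thm.~1.17]{CaCi93} in place of the pushforward). Your closing remark correctly isolates $\dim Q_3=\binom{2k+3}{3}$ as the external input for the degree-three count, but the proposed workaround for torsion $\tau\notin E[2]$ again routes through \cref{prop.Q.2k+1.k} and so inherits the same circularity; that dimension has to come from \cite{CKS4}, as in the paper.
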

\begin{proof}
For brevity, we write $\Psi=\Psi_{(2k+1)/k}$ and $\cL'=\cL'_{(2k+1)/k}$.

Let $Q_j$ and $B_j$ denote the degree-$j$ components of $Q_{2k+1,k}(E,\tau) $ and $B(S^kE,\s',\cL')$,
respectively. 

To prove that $\ker(\Psi)$ is zero in degree two we must show that $\dim(Q_2)=\dim(B_2)$.
Since $\tau\in (E-E[2])\cup\{0\}$, we see in \cite[Thm.~5.10]{CKS4} that
$$
\dim(Q_2) \;=\;
\binom{2k+2}{2}\;=\;(2k+1)(k+1).
$$
We will use a special case of \cite[Thm.~1.17]{CaCi93}:  let $\cL$ be an invertible sheaf on $S^kE$ such that 
$[\cL]=aD+ bF$; if $a \ge 0$ and $a+kb>0$, then
$$
\dim H^0(S^kE,\cL) \; = \; \frac{(a+kb)}{k!} \, \prod_{i=1}^{k-1} (a+i)\, .
$$
Since $[\cL']=D+2F$, $[\cL' \otimes (\s')^*\cL']=2D+4F$. Hence
$$
\dim(B_2) \;=\; \dim H^0(S^kE,\cL' \otimes (\s')^*\cL') \; = \; \frac{(2+4k)}{k!} \, \prod_{i=1}^{k-1} (2+i) \;=\; (1+2k)(k+1).
$$
Thus, $\dim(B_2)=\dim(Q_2)$.
 
On the other hand, $\dim(Q_3)={{2k+3} \choose {3}}$ and $\dim(B_3)$ is 
$$
\dim H^0(S^kE,\cL' \otimes (\s')^*\cL'\otimes (\s')^{2*}\cL') \; = \; \frac{(3+6k)}{k!} \, \prod_{i=1}^{k-1} (3+i) \;=\;
\tfrac{1}{2} (1+2k)(k+1)(k+2)
$$
so $\dim(Q_3)-\dim(B_3)=\frac{1}{6}k(k+1)(2k+1)$.
\end{proof}

For example, the kernel of the map $Q_{5,2}(E,\tau) \to B(S^2E,\s',\cL')$ is generated by 5 elements of degree 3 when $\tau\in(E-E[2])\cup\{0\}$.  
When $\tau=0$ this recovers the well-known fact  that the image of the map $S^2E \to \PP^4$ is the intersection of 5 cubic hypersurfaces.
Feigin and Odesskii say that the subalgebra of $Q_{5,2}(E,\tau) $ generated by those 5 degree-3 elements is isomorphic to 
$Q_{5,1}(E,\tau)$ (\cite[p.~25]{FO-Kiev}). We do not know how to prove this.

\begin{proposition}
\label{prop.Q.2k+1.k}
Let $\PP^{2k}=\PP(V^*)$ and let $i:S^kE \to \PP^{2k}$ be the closed immersion given by the complete linear system $|\cL'|$
where $[\cL']=D+2F$. If $\tau\in (E-E[2])\cup\{0\}$, then the space of relations for $Q_{2k+1,k}(E,\tau)$ is the subspace of $V \otimes V$ vanishing on the graph of 
the automorphism $\s':S^kE\to S^kE$ that is translation by $\tau$. 
\end{proposition}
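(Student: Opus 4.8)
The plan is to combine the surjectivity statement for $\Psi_{(2k+1)/k}$ (from \cref{cor.12}) with the known dimensions of the low-degree components of $Q_{2k+1,k}(E,\tau)$ and $B(S^kE,\s',\cL')$ to pin down the space of relations exactly. Write $R \subseteq V\otimes V$ for the space of quadratic relations for $Q_{2k+1,k}(E,\tau)$, and write $W \subseteq V\otimes V$ for the subspace of bilinear forms vanishing on the graph $\Gamma_{\s'}$ of $\s'\colon S^kE \to S^kE$, viewing $V = H^0(S^kE,\cL')$ and $V\otimes V = H^0(S^kE\times S^kE, \cL'\boxtimes\cL')$. By \cref{prop.cor5.13.in.2} we already have the inclusion $R \subseteq W$: the quadratic relations vanish on the graph of $\s'$. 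So the whole content is the reverse inclusion, and this will follow by a dimension count once both $\dim R$ and $\dim W$ are computed.

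First I would compute $\dim R$. By \cite[Thm.~5.10]{CKS4} (invoked already in the proof of \cref{lem.2k+1.k}), when $\tau\in (E-E[2])\cup\{0\}$ we have $\dim Q_2 = \binom{2k+2}{2} = (2k+1)(k+1)$, so since $Q_2 = (V\otimes V)/R$ and $\dim(V\otimes V) = (2k+1)^2$, we get $\dim R = (2k+1)^2 - (2k+1)(k+1) = (2k+1)k = k(2k+1)$. Next I would compute $\dim W$. The multiplication map $V\otimes V \to H^0(\Gamma_{\s'}, (\cL'\boxtimes\cL')|_{\Gamma_{\s'}})$ has kernel exactly $W$. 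Since $\Gamma_{\s'}$ is the image of $S^kE$ under $(\id,\s')$, restriction identifies this target with $H^0(S^kE, \cL'\otimes(\s')^*\cL')$, i.e.\ with $B_2$, the degree-two component of $B(S^kE,\s',\cL')$; and by the computation in the proof of \cref{lem.2k+1.k}, using \cite[Thm.~1.17]{CaCi93} with $[\cL'\otimes(\s')^*\cL'] = 2D+4F$, this has dimension $(1+2k)(k+1)$. The key point is that the map $V\otimes V \to B_2$ is surjective — this is precisely the statement that $B(S^kE,\s',\cL')$ is generated in degree one, which is \cref{thm.B(S^gE)} (equivalently \cref{cor.12}, since $\frac{2k+1}{k} = [3,2^{k-1}]$ with $m=3\ge 3$). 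Hence $\dim W = (2k+1)^2 - (1+2k)(k+1) = k(2k+1)$.

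Therefore $\dim R = \dim W = k(2k+1)$, and combined with $R\subseteq W$ this forces $R = W$, which is the assertion. The only mild subtlety — and the step I expect to be the main thing to get right — is the identification of the target of the restriction map $V\otimes V \to H^0(\Gamma_{\s'},\cdot)$ with $B_2$: one must check that $(\cL'\boxtimes\cL')$ pulled back along $(\id,\s')\colon S^kE\to S^kE\times S^kE$ is $\cL'\otimes(\s')^*\cL'$ (immediate, since pullback of a box product along a graph is the tensor product of the two pullbacks, and the first coordinate map here is the identity), and that $\Gamma_{\s'}$ is genuinely cut out reducedly as a closed subscheme isomorphic to $S^kE$ via the first projection, so that the surjectivity of $V\otimes V\to B_2$ really is equivalent to degree-one generation of the twisted homogeneous coordinate ring. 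All of this is standard and is exactly the mechanism underlying \cref{prop.map.to.B}; no new ideas are needed beyond assembling these pieces. (Note also that for $\tau=0$ one recovers that the space of quadrics through the embedded $S^kE\subseteq\PP^{2k}$ has the stated dimension, consistent with the classical picture.)
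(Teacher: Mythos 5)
Your argument is correct and is essentially the paper's own proof, just rearranged: the paper phrases it as ``$\Psi$ is an isomorphism in degree two, so the quadratic relations of $Q_{2k+1,k}(E,\tau)$ equal those of $B(S^kE,\s',\cL')$, which are the sections of $\cL'\boxtimes\cL'$ vanishing on $\Gamma_{\s'}$,'' while you phrase it as the inclusion $R\subseteq W$ from \cref{prop.cor5.13.in.2} plus the dimension count $\dim R=\dim W=k(2k+1)$ — but both rest on exactly the same inputs (the dimension computations of \cref{lem.2k+1.k}, degree-one generation of $B$, and the identification of $\ker(V\otimes V\to B_2)$ with $H^0(X^2,(\cL'\boxtimes\cL')(-\Gamma_{\s'}))$).
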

\begin{proof} 
As in \cref{lem.2k+1.k}, we write $\Psi=\Psi_{(2k+1)/k}$ and $\cL'=\cL'_{(2k+1)/k}$.

Because $\Psi$ is an isomorphism in degree two, $Q_{2k+1,k}(E,\tau)$ and  $B(S^kE,\s',\cL')$ have the same quadratic relations.  
Thus, the space of quadratic relations for $Q_{2k+1,k}(E,\tau)$ coincides with the kernel of the multiplication map 
$$
B_1 \otimes B_1 \, = \, H^0(S^kE \times S^kE,\cL' \boxtimes \cL')  \, \longrightarrow \, H^0(S^kE,\cL' \otimes (\s')^*\cL') \, = \, B_2.
$$ 
Let $X=S^kE$ and let $\G_{\s'} \subseteq X^2$ denote the graph of $\s'$.
If we apply the functor $(\cL' \boxtimes \cL') \otimes_{\cO_{X^2}} -$ to the exact sequence 
$0 \to \cO_{X^2}(-\G_{\s'}) \to \cO_{X^2} \to \cO_{\G_{\s'}} \to 0$ and take global sections it becomes clear that the above kernel is 
$H^0(X^2,(\cL' \boxtimes \cL')(-\G_{\s'}))$ which is the subspace of $H^0(S^kE \times S^kE,\cL' \boxtimes \cL')$ consisting of 
the sections that vanish on $\G_{\s'}$.

Since $\frac{2k+1}{k}=[3,2^{k-1}]$, \cref{prop.action.of.sigma.on.S^gE} tells us that $\s'(\!(x_1,\ldots,x_k)\!)=
(\!(x_1+\tau,\ldots,x_k+\tau)\!)$.
\end{proof}

\section{The  rings $B(E^g,\s, \cL_{n/k})$}
\label{se.ep}

In this and the next section we assume that the following equivalent conditions hold:
\begin{enumerate}
  \item 
  $X_{n/k}=E^g$;
  \item 
  $\cL_{n/k}$ is very ample;
  \item 
  all the $n_i$'s in $[n_1,\ldots,n_g]=\frac{n}{k}$ are $\ge 3$.
\end{enumerate}

Let $\s:E^g \to E^g$ be an arbitrary translation automorphism. 

In  this section we show that $ B(E^g,\s,\cL_{n/k})$  is generated in degree one.

In  \cref{se.rel.ep}, we show  that the ideal of relations for $B(E^g,\s,\cL_{n/k})$   is generated by elements of degree  $\le 3$.
Finally, we apply this to the particular $\s$ relevant to $Q_{n,k}(E,\tau)$.

\subsection{The main result in this section}\label{ssec.prod.gen.deg.one}
The fact that $B(E^g,\s,\cL_{n/k})$ is generated in degree one will follow from \cref{pr.l'l''}, the proof of which  occupies most of 
this section.

\begin{proposition}\label{pr.l'l''}
Suppose all the $n_i$'s in $[n_1,\ldots,n_g]=\frac{n}{k}$ are $\ge 3$.
  If $\cL'$ and $\cL''$ are tensor products of translates of $\cL_{n/k}$, then the multiplication map
  \begin{equation}\label{eq:l'l''}
    H^0(E^g,\cL')\otimes H^0(E^g,\cL'') \; \longrightarrow \;  H^0(E^g,\cL'\otimes\cL'')
  \end{equation}
 is onto.    
\end{proposition}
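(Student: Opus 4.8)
We may assume $\cL'=\bigotimes_{i=1}^{r}T_{a_i}^{*}\cL_{n/k}$ and $\cL''=\bigotimes_{i=1}^{s}T_{b_i}^{*}\cL_{n/k}$ with $r,s\ge 1$; the cases $r=0$ or $s=0$ are trivial, as then \cref{eq:l'l''} is an isomorphism. Each of $\cL'$, $\cL''$, $\cL'\otimes\cL''$ is an ample line bundle on the abelian variety $E^{g}$, hence has no cohomology in positive degrees. The plan is to induct on $g$ and, for the inductive step, to use the projection $p\colon E^{g}\to E$, $(z_1,\dots,z_g)\mapsto z_1$, to reduce \cref{eq:l'l''} to a surjectivity statement for a multiplication map on the elliptic curve $E$ that is covered by \cref{le.otimes}.

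For $g=1$ we have $n_1=n\ge 3$ and $\cL_{n/k}=\cO_{E}(n(0))$, so $\cL'$ and $\cL''$ are globally generated (hence semistable) line bundles on $E$ of degrees $rn,sn\ge 3$ with $\tfrac1{rn}+\tfrac1{sn}<1$, and \cref{le.otimes} applies. For $g\ge 2$, inspecting the divisor $D_{n/k}$ of \cref{eq:d} shows that, for every $c\in E$, the restriction of $\cL_{n/k}$ to the fibre $p^{-1}(c)\cong E^{g-1}$ is a translate of $\cL_{n'/k'}$ with $\tfrac{n'}{k'}=[n_2,\dots,n_g]$, all of whose entries are still $\ge 3$; consequently the restriction of a tensor product of $r$ translates of $\cL_{n/k}$ is a tensor product of $r$ translates of $\cL_{n'/k'}$ on $E^{g-1}$, to which the Proposition applies by the inductive hypothesis. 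These fibre restrictions are ample, so by \cref{2-char-var.prop} and cohomology-and-base-change $R^{i}p_{*}$ of $\cL'$, $\cL''$, $\cL'\otimes\cL''$ vanishes for $i\ge 1$, the sheaves $p_{*}\cL'$, $p_{*}\cL''$, $p_{*}(\cL'\otimes\cL'')$ are locally free on $E$, $H^{0}(E^{g},-)=H^{0}(E,p_{*}(-))$, and the fibres of these push-forwards are the corresponding $H^{0}$'s on $E^{g-1}$. Moreover $[\cL']-[p^{-1}(c)]$ equals $(r-1)[\cL_{n/k}]$ plus the class of a standard divisor of type $(n_1-1,n_2,\dots,n_g)$, which is ample by \cref{2-char-var.prop} (here $n_1-1\ge 2$); hence $H^{1}(E^{g},\cL'(-p^{-1}(c)))=0$, the restriction $H^{0}(E^{g},\cL')\to H^{0}(E^{g-1},\cL'|_{p^{-1}(c)})$ is onto, and $p_{*}\cL'$ is generated by its global sections (likewise $p_{*}\cL''$).

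Write \cref{eq:l'l''} as the composite
$$
H^{0}(E,p_{*}\cL')\otimes H^{0}(E,p_{*}\cL'') \,\longrightarrow\, H^{0}(E,p_{*}\cL'\otimes p_{*}\cL'') \,\longrightarrow\, H^{0}(E,p_{*}(\cL'\otimes\cL'')).
$$
For the second arrow I will show that the canonical sheaf map $p_{*}\cL'\otimes p_{*}\cL''\to p_{*}(\cL'\otimes\cL'')$ is a split epimorphism, the analogue of \cref{split-epi}: it is an epimorphism because on each fibre it is the multiplication $H^{0}(\cL'|_{p^{-1}(c)})\otimes H^{0}(\cL''|_{p^{-1}(c)})\to H^{0}((\cL'\otimes\cL'')|_{p^{-1}(c)})$, which is onto by the inductive hypothesis on $E^{g-1}$, and the splitting will come from an explicit description of $p_{*}$ of a tensor product of translates of $\cL_{n/k}$ in terms of the box-product and Poincar\'e-bundle parts of $D_{n/k}$. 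For the first arrow I apply \cref{le.otimes} to $\cU=p_{*}\cL'$ and $\cV=p_{*}\cL''$: both are globally generated, and $\rk(p_{*}\cL')=r^{\,g-1}n'$ (the rank of a fibre restriction) while $\deg(p_{*}\cL')=\chi(E^{g},\cL')=h^{0}(E^{g},\cL')=r^{\,g}n$, so $\mu(p_{*}\cL')=r\cdot\tfrac{n}{n'}=r\cdot\tfrac nk$ (using $n'=k_1=k$, cf.\ \cref{eq.nk.sp}); since $\tfrac nk=[n_1,\dots,n_g]>2$ for $g\ge 2$ with all $n_i\ge 3$, the hypothesis $\mu(p_{*}\cL')^{-1}+\mu(p_{*}\cL'')^{-1}\le\tfrac2{n/k}<1$ of \cref{le.otimes} holds.

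The two outstanding points — and the real obstacles — are the semistability of $p_{*}\cL'$ and the splitting of the relative multiplication map. Semistability reduces, via \cref{le.tens-ss}, \cref{lem.indec.summands.semistable}, and the remark that translating in the $z_1$-direction pulls $p_{*}\cL_{n/k}$ back along an automorphism of $E$ while translating in the fibre directions twists it by a line bundle pulled back from $E$, to the semistability of $p_{*}\cL_{n/k}$ itself; since this sheaf has rank $k$, degree $n$, and $\gcd(n,k)=1$, \cref{le.tu} reduces this further to its indecomposability. I expect to prove indecomposability either directly — from the coprimality together with the positivity established above — or by identifying $p_{*}\cL_{n/k}$ with (a twist of) a Fourier--Mukai transform built from the Poincar\'e bundle appearing in $\cL_{n/k}$, such transforms carrying line bundles of positive degree to stable bundles. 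Granting these two facts about the fibration $E^{g}\to E$, the induction closes; I anticipate that establishing them — especially the indecomposability of $p_{*}\cL_{n/k}$ — is where essentially all the work lies, the remainder being bookkeeping with the surjectivity criteria of \S\ref{sect.semistable} in the spirit of \cref{se.S^gE} and \cref{se.rel}.
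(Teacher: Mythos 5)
Your overall architecture is the same as the paper's (induct on $g$, push forward along a coordinate projection to $E$, compute the slope via Riemann--Roch as in \cref{le.add}, and invoke \cref{le.otimes}), and your slope and global-generation computations are sound. But the two points you defer are exactly where the content of the proof lies, and your sketches for them do not work as stated. First, indecomposability of $p_*\cL_{n/k}$ does \emph{not} follow ``directly from the coprimality together with the positivity'': $\cO_E((0))\oplus\cO_E(2(0))$ has coprime rank and degree and positive slope but is decomposable. Coprimality upgrades indecomposable to stable (\cref{le.tu}); it cannot produce indecomposability. Your alternative route is essentially what the paper does in \cref{le.push}: writing $\cL\cong\cM\otimes\pi^*\cF$ using the Poincar\'e-bundle part of $D_{n/k}$, one identifies $\pi_*\cM$ with $T_\cO\left(\theta^*\rho_*\cO(D'_L)\right)[-1]$ for the spherical twist $T_\cO$ of \cref{re.to}, which is an autoequivalence of ${\sf D}^b(\coh(E))$, so indecomposability follows by induction on $g$. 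That identification is a real argument, not bookkeeping, and it is missing from your proposal.

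Second, your reduction of the semistability of $p_*\cL'$ to that of $p_*\cL_{n/k}$ via \cref{le.tens-ss} is broken, because $p_*$ does not commute with tensor products: $p_*(\cL_1\otimes\cdots\otimes\cL_r)$ is not $p_*\cL_1\otimes\cdots\otimes p_*\cL_r$, so semistability of the individual pushforwards says nothing directly about the pushforward of the product. The paper instead makes the semistability of $\pi_*(\cL_0\otimes\cdots\otimes\cL_i)$ part of the inductive statement (\cref{le.pm}, induction on $g+m$): once the relative multiplication map $\pi_*\cL\otimes\pi_*\cL^{\otimes m}\to\pi_*\cL^{\otimes(m+1)}$ is shown to be an epimorphism of sheaves (fibrewise, by the inductive hypothesis on $E^{g-1}$) with domain semistable and \emph{equal slopes} on both sides (\cref{le.add}), the target is semistable as a quotient of the same slope, and the kernel is then semistable of positive degree, so $H^1$ of it vanishes and surjectivity on global sections follows. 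This last point also shows that your proposed splitting of $p_*\cL'\otimes p_*\cL''\to p_*(\cL'\otimes\cL'')$ is unnecessary (and there is no reason to expect one here: \cref{split-epi} splits because of a functorial symmetrization in characteristic zero, which has no analogue for these bundles); the $H^1=0$ argument replaces it. In short: the skeleton is right, but the indecomposability of the pushforward and the semistability of pushforwards of tensor products each require an argument you have not supplied, and the one reduction you do propose for the latter is invalid.
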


Our strategy for  proving \cref{pr.l'l''} is similar to that  used to prove \Cref{le.aux}\cref{psb.mult.onto}.

\subsection{Notation}
Most of the notation in this section and the next is the same as that in \cref{sect.Xn/k}
though we make some simplifications and introduce some new notation as follows:
\begin{enumerate}
  \item 
  $X=E^g$;
    \item
  $\pi:E^g\to E$ is the projection $\pi(z_1,\ldots,z _g)=z _g$;
    \item
  $\pi':E^g\to E^{g-1}$ is the projection $\pi(z_1,\ldots,z _g)=(z_{1},\ldots,z_{g-1})$;
 \item
 $\rho:E^{g-1} \to E$ is the projection $\pi(z_1,\ldots,z _{g-1})=z _{g-1}$;
  \item 
  $\cL=\cL_{n/k}$;
  \item
  $D=D_{n/k}$ defined in \cref{eq:d}; thus $\cL=\cO_{E^g}(D)$;
 \item
 $\s:E^g \to E^g$ is an arbitrary translation automorphism;
\end{enumerate}

\subsection{Preliminary results}

\begin{proposition}
\label{prop.grauert}
Let $k'$ be the unique integer such that $n>k'\ge 1$ and $kk'=1\ (\mathrm{mod}\ n)$.
  \begin{enumerate}
  \item\label{item.grauert.loc.free}
  $\pi_*\cL$ is a locally free $\cO_E$-module of  degree $n$ and rank $k'$.
  \item\label{item.grauert.slope}
$\mu(\pi_*\cL)=n/k'$. 
  \item\label{item.grauert.cohom}
  $H^1(E,\pi_*\cL)=0$. 
\end{enumerate}
\end{proposition}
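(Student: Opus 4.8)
The plan is to compute $\pi_*\cL$ (where $\cL:=\cL_{n/k}$) fibrewise and feed the result into Grauert's theorem. We may assume $g\ge 2$, since for $g=1$ the map $\pi$ is the identity and $\pi_*\cL=\cL=\cO_E(n(0))$ by \eqref{defn.Lnk} and \eqref{defn.Di}, so all three claims are immediate (note $k'=1$ when $k=1$).

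The heart of the argument is the identification of the restriction of $\cL$ to a fibre $F_u:=\pi^{-1}(u)=E^{g-1}\times\{u\}$, which I will regard as $E^{g-1}$ via the first $g-1$ coordinates. I would restrict the divisor $D_{n/k}$ of \eqref{eq:d} component by component. The term $E^{g-1}\times D_g$ equals $\pi^{*}D_g$, so its associated line bundle restricts trivially to $F_u$ (for every $u$, including $u=0$, where one must restrict the line bundle rather than the divisor). For $i\le g-1$ the term $E^{i-1}\times D_i\times E^{g-i}$ meets $F_u$ properly and restricts to $E^{i-1}\times D_i\times E^{g-1-i}$; for $j\le g-2$ the diagonal $\Delta_{j,j+1}$ restricts to $\Delta_{j,j+1}$; and $\Delta_{g-1,g}=\{z_{g-1}=z_g\}$ restricts to $\{z_{g-1}=u\}=E^{g-2}\times(u)$. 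Hence
\[
\cL|_{F_u}\;\cong\;\cO_{E^{g-1}}\!\Big(\sum_{i=1}^{g-2}E^{i-1}\times D_i\times E^{g-1-i}+E^{g-2}\times\big(D_{g-1}+(u)\big)+\sum_{j=1}^{g-2}\Delta_{j,j+1}\Big),
\]
and, using \eqref{defn.Di} to see that the divisors $D_i$ ($i\le g-2$) and $D_{g-1}+(u)$ on $E$ have exactly the degrees required, this exhibits $\cL|_{F_u}$ as $\cO_{E^{g-1}}$ of a standard divisor of type $(n_1,\dots,n_{g-1})$ (all of whose entries are $\ge 3$).

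Granting this, \cref{2-char-var.prop} applied on $E^{g-1}$ gives $H^q(F_u,\cL|_{F_u})=0$ for $q\ge 1$ and $\dim_\CC H^0(F_u,\cL|_{F_u})=d(n_1,\dots,n_{g-1})$, independently of $u$. By Grauert's theorem \cite[Cor.~III.12.9]{Hart} it follows that $R^q\pi_*\cL=0$ for all $q\ge1$ and that $\pi_*\cL$ is locally free of rank $d(n_1,\dots,n_{g-1})$. To see this rank equals $k'$, observe that a tridiagonal determinant is unchanged by reversing the order of its diagonal entries (conjugate by the order-reversing permutation matrix), so $d(n_1,\dots,n_{g-1})=d(n_{g-1},\dots,n_1)=l_g$, which is $k'$ by \eqref{eq.nk.sp}. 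For the degree, the vanishing $R^{\ge1}\pi_*\cL=0$ makes the Leray spectral sequence degenerate, so $H^q(E^g,\cL)\cong H^q(E,\pi_*\cL)$ for all $q$; since $\dim_\CC H^0(E^g,\cL)=n$ and $H^{\ge1}(E^g,\cL)=0$ by \cref{2-char-var.prop}, Riemann--Roch on the elliptic curve $E$ gives $\deg(\pi_*\cL)=\chi(E,\pi_*\cL)=\chi(E^g,\cL)=n$. This proves \cref{prop.grauert}\cref{item.grauert.loc.free}; part \cref{item.grauert.slope} follows by dividing degree by rank, and part \cref{item.grauert.cohom} is the instance $q=1$ of $H^q(E^g,\cL)\cong H^q(E,\pi_*\cL)$ together with $H^1(E^g,\cL)=0$.

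The only genuinely non-formal step is the fibrewise computation: one has to track carefully how $\Delta_{g-1,g}$ and the term $E^{g-1}\times D_g$ behave under restriction and confirm that, over \emph{every} $u\in E$, the outcome is $\cO$ of an honest standard divisor of type $(n_1,\dots,n_{g-1})$, and then match the numerical answer $d(n_1,\dots,n_{g-1})$ with $k'=l_g$. Alternatively one could extract the structure of $\pi_*\cL_{n/k}$ from the description of $\cL_{n/k}$ in \cite{CKS2}, but the self-contained route above seems the shortest.
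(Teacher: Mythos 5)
Your proof is correct and follows essentially the same route as the paper's: restrict $\cL$ to a fibre of $\pi$, identify the restriction as a standard divisor of type $(n_1,\ldots,n_{g-1})$, and apply Grauert's theorem together with \cref{2-char-var.prop} and Riemann--Roch on $E$. You are in fact more explicit than the paper on the two points it leaves implicit, namely the component-by-component restriction of $D_{n/k}$ (including the pullback term $E^{g-1}\times D_g$) and the identification $d(n_1,\ldots,n_{g-1})=l_g=k'$ via the reversal symmetry of the tridiagonal determinant.
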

\begin{proof} 
For all $z \in E$, the restriction, $\cL_z$, of $\cL$ to $X_z=\pi^{-1}(z)$
 is a standard divisor of type $(n_{1},\ldots,n_{g-1})$ (see \cref{sect.Xn/k}), which is very ample because all the $n_i$'s are $\ge 3$.
The dimension of $H^0(\cL_z)$ is $d(n_1,\ldots,n_{g-1})$ (see \cref{sssect.sigma})
and $H^q(\cL_{[n_1,\ldots,n_{g-1}]})=0$ for all $q \ge 1$.
Since the dimension of $H^0(X_z,\cL_z)$ is independent of $z$, 
Grauert's Theorem \cite[Cor.~III.12.9]{Hart}
tells us that $\pi_*\cL$ is locally free of rank $d(n_1,\ldots,n_{g-1})$. 
The higher cohomology groups of $\cL_z$ are zero so, by Grauert's Theorem again,
the higher cohomology groups of $\pi_* \cL $ are also zero.

Since $H^1(E,\pi_*\cL)=0$, the dimension of $H^0(E,\pi_* \cL)\cong H^{0}(X,\cL)$ equals the degree of $\pi_*\cL$. Hence
\begin{equation*}
  \mu(\pi_*\cL) \;=\; \frac{d(n_1,\ldots,n_g)}{d(n_1,\ldots,n_{g-1})} \;= \; [n_g,\ldots,n_1] \; =\; \frac{n}{k'}.
\end{equation*}

The proof is complete.
\end{proof}

\begin{lemma}\label{le.push}
For all $x \in E^g$, $\pi_*T_x^*\cL$ is indecomposable of slope $>2$.
\end{lemma}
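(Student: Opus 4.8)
The plan is to reduce to the case $x=0$ by base change, read off the numerical invariants from \cref{prop.grauert}, and then prove indecomposability by combining the coprime‐rank‐and‐degree criterion on $E$ with the Heisenberg symmetry of $\cL_{n/k}$. Since $\pi(z_1,\dots,z_g)=z_g$ is a group homomorphism, $\pi\circ T_x=T_{x_g}\circ\pi$, where $x_g$ is the last coordinate of $x$; as $T_x$ and $T_{x_g}$ are isomorphisms this square is cartesian, so base change gives $\pi_*T_x^*\cL\cong T_{x_g}^*(\pi_*\cL)$. Because $T_{x_g}^*$ is an autoequivalence of $\coh(E)$ preserving rank, degree and indecomposability, it suffices to treat $\pi_*\cL$. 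By \cref{prop.grauert} it is locally free of rank $k'$ and degree $n$, with $H^1(E,\pi_*\cL)=0$ and $\mu(\pi_*\cL)=n/k'=[n_g,\dots,n_1]$, and $\gcd(n,k')=1$ since $kk'\equiv1\pmod n$. Since a negative continued fraction $[a_1,\dots,a_m]$ with every $a_i\ge2$ exceeds $1$ (induction on $m$), for $g\ge2$ we get $n/k'=n_g-[n_{g-1},\dots,n_1]^{-1}>n_g-1\ge2$, while for $g=1$ one has $n/k'=n_1=n\ge3$; in all cases $\mu(\pi_*\cL)>2$.

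For indecomposability, since $\gcd(n,k')=1$ a semistable $\cO_E$-module of rank $k'$ and degree $n$ is stable, hence simple, hence indecomposable (the converse direction of \cref{le.tu}), so it is enough to show $\pi_*\cL$ is semistable. Two inputs are needed. \emph{Generic global generation:} tensoring $\cL_{n/k}$ by $\pi^*\cO_E(-(0))$ replaces $n_g$ by $n_g-1\ge2$, so $\cL_{n/k}\otimes\pi^*\cO_E(-(0))\cong\cO_{E^g}(D')$ for a standard divisor $D'$ of a type with all entries $\ge2$; by \cref{2-char-var.prop}\ref{item:5} its higher cohomology vanishes, equivalently $H^1(E,\pi_*\cL\otimes\cO_E(-(0)))=0$ after Leray and the projection formula (using $R^{\ge1}\pi_*\cL=0$). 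By semicontinuity $H^1(E,\pi_*\cL\otimes P)=0$ for generic $P\in\pic^0(E)$, so $H^0(E^g,\cL)\to H^0(\pi^{-1}(z),\cL|_{\pi^{-1}(z)})$ is onto for generic $z$, i.e.\ the evaluation $H^0(E,\pi_*\cL)\otimes\cO_E\to\pi_*\cL$ has image of full rank $k'$. \emph{Heisenberg symmetry:} $H^0(E,\pi_*\cL)=H^0(E^g,\cL_{n/k})$ (Leray again) is an irreducible module over the Heisenberg group $H_n$ (the theta group of $\cL_{n/k}$), whose elements over a point $v$ of the finite subgroup $K(\cL_{n/k})\subseteq E^g$ come from isomorphisms $T_v^*\cL_{n/k}\cong\cL_{n/k}$; pushing such an isomorphism forward along $\pi$ and using $\pi\circ T_v=T_{v_g}\circ\pi$ yields $\lambda_v\colon\pi_*\cL\xrightarrow{\ \sim\ }T_{v_g}^*(\pi_*\cL)$, and on global sections these, together with scalars, recover the $H_n$-action.

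With these in hand I would argue by contradiction: suppose $\pi_*\cL$ is not semistable and let $\cE_1\subsetneq\pi_*\cL$ be the first step of its Harder--Narasimhan filtration, so $\cE_1$ is semistable of slope $\mu(\cE_1)>\mu(\pi_*\cL)>2$, hence (\cref{le.semist.pos.deg}) globally generated with $H^1(\cE_1)=0$, $\deg\cE_1=h^0(\cE_1)>0$, and $h^0(\cE_1)\le h^0(\pi_*\cL)=n$. If $\deg\cE_1<n$, then $H^0(\cE_1)$ is a proper nonzero subspace of $H^0(\pi_*\cL)$; since $\cE_1$ is the canonical maximal destabilizer, each $\lambda_v$ carries it to $T_{v_g}^*\cE_1$, so $H^0(\cE_1)$ is $H_n$-stable, contradicting irreducibility. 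If $\deg\cE_1=n$, then $H^0(\cE_1)=H^0(\pi_*\cL)$, so the evaluation map of $\pi_*\cL$ factors through $\cE_1$, forcing $k'=\rk(\mathrm{image})\le\rk\cE_1<k'$, again a contradiction. Hence $\pi_*\cL$ is semistable, therefore stable and indecomposable, of slope $n/k'>2$, and so is $T_{x_g}^*(\pi_*\cL)\cong\pi_*T_x^*\cL$.

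The routine parts are the base change and the continued‐fraction estimate; the real obstacle is the indecomposability, and within it the verification that the translation isomorphisms $\lambda_v$ push down to exactly the $H_n$-action on $H^0$ (so that canonicity of the Harder--Narasimhan filtration genuinely produces an $H_n$-submodule of sections), which requires some care with the equivariant structures on $\cL_{n/k}$ and $\cO_{E^g}$. A heavier alternative that bypasses this bookkeeping is to invoke the general fact that the direct image of an ample line bundle under a surjective homomorphism of abelian varieties is semistable.
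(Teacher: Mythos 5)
Your proof is correct in substance, but it reaches indecomposability by a genuinely different route from the paper. You share the reduction $\pi_*T_x^*\cL\cong T_{x_g}^*\pi_*\cL$ and the slope computation $\mu(\pi_*\cL)=n/k'=[n_g,\ldots,n_1]>2$ with \cref{prop.grauert}, but then you prove the \emph{stronger} statement that $\pi_*\cL$ is semistable (hence stable, since $\gcd(n,k')=1$, hence indecomposable), using the theta-group/Heisenberg symmetry of $\cL_{n/k}$: the maximal destabilizer is canonical, so its space of sections would be a nonzero proper $H_n$-submodule of the irreducible module $H^0(E^g,\cL_{n/k})$, and your two cases $\deg\cE_1<n$ and $\deg\cE_1=n$ (the case $\deg\cE_1>n$ being excluded by $h^0(\cE_1)\le n$, as you note) do cover everything. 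The paper instead argues by induction on $g$: it rewrites $\cL$ as $\cM\otimes\pi^*\cF$ using the alternative divisor $D'_{[n_1,\ldots,n_g]}$, identifies $\pi_*\cM$ with the kernel of the canonical surjection $H^0(E^{g-1},\cO(D'_L))\otimes\cO_E\twoheadrightarrow\theta^*\rho_*\cO(D'_L)$, i.e.\ with $T_\cO(\theta^*\rho_*\cO(D'_L))[-1]$ for the spherical-twist autoequivalence of \cref{re.to}, and deduces indecomposability of $\pi_*\cL$ on $E^g$ from that of $\rho_*\cL_{[n_1,\ldots,n_{g-1}+1]}$ on $E^{g-1}$; stability is then only extracted afterwards in \cref{cor.push} via \cref{le.indec} and coprimality. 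Your approach buys a direct proof of stability (subsuming both \cref{le.push} and \cref{cor.push}) and avoids the induction and the somewhat delicate kernel identification, at the price of importing Mumford's irreducibility theorem for the theta group and of the one step you rightly flag: checking that the $H_n$-action on $\Theta_{n/k}(\L)$ from [CKS2, \S5.1.1] really is the theta-group action by translation-isomorphisms $T_v^*\cL_{n/k}\cong\cL_{n/k}$, so that it pushes forward along $\pi$ compatibly with the Harder--Narasimhan filtration; this is true by the construction of the $w_\alpha$'s but does require the equivariance bookkeeping you describe. The paper's argument is more self-contained and stays entirely within the Atiyah/semistability toolkit already set up in \cref{sect.semistable}; your argument is shorter once the Heisenberg input is granted and generalizes more readily (e.g.\ to the direct image under any coordinate projection, or via the semihomogeneity fact you mention). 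Your auxiliary step on generic global generation (twisting by $\pi^*\cO_E(-(0))$ to land on a standard divisor of type $(n_1,\ldots,n_{g-1},n_g-1)$ and applying \cref{2-char-var.prop}) is also sound.
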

\begin{proof}
Since $\pi_*T_x^*\cL=T_{x_{g}}^{*}\pi_{*}\cL$, we can assume $x=0$. The slope of $\pi_*\cL$ is $n/k'$ by \cref{prop.grauert}.
Using the fact that all $n_i$'s are $\ge 3$, an induction argument on $g$ shows that $n/k'>2$.

We prove that $\pi_*\cL$ is indecomposable by induction on $g$.
The case $g=1$ is trivial so  we  assume that $g\ge 2$ and that the result is true for $g-1$. The induction hypothesis will
be applied to the left-hand factor in $E^{g-1} \times E$ and the sheaf $\cL_{[n_1,\ldots,n_{g-1}+1]}$.

In \cite[\S 3.1.3]{CKS2}, we observed that $D=D_{n/k}$ is linearly equivalent to
\begin{equation*}
	D'_{[n_{1},\ldots,n_{g}]}\;=\;\sum_{i=1}^{g}E^{i-1}\times\fd'_{i}\times E^{g-i}-\sum_{j=1}^{g-1}\Delta'_{j,j+1}
\end{equation*}
where $\fd'_{i}=(n_{i}+2-\delta_{i,1}-\delta_{i,g})(0)$ and
\begin{equation*}
	\Delta'_{j,j+1}\;=\;\{(z_{1},\ldots,z_{g})\in E^{g}\;|\;z_{j}+z_{j+1}=0\}.
\end{equation*}
Thus $\cL\cong\cO_{E^{g}}(D'_{[n_{1},\ldots,n_{g}]})$. The rest of the proof uses the divisor $D'_{[n_{1},\ldots,n_{g}]}$ rather than $D_{n/k}$. Let 
\begin{equation*}
	D''\;=\;D'_{[n_{1},\ldots,n_{g}]}-E^{g-1}\times\fd'_{g}.
\end{equation*}

Corresponding to the factorization $E^g=E^{g-1} \times E$ and the decomposition $D=D'' +(E^{g-1}\times\fd'_{g})$, 
 $$
 \cL \; \cong \; \cM \otimes \pi^*\cF
 $$ 
 where $\cM:=\cO_{E^g}(D'')$ and $\cF :=\cO_E(\fd'_{g})$.  
By the projection formula,
\begin{equation}\label{eq:lmf}
  \pi_*\cL \; \cong \; \pi_*\cM\otimes \cF. 
\end{equation}
Since $\cF$ is an invertible $\cO_E$-module, $\pi_*\cL$ is indecomposable if and only if $\pi_*\cM$ is. We will
show that $\pi_*\cM$ is indecomposable.

By Grauert's Theorem \cite[Cor.~III.12.9]{Hart}, $\pi_*\cM$ is the locally free $\cO_E$-module whose fiber at $z\in E$ is 
\begin{equation*}
(\pi_*\cM)_z \; \cong \;   H^0\big(E^{g-1},\cO\big(D'_{L}- E^{g-2}\times\{-z\}\big)\big),
\end{equation*}
where $D'_{L}:=D'_{[n_{1},\ldots,n_{g-1}+1]}$ and $E^{g-1}$ is identified with $E^{g-1}\times\{z\}\subseteq E^g$.
The inclusion $  \cO_{E^{g-1}}(D'_{L}-E^{g-2}\times\{-z\})\subseteq \cO_{E^{g-1}}(D'_{L})$ gives rise to an inclusion 
\begin{equation*}
(\pi_*\cM)_z \; \cong \;  H^0\big(E^{g-1},\cO\big(D'_{L}-E^{g-2}\times\{-z\}\big)\big) \; \subseteq \; H^0(E^{g-1},\cO(D'_{L}))
\end{equation*}
at the level of global sections.  
This inclusion is the $z$-fiber of a monomorphism
\begin{equation}\label{eq:emb} 
  \pi_*\cM    \; \longrightarrow \; H^0(E^{g-1},\cO(D'_{L}))\otimes \cO_{E}
\end{equation}
between locally free $\cO_E$-modules. 
The cokernel of \Cref{eq:emb} is $\theta^*\rho_*\cO_{E^{g-1}}(D'_{L})$, where $\theta$ is the involution $z\mapsto -z$ on $E$.
After tracing through the identification above, the resulting map
\begin{align*}
  H^0(E^{g-1},\cO(D'_{L}))\otimes \cO_{E} 
  &\; \cong \; H^0(E,\rho_*\cO(D'_{L}))\otimes \cO_{E}\\
  &\; \cong \; H^0(E,\theta^*\rho_*\cO(D'_{L}))\otimes \cO_{E}\\
  &\;  \to \; \theta^*\rho_*\cO(D'_{L})
\end{align*}
is the canonical surjection exhibiting the right hand side as a locally free sheaf that is generated by its global sections.

This means that $\pi_*\cM$ can be identified with the shift $T_\cO\left(\theta^*\rho_*\cO(D'_{L})\right)[-1]$, where $T_\cO$ is the autoequivalence of the bounded derived category ${\sf D}^b(\coh(E))$ from \Cref{re.to}. By the induction hypothesis,
 $\theta^*\rho_*\cO(D'_{L})\cong\theta^*\rho_*\cL_{[n_{1},\ldots,n_{g-1}+1]}$ is indecomposable, so  $\pi_*\cM$ is also indecomposable. 
\end{proof}

\subsubsection{Remark}
	The exact sequence
	\begin{equation*}
		0\longrightarrow\pi_{*}\cM\longrightarrow H^{0}(E^{g-1},\cO(D'_{L}))\otimes\cO_{E}\longrightarrow\theta^{*}\rho_{*}(\cO(D'_{L}))\longrightarrow 0
	\end{equation*}
	that appeared in the proof of \cref{le.push} can  be obtained in another way.
	First, for simplicity, write  $\Delta'$  for $\Delta'_{g-1,g}$, and let $j\colon\Delta'\to X$ be the inclusion. 
	After applying the functor $\cO_{X}(D'_{L}\times E)\otimes -$ to
	\begin{equation*}
		0\to\cO_{X}(-\Delta')\to\cO_{X}\to j_{*}\cO_{\Delta'}\to 0,
	\end{equation*}
	we obtain
	\begin{equation}\label{eq.rem.cm}
		0\to\cM\to\cO_{X}(D'_{L}\times E)\to (j_{*}\cO_{\Delta'})(D'_{L}\times E)\to 0.
	\end{equation}
	Here $\cO_{X}(D'_{L}\times E)=\cO_{E^{g-1}}(D'_{L})\boxtimes\cO_{E}$ and
	\begin{align*}
		(j_{*}\cO_{\Delta'})(D'_{L}\times E) &\; \cong \;   (j_{*}\cO_{\Delta'})\otimes\cO_{X}(D'_{L}\times E)
		\\
		&\; \cong \;  j_{*}(\cO_{\Delta'}\otimes j^{*}\cO_{X}(D'_{L}\times E))
		\\
		&\; \cong \;   j_{*}j^{*}\pi'^{*}(\cO_{E^{g-1}}(D'_{L}))
	\end{align*}
	by the Projection Formula. Grauert's theorem shows that the fiber of $R^{1}\pi_{*}(\cM)$ at $z\in E$ is isomorphic to
	\begin{equation*}
		H^{1}(X_{z},\cM_{z})\cong H^{1}(E^{g-1},\cO_{E^{g-1}}(D'_{L}-E^{g-2}\times\{-z\})),
	\end{equation*}
	which is zero by \cref{2-char-var.prop}\cref{item:5}. Thus applying $\pi_{*}$ to \cref{eq.rem.cm} produces the exact sequence
	\begin{equation*}
		0\to\pi_{*}\cM\to H^{0}(E^{g-1},\cO_{E^{g-1}}(D'_{L}))\otimes\cO_{E}\to \pi_{*}j_{*}j^{*}\pi'^{*}(\cO_{E^{g-1}}(D'_{L}))\to 0.
	\end{equation*}
	Since $\pi_{*}j_{*}j^{*}\pi'^{*}=\theta^{*}\rho_{*}$, this is the desired exact sequence.

\begin{corollary}\label{cor.push}
For all $x \in E^g$, $\pi_*T_x^*\cL$  is stable of slope $>2$. 
\end{corollary}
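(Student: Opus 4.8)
The plan is to derive stability from the indecomposability already established in \cref{le.push}, using the numerical criterion in \cref{le.tu}. Recall that \cref{le.tu} says an indecomposable locally free $\cO_E$-module is stable precisely when its degree and rank are coprime. So it suffices to show that $\deg(\pi_*T_x^*\cL)$ and $\rk(\pi_*T_x^*\cL)$ are coprime. Since $\pi_*T_x^*\cL=T_{x_g}^*\pi_*\cL$, and a translation automorphism is an isomorphism, this does not change degree, rank, or indecomposability, so I may reduce to $x=0$ and work with $\pi_*\cL$ itself.

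Now I invoke \cref{prop.grauert}: there $\pi_*\cL$ is computed to have degree $n$ and rank $k'$, where $k'$ is the unique integer with $n>k'\ge 1$ and $kk'\equiv 1\ (\mathrm{mod}\ n)$. The congruence $kk'\equiv 1\ (\mathrm{mod}\ n)$ immediately forces $\gcd(n,k')=1$: any common divisor of $n$ and $k'$ would divide $kk'-1$ and hence $1$. Therefore $\deg(\pi_*\cL)=n$ and $\rk(\pi_*\cL)=k'$ are coprime, and \cref{le.tu} together with the indecomposability from \cref{le.push} shows $\pi_*\cL$ is stable. The slope assertion $\mu>2$ is already contained in \cref{le.push} (and is unaffected by the translation $T_{x_g}^*$, since translation preserves both degree and rank), so the corollary follows.

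There is essentially no obstacle here: the corollary is a packaging of \cref{le.push}, \cref{prop.grauert}, and \cref{le.tu}, the only genuinely new observation being the elementary coprimality of $n$ and $k'$ from the defining congruence. If anything, the one point to state carefully is that the reduction to $x=0$ is legitimate for all three properties (indecomposability, the degree/rank pair, and the slope) precisely because $T_{x_g}$ is an automorphism of $E$ and $\pi$ is equivariant in the appropriate sense, which is exactly the identity $\pi_*T_x^*\cL\cong T_{x_g}^*\pi_*\cL$ used in the proof of \cref{le.push}.

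Here is the proof.

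\begin{proof}
As in the proof of \cref{le.push}, $\pi_*T_x^*\cL\cong T_{x_g}^*\pi_*\cL$, and since $T_{x_g}$ is an automorphism of $E$ this sheaf has the same degree, rank, indecomposability, and slope as $\pi_*\cL$; so we may assume $x=0$. By \cref{le.push}, $\pi_*\cL$ is indecomposable of slope $>2$. By \cref{prop.grauert}\cref{item.grauert.loc.free}, $\deg(\pi_*\cL)=n$ and $\rk(\pi_*\cL)=k'$, where $n>k'\ge 1$ and $kk'\equiv 1\ (\mathrm{mod}\ n)$. Any common divisor of $n$ and $k'$ divides $kk'-1$, hence divides $1$; thus $\gcd(n,k')=1$. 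By \cref{le.tu}, an indecomposable locally free $\cO_E$-module whose degree and rank are coprime is stable, so $\pi_*\cL$ is stable. This proves the corollary.
\end{proof}
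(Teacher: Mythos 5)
Your proof is correct and follows exactly the paper's argument: indecomposability and slope $>2$ from \cref{le.push}, degree $n$ and rank $k'$ from \cref{prop.grauert}, coprimality from $kk'\equiv 1\pmod n$, and then stability via \cref{le.tu}. The explicit reduction to $x=0$ is a harmless bit of extra care that the paper leaves implicit.
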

\begin{proof}
By \Cref{le.push}, $\pi_*T_x^*\cL$ is indecomposable of slope $>2$. 
Stability now follows from \Cref{le.indec}, together with the observation made in the course of the proof of \cref{prop.grauert}
that the degree and rank of $\pi_*\cL$ are  $n$ and $k'$, respectively, which are coprime.  
\end{proof}

To prove \Cref{pr.l'l''} we must deal with tensor products of translates of $\cL$.
A first step is the  following observation.

\begin{lemma}\label{le.add}
If $\cL_1,\ldots,\cL_m$ are  translates  of $\cL$, then
 $   \mu\left(\pi_*\left(\cL_1\otimes\cdots\otimes \cL_m\right)\right) = m\mu\left(\pi_*\cL\right)$. 
\end{lemma}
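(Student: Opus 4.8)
\textbf{Proof proposal for \cref{le.add}.}

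The plan is to reduce everything to the rank-one case for the map $\pi\colon E^g\to E$ and then invoke the slope-additivity of tensor products of locally free sheaves on an elliptic curve, taking care of the fact that $\pi_*$ does not commute with $\otimes$ on the nose. First I would note that each $\cL_i$ is a translate of $\cL=\cL_{n/k}$, so by \cref{prop.grauert} and (the proof of) \cref{le.push} each $\pi_*\cL_i\cong T^*_{x_{i,g}}\pi_*\cL$ is locally free of rank $k'$ and degree $n$, hence of slope $n/k'=\mu(\pi_*\cL)$; translation by a point of $E$ preserves both rank and degree. So the right-hand side of the asserted equality is $m\,n/k'$, and it suffices to compute the left-hand side.

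The key point is that the sheaf $\cL_1\otimes\cdots\otimes\cL_m$ restricted to each fiber $X_z=\pi^{-1}(z)\cong E^{g-1}$ has the same cohomology dimensions as $\cL^{\otimes m}|_{X_z}$, which by \cref{2-char-var.prop}\cref{item:4} and \cref{item:5} (applied to the very ample standard divisor $mD_{[n_1,\ldots,n_{g-1}]}$ of the appropriate type, after the linear-equivalence rewriting of $D_{n/k}$ used in \cref{le.push}) is concentrated in degree zero and of constant dimension. Thus Grauert's theorem \cite[Cor.~III.12.9]{Hart} again applies: $\pi_*(\cL_1\otimes\cdots\otimes\cL_m)$ is locally free, its higher direct images vanish, so $H^1(E,\pi_*(\cL_1\otimes\cdots\otimes\cL_m))\cong H^1(E^g,\cL_1\otimes\cdots\otimes\cL_m)=0$, and therefore its degree equals $\dim_\Bbbk H^0(E^g,\cL_1\otimes\cdots\otimes\cL_m)$. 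Its rank equals the fiber dimension $d(mn_1-\cdots)$; more invariantly, the rank only depends on the Néron-Severi class of the fiberwise restriction, which is $m$ times that of $\cL|_{X_z}$. I would then compute the slope directly: writing $r_m$ and $e_m$ for the rank and degree of $\pi_*(\cL_1\otimes\cdots\otimes\cL_m)$, one has $\mu=e_m/r_m$, and both numerator and denominator can be expressed through the intersection-theoretic data of $[\cL]$ on $E^g$, giving $\mu = m\cdot(n/k')$ after the elementary continued-fraction identities from \cref{sssect.sigma} (cf.\ the computation $d(n_1,\ldots,n_g)/d(n_1,\ldots,n_{g-1})=[n_g,\ldots,n_1]=n/k'$ in \cref{prop.grauert}), now applied with each $n_i$ replaced so as to reflect the $m$-fold tensor power.

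Alternatively — and this is probably the cleaner route — I would avoid computing $r_m$ and $e_m$ from scratch and instead argue that $\cL_1\otimes\cdots\otimes\cL_m\cong\cM\otimes\pi^*\cF$ for a suitable invertible $\cO_E$-module $\cF$ (collecting the ``$E^{g-1}\times\fd_g$'' part exactly as in the proof of \cref{le.push}, now with multiplicities), so that by the projection formula $\pi_*(\cL_1\otimes\cdots\otimes\cL_m)\cong(\pi_*\cM)\otimes\cF$; since tensoring with an invertible sheaf changes the slope additively by $\deg\cF$, it remains only to track $\mu(\pi_*\cM)$ and $\deg\cF$ and check that their sum is $m(n/k')$. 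The main obstacle is bookkeeping: one must verify that the fiberwise Néron-Severi class of $\cL_1\otimes\cdots\otimes\cL_m$ is genuinely $m$ times that of $\cL$ (translates do not change Néron-Severi classes, so this is forced) and that the ``diagonal'' contributions $\Delta'_{j,j+1}$ scale correctly; once that is in place the slope identity is a one-line consequence of $\mu(\cV\otimes\cF)=\mu(\cV)+\deg\cF$ on $E$ and the already-established value $\mu(\pi_*\cL)=n/k'$.
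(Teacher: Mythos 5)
Your first route is, in outline, the same as the paper's proof: Grauert's theorem gives local freeness and vanishing of higher direct images, so $\deg\pi_*(\cL_1\otimes\cdots\otimes\cL_m)=h^0(E^g,\cL_1\otimes\cdots\otimes\cL_m)$ and the rank is $\dim H^0$ of the restriction to a fiber. But the step that actually produces the factor $m$ is the entire content of the lemma, and the one mechanism you offer for it does not work. The restriction of $\cL_1\otimes\cdots\otimes\cL_m$ to a fiber is linearly equivalent to $m$ times a standard divisor $D''$ of type $(n_1,\ldots,n_{g-1})$; it is \emph{not} a standard divisor of some new type obtained by ``replacing each $n_i$,'' because in a standard divisor the diagonal components $\Delta_{j,j+1}$ occur with multiplicity one, whereas in $mD''$ they occur with multiplicity $m$. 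Hence the dimension formula $\dim H^0=d(n_1,\ldots,n_{g-1})$ from \cref{2-char-var.prop} and \cref{prop.grauert}, and the continued-fraction identities of \cref{sssect.sigma}, cannot be applied to $mD''$. What the paper uses instead is the Riemann--Roch theorem for abelian varieties: $h^0(\cO(mD))=(mD)^g/g!=m^g\,h^0(\cO(D))$ on $E^g$, and $(mD'')^{g-1}/(g-1)!=m^{g-1}d(n_1,\ldots,n_{g-1})$ on the fiber $E^{g-1}$. This homogeneity (degree scales by $m^g$, rank by $m^{g-1}$, so slope by $m$) is exactly the missing step; your phrase ``intersection-theoretic data'' points in the right direction, but you never invoke the identity that delivers the factor $m$.

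Your ``cleaner'' second route is circular. Writing $\cL_1\otimes\cdots\otimes\cL_m\cong\cM_{\mathrm{big}}\otimes\pi^*\cF_{\mathrm{big}}$ and applying the projection formula reduces the problem to computing $\mu(\pi_*\cM_{\mathrm{big}})$, where $\cM_{\mathrm{big}}$ is itself a tensor product of $m$ translates of a fixed non-pullback sheaf $\cM$ on $E^g$. Knowing $\mu(\pi_*\cM)$ for a single factor does not determine $\mu(\pi_*\cM_{\mathrm{big}})$: since $\pi_*$ does not commute with tensor products, the assertion $\mu(\pi_*\cM_{\mathrm{big}})=m\,\mu(\pi_*\cM)$ is precisely an instance of the lemma you are trying to prove. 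Splitting off the pullback part only shifts the slope by $\deg\cF_{\mathrm{big}}$; it does not reduce the $m$-fold product to the single-factor case.
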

\begin{proof}
In order to lighten the notation we only address the case $\cL_i=\cL$ for all $i$. The general case is analogous.

Since  $\cL=\cO_{E^g}(D)$,  
\begin{equation*}
\deg\big(\pi_*\cL^{\otimes m}\big) \;=\;   h^0(\pi_*\cL^{\otimes m}) \;=\; h^0(\cL^{\otimes m}) \;=\; \frac {(mD)^g}{g!} \;= \; m^g \deg(\pi_*\cL), 
\end{equation*}
where the third equality follows from the Riemann-Roch theorem for abelian varieties \cite[III.16]{Mum08}.  

On the other hand, a similar argument to the proof of \Cref{le.push} shows that $\pi_*\cL^{\otimes m}$ is a locally free $\cO_E$-module whose fiber over $z\in E$ is
\begin{equation}\label{eq:h0}
  H^0\left(E^{g-1},\cO_{E^{g-1}}\left(m\sum_{i=1}^{g-1}E^{i-1}\times D_{i}\times E^{g-i-1}+mE^{g-2}\times\{z\}+m\sum_{j=1}^{g-2}\Delta_{j,j+1}\right)\right).
\end{equation}
Because $E^{g-2}\times D_{g-1}$ is the pullback of a degree-$(n_{g-1}-2+\delta_{g-1,1})$ divisor on the right-most factor $E$ of $E^{g-1}$, the parenthetic divisor in \Cref{eq:h0} is $mD''$ for a standard divisor $D''$ of type $(n_{1},\ldots,n_{g-1})$. By another application of the Riemann-Roch theorem, the dimension of the 
vector space in \Cref{eq:h0} is
\begin{equation*}
  \frac{(mD'')^{g-1}}{(g-1)!} \;=\; m^{g-1}d(n_1,\cdots,n_{g-1}) \;=\; m^{g-1}\mathrm{rank}(\pi_*\cL). 
\end{equation*}
In conclusion, lifting $\cL$ to the $m^{th}$ tensor power scales the degree of $\pi_*\cL$ by $m^g$ and its rank by $m^{g-1}$. In conclusion the slope scales by $m$, as claimed. 
\end{proof}

\subsubsection{Remark}
\Cref{le.add} says that the map $\mu(\pi_*-)$ is a morphism from the sub-semigroup of $\NS(E^g)$ generated by $\cL$ to $\QQ_{>0}$.

\subsection{Proof of  \Cref{pr.l'l''}}
We will  prove the following more precise version of  \Cref{pr.l'l''}. 

\begin{lemma}\label{le.pm}
If   $\cL_0, \ldots,\cL_m$ are translates of $\cL$, then each $\pi_*(\cL_0\otimes\cdots\otimes \cL_i)$ is semistable and the map
\begin{equation*}
  H^0(E^g,\cL_0)\otimes H^0(E^g,\cL_1\otimes\cdots\otimes \cL_m) \; \longrightarrow \;  H^0(E^g,\cL_0\otimes\cdots\otimes \cL_m)
\end{equation*}
is onto.   
\end{lemma}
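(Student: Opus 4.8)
The strategy will mirror the proofs of \Cref{le.aux}\cref{psb.mult.onto} and \Cref{pr.rel}: push forward along $\pi\colon E^g\to E$ and reduce the surjectivity statement to a surjectivity statement on the elliptic curve $E$, which is then handled by \Cref{le.otimes}. First I would set $\cF:=\cL_1\otimes\cdots\otimes\cL_m$ and note that $\cL_0$ and $\cF$ are tensor products of translates of $\cL=\cL_{n/k}$, so by the argument of \Cref{le.push} and \Cref{le.add} (together with \Cref{le.tens-ss}) each of $\pi_*\cL_0$, $\pi_*\cF$, and $\pi_*(\cL_0\otimes\cF)$ is a semistable locally free $\cO_E$-module, and the slopes satisfy $\mu(\pi_*\cL_0)=\mu(\pi_*\cL)=n/k'>2$ and $\mu(\pi_*\cF)=m\mu(\pi_*\cL)>2$ by \Cref{cor.push} and \Cref{le.add}. (The claimed semistability of every $\pi_*(\cL_0\otimes\cdots\otimes\cL_i)$ follows the same way.) Also, since $R^1\pi_*$ of each of these sheaves vanishes (Grauert, as in \Cref{le.push}), we have canonical identifications $H^0(E^g,-)\cong H^0(E,\pi_*-)$ for each of $\cL_0$, $\cF$, and $\cL_0\otimes\cF$.

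Next I would factor the multiplication map $H^0(E^g,\cL_0)\otimes H^0(E^g,\cF)\to H^0(E^g,\cL_0\otimes\cF)$ through $H^0(E,\pi_*\cL_0\otimes\pi_*\cF)$, exactly as in the proof of \Cref{pr.onto}\cref{psb.mult.onto}: the first leg
\begin{equation*}
  H^0(E,\pi_*\cL_0)\otimes H^0(E,\pi_*\cF)\;\longrightarrow\;H^0(E,\pi_*\cL_0\otimes\pi_*\cF)
\end{equation*}
is onto by \Cref{le.otimes}, using $\mu(\pi_*\cL_0)>2$, $\mu(\pi_*\cF)>2$, hence $\mu(\pi_*\cL_0)^{-1}+\mu(\pi_*\cF)^{-1}<1$, and using \Cref{le.semist.pos.deg}\cref{semist.pos.deg.slope} to see both sheaves are globally generated. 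The second leg $H^0(E,\pi_*\cL_0\otimes\pi_*\cF)\to H^0(E,\pi_*(\cL_0\otimes\cF))$ is onto provided the sheaf map $\pi_*\cL_0\otimes\pi_*\cF\to\pi_*(\cL_0\otimes\cF)$ is surjective.

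The main obstacle is therefore this last surjectivity: unlike the projective-bundle case, where \Cref{split-epi} gave a clean splitting from the comultiplication on symmetric powers, here I need to check that the natural map $\pi_*\cL_0\otimes\pi_*\cF\to\pi_*(\cL_0\otimes\cF)$ is an epimorphism of $\cO_E$-modules. I would argue fiberwise: the fiber of this map at $z\in E$ is (up to the base-change isomorphisms, which hold because the higher direct images vanish) the multiplication map $H^0(X_z,(\cL_0)_z)\otimes H^0(X_z,\cF_z)\to H^0(X_z,(\cL_0\otimes\cF)_z)$ on the fiber $X_z=E^{g-1}$, where $(\cL_0)_z$ and $\cF_z$ are tensor products of translates of $\cL_{[n_1,\ldots,n_{g-1}]}$ (this is the standard-divisor computation underlying \Cref{le.push} and \Cref{le.add}). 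Thus surjectivity on each fiber is precisely the statement of \Cref{pr.l'l''} for $E^{g-1}$ in place of $E^g$, so an induction on $g$ closes the loop, the base case $g=1$ being \Cref{le.otimes} directly (or trivial). By Nakayama's lemma, fiberwise surjectivity gives surjectivity of the sheaf map. Chaining the two onto maps then yields that $H^0(E^g,\cL_0)\otimes H^0(E^g,\cL_1\otimes\cdots\otimes\cL_m)\to H^0(E^g,\cL_0\otimes\cdots\otimes\cL_m)$ is onto, and \Cref{pr.l'l''} follows by taking $\cL'=\cL_0$ and writing $\cL''$ as a tensor product of translates of $\cL$, iterating if necessary. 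I expect the bookkeeping of which divisor class appears on each fiber, and the compatibility of the base-change isomorphisms with the multiplication maps, to be the only genuinely fiddly point.
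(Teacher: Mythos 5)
Your overall strategy is the paper's: identify $H^0(E^g,-)$ with $H^0(E,\pi_*(-))$, split the multiplication map into a leg $H^0(\pi_*\cL_0)\otimes H^0(\pi_*\cF)\to H^0(\pi_*\cL_0\otimes\pi_*\cF)$ handled by \Cref{le.otimes}/\Cref{thm.good.U.V}, and a leg coming from the sheaf map $\pi_*\cL_0\otimes\pi_*\cF\to\pi_*(\cL_0\otimes\cF)$, whose surjectivity is checked fiberwise by Grauert and an induction on $g$ (the paper strengthens the inductive statement to arbitrary standard divisors of type $(n_1,\ldots,n_{g-1})$ rather than invoking \Cref{pr.l'l''} on $E^{g-1}$, which amounts to the same bookkeeping you flag). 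However, there are two genuine gaps.

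First, the semistability of $\pi_*\cF=\pi_*(\cL_1\otimes\cdots\otimes\cL_m)$ does \emph{not} follow from ``the argument of \Cref{le.push} and \Cref{le.add} together with \Cref{le.tens-ss}'': \Cref{le.tens-ss} applies to tensor products of semistable sheaves, but $\pi_*(\cL_1\otimes\cdots\otimes\cL_m)$ is not a tensor product of the $\pi_*\cL_i$ --- that identification is exactly the surjectivity you are trying to prove --- and the indecomposability argument of \Cref{le.push} does not extend to $m\ge 2$ (the degree $m^g n$ and rank $m^{g-1}k'$ are no longer coprime, and the sheaf may well decompose). This is precisely why the lemma's statement carries the semistability of every $\pi_*(\cL_0\otimes\cdots\otimes\cL_i)$ as part of the assertion: the paper runs an induction on $g+m$ with statement $\mathbf{P}_{g,m}$, obtaining semistability of $\pi_*\cL^{\otimes(m+1)}$ only \emph{after} exhibiting it as a quotient of the semistable sheaf $\pi_*\cL\otimes\pi_*\cL^{\otimes m}$ of the same slope (\Cref{le.add} plus \Cref{le.seesaw}). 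You need this semistability already to apply \Cref{le.otimes} to the first leg, so it must be threaded through the induction rather than asserted up front. Second, surjectivity of the sheaf map $\pi_*\cL_0\otimes\pi_*\cF\to\pi_*(\cL_0\otimes\cF)$ does not by itself give surjectivity on global sections over $E$ (e.g.\ $\cO_E(p)\twoheadrightarrow\cO_p$ is not surjective on $H^0$); you must kill $H^1$ of the kernel. The paper does this by observing that all three terms of $0\to\cK\to\pi_*\cL_0\otimes\pi_*\cF\to\pi_*(\cL_0\otimes\cF)\to 0$ are semistable of equal positive slope, so $\deg\cK>0$ and $H^1(E,\cK)=0$ by \Cref{le.semist.pos.deg}. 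Both gaps are repairable with material you already cite, but the repairs are exactly the content of the paper's inductive set-up.
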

\begin{proof}
We denote the statement in the lemma by ${\bf P}_{g,m}$ since it depends on both indices. 
We will argue by induction on $g+m$. To make sure that the induction works, we prove ${\bf P}_{g,m}$ when $\cL$ is an arbitrary standard divisor of type $[n_{1},\ldots,n_{g}]$. However, for simplicity, we write the proof only for $\cL=\cL_{n/k}$.

The case $m=0$ is straightforward and the case $g=1$ follows from \cref{thm.good.U.V}. 
From now on we assume that $g\ge 2$ and $m\ge 1$. 

We assume without loss of generality, as will be clear from the proof,  that all $\cL_i$ are isomorphic to $\cL$. 

Identifying, for the locally free sheaves in question, $H^0(E^g,-)$ with $H^0(E,\pi_*(-))$, the surjectivity claim decomposes into the two separate demands that
\begin{equation}
  \label{eq:pre}
  H^0(E,\pi_*\cL)\otimes H^0(E,\pi_*\cL^{\otimes m})\to H^0(E,\pi_*\cL\otimes \pi_*\cL^{\otimes m})
\end{equation}
and
\begin{equation}
  \label{eq:post}
  H^0(E,\pi_*\cL\otimes \pi_*\cL^{\otimes m}) \to H^0(E,\pi_*\cL^{\otimes (m+1)})
\end{equation}
both be onto. 

The semistability claim in ${\bf P}_{g,m-1}$ implies that the two locally free sheaves appearing in the domain of \Cref{eq:pre} are semistable. 
By \Cref{le.push,le.add}, $\pi_*\cL$ and $\pi_*\cL^{\otimes m}$ have slope $>2$, so \Cref{thm.good.U.V} tells us that  \Cref{eq:pre} 
is surjective. 

It remains to show that \Cref{eq:post} is onto. As before, let $z \in E$ and define $X_z=\pi^{-1}(z) = E^{g-1}\times\{z\}\subseteq E^g$. 
By Grauert's theorem, for $z\in E$ the $z$-fiber of the map
\begin{equation}\label{eq:llmllm1}
  \pi_*\cL\otimes \pi_*\cL^{\otimes m}  \, \longrightarrow \, \pi_*\cL^{\otimes(m+1)}
\end{equation}
is
\begin{equation}\label{eq:xz}
  H^0(X_z,\cL|_{X_z})\otimes H^0(X_z,\cL^{\otimes m}|_{X_z}) \, \longrightarrow \, H^0(X_z,\cL^{\otimes(m+1)}|_{X_z}).
\end{equation}
The locally free sheaf $\cL|_{X_z}$ on $X_z\cong E^{g-1}$ is a standard divisor of type $(n_{1},\ldots,n_{g-1})$. 
The induction hypothesis ${\bf P}_{g-1,m}$ shows that \Cref{eq:xz} is onto, and hence \Cref{eq:llmllm1} is an epimorphism. 
The domain of \Cref{eq:llmllm1} is semistable by ${\bf P}_{g,m-1}$ and \Cref{le.tens-ss}, and the slopes of   $\pi_*\cL\otimes \pi_*\cL^{\otimes m}$ and 
$\pi_*\cL^{\otimes(m+1)}$ are the same by \Cref{le.add}. It follows that the three terms 
in the exact sequence 
\begin{equation}\label{eq:klll}
  0\to \cK\to \pi_*\cL\otimes \pi_*\cL^{\otimes m} \to \pi_*\cL^{\otimes(m+1)}\to 0
\end{equation}
are all semistable and of equal (positive) slopes. Since $\deg(\cK)>0$, $H^1(E,\cK)=0$, so the long exact cohomology sequence for \Cref{eq:klll} produces the desired surjection \Cref{eq:post}.
\end{proof}

\begin{proof}[Proof of \Cref{pr.l'l''}]
 Let $\ldots, \cL_{-1}, \cL_0, \cL_1, \ldots$ be translates of $\cL$ and write
  \begin{align*}
    \cL'    & \;=\;   \cdots \cL_{-1}\otimes \cL_0\\
    \cL''   &\;=\;  \cL_1\otimes\cL_2\cdots.
  \end{align*}
Now  apply \Cref{le.pm} repeatedly to conclude that the map
  \begin{equation*}
    \cdots \otimes  H^0(E^g,\cL_{-1})\otimes H^0(E^g,\cL_0)\otimes H^0(E^g,\cL'') \; \longrightarrow \;  H^0(E^g,\cL'\otimes \cL'')
  \end{equation*}
  is onto, and note that the latter map factors through \Cref{eq:l'l''}. 
\end{proof}

\section{Relations for $B(E^g,\sigma,\cL)$}
\label{se.rel.ep}

In this section,  $\s$ is an arbitrary translation automorphism of $E^g$ and we assume that $\cL:=\cL_{n/k}$ is very ample or, 
equivalently, that $n_i \ge 3$ for all $i$. We will show that the relations for $B(E^g,\s,\cL)$ are generated in degree $\le 3$.

\subsection{Notation}
\label{sect9.notation}
Throughout \S\ref{se.rel.ep}, $\pi:E^g \to E$ denotes the morphism $\pi(x_1,\ldots,x_g)=x_g$.
We will often write $X$ for $E^g$ and use the following notation for various $\cO_{E^g}$-modules:
\begin{align*}
\cM  \;=\; \cM_m & \; =\;   \sigma^*\cL\otimes\cdots\otimes (\sigma^*)^m \cL,
\\
\cN   \;=\; \cN_m& \; =\;  (\sigma^{m+1})^*\cL,
\\
\cK \;=\; \cK_m  & \; =\;  \ker \big(   H^0(E^g,\cN_m) \otimes \cO_X \twoheadrightarrow   \cN_m\big),
\\
\cG \;=\; \cG_m & \;=\;  \cM \otimes \cK\; =\;  \ker\big(\cM \otimes   H^0(E^g,\cN)   \twoheadrightarrow  \cM \otimes \cN\big).
\end{align*}

Since $\cL$ is very ample so is $\cN_m$ for all $m \ge 0$. Hence both are generated by their global sections. 
By \cref{le.push} and \cref{le.semist.pos.deg}\cref{semist.pos.deg.slope}, $\pi_*\cL$ and $\pi_*\cN_m$ are also generated by their global sections.

By \cref{lem.SS92}, the relations for $B(E^g,\s,\cL)$ are generated in degree $\le 3$ if and only if the  map 
 \begin{equation}
\label{onto?}
H^0(E^g,\cL) \otimes H^0(E^g,\cG_m)\, \longrightarrow \, H^0(E^g,\cL \otimes \cG_m)
  \end{equation}
is onto for all $m \ge 2$. 
As in previous sections, we use $\pi_*$ to reduce the question of whether \cref{onto?} is onto to similar questions on $E$.

\subsection{The surjectivity and kernel of the map $Q_{n,k}(E,\tau) \to B(E^g,\s,\cL_{n/k})$ when $X_{n/k}=E^g$}
For the rest of this section we fix an integer $m\geq 2$.

The map in \cref{onto?} factors as
\begin{equation*}
\xymatrix{ 
H^0(E^g,\cL)\otimes H^0(E^g,\cG_m) \ar@{=}[d]  \ar[rr] && H^{0}(\cL\otimes\cG_m)  \ar@{=}[d] 
\\
H^{0}(E,\pi_{*}\cL)\otimes H^{0}(E,\pi_{*}\cG_m) \ar[r]^<<<<<{\alpha'} & H^{0}(E,\pi_{*}\cL\otimes\pi_{*}\cG_m)  \ar[r]^<<<<<{\beta'} & 
H^{0}(E^g,\pi_{*}(\cL\otimes\cG_m))\rlap{.}
}
\end{equation*}

\begin{lemma}\label{7892349023}
	Let
	\begin{align*}
		&\alpha\colon\pi_{*}\cM\otimes H^{0}(E^g,\cN)  \; \longrightarrow \;  \pi_{*}\cM\otimes\pi_{*}\cN,
		\\
		&\beta\colon\pi_{*}\cM\otimes\pi_{*}\cN  \; \longrightarrow \;  \pi_{*}(\cM\otimes\cN)
	\end{align*}
	be the canonical morphisms. There is an exact sequence
	\begin{equation}\label{0924908234}
		0 \; \longrightarrow \;   \ker(\alpha)   \; \longrightarrow \;   \pi_{*}\cG  \; \longrightarrow \;   \ker(\beta)  \; \longrightarrow \;  0.
	\end{equation}
\end{lemma}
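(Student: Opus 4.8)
The statement concerns the sheaf $\cG = \cG_m = \cM \otimes \cK_m$ on $X = E^g$, where $\cK_m$ sits in the exact sequence $0 \to \cK_m \to H^0(E^g,\cN) \otimes \cO_X \to \cN \to 0$. The idea is to push this defining sequence (tensored with $\cM$) forward along $\pi:E^g \to E$ and then factor the resulting four-term exact sequence through the two canonical maps $\alpha$ and $\beta$.

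\textbf{Step 1: Push forward the defining sequence of $\cG$.} Tensor $0 \to \cK \to H^0(E^g,\cN)\otimes\cO_X \to \cN \to 0$ with the locally free sheaf $\cM$ to get $0 \to \cG \to \cM \otimes H^0(E^g,\cN) \to \cM \otimes \cN \to 0$ (recall $\cM \otimes H^0(E^g,\cN)$ means $\cM \otimes_{\Bbbk} H^0(E^g,\cN)$, a direct sum of copies of $\cM$). Apply $\pi_*$. Because $\cM$ and $\cM\otimes\cN$ are tensor products of translates of the very ample $\cL$, \cref{le.push} (together with the argument in the proof of \cref{le.add}, applied to each of $\cM$ and $\cM\otimes\cN$) shows that $R^1\pi_*\cM = 0$ and $R^1\pi_*(\cM\otimes\cN) = 0$: indeed, as in the proof of \cref{le.push}, the fibers of these pushforwards have no higher cohomology since the restrictions to a fiber $E^{g-1}\times\{z\}$ are standard divisors of type $(n_1,\dots,n_{g-1})$ with all $n_i \ge 3$, so \cref{2-char-var.prop}\cref{item:5} applies after Grauert's theorem. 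Hence $\pi_*$ of the short exact sequence gives a short exact sequence
\begin{equation*}
0 \longrightarrow \pi_*\cG \longrightarrow \pi_*\cM \otimes H^0(E^g,\cN) \longrightarrow \pi_*(\cM\otimes\cN) \longrightarrow 0,
\end{equation*}
using $\pi_*(\cM \otimes H^0(E^g,\cN)) = \pi_*\cM \otimes H^0(E^g,\cN)$.

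\textbf{Step 2: Factor the middle map through $\pi_*\cM \otimes \pi_*\cN$.} The surjection $\pi_*\cM \otimes H^0(E^g,\cN) \twoheadrightarrow \pi_*(\cM\otimes\cN)$ factors as the composite of $\mathrm{id}_{\pi_*\cM} \otimes (\text{canonical } H^0(E^g,\cN)\otimes\cO_E \to \pi_*\cN)$ followed by $\beta: \pi_*\cM\otimes\pi_*\cN \to \pi_*(\cM\otimes\cN)$; i.e.\ it is $\beta \circ \alpha$ with $\alpha = \mathrm{id}_{\pi_*\cM}\otimes(\text{counit})$ matching the notation in the statement (note $H^0(E,\pi_*\cN) = H^0(E^g,\cN)$ via \cref{le.1}\cref{psb.pushforward.cohom} or its analogue for $E^g$, so $\alpha$ here is literally the map written in the lemma). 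Both $\alpha$ and $\beta$ are epimorphisms: $\alpha$ is surjective because $\pi_*\cN = \pi_*\cN_m$ is generated by its global sections (noted in \cref{sect9.notation}, via \cref{le.push} and \cref{le.semist.pos.deg}\cref{semist.pos.deg.slope}), so $H^0(E^g,\cN)\otimes\cO_E \to \pi_*\cN$ is onto and remains onto after $\pi_*\cM\otimes(-)$; and $\beta$ is an epimorphism because the analogous statement for $E^g$-fibers over $E$ reduces, by Grauert's theorem, to the surjectivity of $H^0(X_z,\cM|_{X_z}) \otimes H^0(X_z,\cN|_{X_z}) \to H^0(X_z,(\cM\otimes\cN)|_{X_z})$, which holds by \cref{le.pm} (equivalently \cref{pr.l'l''}) applied on $X_z \cong E^{g-1}$, since $\cM|_{X_z}$ and $\cN|_{X_z}$ are products of translates of a standard divisor.

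\textbf{Step 3: Snake lemma.} Now apply the snake lemma (or the $3\times 3$ / kernel comparison) to the commutative diagram with exact rows
\begin{equation*}
\begin{tikzcd}
0 \ar[r] & \pi_*\cG \ar[r]\ar[d] & \pi_*\cM \otimes H^0(E^g,\cN) \ar[r,"\beta\circ\alpha"]\ar[d,equal] & \pi_*(\cM\otimes\cN) \ar[r]\ar[d,equal] & 0\\
0 \ar[r] & \ker(\beta) \ar[r] & \pi_*\cM\otimes\pi_*\cN \ar[r,"\beta"] & \pi_*(\cM\otimes\cN) \ar[r] & 0\rlap{,}
\end{tikzcd}
\end{equation*}
where the left vertical map is induced by $\alpha$. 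Since $\alpha$ is surjective with kernel $\ker(\alpha)$, the snake lemma yields the exact sequence $0 \to \ker(\alpha) \to \pi_*\cG \to \ker(\beta) \to 0$, which is exactly \cref{0924908234}.

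\textbf{Expected main obstacle.} The genuinely delicate point is the epimorphy of $\beta$ in Step 2, since this is where the fiberwise projective-normality input \cref{pr.l'l''} enters and where one must be careful that the Grauert base-change identification of the fiber of $\beta$ with the $E^{g-1}$-multiplication map is valid — this needs $R^1\pi_*$ of the relevant sheaves to vanish, which is the content of Step 1 and ultimately rests on the very-ampleness hypothesis ($n_i \ge 3$ for all $i$) through \cref{2-char-var.prop}. Everything else is a formal diagram chase.
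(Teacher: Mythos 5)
Your proof is correct in substance but takes a genuinely heavier route than the paper's. The paper's argument is a two-line observation: left-exactness of $\pi_*$ applied to $0\to\cG\to\cM\otimes H^0(E^g,\cN)\to\cM\otimes\cN\to 0$ identifies $\pi_*\cG$ with $\ker(\beta\circ\alpha)=\alpha^{-1}(\ker\beta)$, and since $\alpha$ is an epimorphism (because $\pi_*\cN$ is generated by its global sections), its restriction $\alpha^{-1}(\ker\beta)\to\ker\beta$ is an epimorphism with kernel $\ker\alpha$. That is all: no right-exactness of the pushed-forward sequence, no vanishing of $R^1\pi_*$, and --- crucially --- no surjectivity of $\beta$ is needed. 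Your snake-lemma setup, by contrast, requires both rows to be short exact, which forces you to prove that $\beta$ (hence $\beta\circ\alpha$) is onto; you correctly flag this as the delicate point and outsource it to \cref{pr.l'l''} on the fibres of $\pi$ via Grauert's theorem. That does work, but it imports exactly the fibrewise projective-normality machinery that the lemma, as proved in the paper, is designed to avoid at this stage. Two smaller issues with your write-up: (i) the justification in Step 1 for right-exactness is insufficient as stated, since $R^1\pi_*\cM=0$ and $R^1\pi_*(\cM\otimes\cN)=0$ only show that $R^1\pi_*\cG$ is a quotient of $\pi_*(\cM\otimes\cN)$, not that the connecting map vanishes; the right-exactness you want actually follows from the surjectivity of $\beta\circ\alpha$ that you establish in Step 2, so the logic should be reordered (or, better, dropped in favour of the paper's kernel-of-a-composite argument); (ii) the middle vertical arrow of your diagram is labelled as an equality, but the two middle terms $\pi_*\cM\otimes H^0(E^g,\cN)$ and $\pi_*\cM\otimes\pi_*\cN$ are not equal --- that arrow must be $\alpha$, as your own remark that the left vertical map is induced by $\alpha$ presupposes.
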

\begin{proof}
By its very definition, $\cG$ fits into an exact sequence 
 $
  0 \to \cG \to  \cM  \otimes H^0(E^g,\cN) \to  \cM \otimes \cN  \to 0.
$ 
There is an associated   exact sequence $0 \to \pi_*\cG \to \pi_* \cM  \otimes H^0(E^g,\cN) \to  \pi_*(\cM \otimes \cN)$ in which the right-most map factors as 
  $$
 \pi_* \cM  \otimes H^0(E^g,\cN) \, \stackrel{\alpha}{\longrightarrow} \, 
 \pi_* \cM  \otimes \pi_*\cN\, \stackrel{\beta}{\longrightarrow} \,    \pi_*(\cM \otimes \cN).
$$
Thus $\pi_*\cG = \a^{-1}(\ker(\b))$.
Since $\pi_*\cN$ is generated by its global sections, $H^0(E,\pi_*\cN) \otimes \cO_E \to \pi_*\cN$ is epic. Hence
$\a$ is also epic. Its restriction $\a^{-1}(\ker(\b))\to \ker(\b)$ is therefore epic too.  
\end{proof}

\begin{lemma}\label{3102948120}
	The sequence
	\begin{equation}
	\label{12345}
		0 \; \longrightarrow \;  \pi_{*}\cK \; \longrightarrow \;  H^{0}(E,\pi_{*}\cN)\otimes\cO_{E} \; \longrightarrow \;  \pi_{*}\cN \; \longrightarrow \;  0
	\end{equation}
	is exact, and $\pi_{*}\cK$ is indecomposable.
\end{lemma}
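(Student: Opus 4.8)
The plan is to obtain the exact sequence by pushing the defining sequence of $\cK$ down along $\pi\colon E^g\to E$, and then to deduce the indecomposability of $\pi_*\cK$ from that of $\pi_*\cN$ using the autoequivalence $T_\cO$ of ${\sf D}^b(\coh(E))$ recalled in \cref{re.to}, exactly as in the proof of \cref{le.push}.

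For the exact sequence, I would start from the tautological sequence $0\to\cK\to H^0(E^g,\cN)\otimes\cO_{E^g}\xrightarrow{e}\cN\to 0$ on $E^g$ defining $\cK=\cK_m$, and apply the left-exact functor $\pi_*$. Since the fibres of $\pi$ are copies of $E^{g-1}$ we have $\pi_*\cO_{E^g}=\cO_E$, so $\pi_*(H^0(E^g,\cN)\otimes\cO_{E^g})=H^0(E^g,\cN)\otimes\cO_E=H^0(E,\pi_*\cN)\otimes\cO_E$, the last step being the canonical identification. A short check — an $\cO_E$-linear map out of the free sheaf $W\otimes\cO_E$ is determined by the linear map $W\to\Gamma(E,-)$ it induces, and for $e$ this map is the identity of $H^0(E^g,\cN)=H^0(E,\pi_*\cN)$ — shows that $\pi_*(e)$ is the evaluation morphism of $\pi_*\cN$. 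Now $\cN=\cN_m=(\s^{m+1})^*\cL$ with $\s^{m+1}$ a translation automorphism of $E^g$, so $\pi_*\cN\cong T_x^*\pi_*\cL$ for some $x\in E$; by \cref{le.push} this sheaf is indecomposable of slope $>2$, hence semistable of positive degree by \cref{le.tu}, hence generated by its global sections (as also noted in \S\ref{sect9.notation}). Therefore $\pi_*(e)$ is surjective, and the left-exact sequence produced by $\pi_*$ is the asserted short exact sequence \cref{12345}, with $\pi_*\cK=\ker(\pi_*(e))$.

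For the indecomposability I would mimic \cref{re.to}. Since $\pi_*\cN$ is semistable of positive degree, $H^1(E,\pi_*\cN)=0$ by \cref{le.semist.pos.deg}\cref{semist.pos.deg.pos}, so $\mathrm{RHom}(\cO_E,\pi_*\cN)$ is the vector space $H^0(E,\pi_*\cN)$ concentrated in degree $0$; thus $T_\cO(\pi_*\cN)$ is the cone of the evaluation morphism $H^0(E,\pi_*\cN)\otimes\cO_E\to\pi_*\cN$, which by \cref{12345} is $\pi_*\cK[1]$. Since $T_\cO$ (by \cite[Prop.~2.10]{st}) and the shift are autoequivalences of ${\sf D}^b(\coh(E))$ and $\pi_*\cN$ is indecomposable, $\pi_*\cK[1]$, and hence $\pi_*\cK$, is indecomposable in ${\sf D}^b(\coh(E))$; as $\End_{{\sf D}^b(\coh(E))}(\pi_*\cK)=\End_{\coh(E)}(\pi_*\cK)$, the sheaf $\pi_*\cK$ is indecomposable as an $\cO_E$-module.

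I do not expect a serious obstacle: the only delicate points are checking that $\pi_*$ carries the evaluation map of $\cN$ to the evaluation map of $\pi_*\cN$ (so that surjectivity, hence right-exactness, survives) and the harmless passage between indecomposability in ${\sf D}^b(\coh(E))$ and in $\coh(E)$. If one prefers to avoid derived categories, the elementary dualize-and-count-the-summands argument used for $\cK$ inside the proof of \cref{le.otimes} goes through verbatim with $\pi_*\cN$ in place of $\cU$, since $\pi_*\cN$ is indecomposable and generated by its global sections.
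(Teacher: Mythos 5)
Your proposal is correct and follows essentially the same route as the paper: push the defining sequence of $\cK$ forward along $\pi$, use \cref{le.push} together with \cref{le.semist.pos.deg}\cref{semist.pos.deg.slope} to see that $\pi_*\cN$ is generated by its global sections (hence the pushed-forward sequence is exact on the right), and deduce indecomposability of $\pi_*\cK$ via the $T_\cO$ argument of \cref{re.to} (the paper cites both this and the elementary argument from \cref{le.otimes}, exactly the two options you mention). Your explicit check that $\pi_*$ carries the evaluation map of $\cN$ to the evaluation map of $\pi_*\cN$ is a detail the paper leaves implicit, but it is the same proof.
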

\begin{proof}
Applying \cite[Prop.~III.9.3]{Hart} to the projections from $E^{g}=E^{g-1}\times E$ to its two factors we see that 
$\pi_*\cO_{E^g}=\cO_E$. 
	Also, $H^{0}(E^g,\cN)=H^{0}(E,\pi_{*}\cN)$.
	
	Since $\cN$ is generated by its global sections, the sequence
	\begin{equation}\label{0312490582}
		0\; \longrightarrow \; \cK \; \longrightarrow \; H^{0}(E^g,\cN)\otimes\cO_{E^g}\; \longrightarrow \; \cN \; \longrightarrow \;  0
	\end{equation}
	is exact.
	By \cref{le.push}, $\pi_{*}\cN$ is indecomposable of slope $>2$ and therefore generated by global sections 
	by \cref{le.semist.pos.deg}\cref{semist.pos.deg.slope}. Thus, applying $\pi_{*}$ to \cref{0312490582} produces the exact sequence \cref{12345}.
	
	The indecomposability of $\pi_{*}\cK$ follows in the same way as \cref{le.otimes} or \cref{re.to}.
\end{proof}

\begin{lemma}\label{8430925089}\label{213798437}
Let $\a$ and $\b$ be the maps in \cref{7892349023}.
Both  $\ker(\alpha)$ and $\ker(\b)$ are semistable locally free $\cO_E$-modules of slope $>2$.
\end{lemma}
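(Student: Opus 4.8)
The two maps $\a$ and $\b$ are analogues of the symmetrization/multiplication maps already treated in the $S^gE$ setting, so the strategy is to analyze $\ker(\a)$ and $\ker(\b)$ separately, using the derived-category functor $T_\cO$ of \Cref{re.to} for the former and a direct slope computation for the latter. First I would handle $\ker(\b)$. The map $\b\colon\pi_*\cM\otimes\pi_*\cN\to\pi_*(\cM\otimes\cN)$ is the $\pi_*$ of the multiplication $\cM\otimes\cN\to\cM\otimes\cN$ composed with the fibrewise multiplication; by \cref{pr.l'l''} (applied to the standard divisors $\cL|_{X_z}$ on the fibres $X_z\cong E^{g-1}$) the fibrewise multiplication maps are surjective, so Grauert's theorem shows $\b$ is an epimorphism. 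Since $\cM=\cM_m$ and $\cN=\cN_m$ are tensor products of translates of $\cL$, \cref{le.push} and \cref{le.add} give that $\pi_*\cM$, $\pi_*\cN$, and $\pi_*(\cM\otimes\cN)$ are all semistable of slope $>2$ (indeed, of slope $(m+1)\mu(\pi_*\cL)$ and $(m+2)\mu(\pi_*\cL)$ respectively, each a positive integer multiple of $\mu(\pi_*\cL)=n/k'>2$, which is $>m$ when $m\ge 2$ since $\mu(\pi_*\cL)\ge 3$ for the relevant continued fractions — here I should double-check that $n/k'\ge m+2$, or more carefully just use \Cref{le.seesaw}). By \Cref{le.tens-ss} the domain $\pi_*\cM\otimes\pi_*\cN$ is semistable, and since $\b$ is an epimorphism between semistable sheaves with $\mu(\pi_*\cM\otimes\pi_*\cN)=\mu(\pi_*\cM)+\mu(\pi_*\cN) > \mu(\pi_*(\cM\otimes\cN))$ would contradict semistability unless... — actually the cleanest route is: $\ker(\b)$ is a subsheaf of the semistable sheaf $\pi_*\cM\otimes\pi_*\cN$, hence $\mu(\ker(\b))\le\mu(\pi_*\cM\otimes\pi_*\cN)$; and the quotient $\pi_*(\cM\otimes\cN)$ forces, via \Cref{le.seesaw}\cref{le.seesaw.ss}, that $\mu(\ker(\b))\ge$ (something); I would compute $\mu(\ker(\b))$ exactly from the ranks and degrees as in \Cref{eq:mukr}, getting a value $>2$, and semistability of $\ker(\b)$ from \Cref{le.seesaw}\cref{le.seesaw.ss} applied to $0\to\ker(\b)\to\pi_*\cM\otimes\pi_*\cN\to\pi_*(\cM\otimes\cN)\to 0$ together with the fact that any subsheaf of $\ker(\b)$ is a subsheaf of a semistable sheaf.

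\textbf{The kernel of $\a$.} For $\ker(\a)$, the point is that $\a$ is $\pi_*\cM\otimes(\text{canonical epimorphism }H^0(E,\pi_*\cN)\otimes\cO_E\to\pi_*\cN)$, so $\ker(\a)\cong\pi_*\cM\otimes\pi_*\cK$ where $\pi_*\cK=\ker(H^0(E,\pi_*\cN)\otimes\cO_E\to\pi_*\cN)$. By \cref{3102948120}, $\pi_*\cK$ is indecomposable, hence semistable by \Cref{le.tu}; then \Cref{le.tens-ss} makes $\ker(\a)\cong\pi_*\cM\otimes\pi_*\cK$ semistable. Its slope is $\mu(\pi_*\cM)+\mu(\pi_*\cK)$, and $\mu(\pi_*\cK)$ is computed exactly as in the proof of \Cref{le.otimes}: since $\pi_*\cN$ is semistable of slope $\mu(\pi_*\cN)>2$ with $H^1=0$, one has $\deg(\pi_*\cK)=-\deg(\pi_*\cN)$ and $\rk(\pi_*\cK)=\dim H^0(\pi_*\cN)-\rk(\pi_*\cN)=\deg(\pi_*\cN)-\rk(\pi_*\cN)$, whence $\mu(\pi_*\cK)=-1/(1-\mu(\pi_*\cN)^{-1})$; since $\mu(\pi_*\cN)>2$ this is $>-2$, so $\mu(\ker(\a))=\mu(\pi_*\cM)+\mu(\pi_*\cK)>\mu(\pi_*\cM)-2>0$, and in fact $>2$ because $\mu(\pi_*\cM)=(m+1)\mu(\pi_*\cL)$ is at least $2\cdot 3=6$ when $m\ge 2$ (again using $\mu(\pi_*\cL)\ge 3$; if only $\mu(\pi_*\cL)>2$ is available the margin is still comfortably above $2$). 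Local freeness of both kernels is automatic: $\ker(\b)$ and $\ker(\a)$ are kernels of surjections of locally free sheaves on the smooth curve $E$, hence locally free.

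\textbf{Expected main obstacle.} The routine bookkeeping is the slope arithmetic; the one genuinely load-bearing input is the surjectivity of $\b$, which rests on applying \cref{pr.l'l''} fibrewise over $E$ — i.e., recognizing $\cM|_{X_z}$ and $\cN|_{X_z}$ as tensor products of translates of a standard divisor of type $(n_1,\ldots,n_{g-1})$ on $X_z\cong E^{g-1}$, so that the inductive form \Cref{le.pm} (proved for arbitrary standard divisors) applies, and then invoking Grauert's theorem to pass from fibrewise surjectivity to surjectivity of $\b$. The secondary subtlety is making sure the slope inequalities giving ``$>2$'' rather than merely ``$>0$'' actually hold for all $m\ge 2$; I expect this to come down to the elementary estimate $\mu(\pi_*\cL)=n/k'\ge 3$ (equivalently $[n_g,\ldots,n_1]\ge 3$, which holds since all $n_i\ge 3$), after which $\mu(\pi_*\cM)=(m+1)\mu(\pi_*\cL)\ge 6$ and the bound follows with room to spare. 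I would close with a one-line remark recording $\mu(\ker(\a))>2$ for use in the next lemma, paralleling \Cref{re.mu2}.
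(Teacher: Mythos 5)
Your argument is essentially the paper's. For $\ker(\alpha)$ you identify it as $\pi_*\cM\otimes\pi_*\cK$, obtain semistability from the indecomposability of $\pi_*\cK$ (\cref{3102948120}) via \cref{le.tu} and \cref{le.tens-ss}, and run the same slope computation; for $\ker(\beta)$ your ``cleanest route'' is exactly what the paper does: domain and codomain are semistable of equal slope, so the kernel is semistable of that slope too. Three small corrections are worth recording. First, $\cM_m=\sigma^*\cL\otimes\cdots\otimes(\sigma^m)^*\cL$ has $m$ tensor factors, so $\mu(\pi_*\cM)=m\,\mu(\pi_*\cL)$ and $\mu(\pi_*\cM\otimes\pi_*\cN)=(m+1)\mu(\pi_*\cL)$, not $(m+1)$ and $(m+2)$ times $\mu(\pi_*\cL)$; your final bound survives because $m\,\mu(\pi_*\cL)-2>2m-2\ge 2$ for $m\ge 2$. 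Second, the claim $\mu(\pi_*\cL)=n/k'\ge 3$ is false in general (e.g.\ $[3,3]=8/3$); only $n/k'>2$ is available, but, as you hedge, that suffices. Third, the surjectivity of $\beta$ --- which you single out as the one load-bearing input and propose to establish fibrewise via Grauert --- is not needed for this lemma: replacing $\pi_*(\cM\otimes\cN)$ by the image of $\beta$ if necessary, that image is simultaneously a quotient and a subsheaf of semistable sheaves of slope $(m+1)\mu(\pi_*\cL)$, hence has that slope, and the seesaw lemma plus ``subsheaf of equal slope of a semistable sheaf'' finishes the argument; the paper's proof of the $\ker(\beta)$ half is a single sentence to this effect.
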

\begin{proof}
	(1)
	First we deal with $\ker(\a)$. 
	
	By \cref{le.pm},  $\pi_{*}\cM$ is semistable. By \cref{le.add} and \cref{prop.grauert}\cref{item.grauert.slope}, $\mu(\pi_{*}\cM)=m\mu(\pi_{*}\cL) = m\cdot\frac{n}{k'}$.

	Since $H^{0}(E^g,\cN)=H^{0}(E,\pi_{*}\cN)$, 
	\begin{equation*}
		\ker(\alpha) \; = \; \pi_{*}\cM\otimes\ker\big(H^{0}(E^g,\cN)\otimes\cO_{E}\to\pi_{*}(\cN)\big) \;=\;  \pi_{*}\cM\otimes\pi_{*}\cK
	\end{equation*}
	by \cref{3102948120}. 
	Because \cref{12345} is  exact, 
	the degree and rank of $\pi_*\cK$ are
	\begin{align*}
		\deg(\pi_{*}\cK)&\; =\; -\deg(\pi_{*}\cN)\;=\; - n,\\
		\rank(\pi_{*}\cK)&\;=\; \dim H^{0}(\pi_{*}\cN)-\rank(\pi_{*}\cN)\;=\;
		n-k'.
	\end{align*}
	by \cref{prop.grauert}\cref{item.grauert.loc.free}.
	Recall that $\frac{n}{k'}=[n_g,\ldots,n_1]$. Because all $n_i$'s are $\ge 3$, $n/k'>2$. 
	Hence $n-k'>k'$, and
	\begin{align*}
		\mu(\ker\alpha)& \; =\; \mu(\pi_{*}\cM)+\mu(\pi_{*}\cK)\; =\; m\cdot\frac{n}{k'}-\frac{n}{n-k'}\; =\; n\bigg(\frac{m}{k'}-\frac{1}{n-k'}\bigg)\\
		&>\; n\bigg(\frac{m}{k'}-\frac{1}{k'}\bigg) \; =\; (m-1)\frac{n}{k'}
	\end{align*}
	which is $>2$.

	By \cref{3102948120}, $\pi_{*}\cK$ is indecomposable, and hence $\ker(\alpha)=\pi_{*}\cM\otimes\pi_{*}\cK$ is semistable.
	
	(2)
	Since $\pi_{*}\cM\otimes\pi_{*}\cN$ and $\pi_{*}(\cM\otimes\cN)$ are semistable of slope $(m+1)\mu(\pi_{*}\cL)$, the kernel of $\beta$ has these properties too.
\end{proof}

\begin{lemma}\label{0912398344}
Let $\cG$ be as in \cref{sect9.notation}.
Every indecomposable summand of $\pi_{*}\cG$ has slope $>2$.
\end{lemma}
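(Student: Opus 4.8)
The statement is the exact analogue of \cref{le.g} in the $E^g$ setting, and the proof should mirror it. By \cref{7892349023} there is a short exact sequence
\begin{equation*}
  0 \; \longrightarrow \; \ker(\alpha) \; \longrightarrow \; \pi_*\cG \; \longrightarrow \; \ker(\beta) \; \longrightarrow \; 0,
\end{equation*}
and by \cref{8430925089} both $\ker(\alpha)$ and $\ker(\beta)$ are semistable locally free $\cO_E$-modules of slope $>2$. The plan is to extract the information about indecomposable summands of $\pi_*\cG$ from this extension together with the seesaw inequality \cref{le.seesaw}.

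First I would note that $\pi_*\cG$ is locally free: it is the kernel of a morphism between the locally free sheaves $\pi_*\cM\otimes H^0(E^g,\cN)$ and $\pi_*(\cM\otimes\cN)$ (cf.\ the proof of \cref{7892349023}), and the kernel of a morphism of vector bundles on a smooth curve is again a vector bundle. Next, write $\pi_*\cG=\bigoplus_i\cS_i$ as a direct sum of indecomposable locally free $\cO_E$-modules; each $\cS_i$ is then semistable by \cref{le.tu}. Fix an index $i$ and let $\cS_i'$ be the image of $\cS_i$ under the quotient map $\pi_*\cG\to\ker(\beta)$, so that $\cS_i'$ is a quotient of $\cS_i$ and a subsheaf of the semistable sheaf $\ker(\beta)$, while the kernel $\cS_i''$ of $\cS_i\to\cS_i'$ is a subsheaf of $\ker(\alpha)$. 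By \cref{le.seesaw}\cref{le.seesaw.ss} applied to $\ker(\beta)$ (and the fact that $\cS_i'$ is a quotient of $\ker(\beta)$ only after passing through $\pi_*\cG$; more precisely use that $\cS_i'\subseteq\ker(\beta)$ with $\ker(\beta)$ semistable, so $\mu(\cS_i')\le\mu(\ker(\beta))$ is not what we want — instead observe $\cS_i'$ is a \emph{quotient} of $\cS_i$, hence) I would argue via quotients: $\cS_i''\subseteq\ker(\alpha)$ is a subsheaf of a semistable sheaf of slope $>2$, and $\cS_i'$ is a quotient of $\cS_i$. The cleanest route is the one used in \cref{lem.slope.summands}: the inclusion $\cS_i\hookrightarrow\pi_*\cG$ composed with $\pi_*\cG\twoheadrightarrow\ker(\beta)$ gives a map whose image $\cS_i'$ is a quotient of the semistable $\cS_i$ \emph{and} a subsheaf of the semistable $\ker(\beta)$; likewise $\cS_i''$ is a subsheaf of $\cS_i$ and of $\ker(\alpha)$. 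Hence $\mu(\cS_i'')\ge\mu(\cS_i)$ would follow if $\cS_i''$ were a quotient — but it is a sub, so instead $\mu(\cS_i'')\le\mu(\ker(\alpha))$ is automatic and is the wrong direction. I would therefore dualize or, more simply, run the argument of \cref{lem.slope.summands} verbatim: there $\cA=\ker(\alpha)$, $\cB=\ker(\beta)$, $\oplus\cV_i=\pi_*\cG$, and the conclusion is precisely $\mu(\cS_i)\ge\min\{\mu(\ker(\alpha)),\mu(\ker(\beta))\}>2$.

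So the whole proof reduces to a single invocation: apply \cref{lem.slope.summands} with the exact sequence \cref{0924908234} of \cref{7892349023}, whose outer terms are semistable of slope $>2$ by \cref{8430925089}, to the decomposition of $\pi_*\cG$ into indecomposable (hence semistable, by \cref{le.tu}) summands. The only thing to check separately is that $\pi_*\cG$ is locally free, which was noted above. The main obstacle, such as it is, is purely bookkeeping: making sure the exact sequence is in the shape $0\to\cA\to\oplus\cV_i\to\cB\to0$ required by \cref{lem.slope.summands}, which it is, so there is no real difficulty. I expect the proof to be three or four lines.

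\begin{proof}
As in the proof of \cref{7892349023}, $\pi_*\cG$ is the kernel of a morphism of locally free $\cO_E$-modules, hence is itself locally free. Write $\pi_*\cG=\bigoplus_i\cV_i$ with each $\cV_i$ indecomposable; by \cref{le.tu} each $\cV_i$ is semistable. By \cref{7892349023} there is an exact sequence
\begin{equation*}
  0 \; \longrightarrow \; \ker(\alpha) \; \longrightarrow \; \pi_*\cG \; \longrightarrow \; \ker(\beta) \; \longrightarrow \; 0,
\end{equation*}
and by \cref{8430925089} both $\ker(\alpha)$ and $\ker(\beta)$ are semistable locally free $\cO_E$-modules of slope $>2$. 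Applying \cref{lem.slope.summands} with $\cA=\ker(\alpha)$, $\cB=\ker(\beta)$, and $\oplus\cV_i=\pi_*\cG$ gives
\begin{equation*}
  \mu(\cV_i) \;\ge\; \min\{\mu(\ker(\alpha)),\mu(\ker(\beta))\} \;>\; 2
\end{equation*}
for every $i$, as claimed.
\end{proof}
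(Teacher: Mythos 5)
Your proof is correct and takes essentially the same route as the paper: both invoke the exact sequence $0\to\ker(\alpha)\to\pi_*\cG\to\ker(\beta)\to 0$ from \cref{7892349023}, the slope bounds from \cref{8430925089}, and conclude via \cref{lem.slope.summands}. The paper's proof is just a terser version of your final paragraph.
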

\begin{proof}	
	By \cref{7892349023}, there is an exact sequence $0\to\ker(\alpha)\to\pi_{*}\cG\to\ker(\beta)\to 0$. 
	Since $\ker(\alpha)$ and $\ker(\beta)$ are semistable of slope $>2$, the result follows 
	from \cref{lem.slope.summands}.
\end{proof}

\begin{lemma}\label{8902340983}
	The multiplication map $\alpha'\colon H^{0}(E,\pi_{*}\cL)\otimes H^{0}(E,\pi_{*}\cG) \to  H^{0}(E,\pi_{*}\cL \otimes  \pi_{*}\cG)$ is onto.
\end{lemma}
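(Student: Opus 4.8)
The plan is to invoke \Cref{le.otimes} applied to the semistable summands of $\pi_{*}\cL$ and $\pi_{*}\cG$. Since both sheaves are semistable (for $\pi_{*}\cL$ this is \cref{le.push} together with \cref{le.indec}, and for $\pi_{*}\cG$ this follows from \cref{0912398344} via decomposing into indecomposable, hence semistable, summands), and both are generated by their global sections (for $\pi_{*}\cL$ by \cref{le.push} and \cref{le.semist.pos.deg}\cref{semist.pos.deg.slope}, and for $\pi_{*}\cG$ by the filtration argument applied to the exact sequence \cref{0924908234}: since $\ker(\alpha)$ and $\ker(\beta)$ both have slope $>2$ by \cref{8430925089}, they are generated by their global sections and $H^{1}(\ker(\alpha))=0$, so the diagram analogous to \cref{eq.diag.epi} shows $\pi_{*}\cG$ is globally generated), it remains only to verify the numerical hypothesis \cref{eq:ineq} for each pair consisting of a summand of $\pi_{*}\cL$ and a summand of $\pi_{*}\cG$.

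First I would record the relevant slopes. Every indecomposable summand of $\pi_{*}\cL$ has slope $\mu(\pi_{*}\cL)=n/k'>2$ by \cref{prop.grauert}\cref{item.grauert.slope} and the fact that $n/k'=[n_{g},\ldots,n_{1}]>2$ when all $n_{i}\geq 3$. Every indecomposable summand of $\pi_{*}\cG$ has slope $>2$ by \cref{0912398344}. Since $m\geq 2$, I can get a better bound: the proof of \cref{8430925089} shows $\mu(\ker\alpha)>(m-1)\tfrac{n}{k'}\geq \tfrac{n}{k'}>2$ and $\mu(\ker\beta)=(m+1)\mu(\pi_{*}\cL)>2$, so in fact every summand of $\pi_{*}\cG$ has slope $\geq\min(\mu(\ker\alpha),\mu(\ker\beta))>2$. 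With both slopes exceeding $2$, we have $\tfrac{1}{\mu(\cU)}+\tfrac{1}{\mu(\cV)}<\tfrac12+\tfrac12=1$ for any summand $\cU$ of $\pi_{*}\cL$ and any summand $\cV$ of $\pi_{*}\cG$, so \cref{eq:ineq} holds.

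Then \Cref{le.otimes} gives surjectivity of $H^{0}(E,\cU)\otimes H^{0}(E,\cV)\to H^{0}(E,\cU\otimes\cV)$ for each such pair, and summing over the summands yields surjectivity of $\alpha'$. This is essentially a bookkeeping argument; there is no real obstacle, the only thing to be careful about is that the decomposition of $\pi_{*}\cG$ into summands must be compatible with the decomposition of $\pi_{*}\cL$ under tensor product (which it is, since tensoring a direct sum decomposition of $\pi_{*}\cL$ with one of $\pi_{*}\cG$ gives a direct sum decomposition of $\pi_{*}\cL\otimes\pi_{*}\cG$, and $H^{0}$ commutes with finite direct sums). So I would write: decompose $\pi_{*}\cL=\bigoplus_{i}\cU_{i}$ and $\pi_{*}\cG=\bigoplus_{j}\cV_{j}$ into indecomposable summands; each $\cU_{i}$ and $\cV_{j}$ is semistable of slope $>2$ and globally generated, so by \cref{le.otimes} each $H^{0}(\cU_{i})\otimes H^{0}(\cV_{j})\to H^{0}(\cU_{i}\otimes\cV_{j})$ is onto; since $\pi_{*}\cL\otimes\pi_{*}\cG=\bigoplus_{i,j}\cU_{i}\otimes\cV_{j}$ and global sections commute with direct sums, $\alpha'$ is onto.
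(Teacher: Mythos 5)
Your proof is correct and takes essentially the same route as the paper: the paper's (very terse) proof simply invokes \cref{thm.good.U.V} with the hypothesis weakened to ``all indecomposable summands have slope $>2$,'' citing \cref{0912398344} and \cref{le.push}, which is exactly the summand-by-summand application of \cref{le.otimes} that you spell out. The only superfluous step in your write-up is the separate global-generation argument for $\pi_{*}\cG$ via \cref{0924908234}; once you pass to indecomposable summands of slope $>2$, each is semistable and globally generated directly by \cref{le.indec} and \cref{le.semist.pos.deg}\cref{semist.pos.deg.slope}.
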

\begin{proof}
	We use \cref{thm.good.U.V}, the assumptions of which can be weakened as follows: all indecomposable summands of $\cU$ and $\cV$ have slopes $>2$. The result therefore follows from \cref{0912398344,le.push}.
\end{proof}

Our goal in this section is to show that the map \cref{onto?} is onto. We will prove this by induction on $g$.

\begin{lemma}\label{0912309823}
	Suppose that the map \cref{onto?} is onto when $g$ is replaced by $g-1$. The canonical map $\varphi:\pi_{*}\cL\otimes\pi_{*}\cG\to\pi_{*}(\cL\otimes\cG)$ is an epimorphism.
\end{lemma}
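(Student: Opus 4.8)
The statement asserts that $\varphi\colon\pi_{*}\cL\otimes\pi_{*}\cG\to\pi_{*}(\cL\otimes\cG)$ is an epimorphism under the inductive hypothesis that the map \cref{onto?} is onto on $E^{g-1}$. The natural approach is to check surjectivity fiberwise over $E$ using Grauert's Theorem, exactly as in the proofs of \cref{le.push} and \cref{le.pm}. First I would record that $\cL$, $\cN=\cN_{m}$, $\cM=\cM_{m}$ and $\cM\otimes\cN$ are all direct images that commute with base change (their higher direct images vanish, since the relevant sheaves restrict on each fiber $X_{z}=\pi^{-1}(z)\cong E^{g-1}$ to standard divisors of type $(n_{1},\dots,n_{g-1})$ satisfying the vanishing in \cref{2-char-var.prop}\cref{item:5}). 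Consequently $\cK=\cK_{m}$, being the kernel of the surjection $H^{0}(E^g,\cN)\otimes\cO_{E^g}\twoheadrightarrow\cN$, also has $\pi_{*}\cK$ commuting with base change — this is precisely the content of \cref{3102948120} (its fiber at $z$ is the kernel of $H^{0}(X_{z},\cN|_{X_{z}})\otimes\cO\twoheadrightarrow\cN|_{X_{z}}$, using that $\pi_{*}\cN$ is globally generated so $\alpha$ in \cref{7892349023} is epic). Therefore $\pi_{*}\cG=\pi_{*}(\cM\otimes\cK)$ has fiber at $z$ equal to $H^{0}(X_{z},\cM|_{X_{z}}\otimes\cK|_{X_{z}})=H^{0}(X_{z},\cG_{m}|_{X_{z}})$, where $\cG_{m}|_{X_{z}}$ is exactly the analogue on $E^{g-1}$ of the sheaf $\cG_{m}$ built from $\cL|_{X_{z}}$, a standard divisor of type $(n_{1},\dots,n_{g-1})$.

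\textbf{Key steps.} With these identifications in place, the fiber of $\varphi$ at a point $z\in E$ is, by Grauert's Theorem, the multiplication map
$$
H^{0}(X_{z},\cL|_{X_{z}})\otimes H^{0}(X_{z},\cG_{m}|_{X_{z}}) \;\longrightarrow\; H^{0}(X_{z},\cL|_{X_{z}}\otimes\cG_{m}|_{X_{z}}).
$$
The inductive hypothesis — the surjectivity of \cref{onto?} on $E^{g-1}$, stated there for the standard divisor $\cL_{n/k}$ but proved (as in \cref{le.pm}) for an arbitrary standard divisor of the appropriate type — says precisely that this fiber map is onto for every $z$. A surjection of coherent sheaves on a Noetherian scheme may be checked fiberwise (by Nakayama's lemma, once one knows the target $\pi_{*}(\cL\otimes\cG)$ is coherent and that source and target commute with base change so that "fiber of the map" really is "map of fibers"); hence $\varphi$ is an epimorphism. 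I would spell out the base-change step carefully: $\pi_{*}\cL$ commutes with base change by \cref{prop.grauert}-type reasoning, and $\pi_{*}\cG$ commutes with base change by the argument just given, while the higher direct image $R^{1}\pi_{*}(\cL\otimes\cG)$ vanishes (again from the fiberwise cohomology vanishing of the very ample $\cL$ and of $\cG$, whose fibers are globally generated with no higher cohomology by \cref{le.semist.pos.deg}), so $\pi_{*}(\cL\otimes\cG)$ is locally free and commutes with base change as well.

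\textbf{Main obstacle.} The technical heart is verifying that $\cG_{m}|_{X_{z}}$ genuinely coincides with the "$\cG_{m}$ for $E^{g-1}$" attached to the standard divisor $\cL|_{X_{z}}$, so that the induction hypothesis applies verbatim — this requires knowing that $\sigma$ restricts sensibly and that forming $\cK$ and $\cG$ commutes with restriction to $X_{z}$, which in turn rests on $\pi_{*}\cN$ being globally generated and on the base-change compatibility of all the sheaves involved. The other delicate point is that the inductive statement must be the strengthened one (for arbitrary standard divisors of type $(n_{1},\dots,n_{g-1})$, not just $\cL_{n/(k)}$), exactly as already arranged in the proof of \cref{le.pm}; I would flag this explicitly. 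Once these bookkeeping matters are settled, the conclusion is immediate from Grauert and Nakayama, with no further computation.
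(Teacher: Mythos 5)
Your overall strategy---reduce to surjectivity on fibers via Grauert's theorem and invoke the inductive hypothesis on $X_z\cong E^{g-1}$---is the same as the paper's, but there is a genuine gap at exactly the step you flag as the technical heart: the restriction $\cG_m|_{X_z}$ is \emph{not} the analogue of $\cG_m$ built intrinsically on $E^{g-1}$ from $\cL|_{X_z}$, and the claimed coincidence fails. Indeed $\cK_m$ is defined as the kernel of $H^0(E^g,\cN_m)\otimes\cO_{E^g}\twoheadrightarrow\cN_m$, so $\cK_m|_{X_z}$ is the kernel of $H^0(E^g,\cN_m)\otimes\cO_{X_z}\to\cN_m|_{X_z}$, whereas the intrinsic object $\cK'$ on $X_z$ is the kernel of $H^0(X_z,\cN_m|_{X_z})\otimes\cO_{X_z}\to\cN_m|_{X_z}$. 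The restriction map $H^0(E^g,\cN_m)\to H^0(X_z,\cN_m|_{X_z})$ is onto but has a kernel $V$ of dimension $n-k'>0$, so the snake lemma yields $0\to V\otimes\cO_{X_z}\to\cK_m|_{X_z}\to\cK'\to 0$ and hence $0\to V\otimes\cM|_{X_z}\to\cG_m|_{X_z}\to\cG'\to 0$. The inductive hypothesis therefore applies only to the quotient $\cG'$, not to $\cG_m|_{X_z}$ itself, and your argument does not close as written.

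The repair, which is what the paper does, is to apply $H^0(\cL|_{X_z})\otimes(-)$ and $H^0(\cL|_{X_z}\otimes(-))$ to this short exact sequence, verify that both resulting rows are exact (using $H^1(\cM|_{X_z})=H^1(\cL|_{X_z}\otimes\cM|_{X_z})=0$, which follows from ampleness of the restricted standard divisors), and then deduce surjectivity of the middle vertical map from surjectivity of the two outer ones: the map involving $\cG'$ is precisely the map \cref{onto?} for $g-1$ (the inductive hypothesis), while the map involving $V\otimes\cM|_{X_z}$ is $V$ tensored with a multiplication map of translates of a standard divisor, which is \Cref{le.pm}. Your other base-change assertions (that $\pi_*\cK$, $\pi_*\cG$, and $\pi_*(\cL\otimes\cG)$ commute with base change, so the fiber of $\varphi$ is the fiber multiplication map) are fine and match the paper, as does your remark that the inductive statement must be proved for arbitrary standard divisors of type $(n_1,\ldots,n_{g-1})$; but without the extension argument above the induction step is not valid.
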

\begin{proof}
	The claim is trivial when $g=1$ so we assume $g\geq 2$.
	
	Let $z \in E$ and let $X_z=\pi^{-1}({z} )\cong E^{g-1}$. By Grauert's theorem, the fiber of $\varphi$ 
	at $z$ is the map
	\begin{equation}\label{0983240894}
		H^{0}(X_z,\cL|_{X_{z}})\otimes H^{0}(X_z,\cG|_{X_{z}})  \, \longrightarrow \,  H^{0}(X_z,\cL|_{X_{z}}\otimes\cG|_{X_{z}}).
	\end{equation}
	It suffices to show that this map is surjective for all $z$. We will do that.
	
	We will argue by induction on $g$. By replacing $E^g$ by $X_z$, $\cM$ by $\cM|_{X_{z}}$, and $\cN$ by $\cN|_{X_{z}}$ in the definitions of $\cK$ and $\cG$ we obtain sheaves 
	on $X_z\cong E^{g-1}$ that we denote by $\cK'$ and $\cG'$, respectively. 
	Since $\cN$ is locally free, the restriction of \cref{0312490582}  to $X_z$ is still exact. 
	Thus we obtain a commutative diagram with exact rows,
	\begin{equation*}
		\begin{tikzcd}
			0\ar[r] & \cK|_{X_{z}}\ar[d]\ar[r] & H^{0}(E^g,\cN)\otimes\cO_{X_{z}}\ar[d]\ar[r] & \cN|_{X_{z}}\ar[d,equal]\ar[r] & 0 \\
			0\ar[r] & \cK'\ar[r] & H^{0}(X_z,\cN|_{X_{z}})\otimes\cO_{X_{z}}\ar[r] & \cN|_{X_{z}}\ar[r] & 0\rlap{ .}
		\end{tikzcd}
	\end{equation*}
	The canonical map $H^{0}(E^g,\cN)\to H^{0}(X_z,\cN|_{X_{z}})$ is onto by \cite[Lem.~4.14]{CKS2}.
	Denote its kernel by $V$. 
	By the Snake Lemma there is  an exact sequence
	\begin{equation*}
		0\to V\otimes\cO_{X_{z}}\to\cK|_{X_{z}}\to\cK'\to 0.
	\end{equation*}
	Tensoring this with the locally free sheaf $\cM|_{X_{z}}$ yields the exact sequence
	\begin{equation*}
		0\to V\otimes\cM|_{X_{z}}\to\cG|_{X_{z}}\to\cG'\to 0.
	\end{equation*}
	Since $\cL|_{X_{z}}$ is a standard divisor of type $(n_{1},\ldots,n_{g-1})$, it is ample, and so are $\cM|_{X_{z}}$ and $\cL|_{X_{z}}\otimes\cM|_{X_{z}}$. The argument in \cite[Cor.~3.4]{CKS2} shows $H^{1}(\cM|_{X_{z}})=0$ and $H^{1}(\cL|_{X_{z}}\otimes\cM|_{X_{z}})=0$. Thus we obtain a commutative diagram
	\begin{equation*}
		\begin{tikzcd}
			0\ar[r] & H^{0}(\cL|_{X_{z}})\otimes H^{0}(V\otimes\cM|_{X_{z}})\ar[d]\ar[r] & H^{0}(\cL|_{X_{z}})\otimes H^{0}(\cG|_{X_{z}})\ar[d]\ar[r] & H^{0}(\cL|_{X_{z}})\otimes H^{0}(\cG')\ar[d]\ar[r] & 0 \\
			0\ar[r] & H^{0}(\cL|_{X_{z}}\otimes V\otimes\cM|_{X_{z}})\ar[r] & H^{0}(\cL|_{X_{z}}\otimes\cG|_{X_{z}})\ar[r] & H^{0}(\cL|_{X_{z}}\otimes\cG')\ar[r] & 0
		\end{tikzcd}
	\end{equation*}
	with exact rows. Since the surjectivity of the left (resp.\ right) vertical map 
	is reduced to the case $X=E^{g-1}$, the hypothesis for $g-1$ (resp.\ \cref{le.pm}) ensures the surjectivity. Therefore the vertical map in the middle is also surjective, and this completes the proof.
\end{proof}

We use the following notation in the next proof:  If $\cU_1$ and $\cU_2$ are $\cO_{E^g}$-modules we write 
$
\cK(\cU_1,\cU_2)
$  
for the kernel of the canonical morphism
$\pi_{*}\cU_1\otimes\pi_{*}\cU_2 \, \longrightarrow \, \pi_{*}(\cU_1\otimes\cU_2)$.
If $\cU_1$ and $\cU_2$ are locally free $\cO_{E^g}$-modules, then $\cK(\cU_1,\cU_2)$ is obviously a locally free $\cO_E$-module.

\begin{theorem}\label{0892340894}
Let $\s$ be  a translation automorphism of $E^g$. 
If $\cL_{n/k}$ is very ample, then 
the ideal of relations  for $B(E^{g},\s,\cL_{n/k})$ is generated by elements of degree $\leq 3$.
\end{theorem}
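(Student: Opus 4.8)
The plan is to reduce the statement, via \cref{lem.SS92} (or equivalently \cref{lem.B.deg.relns}), to showing that the multiplication map
\[
H^0(E^g,\cL) \otimes H^0(E^g,\cG_m)\, \longrightarrow \, H^0(E^g,\cL \otimes \cG_m)
\]
is surjective for all $m \ge 2$; this reduction is already set up in the notation of \S\ref{sect9.notation} and the displayed map \cref{onto?}. Since $B(E^g,\s,\cL_{n/k})$ is generated in degree one by \cref{pr.l'l''} (combined with \cref{ssec.prod.gen.deg.one}), the hypothesis of \cref{lem.B.deg.relns} is satisfied, so it genuinely suffices to prove surjectivity of \cref{onto?}.

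Next I would push everything down to the elliptic curve $E$ via $\pi:E^g\to E$. Using $\pi_*\cO_{E^g}=\cO_E$ and the fact that $\cL$ and $\cG_m$ both have vanishing higher direct images under $\pi$ (an argument of the type in \cref{le.push} and \cref{le.Gr.G}), the map \cref{onto?} is identified with the composite
\[
H^{0}(E,\pi_{*}\cL)\otimes H^{0}(E,\pi_{*}\cG_m) \; \xrightarrow{\ \alpha'\ } \; H^{0}(E,\pi_{*}\cL\otimes\pi_{*}\cG_m) \; \xrightarrow{\ \beta'\ } \; H^{0}(E,\pi_{*}(\cL\otimes\cG_m)).
\]
Surjectivity of $\alpha'$ is exactly \cref{8902340983}, which rests on \cref{0912398344} (all indecomposable summands of $\pi_*\cG_m$ have slope $>2$), \cref{le.push} ($\pi_*\cL$ is stable of slope $>2$), and the slightly strengthened form of \cref{thm.good.U.V}. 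So the remaining, and main, task is surjectivity of $\beta'$, i.e.\ that $H^0$ of the canonical morphism $\varphi:\pi_*\cL\otimes\pi_*\cG_m\to\pi_*(\cL\otimes\cG_m)$ is onto.

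For $\beta'$ I would proceed by induction on $g$, the base case $g=1$ being vacuous (there $\pi$ is the identity and $\varphi$ is an isomorphism). For the inductive step, \cref{0912309823} shows that $\varphi$ itself is an \emph{epimorphism} of $\cO_E$-modules, provided the theorem is known for $g-1$; its proof restricts to a fiber $X_z\cong E^{g-1}$, uses \cite[Lem.~4.14]{CKS2} to see that global sections of $\cN$ surject onto those over $X_z$, and then feeds the resulting short exact sequences of sheaves on $E^{g-1}$ through the surjectivity statements \cref{le.pm} and the $(g-1)$-case of the theorem. Once $\varphi$ is epic, one examines the exact sequence
\[
0 \longrightarrow \cK(\cL,\cG_m) \longrightarrow \pi_*\cL\otimes\pi_*\cG_m \xrightarrow{\ \varphi\ } \pi_*(\cL\otimes\cG_m)\longrightarrow 0
\]
and shows $H^1(E,\cK(\cL,\cG_m))=0$: the two outer sheaves $\pi_*\cL\otimes\pi_*\cG_m$ and $\pi_*(\cL\otimes\cG_m)$ are semistable (by \cref{le.pm}, \cref{0912398344}, \cref{le.tens-ss}, and \cref{lem.slope.summands}) of equal positive slope $>2$, so \cref{le.seesaw} and \cref{le.semist.pos.deg} force $\cK(\cL,\cG_m)$ to be semistable of positive degree, whence $H^1=0$. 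Taking cohomology of the short exact sequence then gives the surjectivity of $\beta'$.

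Combining surjectivity of $\alpha'$ and $\beta'$ gives surjectivity of \cref{onto?} for all $m\ge 2$, and \cref{lem.B.deg.relns} completes the proof. I expect the genuine obstacle to be the inductive step for $\beta'$ — specifically \cref{0912309823} — where one must track several short exact sequences of sheaves on the fiber $E^{g-1}$, verify the vanishing of the relevant $H^1$'s on those fibers (via ampleness of the restricted standard divisors and the cohomology vanishing of \cref{2-char-var.prop}), and invoke both the surjectivity lemma \cref{le.pm} and the induction hypothesis in the right slots of a $3\times 3$-style diagram; the slope bookkeeping needed to get "$>2$" everywhere (using $n/k'=[n_g,\dots,n_1]>2$ when all $n_i\ge 3$) is delicate but routine.
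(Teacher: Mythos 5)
Your overall architecture is exactly the paper's: reduce via \cref{lem.SS92}/\cref{lem.B.deg.relns} to the surjectivity of \cref{onto?}, factor through $\alpha'$ and $\beta'$, handle $\alpha'$ by \cref{8902340983}, and handle $\beta'$ by induction on $g$ using \cref{0912309823} to get that $\varphi:\pi_*\cL\otimes\pi_*\cG_m\to\pi_*(\cL\otimes\cG_m)$ is an epimorphism and then proving $H^1(E,\cK(\cL,\cG_m))=0$. The gap is in that last vanishing. You assert that $\pi_*\cL\otimes\pi_*\cG_m$ and $\pi_*(\cL\otimes\cG_m)$ are semistable of equal positive slope, but neither claim is available (or true in general). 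The sheaf $\pi_*\cG_m$ is an extension of $\ker(\beta)$ by $\ker(\alpha)$ whose slopes are computed in \cref{8430925089} to be $(m+1)\tfrac{n}{k'}$ and $m\tfrac{n}{k'}-\tfrac{n}{n-k'}$ respectively; these are strictly different, which is precisely why \cref{0912398344} only records that every indecomposable summand of $\pi_*\cG_m$ has slope $>2$ rather than that $\pi_*\cG_m$ is semistable. Moreover $\pi_*\cL\otimes\pi_*\cG_m$ and $\pi_*(\cL\otimes\cG_m)$ have different ranks (they differ by $\rank\cK(\cL,\cG_m)$) while \cref{le.add} does not apply to $\cG_m$, so their slopes cannot be equal; hence \cref{le.seesaw} and \cref{le.semist.pos.deg} do not deliver semistability or positivity of degree for $\cK(\cL,\cG_m)$ as you claim.

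The paper repairs exactly this point with a $3\times 3$ diagram: using the exact sequences \cref{9408934245} and \cref{0910975433} it exhibits $\cK(\cL,\cG_m)$ as an extension of a sheaf $\cC$ by $\cK(\cL,\cM)\otimes\pi_*\cK$. Each of these two pieces \emph{is} sandwiched between semistable sheaves of equal positive slope --- $\cK(\cL,\cM)\otimes\pi_*\cK$ between $\pi_*\cL\otimes\pi_*\cM\otimes\pi_*\cK$ and $\pi_*(\cL\otimes\cM)\otimes\pi_*\cK$ (equal slopes by \cref{le.add}, with $\pi_*\cK$ indecomposable by \cref{3102948120}), and $\cC$ between $\pi_*\cL\otimes\cK(\cM,\cN)$ and $\cK(\cL\otimes\cM,\cN)$ --- so each piece is semistable of positive slope and has vanishing $H^1$, whence $H^1(E,\cK(\cL,\cG_m))=0$. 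You would need to insert this (or an equivalent filtration of $\cK(\cL,\cG_m)$) to make the final step of your argument go through.
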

\begin{proof}
As explained at the beginning of this section, it suffices to show that the map \cref{onto?} is onto. We will do that, using induction on $g$ so that we can use the conclusion of \cref{0912309823}.
First, we will show that $H^{1}(E,\cK(\cL,\cG))=0$.

The exact sequence \cref{0924908234} is
	\begin{equation}\label{9408934245}
		0\to\pi_{*}\cM\otimes\pi_{*}\cK\to\pi_{*}\cG\to\cK(\cM,\cN)\to 0.
	\end{equation}
	Replacing $\cM$ by $\cL\otimes\cM$, the same argument produces the exact sequence
	\begin{equation}\label{0910975433}
		0\to\pi_{*}(\cL\otimes\cM)\otimes\pi_{*}\cK\to\pi_{*}(\cL\otimes\cG)\to\cK(\cL\otimes\cM,\cN)\to 0
	\end{equation}
	since $\cG$ is replaced by $\cL\otimes\cG$. 
	The sequences  $(\pi_{*}\cL) \otimes \,$\cref{9408934245} and \cref{0910975433} fit into the commutative diagram
	\begin{equation*}
		\begin{tikzcd}
			& 0\ar[d] & 0\ar[d] & 0\ar[d] & \\
			0\ar[r] & \cK(\cL,\cM)\otimes\pi_{*}\cK\ar[d]\ar[r] & \cK(\cL,\cG)\ar[d]\ar[r] & \cC\ar[d]\ar[r] & 0 \\
			0\ar[r] & \pi_{*}\cL\otimes\pi_{*}\cM\otimes\pi_{*}\cK\ar[d]\ar[r] & \pi_{*}\cL\otimes\pi_{*}\cG\ar[d]\ar[r] & \pi_{*}\cL\otimes\cK(\cM,\cN)\ar[d]\ar[r] & 0 \\
			0\ar[r] & \pi_{*}(\cL\otimes\cM)\otimes\pi_{*}\cK\ar[d]\ar[r] & \pi_{*}(\cL\otimes\cG)\ar[d]\ar[r] & \cK(\cL\otimes\cM,\cN)\ar[d]\ar[r] & 0 \\
			& 0 & 0 & 0 &
		\end{tikzcd}
	\end{equation*}
	where $\cC$ is a locally free $\cO_{E}$-module, and the exactness of the vertical sequences follows from \cref{0912309823} and the fact that the canonical morphism $\pi_{*}\cL\otimes\pi_{*}\cM\to\pi_{*}(\cL\otimes\cM)$ is an epimorphism. 
Thus, to show $H^1(E,\cK(\cL,\cG))=0$ it suffices to show that $H^1(E,\cK(\cL,\cM)\otimes\pi_{*}\cK)=H^1(E,\cC)=0$.
	
As we showed in \cref{8430925089}, $\pi_{*}\cM\otimes\pi_{*}\cK$ is semistable and has slope $>2$. Thus $\pi_{*}\cL\otimes\pi_{*}\cM\otimes\pi_{*}\cK$ is also semistable and has positive slope. Since $\pi_{*}(\cL\otimes\cM)\otimes\pi_{*}\cK$ has the same property, so does $\cK(\cL,\cM)\otimes\pi_{*}\cK$. On the other hand, the exact sequences
	\begin{equation*}
          0\to\pi_{*}\cL\otimes\cK(\cM,\cN)\to\pi_{*}\cL\otimes\pi_{*}\cM\otimes\pi_{*}\cN\to\pi_{*}\cL\otimes\pi_{*}(\cM\otimes\cN)\to 0
	\end{equation*}
	and
	\begin{equation*}
          0\to\cK(\cL\otimes\cM,\cN)\to\pi_{*}(\cL\otimes\cM)\otimes\pi_{*}\cN\to\pi_{*}(\cL\otimes\cM\otimes\cN)\to 0
	\end{equation*}
	imply $\pi_{*}\cL\otimes\cK(\cM,\cN)$ and $\cK(\cL\otimes\cM,\cN)$ are semistable and have the same positive slope, whence $\cC$ has the same property. It follows from \cref{le.semist.pos.deg} that $H^1(E,\cK(\cL,\cM)\otimes\pi_{*}\cK)=H^1(E,\cC)=0$.
	
	Since $0\to\cK(\cL,\cG)\to\pi_{*}\cL\otimes\pi_{*}\cG\to\pi_{*}(\cL\otimes\cG)\to 0$ is exact and $H^1(E,\cK(\cL,\cG))=0$, the map $H^{0}(E,\pi_{*}\cL\otimes\pi_{*}\cG) \to H^{0}(E,\pi_{*}(\cL\otimes\cG))=H^{0}(E,\cL\otimes\cG)$ is onto. It follows from \cref{8902340983} that the map \cref{onto?} is onto. The proof is complete.
\end{proof}


\bibliography{biblio4}
\bibliographystyle{customamsalpha}

\end{document}